\newcommand{\T}{\mathbb{{T}}}
\newcommand{\N}{\mathbb{{N}}}
\def\N{{{\Bbb N}}}
\def\Z{{{\Bbb Z}}}
\def\T{{{\Bbb T}}}
\def\R{{\Bbb R}}
\def\C{{\Bbb C}}
\def\l{{\lambda }}
\def\a{{\alpha }}
\def\D{{\Delta }}
\def\a{{\alpha}}
\def\b{{\beta}}
\def\d{{\delta}}
\def\e{{\varepsilon}}
\def\s{{\sigma}}
\def\vp{{\varphi}}
\def\t{{\theta }}
\def\g{{\gamma }}
\def\w{{\omega }}
\def\E{\mathcal{E}}
\def\Sp{\mathcal{S}}
\def\){\right)}
\def\({\left(}
\def\supp{\operatorname{supp}}
\numberwithin{equation}{section}
\newtheorem{proposition}{Proposition}[section]
\newtheorem*{example}{Example}
\newtheorem{lemma}{Lemma}[section]
\newtheorem{remark}{Remark}[section]
\newtheorem{assumption}{Assumption}[section]
\newtheorem{theorem}{Theorem}[section]
\newtheorem{corollary}{Corollary}[section]
\numberwithin{equation}{section}
\def\la{\langle}
\def\ra{\rangle}
\def\wh{\widehat}
\def\SD{\mathcal{D}}
\def\q{\quad}
\def\la{\langle}
\def\ra{\rangle}
\def\wh{\widehat}
\def\SD{\mathcal{D}}
\def\supp{\text{\rm supp}}
\def\elra{\hbox to 2in{\rightarrowfill}}
\def\ella{\hbox to 2in{\leftarrowfill}}
\def\hrf{\hbox to 2in{\hrulefill}}
\def\hdotfill{\leaders\hbox to 1em{\hss .\hss}\hfill}
\def\N{{{\Bbb N}}}
\def\Z{{{\Bbb Z}}}
\def\T{{{\Bbb T}}}
\def\R{{\Bbb R}}
\def\C{{\Bbb C}}
\def\l{{\lambda }}
\def\a{{\alpha }}
\def\D{{\Delta }}
\def\a{{\alpha}}
\def\b{{\beta}}
\def\d{{\delta}}
\def\e{{\varepsilon}}
\def\s{{\sigma}}
\def\vp{{\varphi}}
\def\t{{\theta }}
\def\g{{\gamma }}
\def\w{{\omega }}
\def\){\right)}
\def\({\left(}
\def\supp{\operatorname{supp}}
\numberwithin{equation}{section}
\def\Dl{\bar{{\Delta}}}
\def\ab{{(a,b)}}
\def\R{\Bbb R}
\def\XXint#1#2#3{{\setbox0=\hbox{$#1{#2#3}{\int}$}
     \vcenter{\hbox{$#2#3$}}\kern-.5\wd0}}
\begin{document}

\title[Smoothness of functions vs. smoothness of approximation processes]{
Smoothness of functions vs. smoothness of approximation processes
}

\author{Yu.~S.~Kolomoitsev$^*$}
\address{
Yu.
Kolomoitsev,
Universit\"at zu L\"ubeck,
Institut f\"ur Mathematik,
Ratzeburger Allee 160,
23562 L\"ubeck, Germany; Institute of Applied Mathematics and Mechanics of NAS of Ukraine, Generala Batyuka str. 19,  84116 Slov'yans'k, Ukraine
}
\email{kolomoitsev@math.uni-luebeck.de}

%\author{Sergey Tikhonov}
%%\address{S.~Tikhonov, ICREA, Centre de Recerca Matem\`{a}tica, and UAB,
%%Apartat 50~08193 Bellaterra, Barce\-lona, Spain}
%
%\address{S. Tikhonov
%\\
%Centre de Recerca Matem\`{a}tica\\
%Campus de Bellaterra, Edifici C
%08193 Bellaterra (Barcelona), Spain\\
%ICREA, Pg. Llu\'{i}s Companys 23, 08010 Barcelona, Spain\\
% and Universitat Aut\`{o}noma de Barcelona 08193 Bellaterra (Barcelona), Spain
%}
%
%\email{stikhonov@crm.cat}

\author{S.~Yu.~Tikhonov}
\address{S.~Tikhonov,
Centre de Recerca Matem\`atica, Campus de Bellaterra, Edifici C 08193
Bellaterra, Barcelona, Spain; ICREA, Pg. Llu\'is Companys 23, 08010 Barcelona,
Spain, and Universitat Aut\'onoma de Barcelona}
\email{stikhonov@crm.cat}

\date{\today}
\subjclass[2010]{
Primary 41A65, 41A63, 41A50, 41A17, 42B25; Secondary 41A15, 42A45, 41A35, 41A25} \keywords{Measures of smoothness, $K$-functionals,  Best approximation, Jackson and Bernstein inequalities, Littlewood--Paley decomposition, Fourier multipliers}

\thanks{$^*$Corresponding author}

\bigskip\bigskip\bigskip\bigskip

\bigskip
\begin{abstract}
%The main goal of this paper is to present a thoughtful study of interrelations of smoothness ?? properties ?? of functions on various domains and smoothness properties of approximation processes. We give three general approaches to this problem based on geometric properties of $L_p$ spaces and Littlewood-Paley techniques ??? and H\"ormander type multiplier theorems ???. In particular, we derived several sharp inequalities for various measures of smoothness of a given function $f$ and the corresponding methods approximating the function $f$. As an example, we consider such approximation process as polynomials and splines of the best approximation (periodic and non-periodic case), various Fourier means on $\T^d$, $\R^d$, and $[-1,1]$,  nonlinear wavelet approximation, etc.

We provide a comprehensive study of interrelations between different  measures of smoothness of
 functions on various domains and smoothness
properties of approximation processes.
Two general approaches to this problem have been developed: the first
based on geometric properties of Banach spaces and the second on  Littlewood-Paley
and H\"{o}rmander type multiplier theorems.
In particular, we obtain new sharp
inequalities for  measures of smoothness given by the $K$-functionals or moduli of smoothness.
 As examples of approximation processes we consider
best polynomial and spline approximations,
 Fourier multiplier operators on $\T^d$, $\R^d$, $[-1, 1]$, nonlinear
wavelet approximation, etc.

 \end{abstract}

\maketitle

%\newpage

\bigskip

\bigskip
\tableofcontents%{subsection}{section}[section]

\vskip 0.5cm

%\section{General case}
%
%Let $X$ be some abstract functional metric space with the metric $\Vert \cdot \Vert$. That is, we have that the following properties hold:
%
%1) $\Vert f\Vert_X=0$ if and only if $f=0$;
%
%2)
\newpage

\section{Introduction}

The fundamental problem in approximation theory is to find for a complicated function $f$ in a quasinormed space $X$ a close-by, simple approximant $P_n$
from a subset of $X$ such that the error of approximation  $\|f-P_n\|_X$ can be controlled by a specific majorant.
 %\lesssim \omega(1/n)$.
In many cases, this problem is solved completely and necessary and sufficient conditions are given in terms of smoothness properties  of either the function $f$ or approximants $P_n$ of $f$.
 %in the sense that

We illustrate this by considering the well-known case of approximation of periodic functions by trigonometric polynomials on $\T=[0,2\pi]$.
If
$f\in L_p(\T)$, $1\le p\le \infty$, and  $0<\alpha<r$,
for %the best approximation and
  the best approximant $T^*_n$ and the modulus of smoothness $\omega_r(f, t)_p$,
 % and  the $K$-functionals for a couple  $(L_p,W_p^r)$
 the following conditions are equivalent:
\begin{eqnarray*}
&& (i_1)\qquad\quad \|f-T^*_n \|_p= \mathcal{O}(n^{-\alpha}),\qquad\qquad\\
&&(i_2) \qquad\quad\omega_r(f, t)_p= \mathcal{O}(t^\alpha),\qquad\qquad\\
%&&\|f^{(k)}-(T^*_n)^{(k)} \|_p= O(n^{k-r-\alpha}),\\
&&(i_3) \qquad\quad \|(T^*_n)^{(r)} \|_p= \mathcal{O}(n^{r-\alpha}).%\qquad\qquad\\
%&&(i_4) \qquad \quad K(f,t; L_p,W_p^r) = O(t^\a).\qquad\qquad
\end{eqnarray*}
%where $K(f,t; L_p,W_p^r)$ is the $K$-functionals for a couple  $(L_p,W_p^r)$.
See~\cite{St}, \cite{bu0}, and~\cite[Ch.~7]{DeLo}; for functions on $\T^d$ see \cite{Jo}.
Let us also mention earlier results by
Salem and Zygmund~\cite{SZ},  Zamansky~\cite{Za},
and  Civin~\cite{Ci}.
Similar results in the case of approximation by algebraic polynomials of functions on $[-1,1]$ can be found in~\cite[Ch.~8]{book} and~\cite{bu1}.

%Equivalences of types $(i_1)\Leftrightarrow (i_2)$ and $(i_1)\Leftrightarrow (i_4)$

Equivalence $(i_1) \Leftrightarrow (i_2)$ easily follows from the classical Jackson and Bernstein approximation theorems, see, e.g.,~\cite[Ch.~7]{DeLo}, given by
\begin{equation*}\label{-JacksonSO}
E_n(f)_p\lesssim \w_r\(f,1/n\)_p\lesssim \frac1{n^r}\sum_{k=0}^n (k+1)^{r-1}E_k(f)_p, \quad 1\le p\le \infty,
\end{equation*}
or their sharper versions for $1<p<\infty$, see, e.g.,~\cite{DDT},
\begin{equation*}\label{DDT}
  \frac1{n^r}\bigg(\sum_{k=0}^n (k+1)^{r \tau-1}E_k(f)_p^{\tau}\bigg)^{\frac1\tau}
\lesssim \w_r\(f,1/n\)_p\lesssim \frac1{n^r}\bigg(\sum_{k=0}^n (k+1)^{r \t-1}E_k(f)_p^{\t}\bigg)^{\frac1\t},
\end{equation*}
where $E_n(f)_p$ is the error of the best approximation,
$\tau=\max(p,2)$ and $\t=\min(p,2)$.

The equivalence $(i_2) \Leftrightarrow (i_3)$ follows from  the inequalities
\begin{equation}\label{int7--}
n^{-r}  \Vert (T_n^*)^{(r)}\Vert_p\lesssim  \w_r(f, 1/n)_p\lesssim \sum_{k=n}^\infty k^{-r-1} \| (T_{k}^*)^{(r)}\|_{p}, \quad
1\le p\le \infty.
\end{equation}
The left-hand side estimate is a corollary of
the well-known Nikolskii-Stechkin inequality $
  \Vert T_n^{(r)}\Vert_p\lesssim n^r \w_r(T_n, 1/n)_p$.
 % which in particular implies that
 The right-hand side estimate was proved in  \cite{Zh}.
%\begin{equation}\label{int8}
%  \omega_r(f,1/n)_p\lesssim \sum_{k=n}^\infty k^{-r-1} \| (T_{k}^*)^{(r)}\|_{p},\quad
%1\le p\le \infty.
%\end{equation}

%In  the abstract setting

%equivalence $(1.1) \Leftrightarrow (1.2)$ easily follows from the classical
%Different
Jackson and Bernstein approximation theorems
as well as the corresponding
equivalence $(i_1) \Leftrightarrow (i_2)$
are known to be true in various settings.
Surprisingly enough the results involving the smoothness of  approximation processes given in the strong form, i.e., similar to  inequalities \eqref{int7--},
 %(in particular, see \eqref{int7--})
or, even in the weak form, i.e., similar to %those given by
 equivalence $(i_2) \Leftrightarrow (i_3)$,
are much less known in the literature. It is clear that such results provide additional information on smoothness properties of approximants and, therefore, they are useful for applications.  As an example, we mention that the smooth function spaces (Lipschitz, Sobolev, Besov) can be characterize in terms of smoothness of approximation processes.

The main goal of this paper is to present a thoughtful study of interrelations between
 smoothness properties of functions on various domains and smoothness properties of approximation processes.
 In particular, we extend inequalities \eqref{int7--} as follows: for $f \in L_p(\T), 1 < p < \infty,$
% The purpose of our work is to obtain to investigate inequalities of type~\eqref{int7} and~\eqref{int8} in different settings (various approximation process, measures of smoothness and function spaces). In particular, we obtain analogues of~\eqref{DDT} for different approximation methods including best approximants and Fourier means. For example, we will show (see Corollary~\ref{corthAB}) that for $f \in L_p(\T), 1 < p < \infty,$ the following optimal inequalities hold
\begin{equation*}\label{-optimal-p--}
 \( \sum\limits_{k =n+1}^{\infty} 2^{- k r \tau}
\Vert (T_{2^k}^*)^{(r)}\Vert_p^{\tau}
\)^{\frac{1}{\tau}} \lesssim \omega_{r} \( f,
{2^{-n}} \)_p \lesssim \( \sum\limits_{k =n+1}^{\infty}
2^{- k r \theta} \Vert (T_{2^k}^*)^{(r)}\Vert_p^{\theta}
\)^{\frac{1}{\theta}},
\end{equation*}
where $T_{2^k}^*$ stands for the %instead of
 best approximants, partial sums of the Fourier series, de la Vall\'ee Poussin means, Fej\'er means, etc.% (see Corollary~\ref{o-sn1}).

In the general form, our main results state that for
$f \in X$
% can be written in the following form:
\begin{equation}\label{-optimal-}
    \(\sum_{k=n+1}^\infty 2^{-k\a\tau}\Vert P_{2^k}(f)\Vert_{Y}^\tau\)^\frac1\tau
    \lesssim \Omega(f,2^{-n\a},X,Y)
    \lesssim
    \(\sum_{k=n+1}^\infty 2^{-k\a\theta}\Vert P_{2^k}(f)\Vert_{Y}^\theta\)^\frac1\theta,
 \end{equation}
 where the parameters $\tau$ and  $\theta$ are
 related to geometry of the space $X$, and, in particular, for $X=L_p$,  $0< p\le\infty$, are given by
  $$\tau=\left\{
                                            \begin{array}{ll}
                                              \max(p,2), & \hbox{$1<p<\infty$,} \\
                                              \infty, & \hbox{otherwise}
                                            \end{array}
                                          \right.,
\qquad
\theta=\left\{
                                            \begin{array}{ll}
                                              \min(p,2), & \hbox{$p<\infty$,} \\
                                              1, & \hbox{$p=\infty$}
                                            \end{array}
                                          \right..
 $$
Here $Y$ is a smooth function space (Sobolev or Besov spaces),
$P_n(f)$ is a suitable (linear or non-linear) approximation method, and $\Omega(f,2^{-n\a},L_p,Y)$ is some measure of smoothness related to the spaces $L_p$ and $Y$.
It is worth mentioning that the classical modulus of smoothness is equivalent to the $K$-functional
  for a couple  $(L_p,W_p^r)$, namely, $K(f,t; L_p(\T), W_p^r(\T))_p\asymp \w_r(f,t)$, see, e.g.,~\cite[p.~177]{DeLo}.
Therefore, as a measure of smoothness %in inequalities  similar to (\ref{-optimal-p--}) % and inequalities above
 it is natural to consider
 the $K$-functional $K(f,2^{-n\a},L_p,Y)$
  in the case $1\le p\le \infty$ and either
  %, it is the $K$-functional $K(f,2^{-n\a},L_p,Y)$ but in the case $0<p<1$, it is
   an appropriate modulus of smoothness or a realization of the $K$-functional for any $0< p\le \infty$.

%Even though the case of approximation periodic function is well studied, similar results in different settings are much less known.

% On the other hand, equivalent results involving the smoothness of the best approximant, for example given by $(i_3)$,
%are much less known in the literature.

%\bigskip

The rest of the  paper is organized as follows.
 %In this paper, depending of a nature of the approximant $P_n(f)$, we present several approaches for proving~\eqref{-optimal}.
In Section~\ref{sec3}, we consider general (Banach) spaces and investigate smoothness properties of the best approximants. Using  geometric properties of $X$, %(more complicated than those given in Section ~\ref{sec2}),
we obtain sharp inequalities  (\ref{-optimal-}) for appropriate $\t$ and~$\tau$.  %??? for $X=L_p$ with $1<p<\infty$ ???. % with parameter.
In more detail, if the space $X$ is $\t$-uniformly smooth and $\tau$-uniformly convex, then  (\ref{-optimal-}) holds.

% The results of this section are also obtained in the general settings but we restrict ourselves to normed function spaces $X$ and $Y$, $P_n(f)$ is the best approximants, and $K(f,2^{-n\a},X,Y)$ is an appropriate $K$-functional. Our approach is based on certain ???  nice (geometric) inequalities ??? for best approximants which usually holds in normed spaces.

Section~\ref{sec4} studies the smoothness properties of Fourier means of functions from $L_{p,w}(\mathcal{D})$. Our approach is based on Littlewood-Paley-type inequalities and H\"ormander's type multiplier theorems. In particular, inequalities~\eqref{-optimal-} are obtained for a wide class of Fourier multiplier operators, which includes partial sums of Fourier series, de la Val\'ee Poussin means, Fej\'er means, Riesz means, etc. Sharpness of the parameters in~\eqref{-optimal-} will be discussed in Section~\ref{sec9}.

   In Section~\ref{sec2}, we deal with general approximation processes $\{P_{2^n}(f)\}$ and abstract measures of smoothness $\Omega(f,t)_X$ in the metric space $X$. In particular, we treat the case of $X=L_p$ for $0<p<1$.
We prove that
$$
    \left\|\left\{\Omega(P_{2^k}(f),2^{-k})_X
    \right\}_{k\ge n}\right\|_{\ell_\infty}
    \lesssim \Omega(f,2^{-n})_X\lesssim
    \left\|\left\{\Omega(P_{2^k}(f),2^{-k})_X
    \right\}_{k\ge n}\right\|_{\ell_{\l}},
 %   \(\sum_{k=n+1}^\infty \Omega(P_{2^k}(f),2^{-k})_X^\l\)^\frac1\l,
$$
where $\l$ is a parameter related to the geometry of $X$. Let us emphasize that this result holds under very mild conditions on the approximants $P_{2^n}(f)$. Moreover, these inequalities easily imply  the results similar to those given in the equivalence $(i_2) \Leftrightarrow (i_3)$.
 %satisfy  very mild condition  %    The results presented in this section are slightly different from~\eqref{-optimal} but they are also sharp in a certain sense and give an idea of ??????

In Sections~\ref{sec5}--\ref{sec8}, we illustrate our main results obtained in Sections~\ref{sec2}--\ref{sec4} by several important examples. %important for applications.
  In particular, in Section~\ref{sec5}, we investigate  %periodic functions on $\T^d$ and
   %establish new
   relationship between smoothness of periodic functions on $\T^d$  and smoothness of %trigonometric polynomials of
    the best trigonometric approximants, various Fourier means, and smoothness of interpolation operators. Moreover, we consider  approximations
    in Hardy spaces $H_p(D)$, $0<p\le 1$, and smooth  (Lipschitz, Sobolev) spaces. % are also considered in this section.
      Section~\ref{sec6} is devoted to approximation processes on $\R^d$. In this case, we study smoothness properties of band-limited functions that approximate functions from $L_{p}(\R^d)$.

      In Section~\ref{sec7}, we deal with functions on $L_{p,w}[-1,1]$, where $w$ is the Jacobi weight. % approximation processes that leave on a finite interval.
       In particular, we study smoothness properties of algebraic polynomials and splines of the best approximation and consider some Fourier means related to Fourier--Jacobi series.

       In Section~\ref{sec8}, we show that the results of Sections~\ref{sec2} and~\ref{sec3} can be applied to study smoothness properties of  non-linear approximation processes. As examples, we treat non-linear wavelet approximation and splines with free knots.

Finally, in Section~\ref{sec9},  we study the optimality of inequalities~\eqref{-optimal-},
 %in the one-dimensional periodic case and in the case of approximation of a function by the de la Vall\'ee Poussin means. In particular,
  %In more detail, we
  showing that the parameters $\tau$ and $\t$ cannot be improved in general.  Moreover, we define function classes such that
  %d investigate for with classes of functions
   the right-hand side and the left-hand side sums in~\eqref{-optimal-} (with appropriate values of $\tau$ and $\t$) are equivalent to the corresponding modulus of smoothness.

Throughout the paper, we use the notation
$\, F \lesssim G,$
with $F,G\ge 0$, for the
estimate
$\, F \le C\, G,$ where $\, C$ is a positive constant independent of
the essential variables in $\, F$ and $\, G$ (usually, $f$, $\d$, and $n$). %\textbf{??? The same is for $ F \gtrsim G$. ???}
 If $\, F \lesssim G$
and $\, G \lesssim F$ simultaneously, we write $\, F \asymp G$ and say that $\, F$
is equivalent to $\, G$.

\bigskip

\bigskip
{\bf{Acknowledgements.}}
The first author was partially supported by DFG project KO 5804/1-1.
The second author was partially supported by MTM 2017-87409-P, 2017 SGR 358, and by the CERCA Programme of the Generalitat de Catalunya.
%Part of the work was done during the of the authors to the Isaac Newton Institute for Mathematical Sciences, EPSCR Grant no EP/K032208/1.
The authors would like to thank the Isaac Newton Institute for Mathematical Sciences, Cambridge, for support and hospitality during the programme "Approximation, sampling and compression in data science" where part of the work on this paper was undertaken. This work was supported by EPSRC grant no EP/K032208/1.

%\newpage

\bigskip

\bigskip
\newpage

\section{$K$-functionals and smoothness of best approximants}\label{sec3}% Abstract $L_p$ spaces. Inequalities for best approximants}
%\section{Inequalities involving best approximants}

%\bigskip

Let $(X,Y)$ be a couple of normed function spaces with (semi-)norms $\Vert\cdot\Vert_X$ and $\Vert \cdot\Vert_Y$ respectively and $Y\subset X$. The Peetre $K$-functional for this couple is given by
\begin{equation}\label{K1.1}
  K(f,t;X,Y)=\inf\{\Vert f-g\Vert_X+t\Vert g\Vert_Y\,:\, g\in Y\}
\end{equation}
for any $f\in X$ and $t>0$.

Let $\{G_n\}_{n=1}^\infty$ be a family of subsets of $Y$ such that:

\medskip

\noindent i) $0\in G_1$,

\noindent ii) $G_n\subset G_{n+1}$,

\noindent iii) $G_n=-G_n$,

\noindent iv) the closure of $\{G_n\}_{n=1}^\infty$ in $X$ is $X$.

\medskip

The best approximation of $f\in X$ by elements from $G_n$ is given by
$$
E_n(f)_X=\inf \{\Vert f-g\Vert_X\,:\, g\in G_n\}.
$$

Moreover, we suppose that the family $\{G_n\}$ is such that Jackson and Bernstein type inequalities are valid.  Namely, there are positive constants $c_1$, $c_2$, and $\alpha$ such that for any $n\in \N$ we have
\begin{equation}\label{K1.2}
  E_n(f)_X\le c_1 n^{-\a}\Vert f\Vert_Y,\quad f\in Y,
\end{equation}
\begin{equation}\label{K1.3}
  \Vert g_1-g_2\Vert_Y\le c_2 n^\a\Vert g_1-g_2\Vert_X,\quad g_1,g_2\in G_n.
\end{equation}

The latter condition implies that, for every $g\in G_n$,
\begin{equation}\label{Kd}
  \Vert g\Vert_Y\le c_2 n^\a\Vert g\Vert_X,\quad g\in G_n.
\end{equation}
Clearly, if $G_n$ is a linear space, then (\ref{K1.3}) and (\ref{Kd}) are equivalent.

It is also plain to see that the Jackson-type inequality (\ref{K1.2}) implies the direct approximation theorem given by
\begin{equation}\label{eqJK}
  E_n(f)_X\lesssim K(f,n^{-\a};X,Y),   \quad f\in X,\quad n\in \N.
\end{equation}

Our main goal in this section is to obtain inequalities for $K(f,t;X,Y)$ in terms of the best approximation of $f$ by elements from $G_n$.

In what follows, we denote by $P_n(f)$ an element of the best approximation of $f\in X$ by functions from $G_n$ (assuming it exists), i.e.,
$$
E_n(f)_X=\Vert f-P_n(f)\Vert_X\le \Vert f-g\Vert_X\quad \text{for any} \quad g\in G_n.
$$
An element of the near best approximation of $f\in X$ by functions from $G_n$ is denoted by $Q_n(f)$, i.e., there exists a constant $c>0$  independent of $f$ and $n$ such that
$$
\Vert f-Q_n(f)\Vert_X\le cE_n(f)_X.
$$

One of our main tools is the realization of $K$-functional given by
\begin{equation}\label{R1.1}
  R(f,n^{-\a};X,G_n)=\inf\{\Vert f-g\Vert_X+n^{-\alpha}\Vert g\Vert_Y\,:\, g\in G_n\}.
\end{equation}
Clearly, %for any $f\in X$ and $n\in \N$,
$$
K(f,n^{-\a};X,Y) \le R(f,n^{-\a};X,G_n),\quad f\in X,\quad n\in \N,
$$
but for applications it is important to know when
$$
K(f,n^{-\a};X,Y) \asymp R(f,n^{-\a};X,G_n).
$$
The next proposition describes such cases.
%To the best of our knowledge this result was not mentioned early, but  it easily follows from~\cite[Theorem 2.2]{HI} and~\cite[Theorem 3.2]{CDH}.

%??? In order to prove the realization result, we need the following lemma.
%\begin{proposition}\label{l-real}
%Suppose that the Jackson and Bernstein inequalities hold true, that is, (\ref{K1.2}) and (\ref{K1.3}) are valid.
%Then the following conditions are equivalent:
%
%1) for every $f\in X$ and $n\in \N$
%\begin{equation}\label{r1--}
%R(f,n^{-\a};X,G_n)\lesssim K(f,n^{-\a};X,Y);
%\end{equation}
%
%2) for every $f\in X$  and $n\in \N$
%\begin{equation}\label{r1++}
%\Vert P_n(f)\Vert_Y\lesssim n^\a K(f,n^{-\a};X,Y);
%\end{equation}
%
%
%3) for every $g\in G_n$  and $n\in \N$
%\begin{equation}\label{r1}
%\Vert g\Vert_Y\lesssim n^\a K(g,n^{-\a};X,Y),
%\end{equation}
%
%where the constants in $\lesssim$ depend only on $c_1$ and $c_2$.
%%\begin{equation}\label{r1}
%%------4 conditions+++++\Vert g\Vert_Y\le c n^\a K(g,n^{-\a};X,Y),
%%\end{equation}
%\end{proposition}

\begin{proposition}\label{l-real}
Let inequalities~\eqref{K1.2} and \eqref{K1.3} hold. %and let $Q_n(f)$, $n\in \N$, be a near best approximation of $f\in X$ by functions from $G_n$.
Then the following conditions are equivalent:

\begin{enumerate}
  \item[(i)] for every $f\in X$ and $n\in \N$,
\begin{equation}\label{r1--}
R(f,n^{-\a};X,G_n)\lesssim K(f,n^{-\a};X,Y),
\end{equation}

  \item[(ii)] for every $f\in X$ and $n\in \N$,
\begin{equation*}\label{r1--+}
\Vert f-Q_n(f)\Vert_X+n^{-\a}\Vert Q_n(f)\Vert_Y\lesssim K(f,n^{-\a};X,Y).
\end{equation*}

\end{enumerate}
\end{proposition}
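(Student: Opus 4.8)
\textbf{Proof proposal for Proposition~\ref{l-real}.}
The plan is to prove the two implications separately, with the direction $(ii)\Rightarrow(i)$ being essentially immediate and the direction $(i)\Rightarrow(ii)$ requiring the Jackson and Bernstein inequalities~\eqref{K1.2}--\eqref{K1.3} together with a comparison of near-best approximants at consecutive scales. For $(ii)\Rightarrow(i)$: given $f\in X$ and $n\in\N$, the element $Q_n(f)\in G_n$ is admissible in the infimum defining $R(f,n^{-\a};X,G_n)$, so $R(f,n^{-\a};X,G_n)\le \Vert f-Q_n(f)\Vert_X+n^{-\a}\Vert Q_n(f)\Vert_Y$, and the right-hand side is $\lesssim K(f,n^{-\a};X,Y)$ by hypothesis. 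This gives~\eqref{r1--}.

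For the harder direction $(i)\Rightarrow(ii)$, the idea is to start from a near-best approximant $Q_n(f)$, which satisfies $\Vert f-Q_n(f)\Vert_X\le cE_n(f)_X\lesssim K(f,n^{-\a};X,Y)$ by~\eqref{eqJK}, so only the term $n^{-\a}\Vert Q_n(f)\Vert_Y$ needs to be controlled. Since $Q_n(f)\in G_n$ need not itself have small $Y$-norm, I would instead bound $\Vert Q_n(f)\Vert_Y$ by comparing it to the realization-optimal element: let $g_n\in G_n$ nearly attain $R(f,n^{-\a};X,G_n)$, i.e. $\Vert f-g_n\Vert_X+n^{-\a}\Vert g_n\Vert_Y \le 2R(f,n^{-\a};X,G_n)$. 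Then $Q_n(f)-g_n\in G_n$ (using iii), so the Bernstein inequality~\eqref{Kd}/\eqref{K1.3} gives
\begin{equation*}
\Vert Q_n(f)-g_n\Vert_Y\le c_2 n^{\a}\Vert Q_n(f)-g_n\Vert_X\le c_2 n^{\a}\big(\Vert f-Q_n(f)\Vert_X+\Vert f-g_n\Vert_X\big).
\end{equation*}
Hence, using the triangle inequality in $Y$,
\begin{equation*}
n^{-\a}\Vert Q_n(f)\Vert_Y\le n^{-\a}\Vert g_n\Vert_Y + c_2\big(\Vert f-Q_n(f)\Vert_X+\Vert f-g_n\Vert_X\big)\lesssim R(f,n^{-\a};X,G_n)+E_n(f)_X,
\end{equation*}
since $\Vert f-g_n\Vert_X\le 2R(f,n^{-\a};X,G_n)$ and $\Vert f-Q_n(f)\Vert_X\lesssim E_n(f)_X\le R(f,n^{-\a};X,G_n)$. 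Combining with the already-noted bound on $\Vert f-Q_n(f)\Vert_X$ and invoking the assumed inequality~\eqref{r1--} of~(i) to replace $R(f,n^{-\a};X,G_n)$ by $K(f,n^{-\a};X,Y)$ yields
\begin{equation*}
\Vert f-Q_n(f)\Vert_X+n^{-\a}\Vert Q_n(f)\Vert_Y\lesssim K(f,n^{-\a};X,Y),
\end{equation*}
which is~(ii).

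The main obstacle I anticipate is bookkeeping rather than conceptual: one must be careful that the near-best constant $c$ in the definition of $Q_n(f)$, the factor $c_2$ from Bernstein, and the factor $2$ from the near-optimal choice of $g_n$ all combine into a single implied constant independent of $f$ and $n$ — which they do, since~\eqref{eqJK} (a consequence of~\eqref{K1.2}) ensures $E_n(f)_X\lesssim K(f,n^{-\a};X,Y)$ uniformly. One subtlety worth a remark: if the infimum in~\eqref{R1.1} is not attained, one works with a $g_n$ that is within a factor $2$ of it, as above; and if $P_n(f)$ or $Q_n(f)$ fails to exist, the statement should be read as applying to whatever near-best element is available, which is the standing convention in this section. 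No appeal to the multi-scale summation of Section~\ref{sec2} is needed here — the equivalence is purely a single-scale comparison governed by the Bernstein inequality.
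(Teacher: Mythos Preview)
Your argument is correct and is essentially a self-contained version of what the paper merely defers to \cite[Theorem~2.2]{HI} for: the paper's ``proof'' is just the observation that $\Vert f-Q_n(f)\Vert_X\lesssim E_n(f)_X\lesssim K(f,n^{-\a};X,Y)$ via~\eqref{eqJK}, after which the remaining content is attributed to~\cite{HI}. Your direct proof via comparison with a near-optimal $g_n$ for the realization and an application of the Bernstein inequality~\eqref{K1.3} is exactly the standard mechanism behind that cited result, so nothing is lost and nothing new is gained --- you have simply made the argument explicit.

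One small correction: your parenthetical ``$Q_n(f)-g_n\in G_n$ (using~iii)'' is not justified, since property~(iii) only gives $G_n=-G_n$, not closure under addition, and the $G_n$ are not assumed linear. Fortunately you do not need this: inequality~\eqref{K1.3} is stated directly for differences $g_1-g_2$ with $g_1,g_2\in G_n$, so just invoke~\eqref{K1.3} with $g_1=Q_n(f)$, $g_2=g_n$ and drop the unnecessary claim about membership of the difference in $G_n$ (and the reference to~\eqref{Kd}).
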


%\begin{proof}
Even though Proposition~\ref{l-real} in this form was not mentioned in~\cite{HI}, its proof easily follows from~\cite[Theorem 2.2]{HI} taking into account that by~\eqref{eqJK}, for the near best approximation $Q_n(f)$, we have
$$
\Vert f-Q_n(f)\Vert_X\lesssim E_n(f)_X\lesssim K(f,n^{-\a};X,Y)
$$
for any $f\in X$ and $n\in \N$.
%\end{proof}

\begin{remark} \emph{It follows from~\cite[Theorem 2.2]{HI} that under conditions of Proposition~\ref{l-real}, assertions (i) and (ii) are equivalent to the following conditions:}
\begin{enumerate}
\item[(iii)] \emph{for every $f\in X$  and $n\in \N$,}
\begin{equation*}\label{r1++}
\Vert P_n(f)\Vert_Y\lesssim n^\a K(f,n^{-\a};X,Y),
\end{equation*}
%\item[(iv)] \emph{for every $f\in X$  and $n\in \N$,}
%\begin{equation*}\label{r1}
%\Vert Q_n(f)\Vert_Y\lesssim \Vert f\Vert_Y,
%\end{equation*}
\item[(iv)] \emph{for every $g\in G_n$  and $n\in \N$,}
\begin{equation*}\label{r1+}
\Vert g\Vert_Y\lesssim n^\a K(g,n^{-\a};X,Y).
\end{equation*}

\end{enumerate}

\end{remark}

%For convenience, if $\dim M=n$, then an element of a best approximation to $f\in X$ will be denoted by $P_n(f)$.

%For simplicity, we also denote $X=X$, $\Vert \cdot \Vert_X=\Vert \cdot\Vert_Y$, and $Y=Y$, $\Vert \cdot \Vert_{Y}=\Vert \cdot \Vert_Y$.

The next lemma is a crucial result of this section.

\begin{lemma}\label{thAB}
  Let $f\in X$ and inequalities~\eqref{K1.2}, \eqref{K1.3}, and~\eqref{r1--} hold.

{\rm (A)} Suppose that there exist positive constants $A$ and $\tau$ such that %if $m$ is a best approximation in $M$ to an element $x\in X$, then
\begin{equation}\label{A}
  \Vert f-P_n(f)\Vert_X^\tau\le \Vert f-g\Vert_X^\tau-A \Vert g-P_n(f)\Vert_X^\tau,
\end{equation}
for any $g\in G_n$.
  Then, for any $n\in \N$, we have
  \begin{equation}\label{eq1LP}
    \(\sum_{k=n+1}^\infty 2^{-k\a\tau}\Vert P_{2^k}(f)\Vert_{Y}^\tau\)^\frac1\tau\lesssim K(f,2^{-n\a};X,Y).
  \end{equation}
 %where a positive constant $C$ does not depend on $f$ and $n$.

{\rm (B)} Suppose that there exist  positive constants $B$ and $\t$ such that
\begin{equation}\label{B}
  \Vert f-g\Vert_X^\t\le \Vert f-P_n(f)\Vert_X^\t+B \Vert g-P_n(f)\Vert_X^\t
\end{equation}
for all $g\in G_n$.
  Then, for any $n\in \N$, we have
  \begin{equation}\label{eq1+}
 K(f,2^{-n\a};X,Y)\lesssim \(\sum_{k=n+1}^\infty 2^{-k\a\t}\Vert P_{2^k}(f)\Vert_{Y}^\t\)^\frac1\t.
  \end{equation}
 %where a positive constant $C$ does not depend on $f$ and $n$.
\end{lemma}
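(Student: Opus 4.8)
\textbf{Proof plan for Lemma~\ref{thAB}.}

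The plan is to treat parts (A) and (B) by a telescoping argument over the dyadic scale, in the spirit of the proof of Theorem~\ref{le1}, using the realization functional $R(f,n^{-\a};X,G_n)$ as the bridge between the best approximants and the $K$-functional (via~\eqref{r1--}). For part (A), I would first record the key consequence of the uniform-convexity--type hypothesis~\eqref{A}: applying it with $g=P_{2^{k+1}}(f)\in G_{2^{k+1}}\supset G_{2^k}$ — note $g\in G_{2^k}$ is needed, so one should instead apply~\eqref{A} at level $n=2^k$ with $g=P_{2^{k}}(P_{2^{k+1}}(f))$, or more simply use nestedness $G_{2^k}\subset G_{2^{k+1}}$ appropriately — gives
\[
A\,\Vert P_{2^{k+1}}(f)-P_{2^k}(f)\Vert_X^\tau\le E_{2^k}(f)_X^\tau-E_{2^{k+1}}(f)_X^\tau,
\]
where I write $E_m(f)_X=\Vert f-P_m(f)\Vert_X$. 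Since $E_{2^k}(f)_X\to 0$ as $k\to\infty$ (this follows from~\eqref{K1.2} and density, i.e. \eqref{eqJK} together with the assumption $Y$ dense), summing the telescoping right-hand side over $k\ge n$ yields $\sum_{k=n}^\infty \Vert P_{2^{k+1}}(f)-P_{2^k}(f)\Vert_X^\tau\lesssim E_{2^n}(f)_X^\tau\lesssim K(f,2^{-n\a};X,Y)^\tau$, using~\eqref{eqJK}.

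Next I would pass from the $X$-norms of the dyadic differences to the $Y$-norms of the $P_{2^k}(f)$ themselves. The Bernstein inequality~\eqref{K1.3} applied to $g_1-g_2=P_{2^{k+1}}(f)-P_{2^k}(f)\in G_{2^{k+1}}$ gives $\Vert P_{2^{k+1}}(f)-P_{2^k}(f)\Vert_Y\lesssim 2^{k\a}\Vert P_{2^{k+1}}(f)-P_{2^k}(f)\Vert_X$. Then, writing $P_{2^{k}}(f)$ as a telescoping sum of these differences plus $P_{2^{n_0}}(f)$ for a base index, and invoking the inequality $\big(\sum_j a_j\big)^\tau$-type estimate combined with the weight $2^{-k\a\tau}$ and a Hardy-type summation (of the same flavor as in Corollary~\ref{cor2.4}), one converts $\sum_k 2^{k\a\tau}\Vert P_{2^{k+1}}(f)-P_{2^k}(f)\Vert_X^\tau$ back into $\sum_k 2^{-k\a\tau}\Vert P_{2^k}(f)\Vert_Y^\tau$ up to the controlled term; the base term is handled by~\eqref{Kd} and~\eqref{eqJK}. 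This yields~\eqref{eq1LP}. For part (B), the argument runs in the opposite direction: the uniform-smoothness--type hypothesis~\eqref{B}, used along the dyadic chain, produces $E_{2^n}(f)_X^\t-E_{2^{k}}(f)_X^\t\le B\sum \Vert P_{2^{j+1}}(f)-P_{2^j}(f)\Vert_X^\t$ estimates that let one bound $R(f,2^{-n\a};X,G_{2^n})$ — hence by~\eqref{r1--}'s reverse direction, or rather directly since $K\le R$ trivially — by choosing the competitor $g=P_{2^N}(f)$ for large $N$: one estimates $\Vert f-P_{2^N}(f)\Vert_X+2^{-n\a}\Vert P_{2^N}(f)\Vert_Y$, expands $\Vert P_{2^N}(f)\Vert_Y$ via the telescoping sum and~\eqref{K1.3}, applies~\eqref{B} to control the tail $\Vert f-P_{2^N}(f)\Vert_X^\t$ by $\sum_{k>n}\Vert P_{2^{k+1}}(f)-P_{2^k}(f)\Vert_X^\t$, and lets $N\to\infty$; a Hardy-type summation again reorganizes everything into $\big(\sum_{k=n+1}^\infty 2^{-k\a\t}\Vert P_{2^k}(f)\Vert_Y^\t\big)^{1/\t}$.

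The main obstacle I anticipate is bookkeeping with the exponents $\tau$ and $\t$, which may be $\le 1$ or $\ge 1$: the quasi-triangle / power-triangle inequalities $(a+b)^s\le a^s+b^s$ ($s\le 1$) versus $(a+b)^s\le 2^{s-1}(a^s+b^s)$ ($s\ge 1$) must be applied in the right direction at each step, and one must make sure the telescoping identities~\eqref{A} and~\eqref{B} are invoked at a level $m=2^k$ with a competitor that genuinely lies in $G_{2^k}$ (using $P_{2^k}(P_{2^{k+1}}(f))$ rather than $P_{2^{k+1}}(f)$ when strict membership is required, then comparing $\Vert f-P_{2^k}(P_{2^{k+1}}(f))\Vert_X$ with $\Vert f-P_{2^k}(f)\Vert_X$ via optimality as in~\eqref{eq9}). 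The convergence $E_{2^k}(f)_X\to 0$ and the role of~\eqref{r1--} (only part (A) genuinely needs it, to replace $E_{2^n}(f)_X$ or $R$ by $K$; part (B) uses only $K\le R$) should also be stated carefully. Modulo these routine but delicate manipulations, both inequalities follow from the telescoping-plus-Hardy scheme outlined above.
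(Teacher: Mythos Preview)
Your plan for part (A) is essentially the paper's proof: apply \eqref{A} at level $2^{k+1}$ with the competitor $g=P_{2^{k}}(f)\in G_{2^{k}}\subset G_{2^{k+1}}$ (this is the correct direction, and it yields your displayed formula), telescope to bound $\sum_k\|P_{2^{k+1}}(f)-P_{2^{k}}(f)\|_X^\tau$ by $E_{2^n}(f)_X^\tau$, convert to $Y$-norms via Bernstein~\eqref{K1.3} together with the Hardy-type sum~\eqref{eqHardy}, and absorb the base term $2^{-n\alpha}\|P_{2^n}(f)\|_Y$ using Proposition~\ref{l-real} (this is exactly where \eqref{r1--} is needed; \eqref{Kd} and \eqref{eqJK} alone do not give that bound).

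Your plan for part (B), however, has a genuine gap. Taking $g=P_{2^N}(f)$ and letting $N\to\infty$ does not work: $2^{-n\alpha}\|P_{2^N}(f)\|_Y$ need not remain bounded, and after expanding it by telescoping and Bernstein you get $2^{-n\alpha}\sum_{j>n}2^{j\alpha}\|P_{2^{j+1}}(f)-P_{2^j}(f)\|_X$, for which no upper bound in terms of $\sum_k 2^{-k\alpha\theta}\|P_{2^k}(f)\|_Y^\theta$ is available. Equally, if you instead telescope $E_{2^{n+1}}(f)_X^\theta=\sum_{k\ge n+2}(E_{2^{k-1}}^\theta-E_{2^k}^\theta)$ and bound each difference by $B\|P_{2^{k-1}}(f)-P_{2^k}(f)\|_X^\theta$ as you indicate, you still cannot pass to $2^{-k\alpha\theta}\|P_{2^k}(f)\|_Y^\theta$: neither Bernstein nor \eqref{B} yields an inequality of the form $\|P_{2^{k-1}}(f)-P_{2^k}(f)\|_X\lesssim 2^{-k\alpha}\|P_{2^k}(f)\|_Y$. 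The missing ingredient is the \emph{Jackson} inequality~\eqref{K1.2}, which never appears in your plan for (B). The paper takes the competitor $g=P_{2^{k-1}}(P_{2^k}(f))$ in \eqref{B} at level $2^k$, uses optimality $E_{2^{k-1}}(f)_X\le\|f-P_{2^{k-1}}(P_{2^k}(f))\|_X$, and obtains
\[
E_{2^{k-1}}(f)_X^\theta - E_{2^k}(f)_X^\theta \;\le\; B\,\|P_{2^k}(f)-P_{2^{k-1}}(P_{2^k}(f))\|_X^\theta \;=\; B\,E_{2^{k-1}}(P_{2^k}(f))_X^\theta \;\lesssim\; 2^{-k\alpha\theta}\|P_{2^k}(f)\|_Y^\theta,
\]
the last step being Jackson applied to $P_{2^k}(f)\in Y$. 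You mention the double best approximant $P_{2^{k}}(P_{2^{k+1}}(f))$ in your obstacles paragraph, but only as a membership technicality; in fact it is the substantive step that makes (B) work, and it must be paired with Jackson, not Bernstein.
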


\begin{proof} (A)
  Using the representation
  $$
   P_{2^{k}}(f)=\sum_{l=n+1}^k \(P_{2^l}(f)-P_{2^{l-1}}(f)\)+P_{2^n}(f),
  $$
we derive
\begin{equation}\label{eq2LP-}
  \begin{split}
     &\sum_{k=n+1}^\infty 2^{-k\a\tau}\Vert P_{2^k}(f)\Vert_{Y}^\tau\\
     &\lesssim \sum_{k=n+1}^\infty 2^{-k\a\tau}\Vert \sum_{l=n+1}^k P_{2^l}(f)-P_{2^{l-1}}(f)\Vert_{Y}^\tau+2^{-n\a\tau}\Vert P_{2^n}(f)\Vert_{Y}^\tau\\
     &\lesssim \sum_{k=n+1}^\infty 2^{-k\a\tau}\( \sum_{l=n+1}^k \Vert P_{2^l}(f)-P_{2^{l-1}}(f)\Vert_{Y}\)^\tau+2^{-n\a\tau}\Vert P_{2^n}(f)\Vert_{Y}^\tau\\
     &=:L+2^{-n\a\tau}\Vert P_{2^n}(f)\Vert_{Y}^\tau.
  \end{split}
\end{equation}
Next, by Hardy's inequality
\begin{equation}\label{eqHardy}
 \sum_{k=n}^\infty 2^{-k\a} \(\sum_{s=n}^k A_s\)^q \asymp \sum_{k=n}^\infty 2^{-\a k}A_k^q,\quad A_k\ge 0,\quad q>0,
\end{equation}
and  Bernstein's inequality~\eqref{K1.3}, we obtain
\begin{equation}\label{eq2LP}
  \begin{split}
    L\lesssim \sum_{k=n+1}^\infty 2^{-k\a\tau} \Vert P_{2^k}(f)-P_{2^{k-1}}(f)\Vert_{Y}^\tau \lesssim \sum_{k=n+1}^\infty \Vert P_{2^{k}}(f)-P_{2^{k-1}}(f)\Vert_{X}^\tau.
  \end{split}
\end{equation}

Using~\eqref{A} with  $g=P_{2^{k-1}}(f)$ and $n=2^k$, we derive
\begin{equation}\label{eq3LP}
  \Vert P_{2^{k}}(f)-P_{2^{k-1}}(f)\Vert_{X}^\tau\le \frac1{A}\(\Vert f-P_{2^{k-1}}(f)\Vert_X^\tau-\Vert f-P_{2^k}(f)\Vert_X^\tau\).
\end{equation}
Thus, combining~\eqref{eq2LP} and~\eqref{eq3LP} and taking into account that $E_{2^k}(f)_X=\Vert f-P_{2^k}(f)\Vert_X\to 0$ as $k\to\infty$, we have
\begin{equation}\label{eq4LP}
  L\lesssim \Vert f-P_{2^n}(f)\Vert_X^\tau.
\end{equation}

Finally, combining~\eqref{eq2LP-} and~\eqref{eq4LP} and using~Proposition~\ref{l-real}, we obtain \eqref{eq1LP}.

\medskip

(B) By the definition of the $K$-functional, we have
\begin{equation*}\label{eq5-}
  K(f,2^{-n\a};X,Y)\le \Vert f-P_{2^{n+1}}(f)\Vert_X+2^{-n\a}\Vert P_{2^{n+1}}(f)\Vert_{Y}.
\end{equation*}

Thus, to prove~\eqref{eq1+} it is enough to  show that
\begin{equation}\label{eq5LP}
  \Vert f-P_{2^{n+1}}(f)\Vert_X^\t\lesssim \sum_{k=n+1}^\infty 2^{-k\a\t}\Vert P_{2^k}(f)\Vert_{Y}^\t.
\end{equation}
Since $E_{2^k}(f)_X\to 0$ as $k\to \infty$,  we derive
\begin{equation}\label{eq6LP}
  \Vert f-P_{2^{n+1}}(f)\Vert_X^\t=\sum_{k=n+2}^\infty \(\Vert f-P_{2^{k-1}}(f)\Vert_X^\t-\Vert f-P_{2^k}(f)\Vert_X^\t\).
\end{equation}
Next, by the definition of the best approximation,
$$
\Vert f-P_{2^{k-1}}(f)\Vert_X\le \Vert f-P_{2^{k-1}}(P_{2^k}(f))\Vert_X.
$$
Then, inequality~\eqref{B} with $n=2^k$ and $g=P_{2^{k-1}}(P_{2^k}(f))$ and the Jackson inequality~\eqref{K1.2} imply
\begin{equation}\label{eq7+}
  \begin{split}
     \Vert f-P_{2^{k-1}}(f)\Vert_X^\t-&\Vert f-P_{2^k}(f)\Vert_X^\t\\
     &\le \Vert f-P_{2^{k-1}}(P_{2^k}(f))\Vert_X^\t-\Vert f-P_{2^k}(f)\Vert_X^\t\\
     &\le B \Vert P_{2^k}(f)-P_{2^{k-1}}(P_{2^k}(f))\Vert_X^\t\\
     &\lesssim 2^{-(k-1)\a\t}\Vert P_{2^k}(f)\Vert_{Y}^\t.
   \end{split}
\end{equation}
Thus, \eqref{eq6LP} and~\eqref{eq7+} yield~\eqref{eq5LP}, completing  the proof.
\end{proof}

\begin{remark}
{\rm (i)} \emph{It follows from the proof of Lemma~\ref{thAB} that conditions~\eqref{A} and~\eqref{B} can be replaced by the following weaker conditions
\begin{equation*}\label{A++}
  \Vert f-P_{2n}(f)\Vert_X^\tau\le \Vert f-P_{n}(f)\Vert_X^\tau-A \Vert P_{n}(f)-P_{2n}(f)\Vert_X^\tau
\end{equation*}
and
\begin{equation*}\label{B+}
  \Vert f-P_{n}(P_{2n}(f))\Vert_X^\t\le \Vert f-P_{2n}(f)\Vert_X^\t+B \Vert P_{n}(P_{2n}(f))-P_{2n}(f)\Vert_X^\t,
\end{equation*}
%{\bf change!}
respectively.}

{\rm (ii)} \emph{Note that by triangle inequality, estimate~\eqref{B} is always valid with $\t=B=1$.}

{\rm (iii)} \emph{Lemma~\ref{thAB} remains valid without assumption~\eqref{r1--} with the realization $R(f,2^{-n\a};X,Y)$
in place of  the $K$-functional $K(f,2^{-n\a};X,Y)$ in~\eqref{eq1LP} and~\eqref{eq1+}.}
%
%
%
%
%If we replace the $K$-functional $K(f,2^{-n\a};X,Y)$ by the realization $R(f,2^{-n\a};X,Y)$ in inequalities~\eqref{A} and~\eqref{eq1LP}, then Lemma~\ref{thAB} remains valid without assumption~\eqref{r1--}.}
\end{remark}

%?????????????????????/

In what follows, we need some terminology from the theory of Banach spaces (see, e.g.,~\cite[Ch.~IV]{DGZ}).
Let $X$ be a Banach space with the norm $\Vert \cdot \Vert=\Vert \cdot \Vert_X$. The moduli of convexity and smoothness of $X$ are defined respectively by
$$
\d_X(\e)=\inf\left\{1-\bigg\Vert \frac{x+y}{2} \bigg\Vert\,:\, \Vert x\Vert=\Vert y\Vert=1\quad \text{and}\quad \Vert x-y\Vert=\e\right\},\quad 0\le \e\le 2,
$$
and
$$
\rho_X(t)=\sup\left\{\frac12(\Vert x+y\Vert+\Vert x-y\Vert)-1\,:\, \Vert x\Vert=1,\quad \Vert y\Vert=t\right\},\quad t>0.
$$
Let $\tau,\t>1$ be real numbers. Then $X$ is said to be $\tau$-uniformly convex (respectively, $\t$-uniformly smooth) if there exists a constant $c>0$ such that $\d_X(\e)\ge c\e^\tau$ (respectively, $\rho_X(t)\le c t^\t$).
Note that by the Day--Nordlander theorem we always have  $\t\le 2\le \tau$, see, e.g.,~\cite{I} or \cite{N}.

%The  main result of this section is the following theorem.

\begin{theorem}\label{mainmain}
 Let  $G_n$ be convex, $f\in X$, and inequalities~\eqref{K1.2}, \eqref{K1.3}, and~\eqref{r1--} hold.

{\rm (A)} Suppose $X$ is $\tau$-uniformly convex for some $\tau>1$. Then, for any $n\in \N$, we have
  \begin{equation}\label{eq1LPmod}
    \(\sum_{k=n+1}^\infty 2^{-k\a\tau}\Vert P_{2^k}(f)\Vert_{Y}^\tau\)^\frac1\tau\lesssim K(f,2^{-n\a};X,Y).
  \end{equation}

{\rm (B)} Suppose $X$ is $\t$-uniformly smooth for some $\t>1$.  Then, for any $n\in \N$, we have
  \begin{equation}\label{eq1+mod}
 K(f,2^{-n\a};X,Y)\lesssim \(\sum_{k=n+1}^\infty 2^{-k\a\t}\Vert P_{2^k}(f)\Vert_{Y}^\t\)^\frac1\t.
  \end{equation}
\end{theorem}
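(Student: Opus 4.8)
The plan is to deduce Theorem~\ref{mainmain} from Lemma~\ref{thAB} by verifying that $\tau$-uniform convexity yields the inequality~\eqref{A} and $\t$-uniform smoothness yields the inequality~\eqref{B} (with the appropriate exponents), using the convexity of $G_n$. For part (A), the point is a well-known consequence of $\tau$-uniform convexity: if $X$ is $\tau$-uniformly convex, then there is a constant $c>0$ such that for all $u,v\in X$,
\begin{equation*}
 \Bigl\Vert \frac{u+v}{2}\Bigr\Vert_X^\tau + c\Vert u-v\Vert_X^\tau \le \frac12\Vert u\Vert_X^\tau + \frac12\Vert v\Vert_X^\tau .
\end{equation*}
I would apply this with $u=f-P_n(f)$ and $v=f-g$ for an arbitrary $g\in G_n$, so that $\frac{u+v}{2}=f-\frac{P_n(f)+g}{2}$ and $u-v=g-P_n(f)$. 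Since $G_n$ is convex, $\frac{P_n(f)+g}{2}\in G_n$, hence $\bigl\Vert f-\frac{P_n(f)+g}{2}\bigr\Vert_X\ge E_n(f)_X=\Vert f-P_n(f)\Vert_X$. Combining these, and using $\Vert f-P_n(f)\Vert_X\le\Vert f-g\Vert_X$, gives after a short rearrangement exactly an inequality of the form $\Vert f-P_n(f)\Vert_X^\tau \le \Vert f-g\Vert_X^\tau - A\Vert g-P_n(f)\Vert_X^\tau$ with $A=2c$ (one uses $\frac12 a^\tau + \frac12 b^\tau \le \max(a,b)^\tau$ or, more carefully, the elementary estimate $\frac12 a^\tau+\frac12 b^\tau - a^\tau \le b^\tau - a^\tau$ when $b\ge a$, applied to $a=\Vert f-P_n(f)\Vert_X$, $b=\Vert f-g\Vert_X$). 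Thus~\eqref{A} holds and Lemma~\ref{thAB}(A) gives~\eqref{eq1LPmod}.

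For part (B), the relevant consequence of $\t$-uniform smoothness is the companion inequality: if $X$ is $\t$-uniformly smooth then there is $c>0$ such that for all $u,v\in X$,
\begin{equation*}
 \frac12\Vert u+v\Vert_X^\t + \frac12\Vert u-v\Vert_X^\t \le \Vert u\Vert_X^\t + c\Vert v\Vert_X^\t .
\end{equation*}
I would like to use this to produce~\eqref{B}, i.e.\ $\Vert f-g\Vert_X^\t \le \Vert f-P_n(f)\Vert_X^\t + B\Vert g-P_n(f)\Vert_X^\t$ for all $g\in G_n$. Here the natural choice is $u=f-\frac{P_n(f)+g}{2}$ and $v=\frac{g-P_n(f)}{2}$, so that $u+v=f-P_n(f)$ and $u-v=f-g$; then the smoothness inequality bounds $\frac12\Vert f-P_n(f)\Vert_X^\t+\frac12\Vert f-g\Vert_X^\t$ from above by $\Vert f-\frac{P_n(f)+g}{2}\Vert_X^\t + c'\Vert g-P_n(f)\Vert_X^\t$. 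To finish one needs an upper bound on $\Vert f-\frac{P_n(f)+g}{2}\Vert_X$; since $\frac{P_n(f)+g}{2}\in G_n$, this midpoint is again a competitor, but we only control $E_n(f)_X=\Vert f-P_n(f)\Vert_X$ from below by it, not from above. The clean way around this is to invoke Remark following Lemma~\ref{thAB}, part~(ii): estimate~\eqref{B} is \emph{always} valid with $\t=B=1$ by the triangle inequality, so when $\t\le 2$ (which is automatic: no space is $\t$-uniformly smooth for $\t>2$) and more to the point whenever $X$ is $\t$-uniformly smooth with $1<\t\le 2$, one may just apply Lemma~\ref{thAB}(B) with this $\t$ after establishing the nontrivial version. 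Alternatively, and more directly, I would apply the smoothness inequality with $u=f-g$ and $v=g-P_n(f)$, giving $u+v=f-P_n(f)$ and $u-v=f-2g+P_n(f)$; but the term $\Vert f-2g+P_n(f)\Vert_X$ is awkward. So the honest route is: use the midpoint $m=\frac{g+P_n(f)}{2}\in G_n$ and note $2(f-m)=(f-g)+(f-P_n(f))$, $2v:=(f-P_n(f))-(f-g)=g-P_n(f)$; the $\t$-smoothness inequality applied to $u=f-m$ and the half-difference $\tilde v=\frac12(g-P_n(f))$ yields
\begin{equation*}
 \tfrac12\Vert f-P_n(f)\Vert_X^\t + \tfrac12\Vert f-g\Vert_X^\t \le \Vert f-m\Vert_X^\t + c'\Vert g-P_n(f)\Vert_X^\t ,
\end{equation*}
and since $E_n(f)_X\le\Vert f-m\Vert_X$ by optimality, we instead bound $\Vert f-m\Vert_X$ trivially by $\frac12\Vert f-g\Vert_X+\frac12\Vert f-P_n(f)\Vert_X\le\Vert f-g\Vert_X$ when $\Vert f-g\Vert_X\ge\Vert f-P_n(f)\Vert_X$; this last case is the only one that matters because if $\Vert f-g\Vert_X<\Vert f-P_n(f)\Vert_X=E_n(f)_X$ then~\eqref{B} is trivial. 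In the remaining case $\Vert f-m\Vert_X^\t\le\Vert f-g\Vert_X^\t$, and substituting back gives $\frac12\Vert f-g\Vert_X^\t\ge\frac12\Vert f-P_n(f)\Vert_X^\t - c'\Vert g-P_n(f)\Vert_X^\t$, i.e.\ precisely~\eqref{B} with $B=2c'$ (after relabelling). Then Lemma~\ref{thAB}(B) delivers~\eqref{eq1+mod}.

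The main obstacle I anticipate is exactly this bookkeeping in part (B): the midpoint $m\in G_n$ is controlled \emph{from below} by $E_n(f)_X$ but what we want is an upper bound, and reconciling the direction of the inequality requires either the trivial case-split above or a cleaner identity. It is worth double-checking that the standard "parallelogram-type" inequalities for $\tau$-uniformly convex and $\t$-uniformly smooth spaces are available in the exact form stated (these are classical; see, e.g., the references in~\cite{DGZ}, and they also follow from the characterizations via $\Vert\cdot\Vert^\tau$ being uniformly convex / $\Vert\cdot\Vert^\t$ being uniformly smooth). Once those two inequalities are in hand, together with the convexity of $G_n$ and the optimality of $P_n(f)$, the hypotheses~\eqref{A} and~\eqref{B} of Lemma~\ref{thAB} are verified and the theorem follows immediately. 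I would also remark that, by part~(iii) of the Remark after Lemma~\ref{thAB}, if one does not assume~\eqref{r1--} the same argument yields~\eqref{eq1LPmod} and~\eqref{eq1+mod} with the realization $R(f,2^{-n\a};X,G_n)$ in place of $K(f,2^{-n\a};X,Y)$.
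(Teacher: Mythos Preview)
Your argument for part~(A) is correct and is a clean alternative to the paper's approach: you use the midpoint inequality for $\tau$-uniformly convex spaces together with the optimality bound $\Vert f-m\Vert_X\ge E_n(f)_X$ (since $m=\tfrac12(P_n(f)+g)\in G_n$ by convexity), and this indeed yields~\eqref{A} with $A=2c$. The paper instead passes to the Gateaux derivative of $\Vert\cdot\Vert^\tau$ and invokes the Kolmogorov criterion for best approximation; both routes are valid and essentially equivalent here.

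Part~(B), however, contains a genuine error. First, your case split is vacuous: since $g\in G_n$, one always has $\Vert f-g\Vert_X\ge E_n(f)_X=\Vert f-P_n(f)\Vert_X$, so the ``trivial case'' never occurs. More importantly, after substituting $\Vert f-m\Vert_X^\t\le\Vert f-g\Vert_X^\t$ into your smoothness inequality you obtain
\[
\tfrac12\Vert f-P_n(f)\Vert_X^\t \ \le\ \tfrac12\Vert f-g\Vert_X^\t + c'\Vert g-P_n(f)\Vert_X^\t,
\]
which rearranges to $\Vert f-P_n(f)\Vert_X^\t\le\Vert f-g\Vert_X^\t+2c'\Vert g-P_n(f)\Vert_X^\t$ --- the \emph{reverse} of~\eqref{B}, and a triviality. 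The midpoint trick cannot work in this direction: optimality of $P_n(f)$ gives only a \emph{lower} bound on $\Vert f-m\Vert_X$, whereas to extract~\eqref{B} you would need an upper bound on $\Vert f-m\Vert_X$ in terms of $\Vert f-P_n(f)\Vert_X$ (not $\Vert f-g\Vert_X$). Falling back on Remark~(ii) after Lemma~\ref{thAB} only yields the conclusion with $\t=1$, not the claimed $\t>1$.

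The paper circumvents this by differentiating. From Xu's characterization of $\t$-uniform smoothness one gets
\[
g_\t\bigl(f-P_n(f),\,P_n(f)-g\bigr)\ \ge\ \Vert f-g\Vert_X^\t-\Vert f-P_n(f)\Vert_X^\t-d\,\Vert P_n(f)-g\Vert_X^\t,
\]
and then the Kolmogorov criterion for best approximation (see~\cite[p.~90]{Si}) is used to control the left-hand side: in a smooth space the directional derivative $h\mapsto g_\t(f-P_n(f),h)$ is linear, and the criterion forces $g_\t(f-P_n(f),P_n(f)-g)=0$ (certainly when $G_n$ is a linear subspace, since then $\pm(P_n(f)-g)\in G_n$). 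This first-order optimality information is precisely what your midpoint computation does not capture, and it is what closes the inequality in the correct direction to give~\eqref{B}.
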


\begin{proof}
(A) Since $X$ is $\tau$-uniformly convex, then  there exists a constant $c>0$ such that for all $x,y\in X$ and $t\in [0,1]$
\begin{equation}\label{3.1Xu}
  \Vert t x+(1-t)y\Vert^\tau\le t \Vert x\Vert^\tau\ +(1-t)\Vert y\Vert^\tau\-W_\tau(t)c\Vert x-y\Vert^\tau,
\end{equation}
where $\Vert \cdot \Vert=\Vert \cdot\Vert_X$ and $W_\tau(t)=t(1-t)^\tau+t^\tau (1-t)$ (see the proof of Theorem~1 in~\cite{Xu}, see also~\cite{PS} and~\cite{Sm}).
Consider the following Gateaux derivative at $y$ in the direction $x-y$
$$
{\rm g}_\tau(y,x-y)=\lim_{t\to +0}\frac{\Vert y-t(x-y) \Vert^\tau-\Vert y\Vert^\tau}{t}.
$$
Dividing both sides of~\eqref{3.1Xu} by $t\in (0,1)$ and taking limit as $t\to +0$, we get
\begin{equation*}
  {\rm g}_\tau(y,x-y) \le \Vert x\Vert^\tau-\Vert y\Vert^\tau-c\Vert x-y\Vert^\tau.
\end{equation*}

Now let $g\in G_n$. Replacing $x$ by $f-g$ and $y$ by $f-P_n(f)$, we have that
$$
{\rm g}_\tau(f-P_n(f),P_n(f)-g)\le \Vert f-g\Vert^\tau-\Vert f-P_n(f)\Vert^\tau-c\Vert P_n(f)-g\Vert^\tau.
$$
By the Kolmogorov criterion, see, e.g.,~\cite[p.~90]{Si}, we have ${\rm g}_\tau(f-P_n(f),P_n(f)-g)=0$, which implies~\eqref{A}. Thus, using Lemma~\ref{thAB}, we get~\eqref{eq1LPmod}.

%taking into account that $P_n(f)-t(P_n(f)-g)= (1-t)P_n(f)+t g\in G_n$  for $t\in (0,1)$, we see that
%$$
%\(\Vert f-P_n(f) +t(P_n(f)-g))\Vert^\tau -\Vert f-P_n(f)\Vert^\tau\)/t \ge 0.
%$$
%Therefore, ${\rm g}_\tau(f-P_n(f),P_n(f)-g)\ge 0$, which implies that inequality~\eqref{A} is valid. Thus, using Lemma~\ref{thAB}, \eqref{eq1LP}, we get~\eqref{eq1LPmod}.

(B) The proof of~\eqref{eq1+mod} is similar. We only note that by~\cite[Theorem 1$'$]{Xu}, $X$ is $\t$-uniformly smooth if and only if there exists a constant $d>0$ such that
\begin{equation}\label{3.1Xu2}
  \Vert t x+(1-t)y\Vert^\t\ge t \Vert x\Vert^\t\ +(1-t)\Vert y\Vert^\t\-W_\t(t)d\Vert x-y\Vert^\t.
\end{equation}
Then, as above, we derive
$$
{\rm g}_\tau(f-P_n(f),P_n(f)-g)\ge \Vert f-g\Vert^\t-\Vert f-P_n(f)\Vert^\t-c\Vert P_n(f)-g\Vert^\t
$$
and apply the Kolmogorov criterion. Lemma~\ref{thAB} completes the proof.
%, see, e.g.,~\cite[p.~90]{Si}, we have ${\rm g}_\tau(f-P_n(f),P_n(f)-g)=0$, which implies that~\eqref{B} is valid, and, therefore, by Lemma~\ref{thAB}, we get~\eqref{eq1+mod}.
\end{proof}

%We finish this section by discussing optimal parameters $\tau$ and $\t$ in Lemma~\ref{thAB} for the important case $X=L_p$.

Let us give two important examples of Banach space $X$ to illustrate Theorem~\ref{mainmain}, namely,
Lebesgue and Orlicz spaces.

\begin{proposition}\label{remLp} {\rm (See \cite[p.~63]{LT}.)}
Let $X$ be an abstract $L_p$ space with $1<p<\infty$, i.e. let $X$ be a Banach lattice for which
$$
\Vert x+y\Vert^p=\Vert x\Vert^p+\Vert y\Vert^p,
$$
whenever $x,y\in X$ and $\min(x,y)=0$.
%Let $M$ be a closed convex nonempty subset of $X$, $\dim X\ge 2$.
%%An element $m\in M$ is said to be a best approximation in $M$ to an element $x\in X$ if
%%$$
%%\Vert x-m\Vert\le \Vert x-\mu\Vert
%%$$
%%for all $\mu\in M$.
%Then there exist positive constants $A_p$ and $B_p$ such that if $m$ is a best approximation in $M$ to an element $x\in X$, then
%\begin{equation*}\label{ALp}
%  \Vert x-m\Vert^\tau\le \Vert x-\mu\Vert^\tau-A_p \Vert m-\mu\Vert^\tau,\quad \tau=\max(2,p),
%\end{equation*}
%\begin{equation*}\label{BLp}
%  \Vert x-\mu\Vert^\t\le \Vert x-m\Vert^\t+B_p \Vert m-\mu\Vert^\t,\quad \t=\min(2,p),
%\end{equation*}
%for all $\mu\in M$.
Then there exists a constant $c>0$ such that $\d_X(\e)\ge c\e^{\max(2,p)}$ for all $0\le \e\le 2$ and $\rho_X(t)\le c t^{\min(2,p)}$ for all $t>0$.
\end{proposition}

Making use of  Theorem~\ref{mainmain} and Proposition~\ref{remLp}, we obtain the following result.

\begin{theorem}\label{corthAB}
  Let inequalities~\eqref{K1.2}, \eqref{K1.3}, and~\eqref{r1--} be valid for $X=L_p$, $1<p<\infty$, and let $G_n$ be convex.
Then, for any $f\in L_p$ and $n\in \N$, we have
  \begin{equation*}\label{eq1LPc}
    \(\sum_{k=n+1}^\infty 2^{-k\a\tau}\Vert P_{2^k}(f)\Vert_{Y}^\tau\)^\frac1\tau\lesssim K(f,2^{-n\a};L_p,Y),\quad \tau=\max(2,p),
  \end{equation*}
 %where a positive constant $C$ does not depend on $f$ and $n$.
and
  \begin{equation*}\label{eq1+c}
 K(f,2^{-n\a};L_p,Y)\lesssim \(\sum_{k=n+1}^\infty 2^{-k\a\t}\Vert P_{2^k}(f)\Vert_{Y}^\t\)^\frac1\t,\quad  \t=\min(2,p).
  \end{equation*}
 %where a positive constant $C$ does not depend on $f$ and $n$.
\end{theorem}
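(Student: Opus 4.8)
The plan is to obtain Theorem~\ref{corthAB} as an immediate specialization of Theorem~\ref{mainmain} to the case $X=L_p$, using Proposition~\ref{remLp} to verify the uniform convexity/smoothness hypotheses. So the proof is genuinely short and the only thing to say is how the pieces fit together.

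First I would note that $L_p(\mathcal{D})$ (for whatever underlying measure space is in play, $\T^d$, $\R^d$, $[-1,1]$ with a weight, etc.) is an abstract $L_p$ space in the sense of Proposition~\ref{remLp}: it is a Banach lattice under the pointwise order, and for disjointly supported $x,y$ one has $\|x+y\|_p^p = \int |x+y|^p = \int |x|^p + \int |y|^p = \|x\|_p^p + \|y\|_p^p$ since $x$ and $y$ cannot be simultaneously nonzero. Hence, by Proposition~\ref{remLp}, there is $c>0$ with $\delta_{L_p}(\e) \ge c\,\e^{\max(2,p)}$ and $\rho_{L_p}(t) \le c\,t^{\min(2,p)}$ for $0\le\e\le2$, $t>0$. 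In the notation of Section~\ref{sec3} this says precisely that $L_p$ is $\tau$-uniformly convex with $\tau=\max(2,p)>1$ and $\t$-uniformly smooth with $\t=\min(2,p)>1$ (both exponents are $>1$ because $1<p<\infty$).

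Next I would invoke Theorem~\ref{mainmain} directly. Its hypotheses are: $G_n$ convex, $f\in X$, and inequalities~\eqref{K1.2}, \eqref{K1.3}, \eqref{r1--} hold — all of which are assumed in the statement of Theorem~\ref{corthAB} with $X=L_p$. Part~(A) of Theorem~\ref{mainmain} applied with $\tau=\max(2,p)$ yields
\[
\(\sum_{k=n+1}^\infty 2^{-k\a\tau}\Vert P_{2^k}(f)\Vert_{Y}^\tau\)^\frac1\tau\lesssim K(f,2^{-n\a};L_p,Y),
\]
and part~(B) applied with $\t=\min(2,p)$ yields
\[
K(f,2^{-n\a};L_p,Y)\lesssim \(\sum_{k=n+1}^\infty 2^{-k\a\t}\Vert P_{2^k}(f)\Vert_{Y}^\t\)^\frac1\t,
\]
which are exactly the two asserted inequalities. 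This completes the proof.

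There is essentially no obstacle here — the work has all been done in Lemma~\ref{thAB} and Theorem~\ref{mainmain}. The only point requiring a line of care is confirming that the relevant $L_p$ space really is an abstract $L_p$ space in the precise lattice sense of Proposition~\ref{remLp}; but this is the standard fact that ordinary $L_p(\mu)$ spaces, including weighted ones, satisfy the disjointness-additivity of the $p$-th power of the norm, so it is routine. (One might also remark, as in the Remark following Lemma~\ref{thAB}, that by the triangle inequality condition~\eqref{B} holds with $\t=B=1$, so for $1<p\le2$ part~(B) could alternatively be deduced without invoking uniform smoothness; but using $\t=\min(2,p)$ uniformly is cleaner and matches the stated result.)
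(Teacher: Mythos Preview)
Your proposal is correct and is exactly the paper's approach: Theorem~\ref{corthAB} is stated immediately after Proposition~\ref{remLp} as a direct consequence of Theorem~\ref{mainmain} together with that proposition, with no separate proof given. (Incidentally, you have the exponents in the modulus-of-convexity/smoothness bounds stated correctly; the paper's Proposition~\ref{remLp} as written has $\min$ and $\max$ interchanged, which is a typo.)
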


In Section~\ref{secOpt}, we will see that the parameters $\tau$ and $\t$ in Theorem~\ref{corthAB} are optimal.

\begin{corollary}\label{corRathor}
  Let inequalities~\eqref{K1.2}, \eqref{K1.3}, and~\eqref{r1--} be valid for $X=L_p$, $1<p<\infty$,  and let $G_n$ be convex.
Then, for any $f\in L_p$, the following assertions are equivalent:

{\rm (i)}   for any $n\in \N$
\begin{equation*}
  K(f,2^{-n\a};L_p,Y)\asymp 2^{-n\a\t}\Vert P_{2^n}(f)\Vert_{Y},
\end{equation*}

{\rm (ii)} for any $n\in \N$
\begin{equation*}
  \sum_{k=n}^\infty 2^{-k\a\t} \Vert P_{2^k}(f)\Vert_{Y} \lesssim 2^{-n\a\t} \Vert P_{2^n}(f)\Vert_{Y}.
\end{equation*}

%{\rm (iii)}
%\begin{equation*}
%  \sum_{k=n+1}^\infty K(f,2^{-k\a};L_p,Y) \lesssim K(f,2^{-n\a};L_p,Y),
%\end{equation*}
\end{corollary}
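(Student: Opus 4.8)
The plan is to derive the equivalence directly from Theorem~\ref{corthAB}, which already provides the two-sided estimate
$$
\(\sum_{k=n+1}^\infty 2^{-k\a\tau}\Vert P_{2^k}(f)\Vert_{Y}^\tau\)^\frac1\tau\lesssim K(f,2^{-n\a};L_p,Y)\lesssim \(\sum_{k=n+1}^\infty 2^{-k\a\t}\Vert P_{2^k}(f)\Vert_{Y}^\t\)^\frac1\t
$$
with $\tau=\max(2,p)$ and $\t=\min(2,p)$. Since $\t\le\tau$, the $\ell_\tau$-sum on the left dominates the $\ell_\t$-sum on the right up to a constant, so in particular both sums are squeezed between $2^{-n\a\t}\Vert P_{2^n}(f)\Vert_Y$ (the first term of the right-hand sum, noting $2^{-n\a\t}\Vert P_{2^n}(f)\Vert_Y \lesssim 2^{-n\a\t}\Vert P_{2^{n+1}}(f)\Vert_Y$ by Bernstein-type monotonicity, or simply by including the $k=n$ term) and $K(f,2^{-n\a};L_p,Y)$. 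The key observation is that condition (ii), $\sum_{k=n}^\infty 2^{-k\a\t}\Vert P_{2^k}(f)\Vert_Y\lesssim 2^{-n\a\t}\Vert P_{2^n}(f)\Vert_Y$, is precisely a "strong monotonicity" or Stechkin-type decay condition that collapses the infinite sums to their leading term.

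First I would prove (ii) $\Rightarrow$ (i). Assume (ii). Using $\t\le\tau$ and the fact that for $A_k\ge0$ one has $(\sum A_k^\tau)^{1/\tau}\le(\sum A_k^\t)^{1/\t}$, we get
$$
K(f,2^{-n\a};L_p,Y)\lesssim \(\sum_{k=n+1}^\infty 2^{-k\a\t}\Vert P_{2^k}(f)\Vert_{Y}^\t\)^\frac1\t\le \sum_{k=n+1}^\infty 2^{-k\a\t}\Vert P_{2^k}(f)\Vert_{Y}\lesssim 2^{-n\a\t}\Vert P_{2^n}(f)\Vert_{Y},
$$
the last step by (ii). For the reverse inequality $2^{-n\a\t}\Vert P_{2^n}(f)\Vert_Y\lesssim K(f,2^{-n\a};L_p,Y)$, I would combine the left-hand estimate of Theorem~\ref{corthAB} with a lower bound for the $\ell_\tau$-sum by its term at index roughly $n$: indeed, by Bernstein's inequality \eqref{K1.3} and the Jackson-type direct theorem \eqref{eqJK},
$$
2^{-n\a\t}\Vert P_{2^{n+1}}(f)\Vert_{Y}\lesssim 2^{-n\a\t}\Vert P_{2^{n+1}}(f)-P_{2^n}(f)\Vert_Y+2^{-n\a\t}\Vert P_{2^n}(f)\Vert_Y,
$$
and one bounds $\Vert P_{2^{n+1}}(f)-P_{2^n}(f)\Vert_Y\lesssim 2^{(n+1)\a}\Vert P_{2^{n+1}}(f)-P_{2^n}(f)\Vert_X\lesssim 2^{n\a}E_{2^n}(f)_X\lesssim 2^{n\a}K(f,2^{-n\a};X,Y)$; since $\t\le\tau$ this still needs care, so more robustly I would simply note the left-hand side of Theorem~\ref{corthAB} already gives $2^{-(n+1)\a\tau}\Vert P_{2^{n+1}}(f)\Vert_Y^\tau\le\sum_{k\ge n+1}(\cdots)\lesssim K^\tau$, hence $2^{-n\a}\Vert P_{2^{n+1}}(f)\Vert_Y\lesssim K(f,2^{-n\a};L_p,Y)$, and then pass from $P_{2^{n+1}}$ to $P_{2^n}$ using $\Vert P_{2^n}(f)\Vert_Y\lesssim \Vert P_{2^{n+1}}(f)\Vert_Y + \Vert P_{2^{n+1}}(f)-P_{2^n}(f)\Vert_Y$ together with the Bernstein/Jackson bound on the difference, which is again $\lesssim 2^{n\a}K$. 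This gives (i).

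For (i) $\Rightarrow$ (ii), assume $K(f,2^{-n\a};L_p,Y)\asymp 2^{-n\a\t}\Vert P_{2^n}(f)\Vert_Y$ for all $n$. Then by the right-hand inequality of Theorem~\ref{corthAB} applied at level $n$, and by monotonicity of $K$ in $t$ combined with its doubling behaviour,
$$
\sum_{k=n}^\infty 2^{-k\a\t}\Vert P_{2^k}(f)\Vert_{Y}^\t\asymp\sum_{k=n}^\infty K(f,2^{-k\a};L_p,Y)^{\t/\t}\cdot(\text{const})
$$
is not quite the right manipulation — instead I would argue directly: by (i), $2^{-k\a\t}\Vert P_{2^k}(f)\Vert_Y\asymp K(f,2^{-k\a};L_p,Y)$, so $\sum_{k\ge n}2^{-k\a\t}\Vert P_{2^k}(f)\Vert_Y\asymp\sum_{k\ge n}K(f,2^{-k\a};L_p,Y)$. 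Now since $K(f,\cdot;X,Y)$ is nondecreasing, subadditive-type in its argument and (by (i)) comparable to a power-like quantity, I would invoke the elementary fact (as used in Corollary~\ref{le1Optt1}, via \cite{Ti04}) that for any nonnegative sequence one has $\sum_{k\ge n}a_k\lesssim a_n$ iff the "$\ell_1$ tail is controlled by the leading term", and here the leading term is $K(f,2^{-n\a};L_p,Y)\asymp 2^{-n\a\t}\Vert P_{2^n}(f)\Vert_Y$; the summability of the tail is automatic because by the left-hand inequality of Theorem~\ref{corthAB} (with $\ell_\tau\hookrightarrow\ell_1$ failing in general, but with $\ell_\t\hookrightarrow\ell_1$ also failing) — so the cleanest route is: from (i), $K(f,2^{-k\a})\asymp 2^{-k\a\t}\Vert P_{2^k}(f)\Vert_Y$, and the RHS of Theorem~\ref{corthAB} at level $k$ gives $K(f,2^{-k\a})\lesssim(\sum_{j>k}2^{-j\a\t}\Vert P_{2^j}\Vert_Y^\t)^{1/\t}$, while the LHS at level $k-1$ gives the term $2^{-k\a\t}\Vert P_{2^k}\Vert_Y^\t\lesssim K(f,2^{-(k-1)\a})^\t\asymp K(f,2^{-k\a})^\t$ using doubling \eqref{eq4} of $K$; summing $\sum_{k\ge n}2^{-k\a\t}\Vert P_{2^k}\Vert_Y\lesssim\sum_{k\ge n}K(f,2^{-k\a})$, and this last sum is $\lesssim K(f,2^{-n\a})$ by \cite{Ti04} since $K(f,\cdot)$ is equivalent to a modulus of continuity under hypothesis (i). Then (i) converts $K(f,2^{-n\a})$ back to $2^{-n\a\t}\Vert P_{2^n}(f)\Vert_Y$, yielding (ii).

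The main obstacle is the summation step in (i) $\Rightarrow$ (ii): one must justify that $\sum_{k\ge n}K(f,2^{-k\a};L_p,Y)\lesssim K(f,2^{-n\a};L_p,Y)$, which requires knowing that under (i) the function $t\mapsto K(f,t;L_p,Y)$ behaves like a regular (doubling, subadditive) modulus of continuity — this is where the results of \cite{Ti04} on sums of moduli of continuity, already cited in the proof of Corollary~\ref{le1Optt1}, do the work, together with the standard property $K(f,2t)\le 2K(f,t)$. All other steps are routine applications of Theorem~\ref{corthAB}, Bernstein's inequality \eqref{K1.3}, and the direct estimate \eqref{eqJK}.
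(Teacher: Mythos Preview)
Your argument for (ii) $\Rightarrow$ (i) is essentially correct, if somewhat roundabout. The genuine gap is in (i) $\Rightarrow$ (ii). You correctly isolate the crux --- controlling $\sum_{k\ge n}K(f,2^{-k\a})$ by $K(f,2^{-n\a})$ --- but your proposed resolution does not work: the fact that $t\mapsto K(f,t)$ is concave and satisfies $K(f,2t)\le 2K(f,t)$ does \emph{not} by itself force the tail sum to be dominated by the leading term. Nothing you have written rules out, say, $K(f,2^{-k\a})\asymp 1/k$, for which the tail sum diverges. The reference \cite{Ti04} (i.e.\ \eqref{Omegla}) records only an \emph{equivalence}: for a nonnegative sequence $(c_k)$ and any fixed $\lambda>0$,
\[
\Big(\sum_{k\ge n}c_k^{\lambda}\Big)^{1/\lambda}\lesssim c_n\ \text{ for all }n
\quad\Longleftrightarrow\quad
\sum_{k\ge n}c_k\lesssim c_n\ \text{ for all }n.
\]
It does not assert that either side holds automatically for moduli of continuity.

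The missing step is to \emph{supply} this equivalence with the input it needs, and that input is precisely the full left-hand inequality of Theorem~\ref{corthAB}, not just a single term of it. Set $a_k=2^{-k\a}\Vert P_{2^k}(f)\Vert_Y$. Under (i) one has $K(f,2^{-n\a})\asymp a_n$, and the left estimate in Theorem~\ref{corthAB} then reads
\[
\Big(\sum_{k>n}a_k^{\tau}\Big)^{1/\tau}\lesssim K(f,2^{-n\a})\asymp a_n\qquad\text{for all }n.
\]
Now apply \eqref{Omegla} with $\lambda=\tau$ to the sequence $(a_k)$: this converts the $\ell_\tau$ tail bound into $\sum_{k\ge n}a_k\lesssim a_n$, which is (ii). This is exactly what the paper's one-line proof (``follows from Theorem~\ref{corthAB} and~\eqref{Omegla}'') encodes. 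Your detour through $\sum_{k\ge n} a_k\lesssim\sum_{k\ge n} K(f,2^{-k\a})$ discards the $\ell_\tau$-structure and leaves you with nothing to feed into \eqref{Omegla}.
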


\begin{proof}
The proof easily follows from Theorem~\ref{corthAB} and~\eqref{Omegla}.
\end{proof}

Finally, we consider Orlicz spaces. Recall that the Orlicz function $M(t)$ on $[0,\infty)$ is an increasing convex function satisfying $M(0)=0$. We assume that $M$
satisfies $\D_2$-condition, that is, $M(2t)\le c M(t)$ for all $t>0$. The Orlicz class of functions $X=X_M$ on some domain $\mathcal{D}$ with a positive measure $d\mu(x)$ is the class of functions $f$, for which
\begin{equation}\label{spo1}
  \int_{\mathcal{D}} M(|f(x)|)d\mu(x)<\infty,
\end{equation}
and the  (Luxemburg) norm  is
\begin{equation}\label{spo2}
\Vert f\Vert_{X_M}=\inf\left\{\s>0\,:\, \int_{\mathcal{D}} M(|f(x)|/\s)d\mu(x)\le 1\right\}.
\end{equation}

\begin{proposition}\label{prO}
{\rm (A)} Suppose that $M(u)$ is an Orlicz function such that $M(u^{1/\tau})$ is concave for some $\tau$, $2\le \tau<\infty$, and $M(lt)\le \frac12 M(t)$ for some $l<1$. Then there exists an Orlicz function $N(u)$ such that $C^{-1} N(u)\le M(u)\le C N(u)$ and $\delta_{X_N}(\e)\ge c\e^\tau$ with the norm of the space $X_N$ given by
\begin{equation}\label{obia}
 \Vert f\Vert_{X_N}=\inf\left\{\s>0\,:\, \int_{\mathcal{D}} N(|f(x)|/\s)d\mu(x)\le 1\right\}.
\end{equation}
{\rm (B)} Suppose that $M(u)$ is an Orlicz function such that $M(u^{1/\t})$ is convex for some $\t$, $1<\t\le 2$. Then there exists an Orlicz function $N(u)$ such that $C^{-1} N(u)\le M(u)\le C N(u)$ and $\rho_{X_N}(t)\le ct^\t$ with the norm of the space $X_N$ given by~\eqref{obia}.
%\begin{equation}\label{obib}
% \Vert f\Vert_B=\inf\left\{\s>0\,:\, \int_{\mathcal{D}} N(|f(x)|/\s)d\mu(x)\le 1\right\}.
\end{proposition}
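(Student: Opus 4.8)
The plan is to reduce the statement about Orlicz spaces to the already-established renorming theory for abstract $L_p$ lattices, i.e., to Proposition~\ref{remLp}, by finding an equivalent Orlicz function $N$ for which the modular structure of $X_N$ is "$p$-convex" or "$p$-smooth" in the lattice sense. The key observation is that concavity of $M(u^{1/\tau})$ means the function $u\mapsto M(u^{1/\tau})$ is an (equivalent) Orlicz function that is, morally, "like $u$" near zero in its convexity behavior, so that $X_M$ is an order-continuous Banach lattice that is $\tau$-convex; dually, convexity of $M(u^{1/\t})$ makes $X_M$ $\t$-concave. Since $\tau\ge 2$ in part~(A) and $\t\le 2$ in part~(B), these one-sided conditions are exactly what is needed.

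First I would recall the standard fact (see Lindenstrauss--Tzafriri, \cite{LT}) that if $M(u^{1/\tau})$ is concave then the Orlicz space $X_M$ is lattice $\tau$-convex and $2$-concave (the latter because $\tau\ge 2$ forces a lower $2$-estimate automatically once the $\Delta_2$-condition holds), while if $M(u^{1/\t})$ is convex then $X_M$ is $\t$-concave and $2$-convex. Next I would invoke the Figiel--Johnson-type renorming theorem: a Banach lattice which is $p$-convex and $q$-concave admits an equivalent lattice norm that is simultaneously $p$-convex with constant $1$ and $q$-concave with constant $1$; call the space with this new norm $X_N$, and note that the renorming can be realized concretely by replacing $M$ with an equivalent Orlicz function $N$ (one writes $N$ via the $p$-convexification/$q$-concavification procedure applied to $M$, which keeps $N\asymp M$ because of $\Delta_2$). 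Then the condition $M(lt)\le \tfrac12 M(t)$ in part~(A) guarantees that $N$ is genuinely an Orlicz function of the right growth type (it excludes degenerate behavior at $0$), and the Luxemburg norm built from $N$ via~\eqref{obia} is exactly this renormed lattice norm.

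With $X_N$ in hand, the final step is to quote Proposition~\ref{remLp} in the appropriate direction. For part~(A): a lattice that is $1$-convex and $\tau$-concave (equivalently, after the convexification trick, an abstract $L_\tau$-type space) has $\delta_{X_N}(\e)\ge c\e^{\tau}$ by the cited result of \cite{LT}, which is precisely the stated conclusion. For part~(B): a lattice that is $\t$-convex and $2$-concave satisfies $\rho_{X_N}(t)\le ct^{\t}$, again by \cite{LT}. So I would structure the proof as: (1) identify the one-sided lattice convexity/concavity of $X_M$ from the hypothesis on $M$; (2) renorm via an equivalent Orlicz function $N$ to get constant-$1$ estimates, checking $N\asymp M$ using $\Delta_2$ (and, in (A), the extra condition on $M(lt)$); (3) apply Proposition~\ref{remLp} to the lattice $X_N$.

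The main obstacle I anticipate is step~(2): making the renorming \emph{explicit as an Orlicz function} rather than an abstract equivalent lattice norm, and verifying carefully that the Luxemburg functional of the new Orlicz function $N$ coincides (up to the claimed equivalence constant $C$) with the renormed lattice norm. This requires the $p$-convexification/$q$-concavification construction at the level of Orlicz functions and a check that the resulting $N$ still satisfies $\Delta_2$ and is a bona fide Orlicz function (convex, vanishing at $0$, increasing) — here the auxiliary hypothesis $M(lt)\le \tfrac12M(t)$ in part~(A) is exactly what rescues convexity of the $\tau$-concavification near the origin. Everything else (the passage from lattice estimates to $\delta_X,\rho_X$ bounds) is a direct citation to \cite{LT} and involves no genuinely new work.
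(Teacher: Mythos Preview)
Your general strategy (pass to lattice $p$-convexity/$q$-concavity, renorm, then read off the moduli) is a legitimate route, but the execution as written contains two concrete errors and one real gap.

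First, you have the lattice directions reversed in your opening paragraph. Concavity of $u\mapsto M(u^{1/\tau})$ makes $X_M$ lattice \emph{$\tau$-concave}, not $\tau$-convex; convexity of $u\mapsto M(u^{1/\t})$ makes $X_M$ lattice \emph{$\t$-convex}, not $\t$-concave. (You silently switch to the correct labels in your third paragraph, so the argument becomes internally inconsistent.) Second, Proposition~\ref{remLp} is stated for \emph{abstract $L_p$ spaces}, i.e.\ lattices with the exact identity $\Vert x+y\Vert^p=\Vert x\Vert^p+\Vert y\Vert^p$ on disjoint elements. An Orlicz space $X_N$ is essentially never of this form unless $N(u)=u^p$, and a $\tau$-concave lattice with constant~$1$ is not ``an abstract $L_\tau$-type space'' in that sense. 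The result you actually need is Figiel's theorem (in \cite{LT}) that a $q$-concave lattice with $q\ge 2$ admits an equivalent lattice norm with $\delta(\e)\gtrsim \e^q$, and dually for $p$-convex lattices with $p\le 2$; this is a different and deeper statement than Proposition~\ref{remLp}. Third, even granting Figiel's renorming, you still owe the step you yourself flag: showing that the renormed lattice norm is the Luxemburg norm of an explicit equivalent \emph{Orlicz function} $N$. This is not automatic---the Figiel renorming is abstract---and this is precisely where the work lies.

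The paper bypasses all of this by citing the results that do exactly that Orlicz-level construction: part~(B) is \cite[Lemma~2.2]{DP}, and part~(A) follows in the same way from \cite[Theorem~1]{MT}. Those references build $N$ directly from $M$ and verify the modulus estimates for $X_N$ without passing through abstract lattice renorming, which is why the extra hypothesis $M(lt)\le \tfrac12 M(t)$ in~(A) appears---it is used in the explicit Maleev--Troyanski construction, not merely to ensure $\Delta_2$.
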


\begin{proof}
The proof of~(B) can be found in~\cite[Lemma~2.2]{DP}. Assertion (A) can be proved similarly employing Theorem~1 from~\cite{MT}.
\end{proof}

Using Theorem~\ref{mainmain} and Proposition~\ref{prO}, we obtain the following result.

\begin{theorem}\label{corthAB2}
  Let inequalities~\eqref{K1.2}, \eqref{K1.3}, and~\eqref{r1--} be valid for the Orlicz space $X=X_M$ defined by~\eqref{spo1} and~\eqref{spo2}, and let $G_n$ be convex.

 %where a positive constant $C$ does not depend on $f$ and $n$.

{\rm (A)} Suppose that the function $M$ and the parameter $\tau$ are the same as in Proposition~\ref{prO} (A).
Then, for any $f\in X$ and $n\in \N$, we have
  \begin{equation*}\label{eq1LPc}
    \(\sum_{k=n+1}^\infty 2^{-k\a\tau}\Vert P_{2^k}(f)\Vert_{Y}^\tau\)^\frac1\tau\lesssim K(f,2^{-n\a};X,Y).
  \end{equation*}

{\rm (B)} Suppose that the function $M$ and the parameter $\t$ are the same as in Proposition~\ref{prO} (B).
Then, for any $f\in X$ and $n\in \N$, we have
  \begin{equation*}\label{eq1+c}
 K(f,2^{-n\a};X,Y)\lesssim \(\sum_{k=n+1}^\infty 2^{-k\a\t}\Vert P_{2^k}(f)\Vert_{Y}^\t\)^\frac1\t.
  \end{equation*}

 %where a positive constant $C$ does not depend on $f$ and $n$.
\end{theorem}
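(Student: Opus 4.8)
The plan is to deduce Theorem~\ref{corthAB2} from Theorem~\ref{mainmain} by replacing the Orlicz norm with an equivalent one that enjoys the requisite modulus estimate. First I would invoke Proposition~\ref{prO}(A) (resp.\ (B)) to produce an Orlicz function $N$ with $C^{-1}N(u)\le M(u)\le CN(u)$ such that the space $X_N$ — the same class of functions as $X_M$, but equipped with the Luxemburg norm \eqref{obia} — is $\tau$-uniformly convex (resp.\ $\t$-uniformly smooth). The two-sided comparison of $N$ and $M$ yields at once the equivalence of Luxemburg norms $\Vert f\Vert_{X_M}\asymp\Vert f\Vert_{X_N}$, with constants depending only on $C$ and the $\Delta_2$-constant of $M$.

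Next I would transfer the hypotheses to the couple $(X_N,Y)$. Since the $X_M$- and $X_N$-norms are equivalent, the Jackson inequality \eqref{K1.2}, the Bernstein inequality \eqref{K1.3}, and the realization bound \eqref{r1--} all persist for $(X_N,Y)$ (with new absolute constants), and convexity of $G_n$ is unaffected. Moreover $K(f,t;X_M,Y)\asymp K(f,t;X_N,Y)$ for every $f$ and $t$, because passing to an equivalent norm in the $X$-component of the couple alters the $K$-functional only by a multiplicative constant; meanwhile the quantities $\Vert P_{2^k}(f)\Vert_Y$ on the other side of \eqref{eq1LPmod} and \eqref{eq1+mod} involve only the $Y$-norm and are untouched by the renorming. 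Applying Theorem~\ref{mainmain}(A) (resp.\ (B)) in $X_N$ and translating the resulting inequality back through these norm equivalences then gives precisely the assertion of Theorem~\ref{corthAB2}(A) (resp.\ (B)).

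The one point requiring care — and the only place I expect genuine friction — is that the notion of best approximant $P_{2^k}(f)$ is norm-dependent, whereas Theorem~\ref{mainmain} is proved using the Kolmogorov criterion for the norm in which the space is uniformly convex/smooth, i.e.\ for the $X_N$-norm. I would resolve this by taking $P_{2^k}(f)$ throughout to be a best approximant with respect to $\Vert\cdot\Vert_{X_N}$ (which, by the norm equivalence, is automatically a near-best approximant with respect to $\Vert\cdot\Vert_{X_M}$, and conversely), and then checking that this choice is compatible with the statement; alternatively one may appeal to the remark following Lemma~\ref{thAB} that only the structural inequalities \eqref{A} and \eqref{B} are actually used in the argument, and these are exactly what uniform convexity (resp.\ smoothness) of $X_N$ delivers via the Gateaux-derivative computation in the proof of Theorem~\ref{mainmain}. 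Once this bookkeeping about which norm the extremal element is taken in is settled, the rest is immediate from Step~1.
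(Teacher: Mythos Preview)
Your approach is exactly the one the paper intends: the statement is presented immediately after the sentence ``Using Theorem~\ref{mainmain} and Proposition~\ref{prO}, we obtain the following result,'' with no further argument given. Your renorming-and-transfer outline is precisely that deduction, and your observation that the best approximant $P_{2^k}(f)$ is a priori taken in the $X_N$-norm (where the Kolmogorov criterion applies) rather than in the original $X_M$-norm is a genuine subtlety the paper leaves implicit; your proposed resolution---interpreting $P_{2^k}(f)$ as the $X_N$-best approximant, which by norm equivalence is near-best in $X_M$ and leaves the $Y$-norms and $K$-functional unchanged up to constants---is the right reading.
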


%It is quite clear how to use Remark~\ref{remLp}  in Theorem~\ref{thAB} in the case $M=G_n$ and $X=L_p$.
%
%Let us show that Remark~\ref{remLp} can be used also in Theorems~\ref{thABC1} and~\ref{thABC2}.
%For example, if $X=L_p$, $1<p<\infty$, and the operators $P_{2^n}(f)$ are such that
%\begin{equation}\label{contr}
%  \Vert f-P_{2^{n}}(f)\Vert_X\le \Vert f-P_{2^{n-1}}(f)\Vert_X,\quad n\in\N,
%\end{equation}
%then inequality~\eqref{A++} holds for $\tau=\max(2,p)$. Indeed, let $\mu=P_{2^{k-1}}(f)$ and $M=\{\l P_{2^{k-1}}(f)+(1-\l) P_{2^{k}}(f), \l\in [0,1]\}$. It is easy to see that $m=P_{2^k}(f)$ is the best approximation in $M$ to $f$ in $X$. Otherwise, we would have
%$$
%\Vert f-(1-\l_0) P_{2^{k-1}}(f)-\l_0 P_{2^{k}}(f)\Vert_X< \Vert f-P_{2^{k}}(f)\Vert_X
%$$
%for some $\l_0\in [0,1)$, and, therefore,
%$$
%(1-\l_0)\Vert f-P_{2^{k-1}}(f)\Vert_X<(1-\l_0)\Vert f-P_{2^{k}}(f)\Vert_X,
%$$
%which contradicts with~\eqref{contr}.
%
%Similarly, we can show that inequality~\eqref{B+} holds with $\t=\min(2,p)$. In this case, it is not necessary to suppose that~\eqref{contr} holds, but we should to suppose, for example, that
%$$
%P_{2^{k-1}}(P_{2^k}(f))=P_{2^{k-1}}(f)
%$$
%because of condition~\eqref{A2+++}. In other, words Theorem~\ref{thABC2} may be useful for de la Valle\'e-Poussin type operators, but Theorem~\ref{thABC2} may be useful for the operators, which are related to operators of the best approximation. ???

%?????????????????????????????///

%\bigskip

\section{Smoothness of Fourier multiplier operators}\label{sec4} %convolution-type operators}

\subsection{Realization and Littlewood-Paley-type inequality}\label{SecReal}

%In this section,
First we introduce basic notations and collect auxiliary results. We follow the discussion in the paper~\cite{DDT}.

We assume that $Q(D)$ is a self-adjoint operator, that is,
$
\la Q(D) f,g\ra = \la f,Q(D)g\ra$ whenever $Q(D)f,Q(D)g\in
L_{2,w}(\mathcal{D}),
$
where
$$
\Vert  f\Vert_{L_{p,w}(\mathcal{D})} = \Vert f\Vert_{p,w}=\(\int_{\mathcal{D}} \vert  f\vert  ^pw \)^\frac1p
$$
and
$\la f,g\ra = \int_{\SD} fgw.$  Let $\lambda_k$ be the eigenvalues of $Q(D),$  satisfying
$$
0\le \lambda_0<\lambda_k<\lambda_{k+1}, \q G_k =\{\varphi
:Q(D)\varphi  = \lambda_k\varphi  \},
$$
$G_k$ is finite dimensional, $G_k\subset L_{p,w}(\SD) $ for $1\le p\le
\infty  $ and ${\rm span}\,\cup_k G_k$ is dense in $L_{p,w}(\SD)$
for $1\le p<\infty  .$  Examples of such operators and matching
spaces are: $-\big(\frac{d}{dx}\big)^2$ for $L_p(\T);$
$-\,\frac{d}{dx}\,(1-x^2)\,\frac{d}{dx}$ for $L_p[-1,1];$
%$-\wt\Delta  $ (where $\wt\Delta  $ is the Laplace Beltrami
%operator) for $L_p(S^{d-1})$ (where $S^{d-1}$ is the unit sphere in
%$\R^d);$
$-\Delta  +\vert  x\vert  ^2$, where $\Delta  $ is the
Laplacian for $L_p(\R^d);$ and $-w^{-1}_{\alpha  ,\beta
}\;\frac{d}{dx}\;w_{\alpha  \beta  }(1-x^2)\,\frac{d}{dx}$ for
$L_{p,w_{\alpha  ,\beta  }}[-1,1]$, where $w_{\alpha  ,\beta  }(x) =
(1-x)^\alpha  (1+x)^\beta  $ with $\alpha  ,\beta  >-1$.
%(see for
%example [Ch-Di] and [Di,98]).

We define
$$
A_k f =\sum^{d_k}_{\ell=1} \, \langle f,\psi_{k,\ell} \rangle\,\psi_{k,\ell}, %\int_{\SD} f(x)\psi  _{k,\ell}(x)w(x)dx
$$
where $d_k$ is the dimension of $G_k$ and $\{\psi  _{k,\ell}\}$ an
orthonormal basis of $G_k$ in $L_{2,w}(\SD).$

For $f\in L_{p,w}(\SD),$ $f\sim \sum_{k=0}^\infty A_k
f,$ we define $Q(D)^\gamma  $ by
$$
Q(D)^\gamma  f \sim \sum_k\lambda_k^\gamma  A_k f
$$
and we say that $Q(D)^\gamma  f\in L_{p,w}(\SD)$ if there exists $g\in L_{p,w}(\SD)$
such that $\lambda_k^\gamma  A_k f = A_k g.$

In what follows, we suppose that $\lambda_k\asymp k^\sigma$ for some positive $\s>0$. Note that in the example above $\sigma=2$ except for the eigenvalues
of $-\Delta  +\vert  x\vert  ^2$ where $\sigma=1$ (see~\cite{Di}).

As usual, we define the $K\text{-functional}\; K_\gamma  \big(f,Q(D),t^{\sigma  \gamma
}\big)_{p,w}$ by
\begin{equation}\label{Kfunc}
  K_\gamma  \big(f,Q(D),t^{\sigma  \gamma  }\big)_{p,w}: =\inf_{Q(D)^\gamma
g\in L_{p,w}(\mathcal{D})}\; \{\Vert  f-g\Vert  _{L_{p,w}(\mathcal{D})} +
t^{\sigma  \gamma  } \Vert  Q(D)^\gamma g \Vert
_{L_{p,w}(\mathcal{D})}\}.
\end{equation}

In this section, we consider approximation processes, which are defined by means of the Fourier multiplier operator $T_{\mu}$ given by
$$
T_{\mu} f \sim \sum^\infty  _{k=0} \mu  _k
A_k f \q\text{for}\q f\sim \sum^\infty  _{k=0} A_k f.
$$
We will use the following assumption related to a H\"ormander-Mikhlin-type theorem.

\begin{assumption}\label{H}
  For some $\ell_0\ge 0$, the condition
\begin{equation}\label{eq(3.7)}
  \vert  \Delta  ^\ell \mu  _k\vert  \le A(k+1)^{-\ell}\q\text{for}\q
0\le\ell \le \ell_0,
\end{equation}
where
$$
\Delta  ^0 \mu  _k = \mu  _k, \q \Delta  \mu  _k = \mu  _{k+1}
-\mu  _k\q\text{and}\q  \Delta  ^\ell \mu  _k = \Delta  (\Delta
^{\ell-1}\mu  _k),
$$
implies
\begin{equation*}\label{eq(3.8)}
  \Vert  T_{\mu} f\Vert  _{L_{p,w}(\SD)} \le
C\big(A,L_{p,w}(\SD),\{G_k\}\big)\Vert  f\Vert  _{L_{p,w}(\SD)}\,, \q
1 < p <\infty.
\end{equation*}
\end{assumption}

It is clear that under Assumption~\ref{H}, the de~la~Vall\'ee Poussin-type operator
$$
\eta  _Nf: =\sum^\infty  _{k=0} \eta  \Big(\frac kN\Big) A_k,\quad f\sim \sum^\infty  _{k=0} A_k f,
$$
 satisfies
\begin{equation}\label{f4}
  %\eta  _N f \in \;\text{span}\; \bigcup_{0\le k\le N} G_k, \;
\Vert  \eta  _N f\Vert_{p,w}\le A\Vert  f\Vert_{p,w}.
%\quad\text{and} \quad
%\eta  _N\varphi  =\varphi.
%\;\text{for}\; \varphi  \in
%\;\text{span}\,\bigcup_{0\le k\le \frac N2} G_k.
\end{equation}
Here and in what follows, we assume that
%$\eta  (x)\in C^\infty  [0,\infty  ),$ $\eta  (x) =0$ for $x\ge
%1$, and $\eta  (x) =1$ for $x\le 1/2$.
$$
\eta  (\xi  )\in C^\infty  [0,\infty  ), \q
\eta  (\xi  ) = \left\{
                  \begin{array}{ll}
                    1, & \hbox{$\xi  \le 1/2$,} \\
                    0, & \hbox{$\xi  \ge 1$.}
                  \end{array}
                \right.
$$

Moreover, the following realization result (see~\cite[Theorem 7.1]{Di}) holds:
\begin{equation}\label{f3}
  K_\gamma  \big(f,Q(D),\lambda_N^{-\gamma  }\big)_{p,w}
\asymp\Vert  f-\eta  _Nf\Vert_{p,w} +\lambda_N^{-\gamma  }\Vert
Q(D)^\gamma  \eta  _Nf\Vert_{p,w}. %\eqno 3.11
\end{equation}

%Moreover, given that (3.7) implies (3.8) and with the other
%assumptions of this section, the Littlewood-Paley type result
%$$
%\Big\Vert  \Big\{\sum^\infty  _{j=0} \theta  _j
%(f)^2\Big\}^{1/2}\Big\Vert  _{L_{p,w}} \approx \Vert  f\Vert
%_{L_{p,w}} \q 1<p<\infty
%\eqno 3.12
%$$
Denote
\begin{equation}\label{LPVP}
  \theta  _0 (f):=\eta  _1f \q \text{\rm and} \q \theta  _j(f): =\eta  _{2^j}f-\eta
_{2^{j-1}} f\q\text{for}\q j>0.
\end{equation}

The following Littlewood--Paley-type theorem plays a crucial role in our further study.

\begin{theorem}\label{J3.0} {\rm (See \cite[Theorem 2.1]{DaDi}, \cite[Theorem 3.1]{DDT}.)}
Let $f\in L_{p,w}(\SD)$, $1<p<\infty$, and Assumption~\ref{H} be satisfied,
%and $\lambda_k$ is a polynomial in
%$k$ of degree $\sigma .$
then
\begin{equation*}\label{eq3.16}
  \Big\Vert  \Big\{\sum^\infty  _{j=0}
\big(\theta  _j(f)\big)^2\Big\}^{1/2}\Big\Vert  _{L_{p,w}(\SD)} \asymp\Vert   f\Vert  _{L_{p,w}(\SD)}.
\end{equation*}
If, in addition, $\gamma>0$, then
\begin{equation}\label{eq3.16'}
\Big\Vert  \Big\{\sum^\infty  _{j=1} \big(2^{j\gamma  \sigma  }
\theta  _j(f)\big)^2\Big\}^{1/2}\Big\Vert  _{L_{p,w}(\SD)} \asymp\Vert  Q(D)^\gamma  f\Vert  _{L_{p,w}(\SD)}.
\end{equation}
\end{theorem}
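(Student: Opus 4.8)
The plan is to deduce the Littlewood--Paley equivalences of Theorem~\ref{J3.0} from the boundedness of Fourier multiplier operators given in Assumption~\ref{H}, combined with a Rademacher/Khinchin averaging argument, exactly as in~\cite{DaDi, DDT}. The two displayed equivalences are proved by the same scheme, so I would treat the second (with the factors $2^{j\gamma\sigma}$) and recover the first by formally setting $\gamma=0$; the only genuine extra input for~\eqref{eq3.16'} is the identification $\lambda_k\asymp k^\sigma$, which turns $\lambda_k^\gamma A_k f$ into $2^{j\gamma\sigma}\theta_j(f)$ up to constants on dyadic blocks.

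First I would set up the square function via Rademacher functions $\{r_j\}$ on $[0,1]$: for fixed $t\in[0,1]$ the sequence $\{\varepsilon_j(t)\}=\{r_j(t)\}$ of signs defines a multiplier sequence $\mu_k = \sum_j \varepsilon_j(t)\,\chi_{\Delta_j}(k)\,\omega(k)$, where $\Delta_j$ is the $j$-th dyadic block of indices (those with $2^{j-1}\le k<2^j$, say) and $\omega$ is a fixed smooth function equal to $2^{j\gamma\sigma}\lambda_k^{-\gamma}\cdot(\text{scaling})$ on $\Delta_j$; one checks that such $\mu_k$ satisfies the Hörmander-type condition~\eqref{eq(3.7)} with $A$ independent of $t$, because differences $\Delta^\ell\mu_k$ pick up at most one jump across each block boundary and the smooth factor contributes $O((k+1)^{-\ell})$ — here the hypothesis $\lambda_k\asymp k^\sigma$, together with enough regularity of $\lambda_k$ in $k$ (which I would invoke from the structure of $Q(D)$, cf.~\cite{Di}), is what keeps the symbol admissible. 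Applying Assumption~\ref{H} to $T_\mu$ and then $T_\mu^{-1}$-type operators gives, for each $t$,
\begin{equation*}
  \Bigl\Vert \sum_{j\ge 1} r_j(t)\, 2^{j\gamma\sigma}\theta_j(f)\Bigr\Vert_{p,w}\asymp \Vert Q(D)^\gamma f\Vert_{p,w},
\end{equation*}
with constants independent of $t$ (the reverse direction using that $\sum_j 2^{j\gamma\sigma}\theta_j$ reconstructs $Q(D)^\gamma f$ up to the low-frequency term $\theta_0$, which is controlled separately by~\eqref{f4}).

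Then I would integrate the $p$-th power over $t\in[0,1]$ and apply Khinchin's inequality fiberwise in the measure $w\,dx$ to replace the Rademacher average by the square function, yielding
\begin{equation*}
  \Bigl\Vert \Bigl\{\sum_{j\ge 1}\bigl(2^{j\gamma\sigma}\theta_j(f)\bigr)^2\Bigr\}^{1/2}\Bigr\Vert_{p,w}\asymp \Vert Q(D)^\gamma f\Vert_{p,w},
\end{equation*}
which is~\eqref{eq3.16'}; the unweighted-exponent version with $\gamma=0$ is identical, reconstructing $f$ itself from $\sum_j\theta_j(f)$. The main obstacle I anticipate is the verification that the randomly-signed, smoothly-weighted symbols $\mu_k$ genuinely satisfy~\eqref{eq(3.7)} uniformly in $t$ — i.e. controlling the finite differences $\Delta^\ell\mu_k$ across dyadic block boundaries and absorbing the factors $2^{j\gamma\sigma}\lambda_k^{-\gamma}$ — since this is where the hypotheses on $\lambda_k$ (namely $\lambda_k\asymp k^\sigma$ and its discrete smoothness) are really used; everything after that is the standard Khinchin symmetrization. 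Since this is precisely the content of~\cite[Theorem 2.1]{DaDi} and~\cite[Theorem 3.1]{DDT}, I would present the argument briefly and refer to those papers for the multiplier estimates.
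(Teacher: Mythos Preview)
The paper does not prove Theorem~\ref{J3.0} at all: it is stated with an explicit citation to \cite[Theorem~2.1]{DaDi} and \cite[Theorem~3.1]{DDT} and then used as a black box. Your sketch is therefore not being compared against any argument in the present paper, but against those references; and indeed the Rademacher/Khinchin randomization combined with the H\"ormander-type multiplier bound of Assumption~\ref{H} is exactly the mechanism used there, so your overall plan is correct.

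One technical imprecision worth flagging: you write the randomized symbol as $\mu_k=\sum_j \varepsilon_j(t)\,\chi_{\Delta_j}(k)\,\omega(k)$ with sharp dyadic indicators $\chi_{\Delta_j}$. In the actual construction the blocks $\theta_j$ come from the smooth cutoff $\eta$, i.e.\ the multiplier of $\theta_j$ is $\eta(k/2^j)-\eta(k/2^{j-1})$, supported on $[2^{j-2},2^j]$ with overlap between neighbors. This smoothness is precisely what makes the finite-difference condition~\eqref{eq(3.7)} hold uniformly in $t$: with sharp indicators the jumps at block boundaries would spoil the higher differences $\Delta^\ell\mu_k$. You do acknowledge smoothness is needed, but the formula as written does not reflect it. Similarly, for the reverse inequality one does not literally have $\sum_j 2^{j\gamma\sigma}\theta_j(f)=Q(D)^\gamma f$; rather one applies a second multiplier (built from $\lambda_k^\gamma 2^{-j\gamma\sigma}$ on the support of each $\theta_j$, again smooth and satisfying~\eqref{eq(3.7)} thanks to $\lambda_k\asymp k^\sigma$) to pass between the two. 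With those clarifications your outline matches the cited proofs.
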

%We note that only the right hand inequality of the equivalence
%(3.16) is used in this paper and that inequality only for $\gamma
%\ge 0.$  The other parts are given for the sake of completeness and
%future use.  For the weight $w$ as restricted in this section and
%in Theorem 3.1, Theorem 2.1 of [Da-Di,05] is (3.16) with $\gamma
%=0.$

%??? The equivalence (\ref{eq3.16}) has
%the advantage that it yields a result for a wider class of
%expansions and that $\theta  _j f$ is related via $\eta  _\lambda    f$ to a
%near best approximation as (3.10) implies
%$$
%\Vert  f-\eta  _{2\lambda  }f\Vert_{p,w} \le (1+A)E_\lambda(f)_{p,w}
%\eqno 3.14
%$$
%where
%$$
%E_\lambda  (f)_{L_{p,w}(\SD)} =\inf \,\Big(\Vert  f-\varphi  \Vert
%_{L_{p,w}(\SD)} : \varphi  \in \;\text{span}\,\Big\{\bigcup_{\lambda
%(k)<\lambda  } G_k\Big\}\Big).
%\eqno 3.15
%$$
%
%We also note that $\eta  _\lambda \,, $ which defined $\theta  _j$ (see
%(3.13)), satisfies the realization result (3.11).  On the other hand,
%(3.12) has the disadvantage that $\theta  _j$ is orthogonal to
%$\theta  _\ell$ only when $\vert  j-\ell\vert  \ge 2.$ ???

\subsection{Smoothness of the de~la~Vall\'ee Poussin means %convolution-type operators
 in $L_{p,w}$}

\begin{theorem}\label{thLP}
  Let $f\in L_{p,w}(\mathcal{D})$, $1<p<\infty$, $\gamma>0$, $\tau=\max(2,p)$, $\theta=\min(2,p)$, $n\in \N$, and Assumption~\ref{H} hold.
Then
  \begin{equation}\label{L}
    \(\sum_{k=n+1}^\infty 2^{-\s \g \tau k} \Vert Q(D)^\g \eta_{2^k} f\Vert_{p,w}^\tau\)^\frac1\tau \lesssim K_\g(f,Q(D),2^{-n\g \s})_{p,w}
  \end{equation}
and
    \begin{equation}\label{R}
   K_\g(f,Q(D),2^{-n\g \s})_{p,w}\lesssim \(\sum_{k=n+1}^\infty 2^{-\s \g \t k} \Vert Q(D)^\g \eta_{2^k} f\Vert_{p,w}^\t\)^\frac1\t.
  \end{equation}
%where the constants $C_1$ and $C_2$ do not depend on $f$ and $n$.
\end{theorem}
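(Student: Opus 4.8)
The plan is to derive Theorem~\ref{thLP} from the Littlewood--Paley characterization in Theorem~\ref{J3.0} together with the realization identity~\eqref{f3}. The crucial observation is that the building blocks $Q(D)^\g\eta_{2^k}f$ can be dyadically decomposed via the operators $\theta_j(f)$ from~\eqref{LPVP}: since $\eta_{2^k}f=\sum_{j=0}^{k}\theta_j(f)$ (because $\eta_{2^k}=\eta_1+\sum_{j=1}^k(\eta_{2^j}-\eta_{2^{j-1}})$ as Fourier multipliers), we get $Q(D)^\g\eta_{2^k}f\sim\sum_{j=0}^k Q(D)^\g\theta_j(f)$, and by~\eqref{eq3.16'} applied to $\eta_{2^k}f$ (which is a finite-rank truncation, so everything converges),
\[
\Vert Q(D)^\g\eta_{2^k}f\Vert_{p,w}\asymp\Big\Vert\Big\{\sum_{j=1}^{k}\big(2^{j\g\s}\theta_j(f)\big)^2\Big\}^{1/2}\Big\Vert_{p,w}.
\]
Similarly, by~\eqref{f3}, $K_\g(f,Q(D),2^{-n\g\s})_{p,w}\asymp\Vert f-\eta_{2^n}f\Vert_{p,w}+2^{-n\g\s}\Vert Q(D)^\g\eta_{2^n}f\Vert_{p,w}$, and the first term, by the $L_{p,w}$ Littlewood--Paley equivalence (the first display of Theorem~\ref{J3.0}) applied to the tail $f-\eta_{2^n}f\sim\sum_{j>n}\theta_j(f)$ together with~\eqref{eq3.16'}, satisfies $\Vert f-\eta_{2^n}f\Vert_{p,w}\asymp\big\Vert\{\sum_{j>n}(\theta_j(f))^2\}^{1/2}\big\Vert_{p,w}$.

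First I would reduce both inequalities~\eqref{L} and~\eqref{R} to purely ``scalar'' statements about the sequence $a_j:=2^{j\g\s}\theta_j(f)$, regarded as elements of $L_{p,w}$, using the vector-valued Littlewood--Paley square-function norm $\big\Vert\{\sum_j|b_j|^2\}^{1/2}\big\Vert_{p,w}$. The key analytic input is the comparison between the $\ell_2$-valued norm and $\ell_\tau$-, $\ell_\theta$-valued norms: for $\tau=\max(2,p)$ one has $\big\Vert\{\sum_j|b_j|^\tau\}^{1/\tau}\big\Vert_{p,w}\lesssim\big\Vert\{\sum_j|b_j|^2\}^{1/2}\big\Vert_{p,w}$, and for $\theta=\min(2,p)$ the reverse inequality $\big\Vert\{\sum_j|b_j|^2\}^{1/2}\big\Vert_{p,w}\lesssim\big\Vert\{\sum_j|b_j|^\theta\}^{1/\theta}\big\Vert_{p,w}$ holds; these follow from the embeddings $\ell_2\hookrightarrow\ell_\tau$, $\ell_\theta\hookrightarrow\ell_2$ combined with the fact that $L_{p,w}(\ell_q)$-norms behave monotonically in a suitable sense — more precisely, for $p\ge 2$ one uses Minkowski's integral inequality in $L_{p/2}$ to pull the $\ell_{\tau/2}=\ell_{p/2}$ sum out, and dually for $p\le2$. (This is exactly the mechanism producing $\min/\max(p,2)$ in~\cite{DDT}.)

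For~\eqref{L}: I would bound $2^{-k\g\s}\Vert Q(D)^\g\eta_{2^k}f\Vert_{p,w}$, then raise to power $\tau$ and sum over $k>n$; after substituting the square-function expression and using $\ell_2\hookrightarrow\ell_\tau$ plus a Hardy-type rearrangement of the double sum $\sum_{k>n}2^{-k\g\s\tau}(\sum_{j\le k}(\cdots))$, everything collapses to $\lesssim\big\Vert\{\sum_{j>n}(2^{j\g\s(1-1)}\cdots)^2\}^{1/2}\big\Vert$-type quantities dominated by $K_\g$. The cleaner route is to note $\sum_{k>n}2^{-k\g\s\tau}\Vert Q(D)^\g\eta_{2^k}f\Vert_{p,w}^\tau$ is, up to the $\ell_\tau$-vs-$\ell_2$ comparison, comparable to $\sum_{k>n}2^{-k\g\s\tau}\big\Vert(\sum_{j\le k}a_j^2)^{1/2}\big\Vert_{p,w}^\tau$, and since $\Vert(\sum_{j\le k}a_j^2)^{1/2}\Vert_{p,w}$ is increasing in $k$ while $2^{-k\g\s\tau}$ is summable, the sum is $\lesssim$ the term one gets by essentially replacing it with $2^{-n\g\s\tau}\Vert Q(D)^\g\eta_{2^n}f\Vert_{p,w}^\tau$ plus controlled tails — all bounded by $K_\g(f,Q(D),2^{-n\g\s})_{p,w}^\tau$ via~\eqref{f3}. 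For~\eqref{R}: by~\eqref{f3} it suffices to dominate $\Vert f-\eta_{2^n}f\Vert_{p,w}$ and $2^{-n\g\s}\Vert Q(D)^\g\eta_{2^n}f\Vert_{p,w}$; the second term is one summand on the right of~\eqref{R}, while for the first I would use $\Vert f-\eta_{2^n}f\Vert_{p,w}\asymp\big\Vert\{\sum_{j>n}a_j^2\}^{1/2}\big\Vert_{p,w}\lesssim\big\Vert\{\sum_{j>n}a_j^\theta\}^{1/\theta}\big\Vert_{p,w}$ (the $\ell_\theta\hookrightarrow\ell_2$ direction), and then reconstruct $a_j$ in terms of the differences $Q(D)^\g\eta_{2^j}f-Q(D)^\g\eta_{2^{j-1}}f=Q(D)^\g\theta_j(f)$, applying~\eqref{eq3.16'} to the single block $\theta_j(f)$ to get $\Vert a_j\Vert_{p,w}\asymp 2^{j\g\s}\Vert\theta_j(f)\Vert_{p,w}\asymp\Vert Q(D)^\g\theta_j(f)\Vert_{p,w}\lesssim\Vert Q(D)^\g\eta_{2^j}f\Vert_{p,w}+\Vert Q(D)^\g\eta_{2^{j-1}}f\Vert_{p,w}$, so a Hardy-type inequality finishes the bound by $\sum_{k>n}2^{-k\g\s\theta}\Vert Q(D)^\g\eta_{2^k}f\Vert_{p,w}^\theta$ (after also noticing that $2^{-j\g\s}\cdot 2^{j\g\s}$ cancellations must be handled — one actually applies the $\ell_\theta$-comparison to the weighted sequence $2^{-k\g\s}\Vert Q(D)^\g\eta_{2^k}f\Vert$ via Hardy's inequality~\eqref{eqHardy}).

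The main obstacle I anticipate is the careful bookkeeping in moving between the square-function norm $\big\Vert\{\sum_j|b_j|^2\}^{1/2}\big\Vert_{p,w}$ and the mixed-norm quantities $\big(\sum_j\Vert b_j\Vert_{p,w}^q\big)^{1/q}$ appearing in~\eqref{L}--\eqref{R}: the former is an $L_{p,w}(\ell_2)$ norm, the latter an $\ell_q(L_{p,w})$ norm, and the inequalities between them are precisely where $\tau=\max(2,p)$ and $\theta=\min(2,p)$ enter and cannot be improved (as Section~\ref{secOpt} confirms). Concretely, $\ell_q(L_{p,w})\hookrightarrow L_{p,w}(\ell_2)$ requires $q\le\min(2,p)$ and $L_{p,w}(\ell_2)\hookrightarrow\ell_q(L_{p,w})$ requires $q\ge\max(2,p)$ — these are classical (Minkowski / triangle inequality in $L_{p/q}$ or its dual) but must be invoked in the right direction at the right place, and combined with the Hardy inequality~\eqref{eqHardy} to absorb the partial-sum structure $\eta_{2^k}f=\sum_{j\le k}\theta_j(f)$. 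Once this dictionary is set up, both~\eqref{L} and~\eqref{R} follow by essentially the same computation as in Lemma~\ref{thAB}, with $Q(D)^\g$ playing the role of the $Y$-seminorm and $\eta_{2^k}$ the best-approximation operator $P_{2^k}$; indeed, Theorem~\ref{thLP} can alternatively be viewed as an instance of Theorem~\ref{mainmain} with $X=L_{p,w}$, $Y=\{g:Q(D)^\g g\in L_{p,w}\}$, $G_{2^k}=\mathrm{span}\bigcup_{j\le k}G_j$, and $P_{2^k}=\eta_{2^k}$ — though carrying it out directly via Littlewood--Paley, as above, is the cleaner path and avoids checking convexity/smoothness of $L_{p,w}$ abstractly.
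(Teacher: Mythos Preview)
Your proposal is correct and follows essentially the same route as the paper: both arguments combine the realization identity~\eqref{f3} with the Littlewood--Paley characterization of Theorem~\ref{J3.0}, then exploit the embeddings $\ell_2\hookrightarrow\ell_\tau$ and $\ell_\theta\hookrightarrow\ell_2$ together with Minkowski's inequality (in $L_{p/\tau}$ or $L_{p/\theta}$) and Hardy's inequality~\eqref{eqHardy} to pass between the mixed norms. The paper's execution differs only in bookkeeping: it splits the square function of $\eta_{2^k}f-\eta_{2^n}f$ into three ranges $J_1,J_2,J_3$ to handle the boundary indices where $\theta_j(\eta_{2^k}f)\neq\theta_j(f)$, a technicality you allude to but do not spell out; and one caution on your closing remark --- Theorem~\ref{mainmain} is stated for \emph{best} approximants (its proof uses the Kolmogorov criterion), so it does not directly cover the linear means $\eta_{2^k}$, which is precisely why the paper proves Theorem~\ref{thLP} via the Littlewood--Paley route rather than invoking Section~\ref{sec3}.
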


\begin{proof}
  Denote $\a=\s \g$ and
  $$
  I^\tau=\sum_{k=n+1}^\infty 2^{-\a \tau k} \Vert Q(D)^\g \eta_{2^k} f\Vert_{p,w}^\tau.
  $$
Then
\begin{equation}\label{f5}
  \begin{split}
     I^\tau&\lesssim\sum_{k=n+1}^\infty 2^{-\a \tau k} \Vert Q(D)^\g (\eta_{2^k} f-\eta_{2^n} f)\Vert_{p,w}^\tau+2^{-n \a\tau} \Vert Q(D)^\g \eta_{2^n} f\Vert_{p,w}^\tau\\
     &=J+2^{-n \a\tau} \Vert Q(D)^\g \eta_{2^n} f\Vert_{p,w}^\tau.
   \end{split}
\end{equation}
By \eqref{eq3.16'}, we have
\begin{equation*}
  \begin{split}
     J&\lesssim \sum_{k=n+1}^\infty 2^{-k\a \tau}
     \bigg\{
     \int_{\mathcal{D}}
     \bigg(
     \sum_{j=1}^\infty 2^{2\a j} \(\t_j(\eta_{2^k}f-\eta_{2^n}f)\)^2
     \bigg)^\frac p2 w
     \bigg\}^\frac\tau p
     \\
     &=\sum_{k=n+1}^\infty 2^{-k\a \tau} \bigg\{\int_{\mathcal{D}}
     \bigg(
     \sum_{j=n}^{k+1} 2^{2\a j} \(\t_j(\eta_{2^k}f-\eta_{2^n}f)\)^2
     \bigg)^\frac p2 w
     \bigg\}^\frac\tau p\\
     &\lesssim \sum_{k=n+1}^\infty 2^{-k\a \tau} \Big\{2^{\a n} \Vert \t_n(\eta_{2^k}f-\eta_{2^n}f)\Vert_{p,w}+2^{\a (n+1)} \Vert \t_{n+1}(\eta_{2^k}f-\eta_{2^n}f)\Vert_{p,w} \Big\}^\tau\\
     &+\sum_{k=n+1}^\infty 2^{-k\a \tau} \bigg\{\int_\mathcal{D} \Big(\sum_{j=n+2}^{k-1} 2^{2j\a} \t_j(f)^2\Big)^\frac p2 w \bigg\}^\frac\tau p\\
     &+\sum_{k=n+1}^\infty 2^{-k\a \tau}
     \Big\{2^{k\a}\Vert \t_k(\eta_{2^k}f-\eta_{2^n}f)\Vert_{p,w}+2^{(k+1)\a}\Vert \t_{k+1}(\eta_{2^k}f-\eta_{2^n}f)\Vert_{p,w}\Big\}^\tau\\
     &=J_1+J_2+J_3.
   \end{split}
\end{equation*}

Let us estimate the first sum~$J_1$. By~\eqref{f4}, we have
\begin{equation*}
  \begin{split}
      \Vert \t_{j}(\eta_{2^k}f-\eta_{2^n}f)\Vert_{p,w}&\le 2A \Vert \eta_{2^k}f-\eta_{2^n}f\Vert_{p,w}\\
      &\le 2A (\Vert f-\eta_{2^n}f\Vert_{p,w}+\Vert f-\eta_{2^k}f\Vert_{p,w}).
   \end{split}
\end{equation*}
%Let us show that for $k\ge n+1$
%$$
%\Vert f-\eta_{2^k}f\Vert_{p,w} \lesssim \Vert f-\eta_{2^n}f\Vert_{p,w}.
%$$
In light of  the fact that
$$
\eta_{2^k} (\eta_{2^n} f)=\eta_{2^n}f\quad\text{for}\quad  k\ge n+1,
$$
we derive
\begin{equation}\label{ZZZ}
  \begin{split}
\Vert f-\eta_{2^k}f\Vert_{p,w}&=\Vert f-\eta_{2^n}f+\eta_{2^k}(\eta_{2^n}f-f)\Vert_{p,w}\\
&\le (1+A)\Vert f-\eta_{2^n}f\Vert_{p,w}.
   \end{split}
\end{equation}
Therefore,
$$
\Vert \t_j(\eta_kf-\eta_nf)\Vert_{p,w}\lesssim \Vert f-\eta_{2^n}f\Vert_{p,w}
$$
and we get
$$
J_1\lesssim \Vert f-\eta_{2^n}f\Vert_{p,w}.
$$
Regarding $J_2$, we note that
$$
\t_j(f)=\t_j(f-\eta_{2^n}f)\quad\text{for}\quad j\ge n+2,
$$
and, therefore,
$$
J_2=\sum_{k=n+1}^\infty 2^{-k\a \tau} \bigg\{\int_\mathcal{D} \Big(\sum_{j=n+2}^{k-1} 2^{2j\a} \t_j(f-\eta_{2^n}f)^2\Big)^\frac p2 w \bigg\}^\frac\tau p.
$$
Dealing with $J_3$, we observe that $\t_k(\eta_{2^n}f)=\eta_{2^n}(\t_k (f))$. Then
\begin{equation*}
  \begin{split}
      \Vert \t_k(\eta_{2^k}f-\eta_{2^n}f)\Vert_{p,w}&\le \Vert \t_k(\eta_{2^k}f-f)\Vert_{p,w}+\Vert \t_k(\eta_{2^n}f-f)\Vert_{p,w}\\
      &=\Vert \eta_{2^k}(\t_k (f))-\t_k(f)\Vert_{p,w}+\Vert \t_k(\eta_{2^n}f-f)\Vert_{p,w}\\
      &\lesssim \Vert \t_k(\eta_{2^n}f-f)\Vert_{p,w},
   \end{split}
\end{equation*}
where in the last estimate we used~\eqref{ZZZ} with $\t_k(f)$ in place of $f$.

Combining the above inequalities, we obtain that
\begin{equation*}\label{f6-}
  J\lesssim \sum_{k=n+1}^\infty 2^{-k\a\tau}\bigg\{\int_\mathcal{D} \bigg(\sum_{j=n}^{k+1} 2^{2j\a}\(\t_j(f-\eta_{2^n}f)\)^2 \bigg)^\frac p2 w\bigg\}^\frac\tau p
             +\Vert f-\eta_{2^n}f\Vert_{p,w}^\tau.
\end{equation*}
Next, using Minkowski's inequality with $\frac\tau p\ge 1$, Hardy's inequality~\eqref{eqHardy},
%\begin{equation}\label{eqHardy}
% \sum_{k=n}^\infty 2^{-k\a} \(\sum_{s=n}^k A_s\)^q \asymp \sum_{k=n}^\infty 2^{-\a k}A_k^q,\quad A_s\ge 0,\quad q>0,
%\end{equation}
the inequality $\Vert \{a_k\}\Vert_{\ell_\tau}\le \Vert \{a_k\}\Vert_{\ell_2}$, and Theorem~\ref{J3.0}, we get
%Next, from~\eqref{f6-}, using the triangle inequality $\sum a_k^\frac\tau p\le (\sum a_k)^\frac\tau p$, $\frac\tau p\ge 1$, Hardy's inequality, the inequality $\Vert \{a_k\}\Vert_{\ell_\tau}\le \Vert \{a_k\}\Vert_{\ell_2}$, and Theorem~\ref{J3.0}, we get
\begin{equation*}
  \begin{split}
     \sum_{k=n+1}^\infty 2^{-k\a\tau}&\bigg\{\int_\mathcal{D} \bigg(\sum_{j=n}^{k+1} 2^{2j\a}\(\t_j(f-\eta_{2^n}f)\)^2 \bigg)^\frac p2 w\bigg\}^\frac\tau p\\
     &\lesssim \bigg\{\int_\mathcal{D} \Big[\sum_{k=n+1}^\infty 2^{-k\a\tau} \Big(\sum_{j=n}^{k+1}2^{2j\a}(\t_j(f-\eta_{2^n}f))^2 \Big)^\frac\tau 2 \Big]^\frac p\tau w \bigg\}^\frac\tau p\\
     &\lesssim \bigg\{\int_\mathcal{D} \Big[\sum_{j=n}^\infty |\t_j(f-\eta_{2^n}f)|^\tau \Big]^\frac p\tau w \bigg\}^\frac \tau p\\
     &\lesssim \bigg\{\int_\mathcal{D} \Big[\sum_{j=n}^\infty |\t_j(f-\eta_{2^n}f)|^2 \Big]^\frac p2 w \bigg\}^\frac \tau p\\
     &\lesssim \Vert f-\eta_{2^n}f\Vert_{p,w}^\tau.
  \end{split}
\end{equation*}
Therefore,
\begin{equation}\label{f6}
  J\lesssim\Vert f-\eta_{2^n}f\Vert_{p,w}^\tau.
\end{equation}

In light of~\eqref{f3}, estimates \eqref{f5} and~\eqref{f6} imply
\begin{equation*}
\begin{split}
    I^\tau&\lesssim \Vert f-\eta_{2^n}f\Vert_{p,w}^\tau+2^{-n \a\tau} \Vert Q(D)^\g \eta_{2^n} f\Vert_{p,w}^\tau\\
    &\lesssim K_\g(f,Q(D),2^{-n\g \s})_{p,w}^\tau,
\end{split}
\end{equation*}
which proves~\eqref{L}.

\medskip

Let us prove~\eqref{R}. By~\eqref{f3}, we have
\begin{equation}\label{f8}
  K_\g(f,Q(D),2^{-n\g \s})_{p,w}^\t\lesssim \Vert f-\eta_{2^n}f\Vert_{p,w}^\t+2^{-n \a\t} \Vert Q(D)^\g \eta_{2^n} f\Vert_{p,w}^\t.
\end{equation}
By Theorem~\ref{J3.0}, taking into account that
$$
(\t_j(f-\eta_{2^n}f))^2\le 4 (\t_j(f))^2+4(\t_j(\eta_{2^n}f))^2,
$$
$$
\t_j(\eta_{2^n}f)=0\quad\text{for}\quad j\ge n+2,
$$
$$
\Vert \t_j(\eta_{2^n}f)\Vert_{p,w}\le A\Vert \t_j(f)\Vert_{p,w},
$$
and
$\Vert \{a_k\}\Vert_{\ell_2}\le \Vert \{a_k\}\Vert_{\ell_\t}$,
we derive
\begin{equation*}
  \begin{split}
      \Vert f-\eta_{2^n}f\Vert_{p,w}^\t
      &\lesssim \(\int_{\mathcal{D}} \bigg[\sum_{j=n}^\infty \t_j(f-\eta_{2^n}f)^2\bigg]^\frac p2 w\)^\frac \t p\\
      &\lesssim \(\int_{\mathcal{D}} \bigg[\sum_{j=n}^\infty \t_j(f)^2\bigg]^\frac p2 w\)^\frac \t p\lesssim \(\int_{\mathcal{D}} \bigg[\sum_{j=n}^\infty |\t_j(f)|^\t\bigg]^\frac p\t w\)^\frac \t p\\
      &= \(\int_{\mathcal{D}} \bigg[\sum_{j=n}^\infty  2^{-j\a\t}  \(\t_j(f)^2 2^{2j\a}\)^\frac\t2\bigg]^\frac p2 w\)^\frac \t p\\
      &\lesssim \Bigg(\int_\mathcal{D} \bigg[ \sum_{j=n}^\infty 2^{-j\a\t}\bigg(  \sum_{k=n}^j \t_k(f)^2 2^{2k\a}\\
      &\qquad\quad+\t_{j+1}(\eta_{2^{j+1}}f)^2 2^{2(j+1)\a}+\t_{j+2}(\eta_{2^{j+1}}f)^2 2^{2(j+2)\a}    \bigg)^\frac \t2       \bigg]^\frac p\t w\Bigg)^\frac \t p\\
      &=\Bigg(\int_\mathcal{D} \bigg[ \sum_{j=n}^\infty 2^{-j\a\t}\bigg(  \sum_{k=n}^{j+2} \t_k(\eta_{2^{j+1}}f)^2 2^{2k\a}
         \bigg)^\frac \t2       \bigg]^\frac p\t w\Bigg)^\frac \t p.
   \end{split}
\end{equation*}
Next, Minkowski's inequality with $\frac p\t\ge 1$ and Theorem~\ref{J3.0} (see~\eqref{eq3.16'}), yield
\begin{equation}\label{f9}
  \begin{split}
     \Vert f-\eta_{2^n}f\Vert_{p,w}^\t &\lesssim \sum_{j=n}^\infty 2^{-j\a\t} \Bigg(\int_\mathcal{D} \bigg[ \sum_{k=n}^{j+2} \t_k(\eta_{2^{j+1}}f)^2 2^{2k\a}
                \bigg]^\frac p2 w\Bigg)^\frac \t p\\
     &\lesssim \sum_{j=n}^\infty 2^{-j \a\t} \Vert Q(D)^\g \eta_{2^{j+1}} f\Vert_{p,w}^\t \lesssim \sum_{j=n}^\infty 2^{-j \a\t} \Vert Q(D)^\g \eta_{2^{j}} f\Vert_{p,w}^\t.
  \end{split}
\end{equation}

%It is clear that
%\begin{equation}\label{f10}
%  \begin{split}
%     2^{-n\a\t} \Vert Q(D)^\g \eta_{2^{n}} f\Vert_{p,w}^\t&=2^{-n\a\t} \Vert \eta_{2^n}\(Q(D)^\g \eta_{2^{n+1}} f\)\Vert_{p,w}^\t\\
%     &\le \sum_{k=n}^\infty 2^{-k \a\t} \Vert Q(D)^\g \eta_{2^{k}} f\Vert_{p,w}^\t.
%  \end{split}
%\end{equation}

Finally, combining \eqref{f8} and \eqref{f9}, we derive~\eqref{R}.
\end{proof}

\begin{corollary}\label{CCCC}
Under the conditions of Theorem~\ref{thLP}, we have
  \begin{equation*}
  %\label{L}
    \(\sum_{k=n+1}^\infty  k^{-\s \g \tau -1} \Vert Q(D)^\g \eta_{k} f\Vert_{p,w}^\tau\)^\frac1\tau \lesssim K_\g(f,Q(D),n^{-\g \s})_{p,w}
  \end{equation*}
and
    \begin{equation*}
    %\label{R}
   K_\g(f,Q(D),n^{-\g \s})_{p,w}\lesssim \(\sum_{k=n+1}^\infty k^{-\s \g \t-1 } \Vert Q(D)^\g \eta_{k} f\Vert_{p,w}^\t\)^\frac1\t.
  \end{equation*}
\end{corollary}

\begin{proof}
The proof easily follows from inequalities~\eqref{L} and~\eqref{R} and the fact that
$$\Vert Q(D)^\g \eta_{\mu} f\Vert_{p,w}\lesssim \Vert Q(D)^\g \eta_{\nu} f\Vert_{p,w},\qquad \nu\ge 2\mu.$$
The latter holds in light of boundedness of
the de~la~Vall\'ee Poussin-type operator in $L_{p,w}$
given by~\eqref{f4} and the fact that $\eta_{\mu}(\eta_\nu f)=\eta_\mu f$ for $\nu\ge 2\mu$. We also take into account that
$K_\g(f,Q(D),2t)_{p,w}\asymp K_\g(f,Q(D),t)_{p,w}$ for any
 $t>0$.
\end{proof}

\subsection{General Fourier multiplier operators}

In this subsection, we extend Theorem~\ref{thLP} considering general Fourier multiplier operators given by
$$
\Psi_n f\sim \sum_{k=0}^\infty \psi\(\frac kn\) A_k f,
$$
where a function $\psi\,: [0,\infty) \to \R$ is such that $\supp \psi \subset [0,1)$.
Together with the operator $\Psi_n$, additionally  assuming that $\psi(x)\neq 0$ for all $x\in [0,2^{-m}]$ for some $m\in \Z_+$, we will also use the operator
$$
\widetilde{\Psi}_n\sim \sum_{k=0}^\infty {\widetilde{\psi}\(\frac kn\)}A_k f,\quad \quad \widetilde{\psi}(\xi)=\frac{\eta(\xi)}{\psi(2^{-m}\xi)},
$$
which plays a role of the inverse operator to $\Psi_n$.

\begin{theorem}\label{thPsi}
  Suppose that the conditions of Theorem~\ref{thLP} are satisfied.

{\rm (A)} Let the operators $\Psi_{2^n}$  be such that, for any $f\in L_{p,w}(\mathcal{D})$ and $n\in\N$,
\begin{equation}\label{dopa}
  \Vert \Psi_{2^n} f\Vert_{p,w}\le C \Vert f\Vert_{p,w},
\end{equation}
where the constant $C$ does not depend on $f$ and $n$. Then
  \begin{equation}\label{L+}
    \(\sum_{k=n+1}^\infty 2^{-\s \g \tau k} \Vert Q(D)^\g \Psi_{2^k} f\Vert_{p,w}^\tau\)^\frac1\tau \lesssim K_\g(f,Q(D),2^{-n\g \s})_{p,w}.
  \end{equation}
%where the constant $C_1$ does not depend on $f$ and $n$.

{\rm (B)} Suppose that there exists $m\in \N$ such that $\psi(x)\neq 0$ for all $x\in [0,2^{-m}]$ and the operators $\widetilde{\Psi}_{2^n}$  are such that, for any $f\in L_{p,w}(\mathcal{D})$ and $n\in\N$,
%the sequence $\{\mu_k\}_{k=0}^\infty$,
%$$
%\mu_k=\eta\(\frac k{2^{n}}\)/\psi\(\frac k{2^{n+m}}\)\quad \text{satisfies}\quad (3.7)
%$$
\begin{equation}\label{dopa'}
  \Vert \widetilde{\Psi}_{2^n} f\Vert_{p,w}\le C \Vert f\Vert_{p,w},
\end{equation}
where the constant $C$ does not depend on $f$ and $n$. Then
    \begin{equation}\label{R+}
       K_\g(f,Q(D),2^{-n\g \s})_{p,w}\lesssim\(\sum_{k=n+1}^\infty 2^{-\s \g \t k} \Vert Q(D)^\g \Psi_{2^{k}} f\Vert_{p,w}^\t\)^\frac1\t.
  \end{equation}
%where the constant $C_2$ does not depend on $f$ and $n$.
\end{theorem}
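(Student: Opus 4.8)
The strategy is to reduce Theorem~\ref{thPsi} to Theorem~\ref{thLP} by comparing the Fourier multiplier operators $\Psi_{2^k}$ with the de~la~Vall\'ee Poussin means $\eta_{2^k}$, exploiting the fact that $\psi$ has support in $[0,1)$. The key observation is that, because $\eta(\xi)=1$ on $[0,1/2]$ and $\psi$ is supported in $[0,1)$, the symbols of $\Psi_n$ and $\eta_{Cn}$ overlap in a way that lets us write one operator as a bounded multiplier transform of the other, up to a harmless dilation in the index.

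\textbf{Part (A).} First I would note that $\eta_{2^{k+1}}\circ \Psi_{2^k}=\Psi_{2^k}$, since $\supp\psi\subset[0,1)$ forces $\psi(j/2^k)=0$ whenever $j/2^k\ge 1$, i.e.\ whenever $j\ge 2^k$, while $\eta(j/2^{k+1})=1$ for $j<2^k$. Hence $Q(D)^\g\Psi_{2^k}f=Q(D)^\g\eta_{2^{k+1}}(\Psi_{2^k}f)$, and applying the Littlewood--Paley estimate \eqref{eq3.16'} together with $\t_j(\Psi_{2^k}f)=0$ for $j\ge k+1$, one gets
\[
\Vert Q(D)^\g \Psi_{2^k}f\Vert_{p,w}\asymp \Big\Vert\Big\{\sum_{j=1}^{k}(2^{j\g\s}\t_j(\Psi_{2^k}f))^2\Big\}^{1/2}\Big\Vert_{p,w}.
\]
Now $\t_j(\Psi_{2^k}f)=\Psi_{2^k}(\t_j f)$ for $j\le k-1$ (the frequencies of $\t_j f$ lie in a range where multiplying by $\eta_{2^k}/\eta_{2^{k-1}}$ commutes with multiplying by $\psi(\cdot/2^k)$), and by \eqref{dopa} applied to $\t_jf$ we obtain $\Vert\t_j(\Psi_{2^k}f)\Vert_{p,w}\lesssim\Vert\t_j f\Vert_{p,w}$; the two boundary indices $j=k,k+1$ are handled separately exactly as the terms $J_1,J_3$ in the proof of Theorem~\ref{thLP}, using \eqref{dopa} to bound $\t_k(\Psi_{2^k}f)$ by $\Vert f-\eta_{2^n}f\Vert_{p,w}$-type quantities. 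Feeding these pointwise (in $x$) square-function bounds into the chain of Minkowski and Hardy inequalities from the proof of \eqref{L} then yields
\[
\sum_{k=n+1}^\infty 2^{-\s\g\tau k}\Vert Q(D)^\g\Psi_{2^k}f\Vert_{p,w}^\tau\lesssim \Vert f-\eta_{2^n}f\Vert_{p,w}^\tau+2^{-n\s\g\tau}\Vert Q(D)^\g\eta_{2^n}f\Vert_{p,w}^\tau\lesssim K_\g(f,Q(D),2^{-n\g\s})_{p,w}^\tau,
\]
which is \eqref{L+}. In fact, the cleanest route is simply to observe that all of the estimates in the proof of Theorem~\ref{thLP} used only: boundedness of the relevant operator, the fact that it acts as the identity on low frequencies, and that it kills high frequencies — so one re-runs that proof verbatim with $\Psi$ in place of $\eta$.

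\textbf{Part (B).} Here the roles are reversed: we want to recover $\eta_{2^n}f$ from the family $\{\Psi_{2^k}f\}$, so we use the ``inverse'' operator $\widetilde\Psi_n$ with symbol $\widetilde\psi(\xi)=\eta(\xi)/\psi(2^{-m}\xi)$, which is well-defined and smooth precisely because $\psi(x)\ne 0$ on $[0,2^{-m}]$. The point is that $\widetilde\Psi_{2^{k}}\circ\Psi_{2^{k+m}}=\eta_{2^{k}}$ on the appropriate frequency range: for $j<2^{k}$ one has $\widetilde\psi(j/2^k)\psi(j/2^{k+m})=\eta(j/2^k)$, and for $j\ge 2^k$ both sides vanish. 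Consequently $Q(D)^\g\eta_{2^k}f=Q(D)^\g\widetilde\Psi_{2^{k-m}}(\Psi_{2^k}f)$ (up to a shift of the index by $m$, which only changes constants in the final sum), and \eqref{dopa'} gives $\Vert Q(D)^\g\eta_{2^k}f\Vert_{p,w}\lesssim\Vert Q(D)^\g\Psi_{2^k}f\Vert_{p,w}$ — more carefully, one controls each Littlewood--Paley block $\t_j(\eta_{2^k}f)$ by $\t_j(\Psi_{2^{k'}}f)$ for suitable nearby $k'$ using \eqref{dopa'} on $\t_jf$. Plugging this into inequality \eqref{R} from Theorem~\ref{thLP} (or, again, re-running its proof with $\Psi$ replacing $\eta$ and invoking \eqref{dopa'} at the step where $\eqref{f4}$ was used) gives
\[
K_\g(f,Q(D),2^{-n\g\s})_{p,w}\lesssim\Big(\sum_{k=n+1}^\infty 2^{-\s\g\t k}\Vert Q(D)^\g\eta_{2^k}f\Vert_{p,w}^\t\Big)^{1/\t}\lesssim\Big(\sum_{k=n+1}^\infty 2^{-\s\g\t k}\Vert Q(D)^\g\Psi_{2^k}f\Vert_{p,w}^\t\Big)^{1/\t},
\]
which is \eqref{R+}, after absorbing the index shift by $m$ into the implied constant.

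\textbf{Main obstacle.} The delicate point is the bookkeeping of frequency supports near the transition scales $j\approx k$: one must verify that $\Psi_{2^k}$ (resp.\ $\widetilde\Psi_{2^k}$) really does act as the identity on $\t_j f$ for $j$ a fixed number of scales below $k$, and commutes with $\t_j$ there, so that \eqref{dopa}/\eqref{dopa'} can be applied blockwise. This is exactly the role played by the identities $\eta_{2^k}(\eta_{2^n}f)=\eta_{2^n}f$ and $\t_k(\eta_{2^n}f)=\eta_{2^n}(\t_k f)$ in the proof of Theorem~\ref{thLP}; here one needs their analogues for $\psi$-multipliers, which follow from $\supp\psi\subset[0,1)$ and $\psi\ne 0$ on $[0,2^{-m}]$ but require care about the finitely many boundary blocks, which contribute only a bounded number of extra terms and hence do not affect the $\ell_\tau$ or $\ell_\t$ sums.
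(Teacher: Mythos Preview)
Your Part (B) is essentially the paper's argument: establish the termwise comparison $\Vert Q(D)^\g \eta_{2^k} f\Vert_{p,w}\lesssim \Vert Q(D)^\g \Psi_{2^{k+m}} f\Vert_{p,w}$ via the identity $\eta_{2^k}=\widetilde\Psi_{2^k}\circ\Psi_{2^{k+m}}$ and boundedness \eqref{dopa'}, then plug directly into \eqref{R} and absorb the index shift. Good.

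For Part (A), however, you work much harder than necessary and your proposed route has a gap. The paper's argument is a single line: since $\supp\psi\subset[0,1)$ and $\eta\equiv 1$ on $[0,1/2]$, one has $\Psi_{2^k}f=\Psi_{2^k}(\eta_{2^{k+1}}f)$; multipliers commute with $Q(D)^\g$, so
\[
\Vert Q(D)^\g \Psi_{2^k} f\Vert_{p,w}=\Vert \Psi_{2^k}\big(Q(D)^\g \eta_{2^{k+1}} f\big)\Vert_{p,w}\le C\Vert Q(D)^\g \eta_{2^{k+1}} f\Vert_{p,w}
\]
by \eqref{dopa}, and \eqref{L} immediately gives \eqref{L+}. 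No Littlewood--Paley blocks, no boundary terms $J_1,J_3$, no re-running of anything. This is exactly symmetric to what you did for (B), and you had the key identity $\eta_{2^{k+1}}\circ\Psi_{2^k}=\Psi_{2^k}$ at the very start --- you just didn't exploit it the same way.

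Your ``cleanest route'' of re-running the proof of Theorem~\ref{thLP} verbatim with $\Psi$ in place of $\eta$ does \emph{not} work as stated: that proof uses $\eta_{2^k}(\eta_{2^n}f)=\eta_{2^n}f$ for $k\ge n+1$ (equation \eqref{ZZZ}), which relies on $\eta\equiv 1$ on $[0,1/2]$. A general $\psi$ need not equal $1$ anywhere, so there is no analogue of $\Psi_{2^k}\Psi_{2^n}=\Psi_{2^n}$. Your ``Main obstacle'' paragraph correctly identifies this, but it is an obstacle only for the detour you chose; the direct termwise comparison avoids it entirely.
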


\begin{proof}
To prove inequality~\eqref{L+}, it is enough to note that by~\eqref{dopa} one has
$$
\Vert Q(D)^\g\Psi_{2^n} f\Vert_{p,w}=\Vert Q(D)^\g\Psi_{2^n} (\eta_{2^{n+1}}f)\Vert_{p,w}\le C\Vert Q(D)^\g \eta_{2^{n+1}} f\Vert_{p,w}.
$$
%where the constant $C_1$ does not depend on $f$ and $n$.
Thus, \eqref{L} clearly implies~\eqref{L+}.

To show~\eqref{R+}, we note that by \eqref{dopa'}, we have
\begin{equation*}
  \begin{split}
    \bigg\Vert \sum_{k=0}^n \eta(2^{-n}k)A_k(f)\bigg\Vert_{p,w}&=\bigg\Vert \sum_{k=0}^n \eta(2^{-n}k)\psi(2^{-n-m}k)(\psi(2^{-n-m}k))^{-1}A_k(f)\bigg\Vert_{p,w}\\
     &\le C\bigg\Vert \sum_{k=0}^{n+m} \psi(2^{-n-m}k)A_k(f)\bigg\Vert_{p,w},
   \end{split}
\end{equation*}
which gives
\begin{equation}\label{Hor+}
  \Vert Q(D)^\g \eta_{2^n} f\Vert_{p,w}\lesssim \Vert Q(D)^\g \Psi_{2^{n+m}} f\Vert_{p,w}.
\end{equation}
This and~\eqref{R} imply
    \begin{equation*}\label{R++}
    \begin{split}
       K_\g(f,Q(D),2^{-n\g \s})_{p,w}&\lesssim\(\sum_{k=n+1}^\infty 2^{-\s \g \t k} \Vert Q(D)^\g \Psi_{2^{k+m}} f\Vert_{p,w}^\t\)^\frac1\t\\
       %&=\(\sum_{k=n+m+1}^\infty 2^{-\s \g \t (k-m)} \Vert Q(D)^\g \Psi_{2^{k}} f\Vert_{p,w}^\t\)^\frac1\t\\
       &\lesssim\(\sum_{k=n+1}^\infty 2^{-\s \g \t k} \Vert Q(D)^\g \Psi_{2^{k}} f\Vert_{p,w}^\t\)^\frac1\t,
    \end{split}
  \end{equation*}
completing the proof.
\end{proof}

\begin{remark}\label{remmuk}
{\rm (i)} \emph{By Assumption~\ref{H}, condition~\eqref{dopa} can be replaced by the condition that
the sequence
$\left\{\psi\(k{2^{-n}}\)\right\}_{k\in \Z_+}$ satisfies~\eqref{eq(3.7)}.
Similarly,  condition~\eqref{dopa'} can be replaced by the condition that
the sequence
$\{\widetilde{\psi}\(k{2^{-n}}\)\}_{k\in \Z_+}$ satisfies~\eqref{eq(3.7)}.}

{\rm (ii)} \emph{If $\psi\in C^r([0,\infty)$, then both sequences $\{\psi\(k{2^{-n}}\)\}_{k\in \Z_+}$ and $\{\widetilde{\psi}\(k{2^{-n}}\)\}_{k\in \Z_+}$ satisfy \eqref{eq(3.7)} with $\ell_0=r$.}

{\rm (ii)} \emph{Inequalities~\eqref{L+} and~\eqref{R+} can be  written similarly to those in Corollary~\ref{CCCC}.}
\end{remark}

\begin{example}
\emph{Many classical Fourier means are covered by Theorem~\ref{thPsi}. In particular, these cases include the following operators
$
\Psi_n f\sim \sum_{k=0}^n \psi\(\frac kn\) A_k f:
$}

\begin{enumerate}
  \item[1)] \emph{Partial sums of Fourier series, the case $\psi(x)=\chi_{[0,1]}(x)$;}
  \item[2)] \emph{Fej\'er means that are generated by the function $\psi(x)=(1-x)_+$;}
  \item[3)] \emph{More generally, Riesz means for which $\psi(x)=(1-x^\alpha)_+^\delta$, $\alpha,\delta>0$;}
  \item[4)] \emph{Rogosinskii means that are generated by}
$$
\psi(x)=\left\{
          \begin{array}{ll}
            \cos\(\frac{\pi x}{2}\), & \hbox{$0\le x\le 1$,} \\
            0, & \hbox{$x>1$;}
          \end{array}
        \right.
$$
  \item[5)]  \emph{Jackson means, the case}
$
\psi(x)=\frac32 (1-|x|)_+ *(1-|x|)_+.
$
\end{enumerate}
\end{example}
%1) Partial sums of Fourier series, the case of $\psi(x)=\chi_{[0,1]}(x)$;
%
%2) Fej\'er means that are generated by the function $\psi(x)=(1-x)_+$;
%
%3) More generaly, Riesz means for which $\psi(x)=(1-x^\alpha)_+^\delta$, $\alpha,\delta>0$;
%
%4) Rogosinskii means that are generated by
%$$
%\psi(x)=\left\{
%          \begin{array}{ll}
%            \cos\(\frac{\pi x}{2}\), & \hbox{$0\le x\le 1$,} \\
%            0, & \hbox{$x>1$;}
%          \end{array}
%        \right.
%$$
%
%5) Jackson means, the case
%$
%\psi(x)=\frac32 (1-|x|)_+ *(1-|x|)_+.
%$

The precise  formulation of the corresponding results in the periodic case will be given in Corollary~\ref{o-sn1}.
%
%???
%
%Jackson-- de la Vall\'ee Poussin means given by (see~\cite[pp. 130-131, 205, 517]{BN})
%$$
%\psi(x)=\left\{
%          \begin{array}{ll}
%            1-6x^2+6|x|^3, & \hbox{$|x|\le 1/2$;} \\
%            2(1-|x|)^3, & \hbox{$1/2\le |x|\le 1$.}
%          \end{array}
%        \right.
%$$

%????

%Examples. Moduli of smoothness in $L_p$, $1<p<\infty$
%
%\begin{theorem}\label{o-sn1}
%Let $f \in L_p, 1 < p < \infty,$ $\alpha\in \mathbb{N}$ and $\tau = \max(2,p)$, $\theta = \min(2,p)$.
% Then
%\begin{equation}\label{rr}
% \Big\{ \sum\limits_{k =n+1}^{\infty} 2^{- k \alpha \tau}
%|| S_{2^k}^{(\alpha)}(f) ||_p^{\tau}
%\Big\}^{\frac{1}{\tau}} \lesssim \omega_{\alpha} \Big( f,
%\frac{1}{2^n} \Big)_p \lesssim \Big\{ \sum\limits_{k =n+1}^{\infty}
%2^{- k \alpha \theta} || S_{2^k}^{(\alpha)}(f) ||_p^{\theta}
%\Big\}^{\frac{1}{\theta}}.
%\end{equation}
%\end{theorem}

%\section{Sharp inequalities for various approximation methods}

%\section{Sharp inequalities in $L_p(\T^d)$  spaces}

\section{General approximation processes and measures of smoothness}\label{sec2}

For a fixed positive $\l$, we consider a metric space $(X,\rho)$ with the metric $\rho : X\times X \mapsto \R_+ $ defined by
$$
\rho(f,g)=\Vert f-g\Vert_X^\l,
$$
where the functional  $\Vert \cdot \Vert_X : X\mapsto \R_+$ is such that for all $f,g\in X$ the following properties hold:

\medskip

i) $\Vert f\Vert_X=0$ if and only if $f=0$,

ii)
$%\begin{equation}\label{eq-2}
  \Vert -f\Vert_X=\Vert f\Vert_X,
$%\end{equation}

iii)
$%\begin{equation}\label{eq0}
  \Vert f+g\Vert_X^\l \le \Vert f\Vert_X^\l+\Vert g\Vert_X^\l.
$%\end{equation}
%$$
%\Vert f+g\Vert_X^\l \le \Vert f\Vert_X^\l+\Vert g\Vert_X^\l\quad\text{for all}\quad f,g\in X;
%$$

%{\bf Is it  a non-homogeneous norm????}

\medskip

Note that the metric $\Vert \cdot \Vert_X=\rho(f,0)$ is not a norm in general since the homogeneity property is not assumed.

%However, we use the notation $\Vert \cdot \Vert_X$ for simplicity.

%??? Let us note that we do not assume that $\Vert \cdot\Vert_X$ is homogeneous. ????

Let us consider the following functional, which to some extend, plays a role of a measure of smoothness (abstract modulus of smoothness)
%We will measure smoothness of a function $f\in X$ by using the following abstract modulus of smoothness
$$
\Omega(f,\d)_X\,:\, X\times (0,\infty)\mapsto \R_+,
$$
which satisfies the following conditions: for any $f,g\in X$ and $\d>0$,
%1)
\begin{equation}\label{eq1}
  \Omega(f,\d)_X\to 0\quad\text{as}\quad \d\to +0,
\end{equation}
%2)
\begin{equation}\label{eq2}
  \Omega(f,\d)_X\le C_1\Vert f\Vert_X,
\end{equation}
%3)
\begin{equation}\label{eq3}
  \Omega(f+g,\d)_X\le C_2\(\Omega(f,\d)_X+\Omega(g,\d)_X\),
\end{equation}
%4)
\begin{equation}\label{eq4}
   \Omega(f,2\d)_X\le C_3 \Omega(f,\d)_X,
\end{equation}
where $C_j=C_j(X,\l)$, $j=1,2,3$.

\medskip

As an approximation tool, we consider the family of operators $P_n\,:\,X\mapsto X$, $n\in \N$, such that the following two properties hold:  for any $f\in X$ and $n\in \N$,

\begin{equation}\label{eq5}
  \Vert f-P_{n}(f)\Vert_X \le \Vert f-P_n(P_{2n}(f))\Vert_X
\end{equation}
%or
%$$
%  \Vert P_{2^k}(f)-P_{2^{k-1}}(f)\Vert_X \le \Vert P_{2^k}(f)-P_{2^{k-1}}(P_{2^k}(f))\Vert_X,\quad k\ge n,\leqno{(\ref{eq5}')}
%$$
%and
\begin{equation}\label{eq6}
  \Vert f-P_n(f)\Vert_X \le C_4 \Omega\(f,n^{-1}\)_X,
\end{equation}
where $C_4=C_4(X,\l)$.

Inequality~\eqref{eq5} trivially holds when $P_n(f)$ is a best approximant to $f$ in $X$ or $P_n(f)$ is such that $P_n(P_{2n}(f))=P_n(f)$, for example, take a de la Vall\'ee Poussin--type operator or a projection operator. The second inequality is the Jackson--type theorem.

\begin{theorem}\label{le1}
Let $f\in X$ and $n\in  \N$. Then
  \begin{equation}\label{eq7}
    \Omega(P_{2^n}(f),2^{-n})_X\lesssim \Omega(f,2^{-n})_X\lesssim \(\sum_{k=n+1}^\infty \Omega(P_{2^k}(f),2^{-k})_X^\l\)^\frac1\l,
  \end{equation}
where the left-hand side inequality holds if we assume only~\eqref{eq2}, \eqref{eq3}, and~\eqref{eq6}.
%where $C_5$ and $C_6$ depend only on $X$.
\end{theorem}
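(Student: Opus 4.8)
The plan is to prove the two inequalities in \eqref{eq7} separately, the left one being essentially a one-line consequence of \eqref{eq2}, \eqref{eq3}, \eqref{eq6}, and the right one requiring a telescoping/dyadic decomposition of $f$ together with the quasi-subadditivity \eqref{eq3} and the $\ell_\l$-triangle inequality coming from property iii).

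For the left-hand inequality, I would write $P_{2^n}(f) = f - (f - P_{2^n}(f))$ and apply \eqref{eq3} to get
\[
\Omega(P_{2^n}(f),2^{-n})_X \lesssim \Omega(f,2^{-n})_X + \Omega(f-P_{2^n}(f),2^{-n})_X.
\]
The second term is bounded by $C_1\Vert f - P_{2^n}(f)\Vert_X$ via \eqref{eq2}, and then by $C_1 C_4\,\Omega(f,2^{-n})_X$ via \eqref{eq6}. This uses only \eqref{eq2}, \eqref{eq3}, \eqref{eq6}, as claimed.

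For the right-hand inequality, the key is a dyadic telescoping. First observe that $\Vert f - P_{2^n}(f)\Vert_X \to 0$ as $n\to\infty$: indeed by \eqref{eq6} it is controlled by $\Omega(f,2^{-n})_X$, which tends to $0$ by \eqref{eq1}. Hence in the metric $\rho$ we may write $f = P_{2^{n}}(f) + \sum_{k\ge n}\bigl(P_{2^{k+1}}(f) - P_{2^{k}}(f)\bigr)$ with convergence in $X$. Applying \eqref{eq3} repeatedly (and using \eqref{eq2} to pass from $\Omega$ of a difference to its $X$-norm) together with the $\l$-subadditivity iii) of the metric, one gets
\[
\Omega(f,2^{-n})_X^\l \lesssim \Omega(P_{2^n}(f),2^{-n})_X^\l + \sum_{k=n}^\infty \bigl\Vert P_{2^{k+1}}(f) - P_{2^{k}}(f)\bigr\Vert_X^\l.
\]
Now $\Omega(P_{2^n}(f),2^{-n})_X\lesssim\Omega(f,2^{-n})_X$ by the already-proved left inequality, and this first term can itself be absorbed by noting $\Omega(P_{2^n}(f),2^{-n})_X \lesssim \Omega(P_{2^n}(f),2^{-n-1})_X + \cdots$ or, more cleanly, by re-telescoping so that only tail differences appear; the cleaner route is to bound $\Omega(f,2^{-n})_X^\l$ directly by $\sum_{k\ge n}\Vert P_{2^{k+1}}(f)-P_{2^k}(f)\Vert_X^\l$ after observing the partial sums $P_{2^n}(f)$ themselves converge to $f$. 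Finally, for each dyadic block I would estimate
\[
\Vert P_{2^{k+1}}(f) - P_{2^{k}}(f)\Vert_X
\lesssim \Vert P_{2^{k+1}}(f) - f\Vert_X + \Vert f - P_{2^{k}}(f)\Vert_X
\lesssim \Omega(f,2^{-k})_X \lesssim \Omega(P_{2^k}(f),2^{-k})_X
\]
using \eqref{eq6}, \eqref{eq4} (to replace $2^{-k-1}$ by $2^{-k}$), and then the left-hand inequality of \eqref{eq7} in reverse form. Wait—that last step needs care, since we want the final bound in terms of $\Omega(P_{2^k}(f),2^{-k})_X$, not $\Omega(f,2^{-k})_X$; the correct substitute is to write $\Vert f - P_{2^k}(f)\Vert_X \lesssim \Vert f - P_{2^k}(P_{2^{k+1}}(f))\Vert_X$ via \eqref{eq5}, and then bound the right side by $\Omega(P_{2^{k+1}}(f),2^{-k})_X$ through \eqref{eq6}; shifting the index and using \eqref{eq4} gives a bound by $\Omega(P_{2^k}(f),2^{-k})_X$ up to constants and possibly combining two consecutive terms.

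The main obstacle, and the step I expect to require the most care, is precisely this last point: routing the dyadic differences $P_{2^{k+1}}(f)-P_{2^k}(f)$ so that they are controlled by $\Omega(P_{2^k}(f),2^{-k})_X$ (the approximant's own smoothness) rather than by $\Omega(f,2^{-k})_X$. This is exactly what property \eqref{eq5} is designed for — it lets one replace $f$ by $P_{2^{k+1}}(f)$ inside a Jackson estimate at level $2^k$ — but one must track the index shifts and invoke \eqref{eq4} to absorb the resulting factor-of-two changes in $\d$, and possibly group $\Omega(P_{2^k}(f),\cdot)$ and $\Omega(P_{2^{k+1}}(f),\cdot)$ terms, which is harmless since both appear in the sum on the right of \eqref{eq7}. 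Everything else is a routine application of the quasi-subadditivity constants $C_1,C_2,C_3,C_4$ and the $\l$-subadditivity of $\rho$.
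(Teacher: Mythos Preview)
Your left-hand inequality is correct and identical to the paper's argument.

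Your right-hand inequality has a genuine gap. You propose to telescope the approximants,
\[
\Vert f-P_{2^n}(f)\Vert_X^\l \le \sum_{k\ge n}\Vert P_{2^{k+1}}(f)-P_{2^k}(f)\Vert_X^\l,
\]
and then to bound each dyadic block by $\Omega(P_{2^{k+1}}(f),2^{-k-1})_X$. But the step you single out as ``the correct substitute'' is not valid: applying \eqref{eq6} to $g=P_{2^{k+1}}(f)$ gives
\[
\Vert P_{2^{k+1}}(f)-P_{2^k}(P_{2^{k+1}}(f))\Vert_X \lesssim \Omega(P_{2^{k+1}}(f),2^{-k})_X,
\]
not a bound on $\Vert f-P_{2^k}(P_{2^{k+1}}(f))\Vert_X$. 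If you split once more you only recover $E_{2^k}^\l\le E_{2^{k+1}}^\l+\Vert P_{2^{k+1}}(f)-P_{2^k}(P_{2^{k+1}}(f))\Vert_X^\l$, and then the residual $E_{2^{k+1}}$ is still there. More generally, there is no way to bound $\Vert P_{2^{k+1}}(f)-P_{2^k}(f)\Vert_X$ by smoothnesses of the approximants alone: after inserting $P_{2^k}(P_{2^{k+1}}(f))$ you are left with $\Vert P_{2^k}(P_{2^{k+1}}(f))-P_{2^k}(f)\Vert_X$, and no stability of $P_{2^k}$ is assumed.

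The paper avoids this by telescoping the \emph{errors} rather than the approximants. With $E_{2^k}=\Vert f-P_{2^k}(f)\Vert_X$ and $I_{2^k}=\Vert P_{2^{k+1}}(f)-P_{2^k}(P_{2^{k+1}}(f))\Vert_X$, property~\eqref{eq5} and the $\l$-triangle inequality yield $E_{2^k}^\l-E_{2^{k+1}}^\l\le I_{2^k}^\l$, so
\[
E_{2^n}^\l=\sum_{k\ge n}\bigl(E_{2^k}^\l-E_{2^{k+1}}^\l\bigr)\le \sum_{k\ge n} I_{2^k}^\l\lesssim \sum_{k\ge n}\Omega(P_{2^{k+1}}(f),2^{-k-1})_X^\l,
\]
the last estimate by \eqref{eq6} and \eqref{eq4}. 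One final application of \eqref{eq3} and \eqref{eq2} then gives $\Omega(f,2^{-n})_X^\l\lesssim E_{2^{n+1}}^\l+\Omega(P_{2^{n+1}}(f),2^{-n-1})_X^\l$ and finishes. So the missing idea is to telescope $E_{2^k}^\l$, not $P_{2^k}(f)$; your use of \eqref{eq5} points in exactly this direction, but the telescope you wrote down does not close.
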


%\begin{remark}\label{rem0}
%The left hand side inequality holds only under conditions...
%\end{remark}

%\begin{remark}\label{rem1}
%Theorem does not hold if at least any of conditions does not hold.
%\end{remark}

Note that in the case of the Banach space $X$, a similar result for $K$-functionals and holomorphic semi-groups was obtained in~\cite[Lemmas 3.5.4 and 3.5.5]{BB}.

\begin{proof}[Proof of Theorem~\ref{le1}]
By~\eqref{eq3},
$$
\Omega(P_{2^n}(f),2^{-n})_X \lesssim \Omega(P_{2^n}(f)-f,2^{-n})_X+\Omega(f,2^{-n})_X,
$$
and the left-hand side estimate in~\eqref{eq7} follows from~\eqref{eq2} and~\eqref{eq6}.

%The left-hand side inequality easily follows from inequalities~\eqref{eq2}, \eqref{eq3}, and~\eqref{eq6}.

Let us prove the right-hand side inequality.
Denote
$$
I_{2^n}:=\Vert P_{2^{n+1}}(f)-P_{2^n}(P_{2^{n+1}}(f))\Vert_X.
$$
Then by~\eqref{eq6} and~\eqref{eq4}, we have
\begin{equation}\label{eq8}
  I_{2^n}\lesssim\Omega(P_{2^{n+1}}(f),2^{-n})_X\lesssim\Omega(P_{2^{n+1}}(f),2^{-n-1})_X.
\end{equation}
At the same time, by~\eqref{eq5} we get
\begin{equation}\label{eq9}
  \begin{split}
     I_{2^n}^\l&=\Vert P_{2^{n+1}}(f)-f+f-P_{2^{n}}(P_{2^{n+1}}(f))\Vert_X^\l\\
     &\ge\Vert f-P_{2^n}(P_{2^{n+1}}(f))\Vert_X^\l-\Vert f-P_{2^{n+1}}(f)\Vert_X^\l\\
     &\ge\Vert f-P_{2^n}(f)\Vert_X^\l-\Vert f-P_{2^{n+1}}(f)\Vert_X^\l\\
     &=:E_{2^n}^\l-E_{2^{n+1}}^\l.
  \end{split}
\end{equation}
By~\eqref{eq6} and~\eqref{eq1}, $E_{2^k}\to 0$ as $k\to \infty$. Thus, \eqref{eq8} and~\eqref{eq9} imply
\begin{equation}\label{eq10}
  \begin{split}
    E_{2^n}^\l=\sum_{k=n}^\infty \(E_{2^k}^\l-E_{2^{k+1}}^\l\)\le \sum_{k=n}^\infty I_{2^k}^\l\lesssim\sum_{k=n}^\infty \Omega(P_{2^{k+1}}(f),2^{-k-1})_X^\l.
  \end{split}
\end{equation}
Then, using properties of the modulus of smoothness, namely~\eqref{eq4}, \eqref{eq3}, and \eqref{eq2}, we obtain
\begin{equation*}
  \begin{split}
     \Omega(f,2^{-n})_X^\l &\lesssim \Omega(f,2^{-n-1})_X^\l\\
     &\lesssim \(\Omega(f-P_{2^{n+1}}(f),2^{-n-1})_X^\l+\Omega(P_{2^{n+1}}(f),2^{-n-1})_X^\l\)\\
     &\lesssim \Vert f-P_{2^{n+1}}(f)\Vert_X^\l+ \Omega(P_{2^{n+1}}(f),2^{-n-1})_X^\l\\
     &= E_{2^{n+1}}^\l+ \Omega(P_{2^{n+1}}(f),2^{-n-1})_X^\l.
     \end{split}
\end{equation*}
Finally, tacking into account~\eqref{eq10},
\begin{equation*}
  \begin{split}
     \Omega(f,2^{-n})_X^\l&\lesssim \sum_{k=n}^\infty \Omega(P_{2^{k+1}}(f),2^{-k-1})_X^\l+ \Omega(P_{2^{n+1}}(f),2^{-n-1})_X^\l\\
     &\lesssim \sum_{k=n}^\infty \Omega(P_{2^{k+1}}(f),2^{-k-1})_X^\l,
   \end{split}
\end{equation*}
which is the right-hand side inequality of~\eqref{eq7}.
\end{proof}

\begin{remark}\label{remVN}
\emph{Under the conditions of Theorem~\ref{le1}, we have }
  \begin{equation}\label{eq7}
    \Omega(f,n^{-1})_X\lesssim \(\sum_{k=1}^\infty \Omega(P_{2^k n}(f),2^{-k}n^{-1})_X^\l\)^\frac1\l.
  \end{equation}
%where the left-hand side inequality holds if we assume only~\eqref{eq2}, \eqref{eq3}, and~\eqref{eq6}.
\emph{This inequality can be obtained by using a slight modification of the proof of Theorem~\ref{le1}. See also the proof of Lemma~8 in~\cite{HuLi05}.}
\emph{Similar assertions are also valid for Theorems~\ref{mainmain}, \ref{corthAB},  \ref{corthAB2},  \ref{thLP},   \ref{thPsi} as well as for the corresponding examples in Sections~\ref{sec5}--\ref{sec8}.}
\end{remark}

%\bigskip

As a simple corollary of Theorem~\ref{le1} and Remark~\ref{remVN}, we have the following version of Jackson's inequality written in terms of measure of smoothness of
%Using~\eqref{eq6} and~\eqref{eq7}, we directly have that the the error of approximation of $f$ by $P_{2^n}(f)$ can be measured in terms of the modulus of smoothness of
$P_{2^k n}(f)$.
%for $k>n$. In particular, we have the following result.
\begin{corollary}
Let $f\in X$ and $n\in \N$. Then
\begin{equation*}
  \Vert f-P_{n}(f)\Vert_X\lesssim \(\sum_{k=1}^\infty \Omega(P_{2^k n}(f),2^{-k}n^{-1})_X^\l\)^\frac1\l.
\end{equation*}
\end{corollary}
\begin{remark}\label{fourier0}
\emph{
If we assume the more general  condition than (\ref{eq6}), namely,
$$%\begin{equation}\label{eq6}
  \Vert f-P_n(f)\Vert_X \le C_4 \xi(n)  \Omega\(f,n^{-1}\)_X,
$$%\end{equation}
where $C_4=C_4(X,\l)$ and $\xi$ is a positive non-decreasing function on $[1,\infty),$
then repeating the  proof of Theorem~\ref{le1} gives the following estimates
 \begin{equation}\label{eq7--}
\xi^{-1}(2^n)    \Omega(P_{2^n}(f),2^{-n})_X\lesssim \Omega(f,2^{-n})_X\lesssim \(\sum_{k=n+1}^\infty \xi^\l(2^k) \Omega(P_{2^k}(f),2^{-k})_X^\l\)^\frac1\l
\end{equation}
and
\begin{equation*}
  \Vert f-P_{2^n}(f)\Vert_X\lesssim \(\sum_{k=n+1}^\infty \xi^\l(2^k) \Omega(P_{2^k}(f),2^{-k})_X^\l\)^\frac1\l.
\end{equation*}
A typical example when Remark~\ref{fourier0} can be applied is considering the partial sums of Fourier series $P_n(f)=S_n(f)$ in the case $X=L_p(\T)$, $p=1,\infty$, and $\xi(t)=\log (t+1)$; for details see Corollary~\ref{fourier}.
}
%  As a typical example, take  $X=L_p(\T^d)$, $p=1,\infty$, $\l=1,$ $\Omega\(f,n^{-1}\)_X=\omega_s(f,n^{-1})_{L_p(\T^d)}$, i.e., the classical modulus of smoothness of order $s\in \N$, and $P_n=S_{n;q} f(x)=\sum_{\Vert k\Vert_{\ell_q}\le n} \widehat{f}(k) e^{i(k,x)}$, i.e.  the $\ell_q$-partial Fourier sums of $f$.
% Then (\ref{eq7--}) holds with $\xi(t)=\ln t$.
% }{\bf napishi ksi. mojet, zapisat v sootvetstv. sekciju etot primer (i cherez proizvodnye)? }
\end{remark}

In what follows, we say that  $\w\,:\, \R_+\to\R_+$ is the modulus a continuity if $\omega$ is a positive  non-decreasing function, $\w(0)=0$, and $\w(x+y)\le \w(x)+\w(y)$ for any $x,y\in \R_+$.

\begin{corollary}\label{le1Optt1}
%Suppose that conditions~\eqref{eq1}--\eqref{eq6} are valid.
For any modulus of continuity $\w$ such that
\begin{equation}\label{Omeg}
 \sum_{k=n}^\infty \w(2^{-k})\lesssim \w(2^{-n}),
\end{equation}
the following assertions are equivalent:
\begin{enumerate}
  \item $\Omega(P_{2^n}(f),2^{-n})_X\lesssim \w(2^{-n})$,

  \item $\Omega(f,2^{-n})_X\lesssim \w(2^{-n})$.
\end{enumerate}
\end{corollary}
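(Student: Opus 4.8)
The plan is to deduce Corollary~\ref{le1Optt1} directly from Theorem~\ref{le1} together with the summability hypothesis~\eqref{Omeg} on the modulus of continuity $\w$. The implication $(2)\Rightarrow(1)$ is immediate: it is precisely the left-hand side inequality in~\eqref{eq7}, which only needs~\eqref{eq2}, \eqref{eq3}, and~\eqref{eq6}, and gives $\Omega(P_{2^n}(f),2^{-n})_X\lesssim \Omega(f,2^{-n})_X\lesssim \w(2^{-n})$.

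For the converse $(1)\Rightarrow(2)$, I would apply the right-hand side inequality in~\eqref{eq7}. Assuming $\Omega(P_{2^k}(f),2^{-k})_X\lesssim \w(2^{-k})$ for all $k$, we get
\begin{equation*}
  \Omega(f,2^{-n})_X\lesssim \Big(\sum_{k=n+1}^\infty \Omega(P_{2^k}(f),2^{-k})_X^\l\Big)^{1/\l}
  \lesssim \Big(\sum_{k=n+1}^\infty \w(2^{-k})^\l\Big)^{1/\l}.
\end{equation*}
It then remains to observe that $\big(\sum_{k\ge n+1}\w(2^{-k})^\l\big)^{1/\l}\lesssim \sum_{k\ge n+1}\w(2^{-k})\lesssim \w(2^{-n})$. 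The first of these two estimates is the elementary inequality $\|\cdot\|_{\ell_\l}\le\|\cdot\|_{\ell_1}$ when $\l\le 1$ (which is the relevant range here, since property iii) for $\Vert\cdot\Vert_X$ is a quasi-triangle/$\l$-subadditivity condition typically with $0<\l\le1$); if one wishes to allow $\l>1$ as well, one instead uses that $\w(2^{-k})$ is bounded by $\w(2^{-n-1})$ for $k\ge n+1$, factors out one power, and reduces to the $\ell_1$ bound again. The second estimate is exactly hypothesis~\eqref{Omeg} (applied with $n$ replaced by $n+1$, and using monotonicity of $\w$ to pass from $\w(2^{-n-1})$ back to a constant times $\w(2^{-n})$, which also follows from $\w$ being a modulus of continuity via $\w(2x)\le 2\w(x)$).

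I do not anticipate a serious obstacle: the whole argument is a one-line invocation of Theorem~\ref{le1} in each direction, glued together by the scalar summation estimate encoded in~\eqref{Omeg}. The only point requiring a moment's care is the passage from the $\ell_\l$-sum produced by~\eqref{eq7} to the $\ell_1$-type sum controlled by~\eqref{Omeg}; this is routine but should be stated explicitly, distinguishing (or just restricting to) the case $\l\le1$ so that subadditivity of $t\mapsto t^\l$ applies. One should also note that condition~\eqref{Omeg} automatically forces $\w(2^{-n})\to0$, so the right-hand side of~\eqref{eq7} is finite and the manipulations are legitimate.
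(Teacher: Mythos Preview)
Your overall strategy matches the paper's: both directions come from Theorem~\ref{le1}, and the only nontrivial point is passing from the $\ell_\l$-sum in~\eqref{eq7} to the $\ell_1$-type condition~\eqref{Omeg}. However, the inequality you invoke for this step is stated in the wrong direction. For $0<\l\le 1$ one has $\Vert\{a_k\}\Vert_{\ell_1}\le \Vert\{a_k\}\Vert_{\ell_\l}$, not the reverse; so your claimed bound $\big(\sum_k \w(2^{-k})^\l\big)^{1/\l}\lesssim \sum_k \w(2^{-k})$ fails precisely in the range you call ``relevant.'' Your backup ``factor out one power'' argument is correct, but only for $\l\ge 1$, since it uses $\w(2^{-k})^{\l-1}\le \w(2^{-n-1})^{\l-1}$, which reverses when $\l<1$.

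The fix (and this is exactly what the paper does, citing~\cite{Ti04}) is to note that~\eqref{Omeg} is in fact equivalent to the same condition with any exponent $\l>0$, namely $\big(\sum_{k\ge n}\w(2^{-k})^\l\big)^{1/\l}\lesssim \w(2^{-n})$. The reason is that~\eqref{Omeg} forces geometric decay: setting $S_n=\sum_{k\ge n}\w(2^{-k})$, the hypothesis $S_n\le C\w(2^{-n})=C(S_n-S_{n+1})$ gives $S_{n+1}\le(1-1/C)S_n$, hence $\w(2^{-k})\le S_k\le C\rho^{\,k-n}\w(2^{-n})$ with $\rho=1-1/C<1$. Summing the $\l$-th powers of this geometric bound yields the desired estimate for every $\l>0$, after which your argument goes through unchanged.
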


\begin{proof}
The proof follows from~\eqref{eq7} and the simple fact that~\eqref{Omeg} is equivalent to
\begin{equation}\label{Omegla}
 \(\sum_{k=n}^\infty \w(2^{-k})^\l\)^\frac1\l\lesssim \w(2^{-n})\quad\text{for any}\quad \l>0,
\end{equation}
see, e.g.,~\cite{Ti04}.
\end{proof}

%A sharper assertion is given in the following result.

%\bigskip

For a given modulus of continuity $\w$, we define the function class
$$
\Xi_\w=\left\{f\in X\,:\, \Omega(f,\d)_X\asymp \w(\d),\quad \d\to 0\right\}.
$$

The next corollary provides sharpness of Theorem~\ref{le1}.

\begin{corollary}\label{le1Optt2}
 Let $f\in \Xi_\w$ and $\w$ satisfy~\eqref{Omeg}. Then, for large enough $n\in \N$,
  \begin{equation}\label{XXXZZZ1}
    \Omega(f,2^{-n})_X\asymp \Omega(P_{2^n}(f),2^{-n})_X \asymp \(\sum_{k=n+1}^\infty \Omega(P_{2^k}(f),2^{-k})_X^\l\)^\frac1\l.%,\quad n\to \infty.
  \end{equation}

%{\rm (2)} Suppose that $\w$ is such that the function
%$
%\vp(t)=\inf_{k\in \N}\frac{\w(t2^{-k})}{\w(2^{-k})}
%$
%satisfies  $\lim_{t\to\infty}\vp(t)=\infty$.
%Then
%\begin{equation*}\label{XXXZZZ1+}
%  \Omega(f,2^{-n})_X\asymp\Omega(P_{2^n}(f),2^{-n})_X.
%\end{equation*}
\end{corollary}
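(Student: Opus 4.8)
The plan is to read off \eqref{XXXZZZ1} from Theorem~\ref{le1} by exploiting that the assumption $f\in\Xi_\w$ pins down the size of every quantity that appears, with the regularity condition \eqref{Omeg} used only to sum the tails.

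First I would apply Theorem~\ref{le1} directly to $f$, which already gives
$$
\Omega(P_{2^n}(f),2^{-n})_X\lesssim\Omega(f,2^{-n})_X\lesssim\Bigl(\sum_{k=n+1}^{\infty}\Omega(P_{2^k}(f),2^{-k})_X^{\l}\Bigr)^{\frac1\l},
$$
so one half of each of the two asymptotic equalities in \eqref{XXXZZZ1} is free. Next, since $f\in\Xi_\w$ there is $n_0$ with $\Omega(f,2^{-k})_X\asymp\w(2^{-k})$ for $k\ge n_0$; combined with the leftmost inequality above this yields $\Omega(P_{2^k}(f),2^{-k})_X\lesssim\w(2^{-k})$ for $k\ge n_0$. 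Inserting this into the tail sum and using that \eqref{Omeg} is equivalent to \eqref{Omegla} (as in the proof of Corollary~\ref{le1Optt1}), I obtain for $n\ge n_0$
$$
\Bigl(\sum_{k=n+1}^{\infty}\Omega(P_{2^k}(f),2^{-k})_X^{\l}\Bigr)^{\frac1\l}\lesssim\Bigl(\sum_{k=n+1}^{\infty}\w(2^{-k})^{\l}\Bigr)^{\frac1\l}\lesssim\w(2^{-n})\asymp\Omega(f,2^{-n})_X,
$$
which, together with Theorem~\ref{le1}, establishes the rightmost $\asymp$ in \eqref{XXXZZZ1}.

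It remains to reverse the leftmost inequality, i.e. to prove $\Omega(f,2^{-n})_X\lesssim\Omega(P_{2^n}(f),2^{-n})_X$ for $n$ large, and I expect this to be the crux. Using \eqref{eq3} and \eqref{eq2} I would split
$$
\Omega(f,2^{-n})_X\lesssim\Omega(f-P_{2^n}(f),2^{-n})_X+\Omega(P_{2^n}(f),2^{-n})_X\lesssim\|f-P_{2^n}(f)\|_X+\Omega(P_{2^n}(f),2^{-n})_X,
$$
so the whole matter reduces to the estimate $\|f-P_{2^n}(f)\|_X\lesssim\Omega(P_{2^n}(f),2^{-n})_X$. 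To get it I would return to inequalities \eqref{eq8}--\eqref{eq10} from the proof of Theorem~\ref{le1}, which give $\|f-P_{2^n}(f)\|_X^{\l}\lesssim\sum_{k>n}\Omega(P_{2^k}(f),2^{-k})_X^{\l}$; by the previous paragraph this tail is $\asymp\w(2^{-n})^{\l}$, while each summand is $\lesssim\w(2^{-k})^{\l}$ and $\sum_{k>n}\w(2^{-k})^{\l}\asymp\w(2^{-n})^{\l}$. The delicate point — the place where the full force of $f\in\Xi_\w$ together with \eqref{Omeg} is needed — is to upgrade this into the pointwise lower bound $\Omega(P_{2^n}(f),2^{-n})_X\gtrsim\w(2^{-n})$ for all large $n$; once that is in hand one has $\|f-P_{2^n}(f)\|_X\lesssim\w(2^{-n})\lesssim\Omega(P_{2^n}(f),2^{-n})_X$, and collecting the displayed inequalities, with \eqref{eq4} used to pass freely between the scales $2^{-n}$ and $2^{-n-1}$, completes the proof of \eqref{XXXZZZ1}.
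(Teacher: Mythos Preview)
Your overall structure is correct and matches the paper's: both proofs use Theorem~\ref{le1} for the ``easy'' halves of the two equivalences and then use \eqref{Omegla} to close the tail sum. Your identification of the crux is also correct: everything reduces to the pointwise lower bound $\Omega(P_{2^n}(f),2^{-n})_X\gtrsim\w(2^{-n})$ for large $n$.

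The gap is that you do not indicate any mechanism to prove this lower bound; you only say ``once that is in hand''. This is not a minor omission, because this bound is precisely the substantive content of the corollary, and it does not follow from anything you have set up. Your route via $\|f-P_{2^n}(f)\|_X\lesssim\w(2^{-n})$ is circular at this point: it gives an upper bound on the error, not a lower bound on $\Omega(P_{2^n}(f),2^{-n})_X$. The paper obtains the lower bound by a scale-shift trick that is not visible in your sketch. From \eqref{Omeg} and monotonicity one gets $\w(2^{-n+m})\gtrsim (m+1)\,\w(2^{-n})$ for any fixed $m\ge 1$. Applying \eqref{eq4} $m$ times (not just once between $2^{-n}$ and $2^{-n-1}$ as you suggest) gives $\Omega(P_{2^n}(f),2^{-n})_X^\l\ge C_3^{-m\l}\Omega(P_{2^n}(f),2^{-n+m})_X^\l$, and then \eqref{eq3}, \eqref{eq2}, \eqref{eq6} yield
\[
\Omega(P_{2^n}(f),2^{-n+m})_X^\l\;\ge\;C_2^{-\l}\Omega(f,2^{-n+m})_X^\l-(C_1C_4)^\l\Omega(f,2^{-n})_X^\l\;\ge\;\bigl(c'(m+1)^\l-c''\bigr)\w(2^{-n})^\l.
\]
Fixing $m$ large enough (depending only on the constants $C_1,\dots,C_4$ and on the $\Xi_\w$ constants) makes the bracket positive, which gives $\Omega(P_{2^n}(f),2^{-n})_X\gtrsim\w(2^{-n})$. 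Without this step your argument is incomplete.
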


\begin{proof}
First, we prove that
\begin{equation}\label{XXXZZZ1++++}
  \Omega(f,2^{-n})_X\asymp \Omega(P_{2^n}(f),2^{-n})_X.
\end{equation}
The part $\,\gtrsim\,$ in~\eqref{XXXZZZ1++++} is given by~\eqref{eq7}. To show the part $\,\lesssim\,$, we note that by~\eqref{Omeg} and monotonicity of $\w$, for any $m<n$, we have
\begin{equation*}\label{XXXZZZ1++++-}
  \w(2^{-n+m}) \gtrsim \sum_{k=n-m}^\infty \w(2^{-k}) \gtrsim \sum_{k=n-m}^n \w(2^{-k}) \gtrsim (m+1) \w(2^{-n}).
\end{equation*}
Then,  taking onto account~\eqref{eq4}, \eqref{eq3}, \eqref{eq2}, and~\eqref{eq6} and choosing large enough $m\in \N$, we derive
\begin{equation*}\label{eqqqq2}
  \begin{split}
      \Omega(P_{2^n}(f),2^{-n})_X^\l &\ge C_3^{-m \l}\Omega(P_{2^n}(f),2^{-n+m})_X^\l \\
      &\ge C_3^{-m \l}\(C_2^{-\l}\Omega(f,2^{-n+m})_X^\l -  \Omega(f-P_{2^n}(f),2^{-n-m})_X^\l\)\\
      &\ge C_3^{-m \l}\(C_2^{-\l}\Omega(f,2^{-n+m})_X^\l -  C_1^\l\Vert f-P_{2^n}(f)\Vert_X^\l\)\\
      &\ge C_3^{-m \l}\(C_2^{-\l} \Omega(f,2^{-n+m})_X^\l -  (C_1C_4)^\l\Omega(f,2^{-n})_X^\l\)\\
      &\ge C_3^{-m \l}\(c' \w(2^{-n+m})^\l-c''\w(2^{-n})^\l  \)\\
      & \ge C_3^{-m \l}\(c' (m+1)^\l-c''\)\w(2^{-n})^\l\\
      &\gtrsim \w(2^{-n})^\l\gtrsim  \Omega(f,2^{-n})_X^\l.
  \end{split}
\end{equation*}

To prove the second equivalence in~\eqref{XXXZZZ1}, we note the part $\,\lesssim\,$ follows  from the right-hand side inequality of~\eqref{eq7} and~\eqref{XXXZZZ1++++} while the part $\,\gtrsim\,$ follows from~\eqref{Omegla}, the left-hand side inequality in~\eqref{eq7}, and~\eqref{XXXZZZ1++++},
\begin{equation*}
  \(\sum_{k=n+1}^\infty \Omega(P_{2^k}(f),2^{-k})_X^\l\)^\frac1\l\lesssim \(\sum_{k=n}^\infty \w(2^{-k})^\l\)^\frac1\l\lesssim \w(2^{-n}) \lesssim \Omega(P_{2^n}(f),2^{-n})_X.
\end{equation*}
%This concludes the proof.
\end{proof}

\begin{remark}\label{ZZZXXX}
\emph{Corollaries~\ref{le1Optt1} and~\ref{le1Optt2} imply that if $\w(\d)=\d^\a$, $\a>0$, then, for any $f\in X$ and $n\in \N$, we have
$$
\Omega(f,2^{-n})_X \lesssim \w(2^{-n})\quad \text{iff}\quad \Omega(P_{2^n}(f),2^{-n})_X \lesssim \w(2^{-n}).
$$
If, in addition, $f \in \Xi_\w$, then
$$
\Omega(f,2^{-n})_X \asymp \Omega(P_{2^n}(f),2^{-n})_X \asymp \w(2^{-n}).
$$}
%
%
%. Then for any $\mu>0$ and $n\in \N$, we have
%\begin{equation*}
%%\label{eq7}
%    2^{-n \a}\asymp\Omega(P_{2^n}(f),2^{-n})_X\asymp \Omega(f,2^{-n})_X\asymp \(\sum_{k=n+1}^\infty \Omega(P_{2^k}(f),2^{-k})_X^\mu\)^\frac1\mu.
%\end{equation*}
\end{remark}

The results of Remark~\ref{ZZZXXX} can be extended to Besov-type spaces.

For a given modulus of smoothness $\Omega$, $s>0$, and $0<q\le \infty$, we define the Besov-type space as follows:
  \begin{equation}\label{Besov}
B_{X,q}^s=\left\{f\in X: |f|_{B_{X,q}^s}=\left( \int_0^1 \big( t^{-s} \Omega(f,t)_X)^q\frac{dt}t\right)^{\frac1q}<\infty\right\}
  \end{equation}
with the usual modification in the case $q=\infty$.

We have the following characterization of $B_{X,q}^s$. %??? See also~\cite{??} for a similar characterization of Besov spaces.
\begin{corollary}\label{cor2.4}
Let $s>0$ and $0<q\le \infty$. We have %Then $f\in B_{X,q}^s$ if and only if
$$
|f|_{B_{X,q}^s}\asymp \left( \sum_{k=1}^\infty 2^{s q k} \Omega\(P_{2^k}(f),2^{-k}\)_X^q\right)^{\frac1q}. %\left( \int_0^1 \big( t^{-s} \Omega(P_{[1/t]}(f),t)_X)^q\frac{dt}t\right)^{\frac1q}.
$$
%{\bf discuss!}
\end{corollary}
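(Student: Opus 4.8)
The plan is to derive Corollary~\ref{cor2.4} from the discrete two-sided estimate in Theorem~\ref{le1}, passing from the continuous Besov seminorm to a dyadic sum by a standard monotonicity argument, and then inserting the equivalence~\eqref{eq7}. First I would recall that, by~\eqref{eq4} (the condition $\Omega(f,2\d)_X\le C_3\Omega(f,\d)_X$) together with the monotonicity built into a measure of smoothness, the integral $\int_0^1(t^{-s}\Omega(f,t)_X)^q\,dt/t$ is equivalent to the discrete sum $\sum_{k=0}^\infty 2^{skq}\Omega(f,2^{-k})_X^q$: on each dyadic block $[2^{-k-1},2^{-k}]$ one bounds $\Omega(f,t)_X$ above and below by $\Omega(f,2^{-k})_X$ up to the constant $C_3$, and $t^{-s}$ by $2^{sk}$ up to $2^s$. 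This reduces the claim to
\begin{equation*}
\left(\sum_{k=1}^\infty 2^{skq}\Omega(f,2^{-k})_X^q\right)^{1/q}\asymp \left(\sum_{k=1}^\infty 2^{skq}\Omega(P_{2^k}(f),2^{-k})_X^q\right)^{1/q},
\end{equation*}
with the usual $\sup$ modification when $q=\infty$.

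For the direction $\gtrsim$ (the sum over the approximants controls the sum over $f$), I would simply use the left-hand inequality in~\eqref{eq7}, namely $\Omega(P_{2^n}(f),2^{-n})_X\lesssim \Omega(f,2^{-n})_X$, which is termwise and hence immediately yields the estimate after raising to the $q$-th power and summing. For the direction $\lesssim$, I would insert the right-hand inequality in~\eqref{eq7}, $\Omega(f,2^{-n})_X^\l\lesssim \sum_{k=n+1}^\infty \Omega(P_{2^k}(f),2^{-k})_X^\l$, into the sum for $f$. This gives a double sum $\sum_{n\ge1}2^{snq}\big(\sum_{k>n}\Omega(P_{2^k}(f),2^{-k})_X^\l\big)^{q/\l}$, which must be dominated by $\sum_{k\ge1}2^{skq}\Omega(P_{2^k}(f),2^{-k})_X^q$. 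This is a classical discrete Hardy-type inequality for the operator $(a_k)\mapsto\big(\sum_{k>n}a_k^\l\big)^{1/\l}$ against the weight $2^{sn}$: since $s>0$, the geometric factors $2^{snq}$ decay as $n$ grows, so the outer sum over $n\le k$ contributes only a constant multiple of $2^{skq}$ for each fixed $k$. I would organize this by splitting into the cases $q/\l\ge1$ (Minkowski's inequality in $\ell_{q/\l}$ applied to the inner sum, then sum the geometric series in $n$) and $q/\l<1$ (use $\big(\sum b_k\big)^{q/\l}\le\sum b_k^{q/\l}$ termwise, then Tonelli plus the geometric series), in both cases reducing the $n$-summation to $\sum_{n<k}2^{snq}\lesssim 2^{skq}$.

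The main obstacle, such as it is, lies entirely in the $\lesssim$ direction: one has to be careful that the exponent mismatch between $\l$ (the geometry parameter, fixed by $X$) and $q$ (the Besov parameter, arbitrary in $(0,\infty]$) does not break the Hardy estimate. The key point to verify is that the weight $2^{sn}$ with $s>0$ makes the relevant Hardy operator bounded on $\ell_q$ for \emph{every} $q\in(0,\infty]$ regardless of $\l$; this is where the hypothesis $s>0$ is essential and where~\eqref{eq4} re-enters if one prefers to bypass~\eqref{eq7} and argue with the raw integral. The endpoint $q=\infty$ should be treated separately but is easier: the $\gtrsim$ part is again termwise from~\eqref{eq7}, and for $\lesssim$ one uses $\sup_n 2^{sn}\Omega(f,2^{-n})_X\lesssim \sup_n 2^{sn}\big(\sum_{k>n}\Omega(P_{2^k}(f),2^{-k})_X^\l\big)^{1/\l}\lesssim \sup_k 2^{sk}\Omega(P_{2^k}(f),2^{-k})_X$, the last step again by summing a geometric series. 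Finally I would note that the sum can start at $k=1$ rather than $k=0$ without loss, since $\Omega(P_1(f),1)_X\lesssim\|f\|_X$ by~\eqref{eq2} and~\eqref{eq6} and the truncation only affects the seminorm by an equivalent amount (or one absorbs the $k=0,1$ terms into the constants).
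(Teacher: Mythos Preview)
Your proof is correct and follows essentially the same route as the paper's: discretize the Besov integral, apply the two-sided estimate~\eqref{eq7} from Theorem~\ref{le1}, and then invoke a discrete Hardy-type inequality (the paper states it as $\sum_{\nu\ge n} 2^{\nu s}\bigl(\sum_{k\ge\nu} A_k\bigr)^q \asymp \sum_{\nu\ge n} 2^{\nu s} A_\nu^q$ for $s,q>0$). Your write-up is more explicit about the exponent mismatch between $\lambda$ and $q$ and about the endpoint $q=\infty$, but the underlying argument is the same.
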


\begin{proof}
The proof easily follows from Theorem \ref{le1} and the Hardy-type inequality
$$
\sum_{\nu=n}^\infty 2^{\nu s}\(\sum_{k=\nu}^\infty A_k\)^q \asymp \sum_{\nu=n}^\infty 2^{\nu s} A_\nu^q,
$$
where $A_\nu \ge 0$ and $s,q>0$.
\end{proof}

%\begin{example}\label{le1}
%  For any $f\in L_p(X)$, where $X=\R^d, \T^d$ and $0<p\le \infty$, we have
%  \begin{equation}\label{eq7RdTd}
%    C_5 \omega_s(P_{2^n}(f),2^{-n})_p\le \omega_s(f,2^{-n})_p\le C_6 \(\sum_{k=n+1}^\infty \omega_s(P_{2^k}(f),2^{-k})_p^\l\)^\frac1\l,
%  \end{equation}
% where $\l=\min(p,1)$.
%\end{example}
%
%\bigskip

\section{Smoothness of approximation processes on $\T^d$}\label{sec5}

\subsection{Smoothness of best approximants.}

In this subsection, we give analogues of Theorems~\ref{le1} and~\ref{corthAB} for best trigonometric approximants in $L_p(\T^d)$ spaces.
We recall some basic notations. Denote the set of all trigonometric polynomials of degree at most $n$ by
$$
\mathcal{T}_n=\text{span}\,\{e^{i(k,x)}:\,\vert k\vert\le n\},
$$
where $\vert k\vert   = (k^2_1 +\dots + k^2_d)^{1/2}.$
The best approximation by trigonometric polynomials is given by
$$
E_n  (f)_{L_p(\T^d)} =\inf\,\{\Vert  f-\varphi  \Vert
_{L_p(\T^d)}:\varphi  \in \mathcal{T}_n\}.
$$
As above, by $P_n(f)$ we denote the best approximant of a function $f$ in $L_p(\T^d)$, that is,
$$
\Vert f-P_n(f)\Vert_{L_p({\T}^d)}=E_n  (f)_{L_p({\T}^d)},
$$
where $P_n(f)\in \mathcal{T}_n$.

In what follows, we will use the well-known Jackson type inequality, see, e.g.,~\cite{Ti} and~\cite{SO}:   %let $0<p\le\infty$, $\r\in \N$, and $n\in\mathbb{N}$. Then
\begin{equation}\label{JacksonSO}
E_n(f)_{L_p({\T}^d)}\le C\w_r\(f,\frac1n\)_{L_p({\T}^d)},\quad f\in L_p(\T^d),\quad 0<p\le\infty,\quad r\in \N,
\end{equation}
where $\w_r(f,h)_p$ is the classical modulus of smoothness,
$$
\w_r(f,\d)_p=\sup_{|h|<\d} \Vert \Delta_h^r f\Vert_{L_p(\T^d)},
$$
$$
\Delta_h f(x)= f(x+h) -f(x),\quad \Delta_h^r =
\Delta_h \Delta_h^{r-1},\quad h\in \R^d,\quad d\ge 1,
$$
and the constant $C$ does not depend on $f$ and $n$.

We will also need the following Stechkin-Nikolskii-type inequality
(see~\cite[Theorem 3.2]{KT}), which states that, for any $n\in \N$ and $0<\d\le\pi/ n$,
\begin{equation}\label{eq+++}
 \Vert T_n\Vert_{\dot W_p^r(\T^d)}\asymp \d^{-r}\w_r (T_n,\d)_{L_p(\T^d)},\quad T_n\in \mathcal{T}_n,\quad 0< p\le\infty,\quad r\in \N,
\end{equation}
where the constants in this equivalence are independent of $T_n$ and $\d$.
Here the homogeneous Sobolev norm is given by
$$
\Vert f \Vert_{\dot W_p^{r}(\T^d)}=\sum_{|\nu|_1=r}\Vert D^\nu f\Vert_{L_p(\T^d)}.
$$

Using Theorem~\ref{le1} with  $X=L_p(\T^d)$, $0<p\le\infty$, and $\Omega(f,\delta)_X=\omega_r(f,\delta)_{L_p(\T^d)}$ for some $r\in \N$, one can easily verify  that properties \eqref{eq1}--\eqref{eq6} are valid.  Therefore,  applying Stechkin-Nikolskii-type inequality~\eqref{eq+++}, we obtain the following result.
\begin{theorem}\label{PrG}
  Let $f\in L_p({\T^d})$, $0<p\le \infty$, and $r\in \N$. Then
  \begin{equation}\label{---eq7Td}
    2^{-nr}\Vert P_{2^n}(f)\Vert_{\dot W_p^r(\T^d)}\lesssim \omega_r(f,2^{-n})_{L_p(\T^d)}\lesssim  \(\sum_{k=n+1}^\infty 2^{-kr\lambda}\Vert P_{2^k}(f)\Vert_{\dot W_p^r(\T^d)}^\l\)^\frac1\l,
  \end{equation}
 where $\l=\min(p,1)$. %and $C_1$ and $C_2$ does not depend on $f$ and $n$.
\end{theorem}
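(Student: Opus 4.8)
The plan is to verify that Theorem~\ref{le1} applies with the concrete choices $X=L_p(\T^d)$, $\|\cdot\|_X=\|\cdot\|_{L_p(\T^d)}$, $\l=\min(p,1)$, $\Omega(f,\d)_X=\w_r(f,\d)_{L_p(\T^d)}$, and $P_n(f)$ the best trigonometric approximant of degree $n$, and then to translate the resulting bounds from moduli of smoothness of $P_{2^k}(f)$ into Sobolev seminorms of $P_{2^k}(f)$ via the Stechkin--Nikolskii equivalence~\eqref{eq+++}. First I would check the structural hypotheses on the metric: for $0<p\le1$ the functional $\|\cdot\|_{L_p}$ is a $p$-norm, so $\|f+g\|_p^p\le\|f\|_p^p+\|g\|_p^p$, i.e. property~(iii) holds with $\l=p=\min(p,1)$; for $1\le p\le\infty$ the triangle inequality gives~(iii) with $\l=1=\min(p,1)$. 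Properties~(i) and~(ii) are immediate.

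Next I would verify conditions~\eqref{eq1}--\eqref{eq6} for $\Omega=\w_r$. Condition~\eqref{eq1} ($\w_r(f,\d)_p\to0$ as $\d\to+0$) is the standard continuity-in-$L_p$ property of the modulus of smoothness, valid for all $0<p\le\infty$. Condition~\eqref{eq2}, $\w_r(f,\d)_p\lesssim\|f\|_p$, follows by expanding $\Delta_h^r$ into a binomial sum of $2^r$ translates and using the (quasi-)triangle inequality, with the constant depending only on $r$ and $p$. Condition~\eqref{eq3} (quasi-subadditivity in $f$) is the corresponding estimate $\w_r(f+g,\d)_p\lesssim\w_r(f,\d)_p+\w_r(g,\d)_p$, again immediate from linearity of $\Delta_h^r$ and the $\l$-inequality. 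Condition~\eqref{eq4}, $\w_r(f,2\d)_p\le C\,\w_r(f,\d)_p$, is the classical doubling property of the modulus (one writes a difference with step $2h$ as a telescoping combination of differences with step $h$, or invokes the known inequality $\w_r(f,n\d)_p\le c\,n^{r/\min(1,p)}\w_r(f,\d)_p$ with $n=2$). Condition~\eqref{eq5} holds because $P_n(f)$ is a best approximant: $\|f-P_n(f)\|_p\le\|f-g\|_p$ for every $g\in\mathcal{T}_n$, and $P_n(P_{2n}(f))\in\mathcal{T}_n$. Condition~\eqref{eq6} is exactly the Jackson inequality~\eqref{JacksonSO}. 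Having checked all of this, Theorem~\ref{le1} yields
\begin{equation*}
  \w_r(P_{2^n}(f),2^{-n})_{L_p(\T^d)}\lesssim\w_r(f,2^{-n})_{L_p(\T^d)}\lesssim\Bigl(\sum_{k=n+1}^\infty\w_r(P_{2^k}(f),2^{-k})_{L_p(\T^d)}^\l\Bigr)^{1/\l}.
\end{equation*}

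Finally I would apply~\eqref{eq+++} with $T_n=P_{2^k}(f)\in\mathcal{T}_{2^k}$ and $\d=2^{-k}\le\pi/2^k$, which gives $\w_r(P_{2^k}(f),2^{-k})_{L_p(\T^d)}\asymp 2^{-kr}\|P_{2^k}(f)\|_{\dot W_p^r(\T^d)}$ with constants independent of $k$ and $f$. Substituting this equivalence into both sides of the displayed chain yields~\eqref{---eq7Td}. I do not expect any genuine obstacle here; the only points requiring a little care are (a) making sure the doubling/quasi-subadditivity constants for $\w_r$ in the quasi-Banach range $0<p<1$ are the ones matching $\l=\min(p,1)$ (standard, e.g. from~\cite{DeLo} or the references cited in the paper), and (b) confirming that $2^{-k}\le\pi/2^k$ so that~\eqref{eq+++} is applicable for every $k\ge n+1$ — which is trivially true — and that the equivalence constants there are uniform, which is precisely the content of the cited Theorem~3.2 of~\cite{KT}.
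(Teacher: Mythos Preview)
Your proposal is correct and follows exactly the approach the paper indicates: verify the hypotheses of Theorem~\ref{le1} for $X=L_p(\T^d)$, $\l=\min(p,1)$, $\Omega=\w_r$, and $P_n$ the best trigonometric approximant, then convert the resulting inequalities for $\w_r(P_{2^k}(f),2^{-k})_p$ into Sobolev seminorms via the Stechkin--Nikolskii equivalence~\eqref{eq+++}. The paper gives no additional argument beyond this outline, so your more detailed verification is entirely in line with it.
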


%\begin{proof}
%
%\end{proof}

The above theorem can be also formulated in terms of the fractional smoothness. For this, we recall the following assertion from~\cite[Corollary 3.1]{KT}:
{\it Let $0<p\le\infty$, $\a>0$, $n\in\mathbb{N}$, and
$0<\d\le \pi/ n$. Then, for any
$T_n\in\mathcal{T}_n$, we have}
%\bigskip
\begin{equation}\label{ineqNS3cor}
\sup_{\xi\in \R^d,\,|\xi|=1}\bigg\Vert \(\frac{\partial}{\partial\xi}\)^{\a} T_n\bigg\Vert_{L_p({\T}^d)}
\asymp \d^{-\a}\w_\a (T_n,\d)_{L_p({\T}^d)},
\end{equation}
where the fractional modulus of smoothness $\w_\a (f,\d)_{L_p({\T}^d)}$   is given by
\begin{equation*}\label{def-mod+}
\omega_{\a} (f,\delta)_{L_p(\T^d)}
 =\sup_{|h|\le \delta }
 \bigg\|
\sum\limits_{\nu=0}^\infty(-1)^{\nu}
\binom{\a}{\nu} f\,\big(\cdot+(\a-\nu) h\big) \bigg\|_{L_p(\T^d)}, %\qquad
\end{equation*}
%where
%\begin{equation}\label{def-mod++}
%\Delta_{h}^{\a} f(x) =\sum\limits_{\nu=0}^\infty(-1)^{\nu}
%\binom{\a}{\nu} f\,\big(x+(\a-\nu) h\big),\quad h\in \R^d,
%\end{equation}
and $\binom{\a}{\nu}=\frac{\a (\a-1)\dots (\a-\nu+1)}{\nu!}$,\quad
$\binom{\a}{0}=1$, see~\cite{PST}.

%\bigskip

Our next goal is to obtain a sharp version of~\eqref{---eq7Td} in the case $1<p<\infty$.
For this, we use Theorem~\ref{corthAB} with $G_n=\mathcal{T}_n$, $X=L_p(\T^d)$, and $Y=H_p^{\a}(\T^d)$, where
$$
H_p^{\a}(\T^d)=\{g\in L_p(\T^d)\,:\, \Vert g \Vert_{\dot H_p^{\a}(\T^d)}=\Vert (-\Delta)^{\a/2} g\Vert_{L_p(\T^d)}<\infty\}
$$
is the fractional Sobolev space. Recall that
\begin{equation}\label{KTD}
  K\(f,t^{\a},L_p(\T^d);H_p^{\a}(\T^d)\)= \inf \left\{\Vert f-g\Vert_{L_p(\T^d)} + t^{\a}\Vert g \Vert_{\dot H_p^{\a}(\T^d)}\, :\, g\in H_p^{\a}(\T^d) \right\}
\end{equation}
and
\begin{equation}\label{RTD}
R\(f,t^{\a};L_p(\T^d),\mathcal{T}_{[1/t]}\)= \inf \left\{\Vert f-T\Vert_{L_p(\T^d)} + t^{\a}\Vert T\Vert_{\dot H_p^{\a}(\T^d)}\,:T\,\in \mathcal{T}_{[1/t]} \right\}
\end{equation}
(cf.~\eqref{K1.1} and~\eqref{R1.1}).
For any $f\in L_p(\T^d)$, $1<p<\infty$, and $\a>0$ we have (see, e.g.,~\cite{KT})
$$
K(f,t^{\a};L_p(\T^d),H_p^\a(\T^d))\asymp R(f,t^{\a};L_p(\T^d),\mathcal{T}_{[1/t]})\asymp \omega_\a(f,t)_{L_p(\T^d)},
$$
which, in particular, implies~\eqref{r1--}.

Jackson and Bernstein inequalities~\eqref{K1.2} and~\eqref{K1.3} are given by~\eqref{JacksonSO} and the following inequality, see, e.g.,~\cite{Wil},
$$
\Vert  (-\Delta  )^{\a/2 } T_n\Vert_{L_p(\T^d)}\lesssim n^\a \Vert  T_n\Vert_{L_p(\T^d)},\quad T_n\in\mathcal{T}_n,\quad 1<p<\infty,\quad \a>0.
$$
%Thus, taking into account that in the considered case inequalities~\eqref{A} and~\eqref{B} are valid for $\tau=\max(2,p)$ and $\t=\min(2,p)$, see %Remark~\ref{remLp}, we obtain the following analogue of Theorem~\ref{thAB}.

%Recall also the classical Bernstein inequality, see~\cite{???}
%\begin{equation}\label{bereq}
%  \Vert T_n\Vert_{\dot W_p^r(\T^d)}\le Cn^r \Vert T_n\Vert_{L_p(\T^d)},\quad  T_n\in \mathcal{T}_n,\quad 0<p\le\infty,\quad r\in \N,
%\end{equation}
%where $C$ does not depend on $T_n$ and $n\in \N$. Note that this inequality can be extended
%
%
%\bigskip

%Now let us give more sharp version of Proposition~\ref{PrG} in light of Theorem~\ref{thAB} for $1<p<\infty$.

%and the realization functional given by
%$$
%\mathcal{R}_\gamma  (f,-\Delta  ,t^{2\gamma  })_p \equiv \Vert  f-\eta
%_{1/t}f\Vert  _p + t^{2\gamma  }\Vert  (-\Delta  )^\gamma  \eta
%_{1/t}f\Vert  _p
%\eqno 4.7
%$$
%are equivalent using the proof in [Da-Di,04, p.~273].  (While stated
%only for integer $\gamma  $ in [Da-Di,04], the proof follows
%verbatim for all $\gamma  >0.)$ The equivalence
%$$
%K_\gamma  (f,-\Delta  ,t^{2\gamma  })_p\approx R_\gamma  (f,-\Delta
%,t^{2\gamma  })_p
%\eqno 4.8
%$$
%allows us to use $\eta  _{1/t}f,$ which is a definite linear operator on
%$f,$ instead of $g$ of (4.6).

%When $L_p(\R^d)$ is replaced by $L_p(\T^d),$ we just replace $\bx$ by
%$\bn$ and (4.6), (4.7) and (4.8) applies to $L_p(\T^d)$ as well.

Thus, Theorem~\ref{corthAB} implies the following result.

\begin{theorem}\label{propAB}
  Let $f\in L_p(\T^d)$, $1<p< \infty$, and $\a>0$. Then
  \begin{equation*}
  \begin{split}
          \(\sum_{k=n+1}^\infty 2^{-k\a\tau}\Vert (-\Delta)^{\a/2}P_{2^k}(f)\Vert_{L_p(\T^d)}^\tau\)^\frac1\tau&\lesssim \omega_\a(f,2^{-n})_{L_p(\T^d)}\\
    &\lesssim \(\sum_{k=n+1}^\infty 2^{-k\a\t}\Vert (-\Delta)^{\a/2}P_{2^k}(f)\Vert_{L_p(\T^d)}^\t\)^\frac1\t,
  \end{split}
  %\label{eq7-}
  \end{equation*}
 where $\tau=\max(2,p)$ and $\t=\min(2,p)$.
\end{theorem}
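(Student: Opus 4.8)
The plan is to apply Theorem~\ref{corthAB} directly with the concrete identifications $X=L_p(\T^d)$, $Y=H_p^\a(\T^d)$, $G_n=\mathcal{T}_n$, and measure of smoothness $\omega_\a(f,t)_{L_p(\T^d)}$. The bulk of the work is verifying the three hypotheses of that theorem, namely the Jackson inequality~\eqref{K1.2}, the Bernstein inequality~\eqref{K1.3}, and the realization equivalence~\eqref{r1--}, together with convexity of $G_n=\mathcal{T}_n$ (which is immediate, since $\mathcal{T}_n$ is a linear space). Once all of these are in place, Theorem~\ref{corthAB} yields precisely the two-sided estimate with $\tau=\max(2,p)$ and $\t=\min(2,p)$, after substituting $K(f,2^{-n\a};L_p(\T^d),H_p^\a(\T^d))\asymp \omega_\a(f,2^{-n})_{L_p(\T^d)}$ on the middle term.

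First I would recall the Jackson inequality~\eqref{JacksonSO} in its fractional form: $E_n(f)_{L_p(\T^d)}\lesssim \omega_\a(f,1/n)_{L_p(\T^d)}$, which together with the standard equivalence $\omega_\a(f,t)_{L_p}\asymp K(f,t^\a;L_p,H_p^\a)$ gives $E_n(f)_{L_p}\lesssim K(f,n^{-\a};L_p,H_p^\a)$, and in particular~\eqref{K1.2} for $f\in H_p^\a(\T^d)$. Next, the Bernstein inequality~\eqref{K1.3}: since $\mathcal{T}_n$ is linear it suffices to prove $\Vert (-\Delta)^{\a/2}T_n\Vert_{L_p(\T^d)}\lesssim n^\a\Vert T_n\Vert_{L_p(\T^d)}$ for $T_n\in\mathcal{T}_n$, which is the Bernstein-type inequality for fractional Laplacians on the torus (cited via~\cite{Wil} in the excerpt); this is where the restriction $1<p<\infty$ enters, as it relies on $L_p$-boundedness of the relevant Fourier multipliers. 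For~\eqref{r1--} I would invoke the known chain $K(f,t^\a;L_p,H_p^\a)\asymp R(f,t^\a;L_p,\mathcal{T}_{[1/t]})\asymp\omega_\a(f,t)_{L_p}$ valid for $1<p<\infty$ and $\a>0$, as recorded in the excerpt right before the theorem; the realization-vs-$K$-functional equivalence is exactly condition~\eqref{r1--}.

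With hypotheses verified, I would apply Theorem~\ref{corthAB}: part~(A) gives
\[
\Big(\sum_{k=n+1}^\infty 2^{-k\a\tau}\Vert P_{2^k}(f)\Vert_{H_p^\a}^\tau\Big)^{1/\tau}\lesssim K(f,2^{-n\a};L_p,H_p^\a),\qquad \tau=\max(2,p),
\]
and part~(B) gives
\[
K(f,2^{-n\a};L_p,H_p^\a)\lesssim\Big(\sum_{k=n+1}^\infty 2^{-k\a\t}\Vert P_{2^k}(f)\Vert_{H_p^\a}^\t\Big)^{1/\t},\qquad \t=\min(2,p),
\]
where $\Vert g\Vert_{H_p^\a}$ abbreviates the homogeneous seminorm $\Vert(-\Delta)^{\a/2}g\Vert_{L_p(\T^d)}$. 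Replacing the $K$-functional by the equivalent modulus $\omega_\a(f,2^{-n})_{L_p(\T^d)}$ in both displays yields the claimed two-sided inequality.

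The main obstacle is, honestly, not conceptual but bookkeeping: one must be careful that all three ingredients (Jackson, Bernstein, realization) are available in the \emph{fractional} and \emph{$d$-dimensional} setting for the full range $1<p<\infty$, and that the seminorm used in $Y=H_p^\a$ is the homogeneous one $\Vert(-\Delta)^{\a/2}\cdot\Vert_{L_p}$ (so that Bernstein holds with a clean power $n^\a$ and Jackson with $n^{-\a}$). All of these are cited in the preceding text (\cite{KT}, \cite{Wil}, \cite{PST}), so the proof reduces to assembling them and invoking Theorem~\ref{corthAB}; no genuinely new estimate is needed. A minor point worth stating explicitly is that the constant in Jackson's theorem for $0<p\le\infty$ from~\eqref{JacksonSO} in particular covers $1<p<\infty$, so~\eqref{eqJK} and hence the hypotheses of Theorem~\ref{corthAB} are met.
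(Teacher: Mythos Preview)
Your proposal is correct and follows essentially the same route as the paper: the paper's own proof is nothing more than the sentence ``Thus, Theorem~\ref{corthAB} implies the following result,'' preceded by precisely the verifications you outline (convexity of $\mathcal{T}_n$, Jackson via~\eqref{JacksonSO}, Bernstein for $(-\Delta)^{\a/2}$ via~\cite{Wil}, and the realization equivalence $K\asymp R\asymp\omega_\a$ via~\cite{KT}). There is nothing to add.
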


%
%\begin{proof}
%????
%\end{proof}

%\bigskip

\subsection{The case of Fourier multiplier operators.}

In this subsection, we give an analogue of Theorem~\ref{thPsi} in the case $\mathcal{D}=\T^d$. We start by recalling the multiplier theorem (Assumption~\ref{H}) and the Littlewood-Paley-type theorem in $L_p(\T^d)$ for $1<p<\infty$.

Concerning Assumption~\ref{H}, the well-known Mikhlin-H\"ormander multiplier theorem (see~\cite[p.~224]{Gr}) states that the condition
\begin{equation}\label{MiHo}
  |\Delta  ^{\b_1}_{e_1} \dots \Delta  ^{\b_d}_{e_d} m(k_1,\dots,k_d)|  \le A |k|^{-\vert
\b \vert  },\quad \vert  \b  \vert  \equiv \b_1 + \dots + \b_d <[
d/2] + 1,
\end{equation}
where $\Delta  _{e_i}m(k_1,\dots,k_i,k_d) =
m(k_1,\dots,k_i+1,\dots,k_d) - m(k_1,\dots,k_i,\dots,k_d),$ implies
$$
\Vert  T_m f\Vert_{L_p(\T^d)} \le C(A,p) \Vert  f\Vert  _{L_p(\T^d)},
$$
where
$$
(T_m f)^\wedge (k) = m(k)\wh f(k)
$$
and
$ \wh f(k) = \frac1{(2\pi)^d}\int_{\T^d} f(y) e^{-i(k,y)} dy.$

We define the de~la~Vall\'ee Poussin-type multiplier operator by
$$
(\eta_n f)^\wedge(k) = \eta  \Big(\frac{\vert  k\vert  }{n}\Big)\wh
f(k)
$$
%For the operator $\eta  _R$ on $L_p(\T^d)$ we have (see [Da-Di,04,
%p.~270])
%$$
%\gathered
%\Vert  \eta  _Lf\Vert  _{L_p(\T^d)} \le A\Vert  f\Vert  _{L_p(\T^d)},
%\q \supp \, (\eta  _L f)^\wedge (\bk) \subset \{\bk : \vert  \bk
%\vert  <L\},\\
%\text{and} \q \supp \;\wh\varphi  (\bk) \subset\Big \{\bk : \vert  \bk
%\vert  <\frac L2\Big\} \q \text{implies} \q \eta  _L\varphi  =\varphi.
%\endgathered\eqno 4.2
%$$
%This means that $\eta  _L$ is a de~la~Vall\'ee Poussin-type operator
%and
%It is obvious that by $(4.9)^\pr$,
%$$
%\Vert  f-\eta  _Lf\Vert  _{L_p(\T^d)} \le (1+A) E_{L/2} (f)_{L_p(\T^d)}.
%\eqno 4.3
%$$
and similarly to~\eqref{LPVP}, we set
$$
\theta_0(f) = \eta_1f \q\text{and} \q \theta_j (f)= \eta_{2^j}f
- \eta_{2^{j-1}} f \q\text{for} \q j\ge 1.
$$

An analogue of the Littlewood-Paley theorem in the case $\mathcal{D}=\T^d$ is given by the following two inequalities, see, e.g.,~\cite[Theorem 4.1]{DDT} or \cite[Ch.~6]{GrII}:
for $f\in L_p(\T^d)$, $1<p<\infty,$ and $\a>0$, we have
$$
\Big\Vert  \Big\{\sum^\infty  _{j=0} (\theta
 _j(f))^2\Big\}^{1/2}\Big\Vert_{L_p(\T^d)} \asymp \Vert  f\Vert_{L_p(\T^d)}
$$
and
%where $\theta_jf$ is given by
%$$
%\theta_0 f = \eta_1f \q\text{and} \q \theta_j f= \eta_{2^j}f
%- \eta_{2^{j-1}} f \q\text{for} \q j\ge 1. \eqno 4.13
%$$
%Moreover, we also have
$$
\Big\Vert  \Big\{\sum^\infty_{j=1} 2^{2j\a} \big(\theta_j
(f)\big)^2\Big\}^{1/2} \Big\Vert_{L_p(\T^d)} \asymp\Vert
(-\Delta)^{\a/2}  f\Vert_{L_p(\T^d)}.
$$

%To present analogues of Theorem~\ref{thLP} and Theorem~\ref{thPsi}, we first introduce different Fourier means

Let us consider the Fourier means given by
$$
\Psi_n f(x)=\sum_{k\in \Z^d} \psi\(\frac{k}{n}\) \widehat{f}(k) e^{i(k,x)},
$$
$$
\widetilde{\Psi}_n f(x)=\sum_{k\in \Z^d} \widetilde{\psi}\(\frac{k}{n}\) \widehat{f}(k) e^{i(k,x)},\quad \widetilde{\psi}(\xi)=\frac{\eta(|\xi|)}{\psi(2^{-m}\xi)},
$$
where the function $\psi:\R^d\to \C$ is such that $\supp \psi\subset [-1,1]^d$ and for some $m\in\Z_+$, $\psi(x)\neq 0$ for all $x\in [-2^{-m},2^{-m}]^d$.

%Now, supposing that the linear means $\Psi_{n}$ and $\widetilde{\Psi}_n$ are bounded operators in $L_p(\T^d)$, $1<p<\infty$,
%that for any $f\in L_p(\T^d)$, $1<p<\infty$,
%\begin{equation}\label{dopaTd}
%  \Vert \Psi_{2^n} f\Vert_{L_p(\T^d)}\le C \Vert f\Vert_{L_p(\T^d)}
%\end{equation}
%and
%\begin{equation}\label{dopaTd}
%  \Vert \widetilde{\Psi}_{2^n} f\Vert_{L_p(\T^d)}\le C \Vert f\Vert_{L_p(\T^d)},
%\end{equation}
%where $C$ does not depend on $f$ and $n$,
%Now, taking into account the mentioned above Mikhlin-H\"ormander multiplier condition and the Littlewood-Paley type inequalities,
We derive the following analogue of Theorem~\ref{thPsi}  in the case $\mathcal{D}=\T^d$.

\begin{theorem}\label{propVPTd}
  Let $f\in L_p(\T^d)$, $1<p< \infty$, $n\in\N$, $\a>0$, $\tau=\max(2,p)$, and $\t=\min(2,p)$.

 {\rm (A)}  If $\{\Psi_{2^k}\}$ are uniformly bounded operators in $L_p(\T^d)$, then
  \begin{equation*}\label{eq7-Td}
    \(\sum_{k=n+1}^\infty 2^{-k\a\tau}\Vert (-\Delta)^{\a/2}\Psi_{2^k}f\Vert_{L_p(\T^d)}^\tau\)^\frac1\tau\lesssim \omega_\a(f,2^{-n})_{L_p(\T^d)}.
  \end{equation*}

  {\rm (B)}  If $\{\widetilde{\Psi}_{2^k}\}$ are uniformly bounded operators in $L_p(\T^d)$, then
    \begin{equation*}\label{eq7+Td} \omega_\a(f,2^{-n})_{L_p(\T^d)}\lesssim  \(\sum_{k=n+1}^\infty 2^{-k\a\t}\Vert (-\Delta)^{\a/2}\Psi_{2^k}f\Vert_{L_p(\T^d)}^\t\)^\frac1\t.
  \end{equation*}
\end{theorem}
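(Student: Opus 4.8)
The plan is to deduce both parts from Theorem \ref{thPsi} (the abstract Fourier-multiplier statement on $L_{p,w}(\mathcal{D})$) by specializing to the case $\mathcal{D}=\T^d$, $w\equiv 1$, and $Q(D)=-\Delta$, so that $\sigma=2$, $\gamma=\a/2$, and $\sigma\gamma = \a$. First I would record the dictionary: the eigenspaces $G_k$ of $-\Delta$ on $\T^d$ are spanned by $\{e^{i(j,x)}:|j|=\sqrt{k}\}$, the operator $Q(D)^\gamma=(-\Delta)^{\a/2}$, and the de la Vall\'ee Poussin-type operator $\eta_N$ defined here by $(\eta_N f)^\wedge(k)=\eta(|k|/N)\widehat f(k)$ coincides with the one in Section \ref{sec4}; Assumption \ref{H} holds in this setting because the Mikhlin--H\"ormander condition \eqref{MiHo} (with $\ell_0=[d/2]+1$) implies $L_p$-boundedness of the multiplier, and the Littlewood--Paley equivalences stated just above the theorem are exactly \eqref{eq3.16'} specialized to $\T^d$. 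The final ingredient is the identification of the $K$-functional with the modulus of smoothness: for $1<p<\infty$ and $\a>0$ one has $K_\gamma(f,-\Delta,t^\a)_{p}\asymp K(f,t^\a;L_p(\T^d),H_p^\a(\T^d))\asymp \omega_\a(f,t)_{L_p(\T^d)}$, which is the equivalence recalled in Section \ref{sec5} (via \eqref{KTD}, \eqref{RTD}, and the cited realization results of \cite{KT}).

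With this dictionary in hand, part (A) is immediate: the hypothesis that $\{\Psi_{2^k}\}$ are uniformly bounded on $L_p(\T^d)$ is exactly \eqref{dopa}, so Theorem \ref{thPsi}(A) gives
$$
\Big(\sum_{k=n+1}^\infty 2^{-\a\tau k}\Vert(-\Delta)^{\a/2}\Psi_{2^k}f\Vert_{L_p(\T^d)}^\tau\Big)^{1/\tau}\lesssim K_\gamma(f,-\Delta,2^{-n\a})_p\asymp\omega_\a(f,2^{-n})_{L_p(\T^d)},
$$
with $\tau=\max(2,p)$, which is the claimed inequality. For part (B), I would check that $\widetilde\Psi_{2^n}$ with $\widetilde\psi(\xi)=\eta(|\xi|)/\psi(2^{-m}\xi)$ is precisely the inverse-type operator from Theorem \ref{thPsi}(B) in the multidimensional notation; the support condition $\supp\psi\subset[-1,1]^d$ together with $\psi(x)\neq 0$ on $[-2^{-m},2^{-m}]^d$ guarantees $\widetilde\psi$ is well defined and the argument of Theorem \ref{thPsi}(B) applies verbatim. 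Hypothesis \eqref{dopa'} is then the uniform boundedness of $\{\widetilde\Psi_{2^k}\}$, so Theorem \ref{thPsi}(B) yields
$$
K_\gamma(f,-\Delta,2^{-n\a})_p\lesssim\Big(\sum_{k=n+1}^\infty 2^{-\a\t k}\Vert(-\Delta)^{\a/2}\Psi_{2^k}f\Vert_{L_p(\T^d)}^\t\Big)^{1/\t},
$$
and replacing the left side by $\omega_\a(f,2^{-n})_{L_p(\T^d)}$ via the $K$-functional equivalence gives (B) with $\t=\min(2,p)$.

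The only genuine issue — and the step I would present most carefully — is verifying that the abstract framework of Section \ref{sec4} truly covers $-\Delta$ on $\T^d$: the eigenvalues of $-\Delta$ are $\{|j|^2:j\in\Z^d\}$, which are not simple and, when listed in increasing order, satisfy $\lambda_k\asymp k^{2/d}$ rather than $\lambda_k\asymp k^2$, so the index convention must be reconciled (the natural statement is that the \emph{radial} de la Vall\'ee Poussin cutoff at level $N$ corresponds to $\eta_N$, not to indexing by $k$-th eigenvalue). This is a purely bookkeeping matter that is already implicit in \cite{DDT} and in the Littlewood--Paley statement quoted above, where the blocks $\theta_j$ are defined through $\eta_{2^j}$; I would simply remark that with the frequency-radius normalization all formulas of Section \ref{sec4} hold with $\sigma\gamma$ replaced by $\a$, and then quote Theorem \ref{thPsi} directly. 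Everything else is a substitution. I would also note, as in Remark \ref{remmuk}, that the uniform-boundedness hypotheses on $\{\Psi_{2^k}\}$ and $\{\widetilde\Psi_{2^k}\}$ are automatically satisfied when $\psi$ and $\widetilde\psi$ are smooth enough (e.g.\ $\psi\in C^{[d/2]+1}$), which makes the theorem applicable to all the classical means — partial sums, Fej\'er, Riesz, Rogosinski, Jackson — listed in the preceding example.
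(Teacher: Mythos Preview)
Your proposal is correct and matches the paper's approach: the paper presents Theorem \ref{propVPTd} without an explicit proof, as the $\T^d$-analogue of Theorem \ref{thPsi}, having first recorded the Mikhlin--H\"ormander multiplier theorem, the Littlewood--Paley equivalences for $\T^d$, and (earlier in Section \ref{sec5}) the identification $K(f,t^\a;L_p(\T^d),H_p^\a(\T^d))\asymp\omega_\a(f,t)_{L_p(\T^d)}$. Your caveat about the eigenvalue indexing and the multivariate $\psi:\R^d\to\C$ is apt, and your resolution---that the short proof of Theorem \ref{thPsi} transfers verbatim once the $\T^d$ Littlewood--Paley theorem is substituted for the abstract one---is exactly the intended argument.
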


\begin{remark}\label{rem5.1}
{\rm (i)} \emph{Note that if $\psi \in A(\R^d)=\{f\,:\, f=\widehat{g},\,\, g\in L_1(\R^d)\}$ (the Wiener class of absolutely convergent Fourier integrals), then
the operators $\{\Psi_{n}\}$ are uniformly bounded in $L_p(\T^d)$ for all $1\le p\le\infty$, see, e.g.,~\cite[Ch. VII]{SW}. Various useful conditions to insure that $\psi\in A(\R^d)$ can be found in the survey~\cite{LST}, see also~\cite[Ch.~4 and 6]{TB}.}

{\rm (ii)} \emph{Concerning the uniform boundedness of $\{\widetilde{\Psi}_n\}$, one can use following version of $\frac1f$-Wiener  theorem
(see \cite[p.102]{Lo}):
Let $f\in A(\R^d)$. If $f(x)\neq 0$ on a closed bounded set $V\subset\R^d$, then
$\frac1{f(x)}$ is extendable to a function in $A(\R^d)$, i.e., there exists a function $g\in A(\R^d)$
such that $f(x)\equiv g(x)$ on $V$.}

{\rm (iii)}\emph{ To verify the uniform boundedness of $\{\Psi_n\}$ and $\{\widetilde{\Psi}_n\}$ in $L_p(\T^d)$ for $1<p<\infty$, one can use  the Mikhlin-H\"ormander multiplier condition~\eqref{MiHo}, which is less restrictive than the conditions given in parts ${\rm (i)}$ and ${\rm (ii)}$ of this remark.}

{\rm (iv)} \emph{Under conditions of Theorem~\ref{propVPTd}, we have that for any $f\in H_p^\b (\T^d)$, $\b>0$,
  \begin{equation*}\label{eq7-Td}
    \(\sum_{k=n+1}^\infty 2^{-k\a\tau}\Vert (-\Delta)^{{(\a+\b)}/2}\Psi_{2^k}f\Vert_{L_p(\T^d)}^\tau\)^\frac1\tau\lesssim \omega_\a((-\Delta)^{\b/2}f,2^{-n})_{L_p(\T^d)}
  \end{equation*}
and
    \begin{equation*}\label{eq7+Td} \omega_\a((-\Delta)^{\b/2}f,2^{-n})_{L_p(\T^d)}\lesssim  \(\sum_{k=n+1}^\infty 2^{-k\a\t}\Vert (-\Delta)^{{(\a+\b)}/2}\Psi_{2^k}f\Vert_{L_p(\T^d)}^\t\)^\frac1\t.
  \end{equation*}}
\end{remark}

%It is not difficult to see that inequalities \eqref{eq7-Td} and \eqref{eq7+Td} are valid for the following Fourier means:

 %Note also that similar assertions are valid for Theorem~\ref{propVPRd} presented below.

As examples, let us consider the following approximation processes:

1) the $\ell_q$-partial Fourier sums
$$
S_{n;q} f(x)=\sum_{\Vert k\Vert_{\ell_q}\le n} \widehat{f}(k) e^{i(k,x)},\quad 1\le q\le \infty;
$$

2) the de~la~Vall\'ee Poussin-type means
$$
\eta_{n}f(x)=\sum_{k\in \Z^d} \eta\(\frac{|k|}n\) \widehat{f}(k) e^{i(k,x)};
$$

3) the Riesz spherical  means
$$
R_n^{\beta,\d} f(x)=\sum_{|k|\le n} \bigg(1- \bigg(\frac{\left|k\right|}n\bigg)^\b\bigg)_+^\delta \widehat{f}(k) e^{i(k,x)},\quad \b,\d>0.
$$
%for $\b>0$ and $\d>d|\frac1p-\frac12|-\frac12$.

%4) Rogozinskii menas, $d=1$, ????

\begin{corollary}\label{o-sn1}
Let $f \in L_p(\T^d)$, $1 < p < \infty$, $\a>0$,  $\tau = \max(2,p)$, and $\theta = \min(2,p)$.
 Then
\begin{equation}\label{rr++++}
 \( \sum\limits_{k =n+1}^{\infty} 2^{- k \a \tau}
\Vert (-\Delta)^{\a/2}T_{2^k}f \Vert_p^{\tau}
\)^{\frac{1}{\tau}} \lesssim \omega_{\a} \Big( f,
\frac{1}{2^n} \Big)_p \lesssim  \( \sum\limits_{k =n+1}^{\infty} 2^{- k \a \t}
\Vert (-\Delta)^{\a/2}T_{2^k}f \Vert_p^{\t}
\)^{\frac{1}{\t}},
\end{equation}
where $T_{2^k} f=S_{2^k;q} f$ with $q=1,\infty$, $\eta_{2^k}f$,  or $R_{2^k}^{\b,\d} f$ with $\d>(d-1)/2$.
%\begin{equation*}\label{rr}
% \( \sum\limits_{k =n+1}^{\infty} 2^{- k r \tau}
%\Vert (-\Delta)^{r/2}\eta_{2^k}(f) \Vert_p^{\tau}
%\)^{\frac{1}{\tau}} \lesssim \omega_{r} \Big( f,
%\frac{1}{2^n} \Big)_p \lesssim  \( \sum\limits_{k =n+1}^{\infty} 2^{- k r \t}
%\Vert (-\Delta)^{r/2}\eta_{2^k}(f)  \Vert_p^{\t}
%\)^{\frac{1}{\t}},
%\end{equation*}
%where $S_n(f)$, $n\in \N$, are the partial sums of the Fourier series of $f$.
\end{corollary}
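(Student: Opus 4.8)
The plan is to derive Corollary~\ref{o-sn1} directly from Theorem~\ref{propVPTd} (equivalently Theorem~\ref{thPsi}) by checking, for each of the listed operators, that both the forward operator $\Psi_{2^k}$ and its "inverse" $\widetilde\Psi_{2^k}$ are uniformly bounded on $L_p(\T^d)$, $1<p<\infty$. Once this verification is done, parts (A) and (B) of Theorem~\ref{propVPTd} give precisely the left- and right-hand inequalities in~\eqref{rr++++}. The three cases correspond to three generating functions $\psi$ on $\R^d$ supported in $[-1,1]^d$ (up to rescaling of the sphere/cube): $\psi=\chi_{\{\|\xi\|_{\ell_q}\le1\}}$ for $q=1,\infty$; $\psi(\xi)=\eta(|\xi|)$ for the de la Vall\'ee Poussin means; and $\psi(\xi)=(1-|\xi|^\b)_+^\d$ for the Riesz means. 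In each case one must also fix $m\in\Z_+$ so that $\psi$ is bounded away from $0$ on $[-2^{-m},2^{-m}]^d$, which is immediate since all three are continuous and equal to $1$ near the origin (or, for $\ell_1$, at least bounded below near $0$; in fact $\psi\equiv1$ on a neighborhood of $0$ for $q=1,\infty$ too).

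First I would handle the de la Vall\'ee Poussin means, which is the easiest: here $\psi=\eta(|\cdot|)$ is $C^\infty$ with compact support, so $\psi\in A(\R^d)$ and $\widetilde\psi=\eta(|\xi|)/\eta(2^{-m}\xi)$ is again $C^\infty$ with compact support, hence both lie in $A(\R^d)$; by Remark~\ref{rem5.1}(i) the families $\{\eta_{2^k}\}$ and $\{\widetilde{\eta}_{2^k}\}$ are uniformly bounded on $L_p(\T^d)$ for all $1\le p\le\infty$, and Theorem~\ref{propVPTd} applies. Next, for the Riesz means $R_n^{\b,\d}$, uniform boundedness of $\{\Psi_{2^k}\}$ on $L_p(\T^d)$ for $\d>d|1/p-1/2|-1/2$ is the classical Bochner--Riesz-type estimate for these means (for $\b=2$ this is the sphere; the general $\b$ version and the periodic transference are standard, cf.\ the references in Section~\ref{sec4} and~\cite{DDT}). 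For $\widetilde\Psi_{2^k}$ one writes $\widetilde\psi(\xi)=\eta(|\xi|)(1-|2^{-m}\xi|^\b)_+^{-\d}$; since $\eta$ localizes to $|\xi|\le1$ while $(1-|2^{-m}\xi|^\b)_+^{-\d}$ is smooth and bounded there (the singularity at $|2^{-m}\xi|=1$ is never reached), $\widetilde\psi$ is a $C^\infty$ compactly supported function, hence in $A(\R^d)$, and $\{\widetilde\Psi_{2^k}\}$ is uniformly bounded on $L_p(\T^d)$ for $1<p<\infty$ by Remark~\ref{rem5.1}(i) or~(iii).

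The main obstacle is the case of the $\ell_q$-partial sums $S_{n;q}$ for $q=1,\infty$: here $\psi=\chi_{\{\|\cdot\|_{\ell_q}\le1\}}$ is the indicator of the $\ell_1$-ball (a cross-polytope) or the $\ell_\infty$-ball (a cube), and one needs the multiplier $\chi$ of such a convex polytope to be bounded on $L_p(\T^d)$ for $1<p<\infty$. For the cube ($q=\infty$) this factors as a product of one-dimensional conjugate-function (Riesz projection) multipliers, so boundedness on $L_p$ for $1<p<\infty$ follows by iterating the one-dimensional M.\ Riesz theorem; for the cross-polytope ($q=1$) one uses that the indicator of a convex polytope is an $L_p$-multiplier for $1<p<\infty$ because its boundary is a finite union of pieces of hyperplanes and the associated multiplier operator is controlled by finitely many directional Hilbert transforms (half-space multipliers), each bounded on $L_p$ — this is a classical fact, see the references on Fourier multipliers in the introduction. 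For $\widetilde\Psi_{2^k}$ one notes that on the support of $\eta(|\xi|)$, if $m$ is chosen large enough that $\{|\xi|\le1\}\subset\{\|2^{-m}\xi\|_{\ell_q}\le1\}$, then $\psi(2^{-m}\xi)\equiv1$ there, so $\widetilde\psi(\xi)=\eta(|\xi|)$, which is $C^\infty$ with compact support and hence in $A(\R^d)$; thus $\{\widetilde\Psi_{2^k}\}$ is trivially uniformly bounded. With all boundedness statements in hand, invoking Theorem~\ref{propVPTd}(A) and~(B) term by term completes the proof of~\eqref{rr++++}. I would organize the write-up as a short paragraph per operator, citing the relevant multiplier results and then applying Theorem~\ref{propVPTd}.
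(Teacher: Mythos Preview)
Your proposal is correct and follows essentially the same route as the paper's proof: reduce to Theorem~\ref{propVPTd} by verifying uniform $L_p$-boundedness of each family $\{\Psi_{2^k}\}$ and of the associated inverse operators $\{\widetilde\Psi_{2^k}\}$. The paper's proof is a two-line version of the same argument (citing \cite{SW}, \cite{We} for the boundedness of the means and the Mikhlin--H\"ormander condition for the inverse operators), whereas you spell out each case in detail; in particular your observation that for $q=1,\infty$ and for the Riesz kernel one can choose $m$ so that $\widetilde\psi$ is $C^\infty$ with compact support is exactly what the paper has in mind when invoking Mikhlin--H\"ormander for the inverse.
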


\begin{proof}
It is enough to note that these means are uniformly bounded in $L_p(\T^d)$, $1<p<\infty$, see, e.g.,~\cite[Ch. VII]{SW} and~\cite{We}, and to apply the
Mikhlin-H\"ormander multiplier condition to show that the corresponding inverse operators $\{\widetilde{\Psi}_n\}$ are also uniformly bounded in $L_p(\T^d)$.
\end{proof}

\begin{remark} \emph{In the univariate case of the Fej\'er means $T_{2^k} f=R_{2^k}^{1,1} f$, the right-hand side of inequality~\eqref{rr++++} was obtained earlier by Zhuk and Natanson in~\cite{Zh}.}
%1. Zhuk and Natanson's (\cite{Zh}) proved that $f\in L_p(\T)$:
%
%$$\omega_l(f,1/n)_p\le C\left( \sum_{k=n}^\infty k^{-l\theta-1} \| (\sigma _{k}(f))^{(l)}\|_{p}^\theta\right)^{1/\theta},\quad
%1<p<\infty,\quad\theta=\min\{2,p\},
%$$
%where $\sigma _{k}(f)=\frac1{n+1}\sum_{\nu=0}^n S_\nu(f)$;
\end{remark}

Note that for $\a\in \N$ and $1<p<\infty$ inequality~\eqref{rr++++} can be equivalently  written as follows
$$
 \( \sum\limits_{k =n+1}^{\infty} 2^{- k \a \tau}
\Vert T_{2^k}f \Vert_{\dot W_p^\a(\T^d)}^{\tau}
\)^{\frac{1}{\tau}} \lesssim \omega_{\a} \Big( f,
\frac{1}{2^n} \Big)_p \lesssim  \( \sum\limits_{k =n+1}^{\infty} 2^{- k \a \t}
\Vert T_{2^k}f \Vert_{\dot W_p^\a(\T^d)}^{\t}
\)^{\frac{1}{\t}}.
$$
We  give its analogue for the cases $p=1,\,\infty$.
\begin{corollary}\label{fourier}
Let $f \in L_p(\T^d)$, $p =1, \infty$, and $\a\in \N$.
 Then
\begin{equation}\label{fourier1}
%\label{rr++++}
 2^{- n \a } \xi^{-1}_q(2^n)
\Vert S_{2^n;q} f \Vert_{\dot W_p^\a(\T^d)} \lesssim \omega_{\a} \Big( f,
\frac{1}{2^n} \Big)_p\lesssim \sum\limits_{k =n+1}^{\infty} 2^{- k \a } \xi_q(2^k)
\Vert S_{2^k;q} f \Vert_{\dot W_p^\a(\T^d)},
\end{equation}
where
$$
\xi_q(t)=\left\{
           \begin{array}{ll}
             \log^d(t+1), & \hbox{$q=1,\infty$,} \\
             t^{\frac{d-1}{2}}, & \hbox{$1<q<\infty$, $q\neq 1$,}
           \end{array}
         \right.
$$
and
\begin{equation}\label{fourier2}
%\label{rr++++}
 2^{- n \a }
\Vert T_{2^n}f \Vert_{\dot W_p^\a(\T^d)} \lesssim \omega_{\a} \Big( f,
\frac{1}{2^n} \Big)_p\lesssim \sum\limits_{k =n+1}^{\infty} 2^{- k \a}
\Vert T_{2^k}f \Vert_{\dot W_p^\a(\T^d)},
\end{equation}
where $T_{2^k} f=\eta_{2^k}f$  or $R_{2^k}^{\b,\d} f$ with $\d>(d-1)/2$.
\end{corollary}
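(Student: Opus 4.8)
The plan is to deduce both \eqref{fourier1} and \eqref{fourier2} from the general machinery of Section~\ref{sec2}, specifically Theorem~\ref{le1} together with Remark~\ref{fourier0}, applied with $X=L_p(\T^d)$ for $p=1,\infty$, the measure of smoothness $\Omega(f,\delta)_X=\omega_\a(f,\delta)_{L_p(\T^d)}$, and $\lambda=\min(p,1)=1$ (so the $\ell_\lambda$ norm is just the $\ell_1$ sum). First I would verify that the hypotheses \eqref{eq1}--\eqref{eq6} hold: properties \eqref{eq1}--\eqref{eq4} are standard for the modulus of smoothness, and \eqref{eq6} is the Jackson inequality \eqref{JacksonSO}. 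Property \eqref{eq5} needs a small argument depending on which operator $P_n$ we take. For $P_n=\eta_{2^k}$ and for the Riesz means $P_n=R_{2^k}^{\b,\d}$, we have the projection-type identity $P_n(P_{2n}(f))=P_n(f)$ (since $\eta$ is supported in $[0,1]$ and equals $1$ on $[0,1/2]$, and similarly the dilated Riesz multipliers nest correctly), so \eqref{eq5} holds trivially as noted right after \eqref{eq6}; for $P_n=S_{2^n;q}$ the partial-sum projections also satisfy $S_n(S_{2n}(f))=S_n(f)$, so \eqref{eq5} holds as well.

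For \eqref{fourier2}, the operators $T_{2^k}=\eta_{2^k}f$ or $R_{2^k}^{\b,\d}f$ with $\d>(d-1)/2$ are uniformly bounded in $L_p(\T^d)$ for $p=1,\infty$: for $\eta_{2^k}$ this follows since $\eta(|\cdot|)\in A(\R^d)$, and for the Riesz means with $\d>(d-1)/2$ this is the classical Bochner bound for the critical exponent at the endpoints (cf.\ the reference \cite{SW} and \cite{We}, and note $d|1/p-1/2|-1/2=(d-1)/2$ for $p=1,\infty$). Hence \eqref{eq6} holds with no logarithmic loss ($\xi\equiv 1$), and Theorem~\ref{le1} directly gives
\[
\omega_\a\Bigl(\cdot,2^{-n}\Bigr)_p \asymp \text{(left side)} \quad\text{and}\quad \omega_\a\Bigl(\cdot,2^{-n}\Bigr)_p \lesssim \sum_{k>n}\omega_\a\bigl(T_{2^k}f,2^{-k}\bigr)_p .
\]
It then remains only to replace $\omega_\a(T_{2^k}f,2^{-k})_p$ by $2^{-k\a}\Vert T_{2^k}f\Vert_{\dot W_p^\a(\T^d)}$; this is exactly the Stechkin--Nikolskii equivalence \eqref{eq+++} (valid for all $0<p\le\infty$, in particular $p=1,\infty$), applied to the polynomial $T_{2^k}f\in\mathcal{T}_{2^k}$ with $\delta=2^{-k}$ (admissible since $2^{-k}\le\pi/2^k$). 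Both the left-hand bound $2^{-n\a}\Vert T_{2^n}f\Vert_{\dot W_p^\a}\lesssim\omega_\a(T_{2^n}f,2^{-n})_p\lesssim\omega_\a(f,2^{-n})_p$ and the right-hand sum follow, giving \eqref{fourier2}.

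For \eqref{fourier1}, the point is that $S_{2^n;q}$ is \emph{not} uniformly bounded at $p=1,\infty$, but only with a growth factor: $\Vert S_{n;q}\Vert_{L_p\to L_p}\lesssim \xi_q(n)$, where $\xi_q(t)=\log^d(t+1)$ for the cube/cross cases $q=1,\infty$ (Dirichlet-kernel $L_1$-norm growth, tensorized in $d$ variables) and $\xi_q(t)=t^{(d-1)/2}$ for $1<q<\infty$ (the Lebesgue constant of the ball-type and $\ell_q$-type partial sums — the classical Herz/Yudin bound). Consequently \eqref{eq6} holds only in the weakened form $\Vert f-S_{2^n;q}(f)\Vert_p\lesssim \xi_q(2^n)\,\omega_\a(f,2^{-n})_p$, which is precisely the setting of Remark~\ref{fourier0} with $\xi=\xi_q$ (nondecreasing on $[1,\infty)$). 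Applying \eqref{eq7--} with $\lambda=1$ yields
\[
\xi_q^{-1}(2^n)\,\omega_\a\bigl(S_{2^n;q}f,2^{-n}\bigr)_p \lesssim \omega_\a(f,2^{-n})_p \lesssim \sum_{k>n}\xi_q(2^k)\,\omega_\a\bigl(S_{2^k;q}f,2^{-k}\bigr)_p,
\]
and then again invoking \eqref{eq+++} to rewrite $\omega_\a(S_{2^k;q}f,2^{-k})_p\asymp 2^{-k\a}\Vert S_{2^k;q}f\Vert_{\dot W_p^\a(\T^d)}$ gives \eqref{fourier1}. The main obstacle — the only genuinely nontrivial input beyond invoking the Section~\ref{sec2} results — is establishing the correct operator-norm growth rates $\xi_q$ for the various partial-sum operators at the endpoints $p=1,\infty$; once these Lebesgue-constant estimates are in hand, everything else is a direct application of Theorem~\ref{le1}/Remark~\ref{fourier0} and the Stechkin--Nikolskii inequality.
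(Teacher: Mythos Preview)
Your approach for \eqref{fourier1} and for \eqref{fourier2} with $T_{2^k}f=\eta_{2^k}f$ is essentially the paper's: apply Remark~\ref{fourier0} (resp.\ Theorem~\ref{le1}) and then convert $\omega_\a(P_{2^k}f,2^{-k})_p$ to $2^{-k\a}\Vert P_{2^k}f\Vert_{\dot W_p^\a}$ via the Stechkin--Nikolskii equivalence~\eqref{eq+++}. That part is fine.

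There is, however, a genuine gap in your treatment of the Riesz means. You assert that ``the dilated Riesz multipliers nest correctly,'' i.e.\ that $R_n^{\b,\d}\bigl(R_{2n}^{\b,\d}f\bigr)=R_n^{\b,\d}f$, in order to verify~\eqref{eq5}. This is false: the multiplier of $R_{2n}^{\b,\d}$ at frequency $k$ is $\bigl(1-(|k|/(2n))^\b\bigr)_+^\d$, which is \emph{not} equal to $1$ on the set $\{|k|\le n\}$ (only at $k=0$), so the composition picks up an extra nontrivial factor. Without the identity $P_n(P_{2n}f)=P_nf$, you have no reason for~\eqref{eq5} to hold for $P_n=R_{2^n}^{\b,\d}$, and the right-hand inequality in Theorem~\ref{le1} (which relies on~\eqref{eq5} through~\eqref{eq9}) is not available directly.

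The paper handles this case differently: it first proves \eqref{fourier2} for the de~la~Vall\'ee Poussin means $\eta_{2^k}$ (where your argument works), and then transfers to the Riesz means by the inverse-multiplier trick from the proof of~\eqref{R+} in Theorem~\ref{thPsi}. Concretely, since $\psi(\xi)=(1-|\xi|^\b)_+^\d$ is nonzero on $[0,2^{-m}]^d$ for suitable $m$, the auxiliary multiplier $\widetilde\psi(\xi)=\eta(|\xi|)/\psi(2^{-m}\xi)$ belongs to the Wiener algebra (hence is bounded on $L_1$ and $L_\infty$), which yields the comparison $\Vert\eta_{2^k}f\Vert_{\dot W_p^\a}\lesssim\Vert R_{2^{k+m}}^{\b,\d}f\Vert_{\dot W_p^\a}$; substituting this into the already-established inequality for $\eta_{2^k}$ gives the right-hand side of~\eqref{fourier2} for $R_{2^k}^{\b,\d}$. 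The left-hand inequality uses only~\eqref{eq2}, \eqref{eq3}, \eqref{eq6} (see the statement of Theorem~\ref{le1}), and~\eqref{eq6} for Riesz means is the Jackson-type bound $\Vert f-R_{2^k}^{\b,\d}f\Vert_p\lesssim\omega_\a(f,2^{-k})_p$ from~\cite{Wil}.
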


\begin{proof}
Estimates~\eqref{fourier1} follow from Remark~\ref{fourier0} with $\xi(t)=\xi_q(t)$ since %  and the fact that
$$
\Vert f-S_{n;q}f\Vert_{L_p(\T^d)}\lesssim \Vert S_{n;q} \Vert_{L_1\to L_1} E_{cn}(f)_{L_p(\T^d)}\lesssim \xi_q(n) \w_\a (f,n^{-1})_{L_p(\T^d)}.
$$
For calculation of $\xi_q(t)$ see, e.g.,~\cite{Li} and~\cite{Dy} for the case $1<q<\infty$ and~\cite[Sec.~9.2]{TB}, \cite{KL} for the case $q=1,\infty$.

The proof of~\eqref{fourier2} for $T_{2^k} f=\eta_{2^k}f$ follows from Theorem~\ref{le1} and the uniform boundedness of the de la Val\'ee Poussin means in $L_1(\T^d)$,  see also Remark~\ref{rem5.1}. The case $T_{2^k} f=R_{2^k}^{\b,\d} f$ can be proved similarly using the uniform boundedness of $R_{2^k}^{\b,\d}$, see, e.g.,~\cite[Ch. VII]{SW}, the inequality $\Vert f-R_{2^k}^{\b,\d} f\Vert_{L_p(\T^d)}\lesssim \omega_{\a} (f, {2^{-n}})_p$, see~\cite{Wil}, and applying the same arguments as in  the proof of~\eqref{R+}.
\end{proof}

\subsection{Inequalities in the Hardy spaces $H_p(D)$, $0<p\le 1$}

%Similar results can be obtained also in the Hardy spaces.
For simplicity, we only consider the analytic Hardy spaces on the unit disc $D=\{z\in\C\,:\, |z|<1\}$.
By definition, an analytic function $f$ on $D$
belongs to the space $H_p=H_p(D)$ if
\begin{equation*}
\Vert f\Vert_{H_p}=\sup_{{0<\rho<1}}\left(\int_{0}^{2\pi}|f(\rho
e^{it})|^p dt\right)^{\frac1p}<\infty.
\end{equation*}\index{\bigskip\textbf{Spaces}!$H_p(D)$}\label{HPD}
%aaa $\r e^{i\f}=(\r_1e^{i\f_1},\dots,\r_ne^{i\f_n})$,
%$d\f=d\f_1\dots d\f_n$.

Set
$$
\eta_n f(x)=\sum_{k=0}^{n} \eta\(\frac kn\)c_k e^{ikx},
$$
where
 %-- ooieoey ec aieacaoaeunoaa eaiiu~\ref{lemHL}
%(in addition, we suppose that $v$ is a monotonic function on $(0,\infty)$),
$c_k=c_k(f)$ are the Taylor coefficients of~$f$. Then,
the realization result is given as follows (see~\cite[Sec.~11]{KT}):
\begin{equation*}\label{KamodHardy}
  \Vert f-\eta_{2^n}f\Vert_{H_p}+2^{-\a n}\Vert (\eta_{2^n}f)^{(\a)}\Vert_{H_p}\asymp \w_\a(f,2^{-n})_{H_p}.
\end{equation*}
%The estimate $"\gtrsim"$ in~\eqref{KamodHardy} is clear, see the definition of the $K$-functional. The proof of the estimate $"\lesssim"$ follows from the inequalities
%\begin{equation}\label{storJackH}
%  \Vert f-V_{2^n}(f)\Vert_{{H_p}}\lesssim E_{2^n}(f)_{H_p}\lesssim \w_\a(f,2^{-n})_{H_p}.
%\end{equation}
%Here, the first inequality can be obtained by using the fact that the function $v$ is a Fourier multiplier in $H_p$ (see~\cite[\S~7.3]{TB}) and the second one is the Jackson inequality in $H_p$  (see~\cite{Stor78}).

Using the scheme of the proof of Theorem~\ref{thLP} and the Littlewood-Paley  theorem in the Hardy spaces $H_p(D)$, $0<p\le 1$, see, e.g.,~\cite[Ch.~6]{GrII}, we obtain the following result.

\begin{theorem}\label{thLPHardy}
  Let $f\in H_{p}({D})$, $0<p\le 1$, $\a\in \N\cup (1/p-1,\infty)$, $n\in \N$. Then
  \begin{equation}\label{LHp}
    \(\sum_{k=n+1}^\infty 2^{-2\a k} \Vert (\eta_{2^k} f)^{(\a)}\Vert_{H_p}^2\)^\frac12 \lesssim \w_\a(f,2^{-n})_{H_p}
  \end{equation}
and
    \begin{equation}\label{RHP}
   \w_\a(f,2^{-n})_{H_p}\lesssim\(\sum_{k=n+1}^\infty 2^{-\a p k} \Vert (\eta_{2^k} f)^{(\a)}\Vert_{H_p}^p\)^\frac1p.
  \end{equation}
%where the constants $C_1$ and $C_2$ do not depend on $f$ and $n$.
\end{theorem}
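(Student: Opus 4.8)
The plan is to imitate the proof of Theorem~\ref{thLP} essentially verbatim, transplanting it from the weighted Lebesgue setting to the analytic Hardy space $H_p(D)$ with $0<p\le 1$. In the dictionary of that proof one replaces $L_{p,w}(\mathcal D)$ by $H_p(D)$ (a quasi-Banach space with natural exponent $\lambda=p$), the operator $Q(D)^\gamma$ by differentiation $f\mapsto f^{(\alpha)}$, the growth $\lambda_N\asymp N^\sigma$ by $\lambda_N\asymp N$, and the de~la~Vall\'ee Poussin means by their analogues $\eta_N$ on the disc. The parameters $\tau=\max(2,p)$ and $\theta=\min(2,p)$ of Theorem~\ref{thLP} then degenerate to $\tau=2$ and $\theta=p$, which is why \eqref{LHp} carries the exponent $2$ and \eqref{RHP} the exponent $p$. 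Three external facts are imported: the uniform boundedness on $H_p(D)$ of $\eta_N$ and of the dyadic blocks $\theta_j=\eta_{2^j}-\eta_{2^{j-1}}$ (smooth Fourier multipliers; this is where the restriction $\alpha\in\mathbb N\cup(1/p-1,\infty)$ enters, cf.~\cite[Ch.~6]{GrII}); the Littlewood--Paley equivalences in $H_p(D)$ for both $\|g\|_{H_p}$ and $\|g^{(\alpha)}\|_{H_p}$ recorded above; and the realization identity $\|f-\eta_{2^n}f\|_{H_p}+2^{-\alpha n}\|(\eta_{2^n}f)^{(\alpha)}\|_{H_p}\asymp\omega_\alpha(f,2^{-n})_{H_p}$, also recorded above. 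Throughout, every triangle inequality is to be used in the $p$-subadditive form $\|g+h\|_{H_p}^p\le\|g\|_{H_p}^p+\|h\|_{H_p}^p$.

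To prove \eqref{LHp} I would set $I^2=\sum_{k\ge n+1}2^{-2\alpha k}\|(\eta_{2^k}f)^{(\alpha)}\|_{H_p}^2$ and split it, as in \eqref{f5}, into $J+2^{-2\alpha n}\|(\eta_{2^n}f)^{(\alpha)}\|_{H_p}^2$, where $J$ is assembled from the differences $\eta_{2^k}f-\eta_{2^n}f$. I would then apply the $H_p$ Littlewood--Paley equivalence for $f^{(\alpha)}$, note that $\theta_j(\eta_{2^k}f-\eta_{2^n}f)=0$ unless $n-1\le j\le k+1$, and break $J$ into the two boundary contributions near $j\approx n$ and $j\approx k$ together with the interior part, where $\theta_j(\eta_{2^k}f-\eta_{2^n}f)=\theta_j(f)=\theta_j(f-\eta_{2^n}f)$ --- exactly the decomposition $J=J_1+J_2+J_3$ of Theorem~\ref{thLP}. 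The boundary pieces are absorbed into $\|f-\eta_{2^n}f\|_{H_p}^2$ using uniform boundedness of $\eta_N,\theta_j$, the estimate $\|f-\eta_{2^k}f\|_{H_p}\lesssim\|f-\eta_{2^n}f\|_{H_p}$ for $k\ge n+1$ (the $H_p$ analogue of \eqref{ZZZ}), and the geometric sum $\sum_{k\ge n+1}2^{-2\alpha k}2^{2\alpha n}\lesssim 1$. The interior piece is reduced to $\|f-\eta_{2^n}f\|_{H_p}^2$ by Minkowski's inequality (exponent $\tau/p=2/p\ge 1$), Hardy's inequality \eqref{eqHardy} with $q=\tau/2=1$, the identity of $\ell_\tau$ and $\ell_2$ (as $\tau=2$), and a final application of the $H_p$ Littlewood--Paley equivalence. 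Combining, $J\lesssim\|f-\eta_{2^n}f\|_{H_p}^2$, so $I^2\lesssim\|f-\eta_{2^n}f\|_{H_p}^2+2^{-2\alpha n}\|(\eta_{2^n}f)^{(\alpha)}\|_{H_p}^2$, and the realization identity yields \eqref{LHp}.

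For \eqref{RHP} I would follow the proof of \eqref{R} with $\theta=p$. By the realization identity it suffices to prove $\|f-\eta_{2^n}f\|_{H_p}^p\lesssim\sum_{k\ge n+1}2^{-\alpha pk}\|(\eta_{2^k}f)^{(\alpha)}\|_{H_p}^p$; the leftover term $2^{-\alpha np}\|(\eta_{2^n}f)^{(\alpha)}\|_{H_p}^p$ is absorbed into the $k=n+1$ summand since $\eta_{2^n}\eta_{2^{n+1}}=\eta_{2^n}$ and $\eta_{2^n}$ is bounded on $H_p$. Starting from the $H_p$ Littlewood--Paley equivalence for $\|f-\eta_{2^n}f\|_{H_p}$, and using $(\theta_j(f-\eta_{2^n}f))^2\le 4(\theta_jf)^2+4(\theta_j\eta_{2^n}f)^2$ with $\theta_j\eta_{2^n}f=0$ for $j\ge n+2$, then $\|\cdot\|_{\ell_2}\le\|\cdot\|_{\ell_p}$ (valid since $p\le 2$) and the pointwise identity $|\theta_jf|^p=2^{-\alpha pj}(2^{2\alpha j}|\theta_jf|^2)^{p/2}$, I would insert the nonnegative terms $2^{2\alpha(j+1)}|\theta_{j+1}(\eta_{2^{j+1}}f)|^2$ and $2^{2\alpha(j+2)}|\theta_{j+2}(\eta_{2^{j+1}}f)|^2$, complete the inner sum to the full square function of $\eta_{2^{j+1}}f$, apply Minkowski (trivially, as $p/\theta=1$), identify the inner integral with $\|(\eta_{2^{j+1}}f)^{(\alpha)}\|_{H_p}^p$ by the $H_p$ Littlewood--Paley equivalence for the derivative, and finish by shifting the index. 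This is the verbatim transcription of the chain \eqref{f8}--\eqref{f9}.

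The steps that need care are all in quasi-Banach bookkeeping: triangle inequalities in $p$-subadditive form, and checking that the Minkowski and Hardy applications stay in their valid ranges --- which they do, since $\tau/p=2/p\ge 1$, $\tau/2=1$, and $p/\theta=1$ (so the Hardy and outer-Minkowski steps are in fact nearly trivial here). The genuine obstacle is not in the argument but in its prerequisites: one must be sure that the Littlewood--Paley characterization of $\|g^{(\alpha)}\|_{H_p}$, the uniform $H_p$-boundedness of the smooth means $\eta_N$, and the realization of $\omega_\alpha$ all remain available on $H_p(D)$ for $0<p\le 1$. These are exactly what the hypothesis $\alpha\in\mathbb N\cup(1/p-1,\infty)$ secures, and that is where the restriction on $\alpha$ is really used.
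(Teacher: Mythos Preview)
Your proposal is correct and follows exactly the approach the paper itself prescribes: the paper does not spell out a proof but simply instructs the reader to use ``the scheme of the proof of Theorem~\ref{thLP} and the Littlewood--Paley theorem in the Hardy spaces $H_p(D)$, $0<p\le 1$,'' together with the stated realization identity---precisely what you have done, correctly tracking that $\tau=2$, $\theta=p$, and that the Minkowski and Hardy steps remain valid (indeed simplify) in these ranges. One small aside: the paper attributes the restriction $\alpha\in\mathbb N\cup(1/p-1,\infty)$ to the well-definedness of the modulus $\omega_\alpha(f,\delta)_{H_p}$ rather than to the multiplier boundedness, but this does not affect the argument.
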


\begin{remark}
{\rm (i)} \emph{Note that the restriction $\a>1/p-1$ is needed to correctly define the modulus of smoothness $\w_\a(f,\delta)_{H_p}$.}

{\rm (ii)}  \emph{Inequalities~\eqref{LHp} and~\eqref{RHP} are also valid if we replace the de la Vall\'ee Poussin means $\eta_{2^k} f$ by the corresponding  means $\Psi_{2^k} f$ with the properties similar to those indicated in Theorem~\ref{thPsi}.}

{\rm (iii)} \emph{Inequality~\eqref{RHP} also follows from Theorem~\ref{le1} and the Stechkin-Nikolskii inequality~\eqref{ineqNS3cor}.}
\end{remark}

\subsection{Approximation in smooth function spaces}

We will say that $f\in {\rm Lip}(\a,p)(\T)$, $0<p\le\infty$, $\a>0$, if $f\in L_p(\T)$ and
\begin{equation*}\label{eqI4}
  \Vert f\Vert_{{\rm Lip}(\a,p)}=\Vert f\Vert_{L_p(\T)}+|f|_{{\rm Lip}(\a,p)}<\infty,
\end{equation*}
where
\begin{equation*}
  |f|_{{\rm Lip}(\a,p)}=\sup_{h>0}\frac{\Vert \D_h^r f\Vert_{L_p(\T)}}{h^\a}=\sup_{h>0}\frac{\w_r(f,h)_p}{h^\a},\quad r=[\a]+1.
\end{equation*}
%Following the initial works of Kalandiya~\cite{Ka} and Pr\"ossdorf~\cite{Pr} the problems of approximation in H\"older spaces ${\rm Lip}(\a,p)$ were studied by Ioakimidis~\cite{Io},  Rempulska and Walczak~\cite{RW}, Bustamante and Roldan~\cite{BR} and many others.
%One can find an interesting survey on this subject in~\cite{BJ}, see also~\cite{PrPr}.

Let $0<p\le\infty$, $0<\a< \ell$, and $\ell,n\in\N$. The best approximation in ${\rm Lip}(\a,p)(\T)$ and the modulus of smoothness are given by
$$
E_n(f)_{{\rm Lip}(\a,p)}=\inf_{T\in \mathcal{T}_n}\Vert f-T\Vert_{{\rm Lip}(\a,p)}
$$
and
$$
\vartheta_{\ell,\a}(f,\d)_p=\sup_{0<h\le \d}\frac{\w_\ell(f,h)_p}{h^\a}.
$$
%see, e.g.,~\cite{KP}.
%By using some standard arguments (see~\cite{BR}) with Theorem~A and Theorem~B

In light of the Jackson  inequality (see~\cite{KP})
\begin{equation*}\label{eqR2}
  E_n(f)_{{\rm Lip}(\a,p)}\lesssim \vartheta_{\ell,\a}\(f,\frac1n\)_p,\quad n\in \N,
\end{equation*}
%where $ f\in {\rm Lip}(\a,p)(\T)$,  $0<p\le\infty$, and $0<\a<\ell$.
by~\eqref{eq+++}, the realization result can be written as follows
\begin{equation}\label{HolSN}
 \vartheta_{\ell,\a}(f,\d)_p\asymp \Vert f-T_n\Vert_{{\rm Lip}(\a,p)}+ \d^{\ell-\a}\Vert T_{n}^{(\ell)}\Vert_{L_p(\T)},\quad n=[1/\d],
\end{equation}
where $T_n\in \mathcal{T}_n$ is such that $E_n(f)_{{\rm Lip}(\a,p)}=\Vert f-T_n\Vert_{{\rm Lip}(\a,p)}$.

%By Stechkin-Nikolskii-type inequality~\eqref{eq+++}, we also have that
%\begin{equation}\label{HolSN}
%  \d^{\ell-\a}\Vert T_{n}^{(\ell)}\Vert_{p}\asymp \vartheta_{\ell,\a}(T_n,\d)_p
%\end{equation}
%for any $T_n\in \mathcal{T}_n$ and $0<\d\le\frac \pi n$.

%Let  $X={\rm Lip}(\a,p)$, $0<p\le\infty$, and $\Omega(f,\delta)_X=\vartheta_\ell(f,\delta)_{p}$ for some $\a<\ell$, $\ell\in \N$. Using~\eqref{eqR2},
%%and the fact that for $0<\a<\min(r,\ell)$,  the (quasi)-norms of a function in ${\rm Lip}(\a,p)(\T)$ and $H_p^{\ell,\a}(\T)$ are equivalent,
%one can easily verify  that properties \eqref{eq1}--\eqref{eq6} are valid.
Therefore,  making use of Theorem~\ref{le1} with $X={\rm Lip}(\a,p)$ and $\Omega(f,\delta)_X=\vartheta_{\ell,\a}(f,\delta)_{p}$, $\a<\ell$, $\ell\in \N$, and~\eqref{HolSN}, we obtain the following result.
\begin{theorem}\label{PrG}
  Let $f\in {\rm Lip}(\a,p)$, $0<p\le \infty$, $\ell\in \N$, $0<\a<\ell$, and $\l=\min(p,1)$. Then
  \begin{equation}\label{eq7Td++}
    2^{-n(\ell-\a)}\Vert T_{2^n}^{(\ell)}\Vert_{L_p(\T)}\lesssim \vartheta_{\ell,\a}(f,2^{-n})_{p}\lesssim  \(\sum_{k=n+1}^\infty 2^{-k (\ell-\a) \l}\Vert T_{2^k}^{(\ell)}\Vert_{L_p(\T)}^\l\)^\frac1\l,
  \end{equation}
 where  $T_{2^k}\in \mathcal{T}_{2^k}$ is the best approximant of $f$ in ${\rm Lip}(\a,p)$.
\end{theorem}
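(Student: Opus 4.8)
The plan is to obtain Theorem~\ref{PrG} as a direct consequence of the abstract Theorem~\ref{le1}, exactly as the surrounding text suggests, by choosing the metric space and the abstract modulus of smoothness appropriately and by verifying the structural hypotheses \eqref{eq1}--\eqref{eq6}. Concretely, I would set $X={\rm Lip}(\a,p)(\T)$ with the quasi-norm $\|\cdot\|_{{\rm Lip}(\a,p)}$ and $\lambda=\min(p,1)$, so that properties i)--iii) of Section~\ref{sec2} hold: the $\lambda$-triangle inequality follows from the corresponding $\lambda$-inequality for $\|\cdot\|_{L_p}$ applied both to the $L_p$-part and, after differencing $r=[\a]+1$ times and taking the supremum over $h$, to the seminorm $|\cdot|_{{\rm Lip}(\a,p)}$. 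For the approximation family I would take $P_n(f)=T_n$, the best approximant of $f$ in the ${\rm Lip}(\a,p)$-metric from $\mathcal{T}_n$, which exists since $\mathcal{T}_n$ is finite-dimensional; being a best approximant, it satisfies \eqref{eq5} automatically (this is precisely the first of the two situations noted after \eqref{eq6}). Finally I would put $\Omega(f,\d)_X=\vartheta_{\ell,\a}(f,\d)_p$ for the fixed parameters $\ell\in\N$, $0<\a<\ell$.

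Next I would check the four conditions on $\Omega$. Condition \eqref{eq1}, $\vartheta_{\ell,\a}(f,\d)_p\to0$ as $\d\to+0$: this needs $\w_\ell(f,h)_p=o(h^\a)$ as $h\to0$ for $f\in{\rm Lip}(\a,p)$, which holds because $\a<\ell$ forces extra decay of the $\ell$-th modulus relative to the Lipschitz exponent — a standard fact (one can quote the usual marchaud-type or embedding argument, or the realization \eqref{HolSN} together with $\|f-T_n\|_{{\rm Lip}(\a,p)}\to0$). Condition \eqref{eq2}, $\vartheta_{\ell,\a}(f,\d)_p\le C\|f\|_{{\rm Lip}(\a,p)}$: bound $\w_\ell(f,h)_p\lesssim \w_r(f,h)_p$ for $h$ small (when $\ell\ge r$) or use $\w_\ell(f,h)_p\lesssim h^{?}\cdot(\dots)$ — more simply, $\vartheta_{\ell,\a}(f,\d)_p=\sup_{0<h\le\d}h^{-\a}\w_\ell(f,h)_p$ is dominated by the full seminorm $\sup_{h>0}h^{-\a}\w_\ell(f,h)_p$, which in turn is $\asymp|f|_{{\rm Lip}(\a,p)}$ (same $r$ vs.\ $\ell$ equivalence of Lipschitz seminorms for $0<\a<\min(r,\ell)$), hence $\le\|f\|_{{\rm Lip}(\a,p)}$. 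Conditions \eqref{eq3} and \eqref{eq4} are immediate: subadditivity in $f$ follows from $\w_\ell(f+g,h)_p^\lambda\le\w_\ell(f,h)_p^\lambda+\w_\ell(g,h)_p^\lambda$, and monotonicity \eqref{eq4} holds with $C_3=2^\a$ since $\vartheta_{\ell,\a}(f,2\d)_p=\sup_{0<h\le2\d}h^{-\a}\w_\ell(f,h)_p\le 2^\a\sup_{0<h\le\d}h^{-\a}\w_\ell(f,h)_p$ after the substitution $h\mapsto 2h$ on the dyadic-doubled range together with monotonicity of $\w_\ell$. Condition \eqref{eq6}, $\|f-T_n\|_{{\rm Lip}(\a,p)}\le C_4\,\vartheta_{\ell,\a}(f,1/n)_p$, is the Jackson inequality in the Lipschitz metric quoted from~\cite{KP}.

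With \eqref{eq1}--\eqref{eq6} verified, Theorem~\ref{le1} gives
\[
\vartheta_{\ell,\a}(T_{2^n},2^{-n})_p\lesssim\vartheta_{\ell,\a}(f,2^{-n})_p\lesssim\Bigl(\sum_{k=n+1}^\infty\vartheta_{\ell,\a}(T_{2^k},2^{-k})_p^\lambda\Bigr)^{1/\lambda}.
\]
To convert this into \eqref{eq7Td++} I would apply the Stechkin--Nikolskii-type equivalence \eqref{eq+++} to each polynomial $T_{2^k}\in\mathcal{T}_{2^k}$ with $\d=2^{-k}\le\pi/2^k$ and with smoothness parameter $\ell$: this yields $\w_\ell(T_{2^k},2^{-k})_p\asymp 2^{-k\ell}\|T_{2^k}\|_{\dot W_p^\ell(\T)}$, and since $T_{2^k}$ is itself a polynomial, $\vartheta_{\ell,\a}(T_{2^k},2^{-k})_p$ is comparable (up to the constants from \eqref{eq4}, i.e.\ from taking the supremum over $0<h\le 2^{-k}$ of $h^{-\a}\w_\ell(T_{2^k},h)_p$ and using \eqref{eq+++} at the scale $h$) to $2^{-k(\ell-\a)}\|T_{2^k}^{(\ell)}\|_{L_p(\T)}$. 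Substituting both sides of this comparison into the displayed inequality gives precisely \eqref{eq7Td++}. The main obstacle, as in Theorem~\ref{PrG} for $\dot W_p^r$ earlier, is purely bookkeeping: making sure that replacing $\vartheta_{\ell,\a}(T_{2^k},2^{-k})_p$ by the single-term expression $2^{-k(\ell-\a)}\|T_{2^k}^{(\ell)}\|_{L_p}$ is legitimate, which amounts to checking that $\sup_{0<h\le\d}h^{-\a}\w_\ell(T_n,h)_p\asymp\d^{-\a}\w_\ell(T_n,\d)_p$ for $T_n\in\mathcal{T}_n$ and $\d\le\pi/n$ — this is a consequence of \eqref{eq+++} applied uniformly in $h\le\d$ together with the monotonicity of $h\mapsto h^{-\a}\w_\ell(T_n,h)_p$ on that range (a Bernstein-type statement), and it should be remarked explicitly rather than passed over.
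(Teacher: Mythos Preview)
Your approach is exactly the paper's: apply Theorem~\ref{le1} with $X={\rm Lip}(\a,p)$, $\Omega=\vartheta_{\ell,\a}$, $\l=\min(p,1)$, $P_n=T_n$ the best Lip-approximant, and then translate $\vartheta_{\ell,\a}(T_{2^k},2^{-k})_p$ into $2^{-k(\ell-\a)}\|T_{2^k}^{(\ell)}\|_p$ via the Stechkin--Nikolskii equivalence~\eqref{eq+++} (this is precisely what the paper packages into the realization~\eqref{HolSN}). Your verifications of \eqref{eq2}--\eqref{eq6} and the final conversion step are correct.

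The one genuine gap is your verification of condition~\eqref{eq1}. It is \emph{not} true that $f\in{\rm Lip}(\a,p)$ together with $\a<\ell$ forces $\w_\ell(f,h)_p=o(h^\a)$: the lacunary function $f(x)=\sum_{j\ge1}2^{-j\a}\cos(2^jx)$ lies in ${\rm Lip}(\a,p)$ for $1<p<\infty$ and satisfies $\w_\ell(f,h)_p\asymp h^\a$ for \emph{every} integer $\ell>\a$, so $\vartheta_{\ell,\a}(f,\d)_p\asymp 1$ does not tend to~$0$. Your fallback justification via~\eqref{HolSN} ``together with $\|f-T_n\|_{{\rm Lip}(\a,p)}\to0$'' is circular: that convergence is exactly the statement that $f$ lies in the ${\rm Lip}(\a,p)$-closure of trigonometric polynomials, and it fails for the same functions. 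To be fair, the paper does not verify~\eqref{eq1} either---it simply invokes Theorem~\ref{le1}---so this gap is shared. The right-hand inequality in~\eqref{eq7Td++} is still expected to hold for such $f$ because the series on the right should diverge, but strictly speaking that requires a separate argument that neither you nor the paper supplies.
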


%Using the Stechkin-Nikolskii inequality~\eqref{eq+++} and Theorem~\ref{le1}, we derive the following result:

In view of Theorem~\ref{corthAB}, we  sharpen~\eqref{eq7Td++} for $1<p<\infty$ as follows.

\begin{theorem}\label{PrG2}
  Let $f\in {\rm Lip}(\a,p)$, $1<p< \infty$, $\ell\in \N$,  $0<\a<\ell$, and $\tau = \max(2,p)$, $\theta = \min(2,p)$. Then
  \begin{equation*}\label{eq7Td}
    \(\sum_{k=n+1}^\infty 2^{-k (\ell-\a) \tau}\Vert T_{2^k}^{(\ell)}\Vert_{L_p(\T)}^\tau\)^\frac1\tau\lesssim \vartheta_{\ell,\a}(f,2^{-n})_{p}\lesssim  \(\sum_{k=n+1}^\infty 2^{-k (\ell-\a) \t}\Vert T_{2^k}^{(\ell)}\Vert_{L_p(\T)}^\t\)^\frac1\t,
  \end{equation*}
 where $T_{2^k}\in \mathcal{T}_{2^k}$ is the best approximant of $f$ in ${\rm Lip}(\a,p)$. %such that $E_{2^k}(f)_{{\rm Lip}(\a,p)}=\Vert f-P_{2^{k}}(f)\Vert_{{\rm Lip}(\a,p)}$.
\end{theorem}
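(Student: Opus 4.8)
The plan is to deduce Theorem~\ref{PrG2} directly from the abstract result of Section~\ref{sec3}, namely Theorem~\ref{corthAB}, by choosing the right spaces and approximation family. First I would set $X = {\rm Lip}(\a,p)(\T)$ with $1<p<\infty$, $G_n = \mathcal{T}_n$ (so $P_n(f) = T_{2^k}$ in the notation of the theorem corresponds to the best approximant in the ${\rm Lip}(\a,p)$-norm), and for the smooth space $Y$ I would take $Y = \{g \in {\rm Lip}(\a,p) : g^{(\ell)} \in L_p(\T)\}$ equipped with the seminorm $\Vert g\Vert_Y = \Vert g^{(\ell)}\Vert_{L_p(\T)}$, with the parameter playing the role of $\a$ in Section~\ref{sec3} being $\ell - \a$. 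The three hypotheses of Theorem~\ref{corthAB} then need to be checked: the Jackson inequality~\eqref{K1.2}, the Bernstein inequality~\eqref{K1.3}, and the realization equivalence~\eqref{r1--}, after which Theorem~\ref{corthAB} gives exactly the claimed two-sided estimate with $\tau = \max(2,p)$, $\t = \min(2,p)$, provided $K(f, 2^{-n(\ell-\a)}; X, Y) \asymp \vartheta_{\ell,\a}(f, 2^{-n})_p$.

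The verifications run as follows. For~\eqref{K1.3} I would use the classical Bernstein inequality $\Vert T_n^{(\ell)}\Vert_{L_p(\T)} \lesssim n^\ell \Vert T_n\Vert_{L_p(\T)}$ together with the fact, coming from~\eqref{eq+++}, that for $T_n \in \mathcal{T}_n$ one has $\Vert T_n\Vert_{{\rm Lip}(\a,p)} \asymp \Vert T_n\Vert_{L_p(\T)} + n^\a\Vert T_n\Vert_{L_p(\T)} \asymp n^\a\Vert T_n\Vert_{L_p(\T)}$ for $n$ large (since $\vartheta_{\ell,\a}(T_n,\d)_p \asymp \d^{\ell-\a}\Vert T_n^{(\ell)}\Vert_{L_p}$ for $\d \le 1/n$ and one takes $\d = 1/n$); chaining these gives $\Vert g_1 - g_2\Vert_Y = \Vert (g_1-g_2)^{(\ell)}\Vert_{L_p} \lesssim n^\ell\Vert g_1-g_2\Vert_{L_p} \lesssim n^{\ell-\a}\Vert g_1-g_2\Vert_{{\rm Lip}(\a,p)}$, which is~\eqref{K1.3} with exponent $\ell-\a$. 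For the Jackson inequality~\eqref{K1.2} and for~\eqref{r1--} I would invoke the realization identity~\eqref{HolSN}: it says precisely that $R(f, \d^{\ell-\a}; {\rm Lip}(\a,p), \mathcal{T}_{[1/\d]}) \asymp \vartheta_{\ell,\a}(f,\d)_p$, and since the $K$-functional is always dominated by the realization while~\eqref{HolSN} gives the reverse domination of the realization by $\vartheta_{\ell,\a}$, and the quoted Jackson theorem $E_n(f)_{{\rm Lip}(\a,p)} \lesssim \vartheta_{\ell,\a}(f,1/n)_p$ of~\cite{KP} gives the direct estimate, one obtains both $K \asymp R \asymp \vartheta_{\ell,\a}$ and~\eqref{K1.2}. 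Convexity of $G_n = \mathcal{T}_n$ is immediate since it is a linear space.

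The main obstacle I anticipate is the bookkeeping around the realization functional: one must be careful that $R(f, 2^{-n(\ell-\a)}; X, G_{2^n})$ as defined in~\eqref{R1.1} with the seminorm $\Vert\cdot\Vert_Y = \Vert\cdot^{(\ell)}\Vert_{L_p}$ genuinely matches the right-hand side of~\eqref{HolSN}, and that the degree parameter $n = [1/\d]$ there is compatible with the dyadic scale $2^n$ used in Theorem~\ref{corthAB} (a harmless constant change of scale). A secondary point is that ${\rm Lip}(\a,p)$ is only a quasi-normed or semi-normed space structure on which Section~\ref{sec3} is nominally stated for normed spaces; but since all the abstract estimates in Lemma~\ref{thAB} and Theorem~\ref{corthAB} are applied here through Proposition~\ref{remLp}, which concerns the target space $L_p$ appearing inside $\Vert\cdot\Vert_Y$ rather than $X$ itself, and the Kolmogorov-criterion argument uses only the metric geometry, this causes no real trouble — one simply needs $X$ to be a Banach space with a best approximant, which ${\rm Lip}(\a,p)$ for $1<p<\infty$ is. Once these identifications are in place, the inequalities of Theorem~\ref{PrG2} follow by substituting $\Vert P_{2^k}(f)\Vert_Y = \Vert T_{2^k}^{(\ell)}\Vert_{L_p(\T)}$ and $2^{-k\a} \rightsquigarrow 2^{-k(\ell-\a)}$ into~\eqref{eq1LPmod} and~\eqref{eq1+mod}.
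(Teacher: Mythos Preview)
Your overall strategy coincides with the paper's: the paper gives no argument beyond the sentence ``In view of Theorem~\ref{corthAB}, we sharpen~\eqref{eq7Td++} for $1<p<\infty$ as follows,'' and you have correctly identified the intended choices $X={\rm Lip}(\a,p)$, $G_n=\mathcal{T}_n$, $\Vert g\Vert_Y=\Vert g^{(\ell)}\Vert_{L_p(\T)}$, exponent $\ell-\a$, and the realization~\eqref{HolSN} supplying both Jackson~\eqref{K1.2} and the equivalence~\eqref{r1--}.

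The gap is in the paragraph where you dismiss the ``secondary point.'' Theorem~\ref{corthAB} is stated for $X=L_p$, and the underlying Theorem~\ref{mainmain} needs $X$ itself to be $\tau$-uniformly convex and $\t$-uniformly smooth. Your claim that Proposition~\ref{remLp} ``concerns the target space $L_p$ appearing inside $\Vert\cdot\Vert_Y$ rather than $X$ itself'' is simply wrong: in the proof of Theorem~\ref{mainmain} the inequalities~\eqref{3.1Xu} and~\eqref{3.1Xu2} are applied to vectors $f-g$, $f-P_n(f)$ in $X$, and the Kolmogorov criterion is for best approximation in the $X$-norm; the resulting conditions~\eqref{A} and~\eqref{B} of Lemma~\ref{thAB} are inequalities between $X$-norms. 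The space $Y$ plays no role there. Now ${\rm Lip}(\a,p)$ is, up to equivalent norm, the Besov space $B^\a_{p,\infty}$; the seminorm $\sup_{h>0}h^{-\a}\w_r(f,h)_p$ carries an $\ell_\infty$-type structure, and $B^\a_{p,\infty}$ contains an isomorphic copy of $c_0$, hence is not reflexive and a fortiori not uniformly convex. So neither Theorem~\ref{corthAB} nor Theorem~\ref{mainmain} applies to $X={\rm Lip}(\a,p)$ as written, and the point you flagged as secondary is in fact the only genuine obstacle in the whole argument. (The paper's one-line reference glosses over the same issue; to close the gap one would need either a direct verification of~\eqref{A} and~\eqref{B} for best approximation in ${\rm Lip}(\a,p)$, or a reduction to the $L_p$ setting.)
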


\begin{remark}
\emph{Using the well-known facts about simultaneous approximation of functions and their derivatives in $L_p(\T)$, see, e.g.~\cite{CzFr} and~\cite[Ch.7, Theorem~2.7]{DeLo}, it is not difficult to obtain analogues of Theorems~\ref{PrG} and~\ref{PrG2} in the Sobolev spaces $W_p^r (\T)$, $1\le p\le \infty$, and $r\in \N$, cf. Remark~\ref{rem5.1} (iv).}
\end{remark}

\subsection{Interpolation operators}

In the above sections, we deal with polynomials of the best approximation and Fourier means. It turns out that Theorem~\ref{le1} can be also applied for interpolation operators. As an example, let us consider an interpolation analogue of the de la Vall\'ee Poussin means:
\begin{equation*}\label{V1}
 V_nf(t)=\frac1{3n}\sum_{k=0}^{6n-1}f\(t_k\)K_n\(t-t_k\),\quad t_k=\frac{\pi k}{3n},\quad t\in \T,
\end{equation*}
where
$$
K_n(t)=\frac12+\sum_{k=1}^{2n}\cos kt+\sum_{k=2n+1}^{4n-1}\frac{4n-k}{2n}\cos kt.
$$

Recall some basic properties of $V_n f$ (see~\cite{Sz}).

\begin{proposition}\label{lem1v}
The following assertions hold:

  {\rm (1)}\quad $\deg V_n f\le 4n-1$;

  {\rm (2)}\quad $V_nf\(t_k\)=f\(t_k\),\quad k=0,\dots,6n-1$;

  {\rm (3)}\quad $V_nT(t)=T(t)$ for any $T\in \mathcal{T}_{2n}$;

  {\rm (4)}\quad for all $f\in C(\T)$ and $r,n\in \N$, we have
$$
\Vert f-V_n f\Vert_{L_\infty(\T)} \lesssim \w_r(f,1/n)_\infty.
$$
\end{proposition}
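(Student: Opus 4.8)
The plan is to move everything to the Fourier side. Writing $K_n$ in exponential form gives $K_n(t)=\sum_{|m|\le 4n-1}\mu_m e^{imt}$, where $\mu_m=1$ for $|m|\le 2n$, $\mu_m=(4n-|m|)/(2n)$ for $2n<|m|<4n$, and $\mu_m=0$ otherwise; this is exactly the de la Vall\'ee Poussin multiplier at level $2n$, and equivalently $K_n=\frac1{2n}\sum_{j=2n}^{4n-1}\widetilde D_j$ with $\widetilde D_j(u)=\tfrac{\sin((j+\frac12)u)}{2\sin(u/2)}$. Property (1) is then immediate, since $V_nf$ is a finite linear combination of the shifts $K_n(\cdot-t_k)$, each a trigonometric polynomial of degree $\le 4n-1$.

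For (2) and (3) I would introduce the discrete Fourier coefficients $\widehat f_m:=\frac1{6n}\sum_{k=0}^{6n-1}f(t_k)e^{-imt_k}$, which are $6n$-periodic in $m$ because $t_k=2\pi k/(6n)$. Substituting the exponential form of $K_n$ and using $\frac1{3n}=2\cdot\frac1{6n}$ yields the representation $V_nf(t)=\sum_{|m|\le 4n-1}\mu_m\widehat f_m e^{imt}$. The arithmetic core is the identity $\sum_{m\equiv r\ (\mathrm{mod}\ 6n)}\mu_m=1$ for every residue $r$: a residue with $|r|\le 2n$ is represented in the window $\{|m|\le 4n-1\}$ by a single index, of weight $1$, while a residue with $2n<|r|$ is represented there by exactly two indices, $m$ and $m\mp 6n$, and the symmetry of the VP weights forces $\mu_m+\mu_{m\mp 6n}=1$. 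Feeding this back, grouping by residue class, and using $e^{imt_j}=e^{i(m+6n)t_j}$ gives $V_nf(t_j)=\sum_r\widehat f_r e^{irt_j}=f(t_j)$, which is (2). For (3), if $T\in\mathcal T_{2n}$ then $\widehat T_m$ coincides with the $m$-th Fourier coefficient of $T$ for $|m|\le 2n$ and vanishes for $2n<|m|\le 4n-1$ (no index in $[-2n,2n]$ is congruent modulo $6n$ to such an $m$), and since $\mu_m=1$ on $|m|\le 2n$ the representation collapses to $T$. Note that (3) does not follow from (1) and (2) by a dimension count: a nonzero trigonometric polynomial of degree $\le 4n-1$ may have up to $8n-2\ge 6n$ zeros, so the $6n$ interpolation conditions do not determine it, and the direct computation is genuinely needed.

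For (4) the strategy is the classical two-step argument: uniform boundedness of $V_n$ on $C(\T)$ together with the reproduction property (3). Uniform boundedness amounts to the Lebesgue-constant estimate $\sup_{t}\frac1{3n}\sum_{k=0}^{6n-1}|K_n(t-t_k)|\lesssim 1$, which I would derive from the pointwise bound $|K_n(u)|\le\frac1{4n\sin^2(u/2)}\lesssim\min\!\bigl(n,\tfrac1{nu^2}\bigr)$ for $|u|\le\pi$ (obtained by telescoping $\sum_{j=2n}^{4n-1}\sin((j+\tfrac12)u)=\tfrac{\cos 2nu-\cos 4nu}{2\sin(u/2)}$): the $O(1)$ nodes $t_k$ within distance $\lesssim 1/n$ of $t$ contribute $O(n)$, while the remaining nodes, spaced by $\pi/(3n)$, contribute $\sum_{j\ge1}\tfrac1{n(j\pi/3n)^2}=O(n)$, and division by $3n$ gives $O(1)$. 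Granting this, for every $T\in\mathcal T_{2n}$,
\[
\|f-V_nf\|_{L_\infty(\T)}=\|(f-T)-V_n(f-T)\|_{L_\infty(\T)}\le(1+\|V_n\|_{C\to C})\,\|f-T\|_{L_\infty(\T)};
\]
taking the infimum over $T$ and applying the classical Jackson inequality $E_{2n}(f)_\infty\lesssim\omega_r(f,\tfrac1{2n})_\infty\le\omega_r(f,\tfrac1n)_\infty$ finishes the proof.

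The step I expect to be the main obstacle is the residue bookkeeping behind (2)--(3): one must pin down precisely which residue classes modulo $6n$ receive one versus two representatives from the symmetric window $|m|\le 4n-1$ and check that in the paired case the de la Vall\'ee Poussin weights telescope to $1$. The boundedness estimate in (4) is standard but still requires some care near the ``diagonal'' $t\approx t_k$, where the node spacing is comparable to the concentration scale $1/n$ of the kernel.
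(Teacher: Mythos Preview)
The paper does not prove this proposition; it simply cites Szabados~\cite{Sz}. Your argument is self-contained and essentially correct, following the standard route (discrete Fourier analysis for (2)--(3), kernel decay plus reproduction for (4)), so there is nothing substantive to compare.

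One small slip worth fixing: the exponential Fourier coefficients of $K_n$ are \emph{half} of what you state, namely $\tfrac12$ for $|m|\le 2n$ and $(4n-|m|)/(4n)$ for $2n<|m|<4n$ (as is clear from $\cos kt=\tfrac12(e^{ikt}+e^{-ikt})$ and the leading $\tfrac12$ in $K_n$). The factor~$2$ you pick up from $\tfrac1{3n}=2\cdot\tfrac1{6n}$ then produces exactly the weights you wrote, so your key formula
\[
V_nf(t)=\sum_{|m|\le 4n-1}\mu_m\,\widehat f_m\,e^{imt},\qquad \mu_m=\begin{cases}1,&|m|\le 2n,\\ (4n-|m|)/(2n),&2n<|m|<4n,\end{cases}
\]
is correct; only the sentence identifying these $\mu_m$ with the exponential coefficients of $K_n$ itself is off by that factor of~$2$. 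Everything downstream---the residue identity $\sum_{m\equiv r\,(6n)}\mu_m=1$, the interpolation property, the reproduction of $\mathcal T_{2n}$, the Lebesgue-constant bound via $|K_n(u)|\le\tfrac1{4n\sin^2(u/2)}$---is fine as written.
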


Thus, noting that $V_n(V_{2n} f)=V_n f$ and using Theorem~\ref{le1}, Proposition~\ref{lem1v}, and the Nikolskii-Stechkin-type inequality~\eqref{eq+++}, we derive the following result.

\begin{theorem}
Let $f\in C(\T)$ and $r,n\in \N$. Then
$$
2^{-nr} \Vert (V_{2^n} f)^{(r)}\Vert_{L_\infty(\T)} \lesssim \w_r(f,2^{-n})_\infty \lesssim \sum_{k=n+1}^\infty 2^{-kr} \Vert (V_{2^k} f)^{(r)}\Vert_{L_\infty(\T)}.
$$
\end{theorem}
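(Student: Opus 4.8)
The plan is to obtain the statement as a direct application of Theorem~\ref{le1}, followed by the Stechkin--Nikolskii inequality~\eqref{eq+++}. Concretely, I would take the Banach space $X=C(\T)$ (so $\l=1$ and $\rho(f,g)=\Vert f-g\Vert_{L_\infty(\T)}$), the abstract modulus $\Omega(f,\d)_X=\w_r(f,\d)_\infty$, and the operators $P_n(f)=V_nf$, and then verify hypotheses~\eqref{eq1}--\eqref{eq6}. Properties~\eqref{eq1}--\eqref{eq4} are the classical elementary facts about $\w_r(\cdot,\cdot)_\infty$ on $C(\T)$: it tends to $0$ with $\d$, it satisfies $\w_r(f,\d)_\infty\le 2^r\Vert f\Vert_{L_\infty(\T)}$, it is subadditive in $f$, and it obeys $\w_r(f,2\d)_\infty\le 2^r\w_r(f,\d)_\infty$. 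Property~\eqref{eq6} is precisely Proposition~\ref{lem1v}(4). The only condition that needs an argument is~\eqref{eq5}: here I would use that $V_ng$ depends on $g$ only through its values at the nodes $t_k=\pi k/(3n)$, $k=0,\dots,6n-1$, via $V_ng(t)=\frac1{3n}\sum_k g(t_k)K_n(t-t_k)$; since each $t_k=\pi(2k)/(6n)$ is an interpolation node of $V_{2n}$, Proposition~\ref{lem1v}(2) gives $(V_{2n}f)(t_k)=f(t_k)$, hence $V_n(V_{2n}f)=V_nf=P_n(f)$ and~\eqref{eq5} holds (indeed with equality on both sides).

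Once~\eqref{eq1}--\eqref{eq6} are in place, Theorem~\ref{le1} with $\l=1$ yields
\[
\w_r(V_{2^n}f,2^{-n})_\infty\lesssim \w_r(f,2^{-n})_\infty\lesssim \sum_{k=n+1}^\infty \w_r(V_{2^k}f,2^{-k})_\infty .
\]
It then remains to convert $\w_r(V_{2^k}f,2^{-k})_\infty$ into $2^{-kr}\Vert(V_{2^k}f)^{(r)}\Vert_{L_\infty(\T)}$, with constants independent of $f$ and $k$. Since $V_{2^k}f\in\mathcal{T}_{4\cdot 2^k-1}$ by Proposition~\ref{lem1v}(1), I would invoke~\eqref{eq+++} with $d=1$, $p=\infty$, polynomial degree $m=4\cdot 2^k$ and mesh $\d_k=\pi/m=\pi 2^{-k}/4$ (which satisfies $\d_k\le\pi/m$), obtaining $\Vert(V_{2^k}f)^{(r)}\Vert_{L_\infty(\T)}\asymp\d_k^{-r}\w_r(V_{2^k}f,\d_k)_\infty\asymp 2^{kr}\w_r(V_{2^k}f,\d_k)_\infty$.

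The main (if minor) obstacle is exactly the mismatch between the dyadic scale $2^{-k}$ appearing in Theorem~\ref{le1} and the scale $\d_k=\pi 2^{-k}/4$ at which the Stechkin--Nikolskii equivalence is available, caused by $V_{2^k}f$ having degree about $4\cdot 2^k$ rather than $2^k$. This is resolved by the elementary chain $\d_k\le 2^{-k}=(4/\pi)\d_k<2\d_k$, which together with monotonicity of $\w_r$ in $\d$ and the doubling property~\eqref{eq4} gives $\w_r(V_{2^k}f,2^{-k})_\infty\asymp\w_r(V_{2^k}f,\d_k)_\infty\asymp 2^{-kr}\Vert(V_{2^k}f)^{(r)}\Vert_{L_\infty(\T)}$ with absolute constants. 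Substituting this equivalence into the displayed two-sided estimate (using it with $k=n$ on the left and with $k\ge n+1$ under the sum on the right) produces exactly the asserted inequalities, completing the proof. Everything else is a direct invocation of Theorem~\ref{le1}, Proposition~\ref{lem1v}, and~\eqref{eq+++}.
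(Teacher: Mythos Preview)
Your proposal is correct and follows essentially the same approach as the paper: note that $V_n(V_{2n}f)=V_nf$ (via the observation that the nodes of $V_n$ are among those of $V_{2n}$) to verify~\eqref{eq5}, invoke Theorem~\ref{le1} with $X=C(\T)$ and $\Omega=\w_r$, and then apply the Stechkin--Nikolskii equivalence~\eqref{eq+++}. You have in fact supplied more detail than the paper does, in particular your careful handling of the degree mismatch between $2^k$ and $\deg V_{2^k}f\le 4\cdot 2^k-1$ via the doubling property.
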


 %same arguments as in the proof of Theorem~\ref{thPsi}

%\section{Sharp inequalities in  $L_p(\R^d)$  spaces}

%\section{Sharp inequalities for entire functions of exponential type}

\section{Smoothness of approximation processes on $\R^d$}\label{sec6}

\subsection{Smoothness of best approximants.}
%
%In this subsection we give an analogues of Theorem~\ref{le1} and Theorem~\ref{thAB} in the case of approximation of a function by entire functions of exponential type in $L_p(\R^d)$ spaces.
%For this, we need to recall some basic notations.

In what follows, the class of band-limited functions  $\mathcal{B}_p^\s$, $1\le p\le\infty$, $\s>0$, is given by
%denotes the set of all entire functions $f$ in $\C^d$ which are of (radial) exponential type $\s$ and being restricted to $\R^d$ belong to $L_p$.
%
%Recall that by the well-known Paley-Wiener-Schwartz theorem
$$
\mathcal{B}_p^\s=\left\{\varphi  \in L_p(\R^d)\,:\,\supp\;\wh\varphi  (x)\subset \{x:\vert
x\vert  < \s\}\right\},
$$
where
$$
\wh g(x) = \int_{\R^d} g(y) e^{-i(x,y)} dy.
$$

%As usual, the rate of the best approximation and the best approximant are defined as follows:
Let
$$
E_{\s}(f)_{L_p(\R^d)} = \inf \big\{\Vert  f-\varphi  \Vert  _{L_p(\R^d)}: \varphi  \in \mathcal{B}_p^\s\big\}
$$
be the best approximation of $f$ and $P_{\s}(f)\in \mathcal{B}_p^\s$ be a best approximant of $f$ in $L_p(\R^d)$, that is,
$$
\Vert f-P_{\s}(f)\Vert_{L_p(\R^d)}=E_{\s}  (f)_{L_p(\R^d)}.
$$

%In contrast to the periodic case, we do not know analogues of
%Jackson and Nikolskii-Stechkin inequalities type inequalities for the full range of parameters $d\in \N$ and $0<p\le \infty$. Becase of this, we have to restrict ourself to the case $d=1$ if $0<p<1$ and $d\in \N$ if $1\le p\le \infty$. In particular, we have the following lemma, see~\cite{????}.

%In contrast to the periodic case, we do not know analogues of
%Jackson and Nikolskii-Stechkin inequalities type inequalities for the full range of parameters $d\in \N$ and $0<p<1$. Because of this, we  restrict ourselves to the case  $1\le p\le \infty$ ?????.

We will use the following Jackson and Nikolskii-Stechkin inequalities, see, e.g.,~\cite[5.3.2]{Ti} and \cite[Theorem~3]{Wil} for the case $1\le p\le \infty$ and~\cite{KT20} for the case $0<p<1$:

%Let $f\in L_p(\R^d)$, $1\le p\le \infty$, $\s>0$,  and $r\in \N$. Then

\begin{equation}\label{JacksonSO_R}
E_\s(f)_p\lesssim \w_r\(f,\frac1\s\)_p,\quad f\in L_p(\R^d),\quad \s>0,\quad r\in \N,
\end{equation}

%2) for  any $\s>0$ and $0<\d\le\pi/ \s$, one has
\begin{equation}\label{eq+++R}
 \Vert P_\s\Vert_{\dot W_p^r(\R^d)}\asymp \d^{-r}\w_r (P_n,\d)_{L_p(\R^d)},\quad P_\s\in \mathcal{B}_p^\s,\quad \s>0,\quad 0<\d\le\pi/ \s.
\end{equation}
%where the constants in this equivalence are independent of $P_\s$ and $\d$.

%Recall also (see~\cite[Theorem 3.2]{KT}) that for  any
%$0< p\le\infty$, $r\in \N$, and $0<\d\le\pi/ n$ one has
%\begin{equation}\label{eq+++R}
% \Vert T_n\Vert_{\dot W_p^r}\asymp \d^{-r}\w_r (T_n,\d)_{p},\quad T\in \mathcal{T}_n,\quad n\in\mathbb{N},
%\end{equation}
%where the constants in this equivalence are independent of $T_n$ and $\d$.
%
%\textbf{????} Similar, assertion we have in the case $L_p(\R^d)$ \textbf{????}

%Now, similarly to the periodic case, we have that in the case $X=L_p(\R^d)$ and $\Omega(f,\d)_X=\omega_r(f,\d)_{L_p(\R^d)}$,
Then, Theorem~\ref{le1} together with inequalities~\eqref{JacksonSO_R} and \eqref{eq+++R} imply the following result.
\begin{theorem}\label{PrGR}
  Let $f\in L_p(\R^d)$, $0< p\le \infty$, and $r\in \N$. Then
  \begin{equation*}\label{eq7R}
    2^{-nr}\Vert P_{2^n}(f)\Vert_{\dot W_p^r(\R^d)}\lesssim \omega_r(f,2^{-n})_{L_p(\R^d)}\lesssim \sum_{k=n+1}^\infty 2^{-kr}\Vert P_{2^k}(f)\Vert_{\dot W_p^r(\R^d)}.
  \end{equation*}
 %where $\l=\min(p,1)$ and $C_1$ and $C_2$ does not depend on $f$ and $n$.
\end{theorem}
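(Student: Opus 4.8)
The plan is to deduce Theorem~\ref{PrGR} from the abstract Theorem~\ref{le1} combined with the two classical inequalities recalled just above it, namely the Jackson inequality~\eqref{JacksonSO_R} and the Nikolskii--Stechkin inequality~\eqref{eq+++R}. Concretely, I would apply Theorem~\ref{le1} with $X=L_p(\R^d)$, $1\le p\le\infty$, with the scalar $\lambda=1$ (the space is Banach, so $\rho(f,g)=\Vert f-g\Vert_{L_p(\R^d)}$ satisfies i)--iii) with $\lambda=1$), with the abstract measure of smoothness $\Omega(f,\delta)_X=\omega_r(f,\delta)_{L_p(\R^d)}$ for a fixed $r\in\N$, and with $P_n$ the best approximant of $f$ from the band-limited class $\mathcal B_p^n$ (existence for $1\le p\le\infty$ is standard). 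First I would verify hypotheses~\eqref{eq1}--\eqref{eq6}: \eqref{eq1}--\eqref{eq4} are the familiar elementary properties of the modulus of smoothness in $L_p(\R^d)$ for $1\le p\le\infty$ ($\omega_r(f,\delta)_{L_p(\R^d)}\to0$ as $\delta\to+0$ by density of smooth compactly supported functions, $\omega_r(f,\delta)_{L_p(\R^d)}\le 2^r\Vert f\Vert_{L_p(\R^d)}$, subadditivity in $f$ by the triangle inequality, and $\omega_r(f,2\delta)_{L_p(\R^d)}\le 2^r\omega_r(f,\delta)_{L_p(\R^d)}$); condition~\eqref{eq5} holds automatically because $P_n(f)$ is a best approximant from $\mathcal B_p^n$ (so $E_n(f)_{L_p(\R^d)}\le\Vert f-g\Vert_{L_p(\R^d)}$ for every $g\in\mathcal B_p^n$, in particular for $g=P_n(P_{2n}(f))$); and \eqref{eq6} is exactly the Jackson inequality~\eqref{JacksonSO_R}.

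With the hypotheses in place, Theorem~\ref{le1} (with $\lambda=1$, so the $\ell_\lambda$-sum is an ordinary sum) gives, for every $f\in L_p(\R^d)$ and $n\in\N$,
$$
\omega_r(P_{2^n}(f),2^{-n})_{L_p(\R^d)}\lesssim\omega_r(f,2^{-n})_{L_p(\R^d)}\lesssim\sum_{k=n+1}^\infty\omega_r(P_{2^k}(f),2^{-k})_{L_p(\R^d)}.
$$
Next I would invoke the Nikolskii--Stechkin inequality~\eqref{eq+++R} applied to $P_\s=P_{2^k}(f)\in\mathcal B_p^{2^k}$ with $\delta=2^{-k}\le\pi/2^k$, which yields the equivalence
$$
\omega_r(P_{2^k}(f),2^{-k})_{L_p(\R^d)}\asymp 2^{-kr}\Vert P_{2^k}(f)\Vert_{\dot W_p^r(\R^d)},\qquad k\in\N.
$$
Substituting this equivalence into the left-hand term of the displayed chain turns it into $2^{-nr}\Vert P_{2^n}(f)\Vert_{\dot W_p^r(\R^d)}$, and substituting it termwise into the right-hand sum turns that sum into $\sum_{k=n+1}^\infty 2^{-kr}\Vert P_{2^k}(f)\Vert_{\dot W_p^r(\R^d)}$, which is precisely the asserted double inequality.

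Since the argument merely assembles three already-available ingredients, I do not expect a genuine obstacle; the only point that needs a little care is the bookkeeping of the hypotheses of Theorem~\ref{le1}, and in particular making sure that both the Jackson inequality~\eqref{JacksonSO_R} (needed for~\eqref{eq6}) and the Nikolskii--Stechkin inequality~\eqref{eq+++R} are valid on the whole range under consideration --- which is exactly why the statement is restricted to $1\le p\le\infty$, the corresponding Jackson- and Bernstein-type estimates for band-limited functions on $\R^d$ not being available in full generality when $0<p<1$.
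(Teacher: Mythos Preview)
Your proposal is correct and follows exactly the approach the paper takes: the paper simply states that Theorem~\ref{le1} together with inequalities~\eqref{JacksonSO_R} and~\eqref{eq+++R} imply the result, and your write-up spells out precisely this argument (with $\lambda=1$ since $L_p(\R^d)$ is Banach for $1\le p\le\infty$). There is nothing to add.
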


To sharpen this result in the case $1<p<\infty$, we will use Theorem~\ref{corthAB} with $G_n=\mathcal{B}_p^n$, $X=L_p(\R^d)$, and $Y=H_p^{\a}(\R^d)$, $\a>0$, where
$$
H_p^{\a}(\R^d)=\{g\in L_p(\R^d)\,:\, \Vert g \Vert_{\dot H_p^{\a}(\R^d)}=\Vert (-\Delta)^{\a/2} g\Vert_{L_p(\R^d)}<\infty\}
$$
is the fractional Sobolev spaces.
%
%Now let us give a more sharp version of Theorem~\ref{PrGR} in light of Theorem~\ref{thAB} for $1<p<\infty$.
%For this, we need to introduce some additional notations.
%As in the periodic case, we set $G_n=\mathcal{B}_p^n$, and for $1<p<\infty$ and $\g>0$, we set $X=L_p(\R^d)$ and $Y=H_p^{\gamma}(\R^d)$, where
%$$
%H_p^{\gamma}(\R^d)=\{g\in L_p(\R^d)\,:\, \Vert g \Vert_{\dot H_p^{\gamma}(\R^d)}=\Vert (-\Delta)^{\gamma/2} g\Vert_{L_p(\R^d)}<\infty\}
%$$
%is the fractional Sobolev spaces.
%In this case we
%define $(-\Delta  )^\gamma  f$ by
%$$
%\big((-\Delta  )^\gamma  f\big)^\wedge (\bx) = (2\pi)^{2\gamma  }
%\vert  \bx\vert  ^{2\gamma  }\wh f(\bx). \eqno 4.5
%$$
The corresponding $K$-functional and its realization are defined similarly to~\eqref{KTD} and~\eqref{RTD} and, moreover,
for any $f\in L_p(\R^d)$, $1<p<\infty$, and $\a>0$,
$$
K(f,t^{\a};L_p(\R^d),H_p^\a(\R^d))\asymp R(f,t^{\a};L_p(\R^d),\mathcal{B}_p^{1/t})\asymp \omega_\a(f,t)_{L_p(\R^d)},
$$
see~\cite{Wil}. This, in particular, implies
%Concerning inequalities~\eqref{K1.2} and~\eqref{K1.3}, we have that in the considered case they are given by
%the Jackson-type inequality~\eqref{JacksonSO_R} and the following Bernstein-type inequality (see, e.g.,~\cite{Wil})
$$
\Vert  (-\Delta  )^{\a/2 } P_\s\Vert_{L_p(\R^d)}\lesssim n^\a \Vert  P_\s\Vert_{L_p(\R^d)},\quad P_\s\in\mathcal{B}_p^n,\quad 1<p<\infty.
$$
%Thus, taking into account that in the considered case inequalities~\eqref{A} and~\eqref{B} are valid for $\tau=\max(2,p)$ and $\t=\min(2,p)$, see Remark~\ref{remLp}, we obtain the following analogue of Theorem~\ref{thAB}.

%Note also that inequalities~\eqref{A} and~\eqref{B} hold in the spaces $L_p(\mathcal{D})$.
%Thus, taking in Theorem~\ref{thAB}, $X=L_p(\mathcal{D})$ and $Y=H_p^\alpha(\mathcal{D})$, where $\mathcal{D}=\R^d$ or $\T^d$ and $\alpha>0$ and recalling that for any $f\in L_p(\mathcal{D})$, $1<p<\infty$, and $\gamma>0$
%$$
%K(f,t^{\gamma},L_p(\mathcal{D}),H_p^\gamma(\mathcal{D}))\asymp \omega_\gamma(f,t)_{L_p(\mathcal{D})},
%$$
%we get the following refinement of Proposition~\ref{PrG}.

Thus, by Theorem~\ref{corthAB},  we obtain

\begin{theorem}\label{propAB}
  Let $f\in L_p(\R^d)$, $1<p<\infty$,  $\a>0$, $\tau=\max(2,p)$, and $\t=\min(2,p)$. Then
  \begin{equation*}
  %\label{eq7-}
\begin{split}
    \(\sum_{k=n+1}^\infty 2^{-k\a\tau}\Vert (-\Delta)^{\a/2}P_{2^k}(f)\Vert_{L_p(\R^d)}^\tau\)^\frac1\tau&\lesssim \omega_\a(f,2^{-n})_{L_p(\R^d)}\\
    %\label{eq7+}
    %\omega_\gamma(f,2^{-n})_{L_p(\R^d)}
&\lesssim  \(\sum_{k=n+1}^\infty 2^{-k\a\t}\Vert (-\Delta)^{\a/2}P_{2^k}(f)\Vert_{L_p(\R^d)}^\t\)^\frac1\t.
 \end{split}
\end{equation*}
\end{theorem}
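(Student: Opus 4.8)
The plan is to obtain this theorem as a direct specialization of Theorem~\ref{corthAB} (equivalently, of Theorem~\ref{mainmain} combined with Proposition~\ref{remLp}). Take $X=L_p(\R^d)$ with $1<p<\infty$, the approximation family $G_n=\mathcal{B}_p^n$, and the smoothness space $Y=H_p^{\a}(\R^d)$. Since each $\mathcal{B}_p^n$ is a linear subspace of $L_p(\R^d)$, it is in particular convex, and the family $\{\mathcal{B}_p^n\}_{n\in\N}$ plainly satisfies conditions i)--iv) of Section~\ref{sec3}: it contains $0$, is nested and symmetric, and its union is dense in $L_p(\R^d)$ (e.g.\ by~\eqref{JacksonSO_R}). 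For $1<p<\infty$ the subspace $\mathcal{B}_p^n$ is weakly closed in the reflexive space $L_p(\R^d)$, so a best approximant $P_n(f)$ exists, and all objects in the statement are well defined.

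Next I would verify the three structural hypotheses required by Theorem~\ref{corthAB}. The Bernstein inequality~\eqref{K1.3} is, by linearity of $\mathcal{B}_p^n$, equivalent to its one-element form~\eqref{Kd}, which is exactly the inequality $\Vert(-\Delta)^{\a/2}P_\s\Vert_{L_p(\R^d)}\lesssim n^{\a}\Vert P_\s\Vert_{L_p(\R^d)}$ recalled above from~\cite{Wil}. The Jackson inequality~\eqref{K1.2} follows from the band-limited Jackson estimate~\eqref{JacksonSO_R}, applied with an integer order $r>\a$, together with monotonicity of the moduli in the order and the standard bound $\w_\a(f,t)_{L_p(\R^d)}\lesssim K(f,t^{\a};L_p(\R^d),H_p^{\a}(\R^d))\le t^{\a}\Vert f\Vert_{\dot H_p^{\a}(\R^d)}$ for $f\in H_p^{\a}(\R^d)$. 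Finally,~\eqref{r1--}, namely $R(f,n^{-\a};L_p(\R^d),\mathcal{B}_p^n)\lesssim K(f,n^{-\a};L_p(\R^d),H_p^{\a}(\R^d))$, is contained in the equivalence $K(f,t^{\a};L_p(\R^d),H_p^{\a}(\R^d))\asymp R(f,t^{\a};L_p(\R^d),\mathcal{B}_p^{1/t})$ recalled above.

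With these hypotheses in hand, Theorem~\ref{corthAB} (whose exponents $\tau=\max(2,p)$ and $\t=\min(2,p)$ are precisely the uniform convexity and uniform smoothness exponents of $L_p$ furnished by Proposition~\ref{remLp}) yields the two-sided estimate with $K(f,2^{-n\a};L_p(\R^d),H_p^{\a}(\R^d))$ in the middle, and replacing this $K$-functional by the equivalent quantity $\w_\a(f,2^{-n})_{L_p(\R^d)}$ gives the asserted inequalities. I do not expect a genuine analytic obstacle at this stage: the substantive work---the uniform convexity/smoothness inequalities and the Kolmogorov criterion argument in Lemma~\ref{thAB} and Theorem~\ref{mainmain}, together with the realization equivalence for $\mathcal{B}_p^{1/t}$---is already available. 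The only points that call for a little care are the existence of best approximants (automatic here by reflexivity for $1<p<\infty$) and invoking the realization equivalence over the whole range $\a>0$; alternatively, if one wishes to bypass~\eqref{r1--} altogether, part~(iii) of the Remark following Lemma~\ref{thAB} allows one to state the same bounds with the realization $R(f,2^{-n\a};L_p(\R^d),\mathcal{B}_p^{2^n})$ in place of the $K$-functional, which is equivalent in this setting anyway.
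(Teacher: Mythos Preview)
Your proposal is correct and follows essentially the same route as the paper: both specialize Theorem~\ref{corthAB} with $X=L_p(\R^d)$, $G_n=\mathcal{B}_p^n$, $Y=H_p^{\a}(\R^d)$, verify the Jackson, Bernstein, and realization hypotheses from the results recalled just before the theorem, and then identify the $K$-functional with $\omega_\a(f,2^{-n})_{L_p(\R^d)}$. Your write-up is, if anything, more detailed than the paper's (existence of best approximants, the i)--iv) checks), but the argument is the same.
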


\bigskip

\subsection{The case of Fourier multipliers operators.}

The  Mikhlin-H\"ormander multiplier theorem (cf. Assumption~\ref{H}) states that the condition
$$
\Big\vert  \frac{\partial^\b}{\partial^{\b_1} x_1\dots\partial^{\b_d}  x_d}\, \mu
(x)\Big\vert  \le A\vert  x\vert  ^{-\vert  \b  \vert  }, \q
\vert  \b  \vert  \equiv \b_1 + \dots + \b_d <\big[\frac
d2\big] + 1
$$
(see~\cite[p.~366]{Gr}) implies
$$
\Vert  T_\mu  f\Vert  _{L_p(\R^d)} \le C(A,p)\Vert  f\Vert
_{L_p(\R^d)},
$$
where
$
(T_\mu  f)^\wedge (x) = \mu  (x)\wh f(x).
$

%To formulate an analogue of the requared Littlewood-Paley-type theorem on $L_p(\R^d)$, we use the following
%de~la~Vall\'ee Poussin-type operator given by
Setting
$$
(\eta_\s f)^\wedge(x) = \eta\Big(\frac{\vert x\vert}{\s}\Big)\wh f(x)
$$
%As above, for the operator $\eta_\s$ on $L_p(\R^d)$ we have
%$$
%\gathered
%\Vert  \eta_\s f\Vert  _{L_p(\R^d)} \le A\Vert  f\Vert_{L_p(\R^d)},
%\q \supp \, (\eta_\s f)^\wedge (\bx) \subset \{\bx : \vert  \bx
%\vert  <\s\},\\
%\text{and} \q \supp \;\wh\varphi  (\bx) \subset\Big \{\bx : \vert  \bx
%\vert  <\frac{\s}2\Big\} \q \text{implies} \q \eta_\s\varphi  =\varphi.
%\endgathered\eqno 4.2
%$$
%This means that $\eta_\s $ is a de~la~Vall\'ee Poussin-type operator
%and
%$$
%\Vert  f-\eta_\s f\Vert  _{L_p(\R^d)} \le (1+A) E_{\s/2} (f)_{L_p(\R^d)}
%\eqno 4.3
%$$
and
$$
\theta_0 (f) = \eta_1f \q\text{and} \q \theta_j (f)= \eta_{2^j}f
- \eta_{2^{j-1}} f \q\text{for} \q j\ge 1,
$$
we have the following analogue of the Littlewood-Paley theorem in the case $\mathcal{D}=\R^d$  (see~\cite[p.~20]{GrII} and~\cite[Theorem 4.1]{DDT}):
for $f\in L_p(\T^d)$, $1<p<\infty,$ and $\g>0$,
$$
\Big\Vert  \Big\{\sum^\infty  _{j=0} (\theta_j(f))^2\Big\}^{1/2}\Big\Vert_{L_p(\R^d)} \asymp \Vert  f\Vert_{L_p(\R^d)}
$$
and
%where $\theta_jf$ is given by
%$$
%\theta_0 f = \eta_1f \q\text{and} \q \theta_j f= \eta_{2^j}f
%- \eta_{2^{j-1}} f \q\text{for} \q j\ge 1. \eqno 4.13
%$$
%Moreover, we also have
$$
\Big\Vert  \Big\{\sum^\infty_{j=1} 2^{2j\a} \big(\theta_j
(f)\big)^2\Big\}^{1/2} \Big\Vert_{L_p(\R^d)} \asymp\Vert
(-\Delta)^{\a/2}  f\Vert_{L_p(\R^d)}.
$$

%To formulate an analogue of Theorem~\ref{thPsi} in the case $\mathcal{D}=\R^d$
We introduce the operators $\Psi_\s$ and $\widetilde{\Psi}_\s$ as follows:
$$
(\Psi_\s f)^\wedge(x) = \psi\Big(\frac{\vert x\vert}{\s}\Big)\wh f(x),
$$
$$
(\widetilde{\Psi}_\s f)^\wedge(x) = \widetilde{\psi}\Big(\frac{\vert x\vert}{\s}\Big)\wh f(x),\quad \widetilde{\psi}(\xi)=\frac{\eta(|\xi|)}{\psi(2^{-m}\xi)},
$$
where a function $\psi:\R^d\to \C$ is such that $\supp \psi\subset [-1,1]^d$ and for some $m\in\Z_+$, $\psi(x)\neq 0$ for all $x\in [-2^{-m},2^{-m}]^d$.

%Now, supposing that the linear means $\Psi_{n}$ and $\widetilde{\Psi}_n$ are bounded operators in $L_p(\T^d)$, $1<p<\infty$,
%that for any $f\in L_p(\T^d)$, $1<p<\infty$,
%\begin{equation}\label{dopaTd}
%  \Vert \Psi_{2^n} f\Vert_{L_p(\T^d)}\le C \Vert f\Vert_{L_p(\T^d)}
%\end{equation}
%and
%\begin{equation}\label{dopaTd}
%  \Vert \widetilde{\Psi}_{2^n} f\Vert_{L_p(\T^d)}\le C \Vert f\Vert_{L_p(\T^d)},
%\end{equation}
%where $C$ does not depend on $f$ and $n$,

%Now, taking into account the mentioned above Mikhlin-H\"ormander multiplier condition and the Littlewood-Paley type inequalities, we derive the following version of Theorem~\ref{thPsi}  in the case $\mathcal{D}=\R^d$.

We are now in a position to give a version of Theorem~\ref{thPsi}  in the case $\mathcal{D}=\R^d$.

\begin{theorem}\label{propVPRd}
  Let $f\in L_p(\R^d)$, $1<p< \infty$, $\a>0$, $\tau=\max(2,p)$, and $\t=\min(2,p)$.

  {\rm (A)} If  $\{\Psi_{2^k}\}$ are uniformly bounded in $L_p(\R^d)$, then
  \begin{equation*}\label{eq7-Rd}
    \(\sum_{k=n+1}^\infty 2^{-k\a\tau}\Vert (-\Delta)^{\a/2}\Psi_{2^k}f\Vert_{L_p(\R^d)}^\tau\)^\frac1\tau\lesssim \omega_\a(f,2^{-n})_{L_p(\R^d)}.
  \end{equation*}

  {\rm (B)} If  $\{\widetilde{\Psi}_{2^k}\}$ are uniformly bounded in $L_p(\R^d)$, then
    \begin{equation*}\label{eq7+Rd} \omega_\a(f,2^{-n})_{L_p(\R^d)}\lesssim  \(\sum_{k=n+1}^\infty 2^{-k\a\t}\Vert (-\Delta)^{\a/2}\Psi_{2^k}f\Vert_{L_p(\R^d)}^\t\)^\frac1\t.
  \end{equation*}
 %where the constants in $\lesssim$  do not depend on $f$ and $n$.
\end{theorem}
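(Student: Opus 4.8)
The plan is to follow exactly the same scheme as in the proof of Theorem~\ref{thLP}, transporting every ingredient from the setting $\mathcal{D}$ with a self-adjoint operator $Q(D)$ to the Euclidean setting $\mathcal{D}=\R^d$ with $Q(D)$ replaced by $(-\Delta)^{\a/2}$ and the de~la~Vall\'ee Poussin means $\eta_{2^k}$ by the Fourier multiplier means $\Psi_{2^k}$, $\widetilde{\Psi}_{2^k}$. First I would record the three tools available on $\R^d$: the realization result
$$
K\big(f,2^{-n\a};L_p(\R^d),H_p^\a(\R^d)\big)\asymp \Vert f-\eta_{2^n}f\Vert_{L_p(\R^d)}+2^{-n\a}\Vert(-\Delta)^{\a/2}\eta_{2^n}f\Vert_{L_p(\R^d)},
$$
the Littlewood--Paley equivalences stated just before the theorem (for $\Vert f\Vert_{L_p(\R^d)}$ and for $\Vert(-\Delta)^{\a/2}f\Vert_{L_p(\R^d)}$ in terms of the dyadic blocks $\theta_j(f)$), and the Hardy inequality~\eqref{eqHardy}. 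With these in hand, the argument of Theorem~\ref{thLP} goes through essentially verbatim: in part~(A) one writes $\Psi_{2^k}f=\Psi_{2^k}(\eta_{2^{k+1}}f)$ and uses $\Vert\Psi_{2^k}g\Vert_{L_p(\R^d)}\lesssim\Vert g\Vert_{L_p(\R^d)}$ (hypothesis~\eqref{dopa}-type) to replace $\Vert(-\Delta)^{\a/2}\Psi_{2^k}f\Vert_{L_p(\R^d)}$ by $\Vert(-\Delta)^{\a/2}\eta_{2^{k+1}}f\Vert_{L_p(\R^d)}$, and then invokes inequality~\eqref{L} of Theorem~\ref{thLP} in the form valid on $\R^d$; in part~(B) one uses the inverse multiplier $\widetilde{\Psi}$: since $\widetilde{\psi}(\xi)=\eta(|\xi|)/\psi(2^{-m}\xi)$ and $\psi(x)\neq0$ on $[-2^{-m},2^{-m}]^d$, boundedness of $\widetilde{\Psi}_{2^n}$ yields
$$
\Vert(-\Delta)^{\a/2}\eta_{2^n}f\Vert_{L_p(\R^d)}\lesssim\Vert(-\Delta)^{\a/2}\Psi_{2^{n+m}}f\Vert_{L_p(\R^d)},
$$
after which inequality~\eqref{R} (again the $\R^d$-analogue) and re-indexing $k\mapsto k+m$ finish the estimate, exactly as in the proof of Theorem~\ref{thPsi}.

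Concretely, I would structure the proof in two short paragraphs mirroring parts (A) and (B). For (A): starting from $I^\tau=\sum_{k\ge n+1}2^{-\a\tau k}\Vert(-\Delta)^{\a/2}\Psi_{2^k}f\Vert_{L_p(\R^d)}^\tau$, the identity $\Psi_{2^k}(\eta_{2^{k+1}}f)=\Psi_{2^k}f$ (valid because $\supp\psi\subset[-1,1]^d$ so $\psi(x/2^k)\eta(|x|/2^{k+1})=\psi(x/2^k)$) together with the uniform boundedness~\eqref{dopa} gives $I\lesssim(\sum_{k\ge n+1}2^{-\a\tau k}\Vert(-\Delta)^{\a/2}\eta_{2^{k+1}}f\Vert_{L_p(\R^d)}^\tau)^{1/\tau}$, and the right side is $\lesssim K(f,2^{-n\a};L_p(\R^d),H_p^\a(\R^d))$ by the $\R^d$-version of~\eqref{L}. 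For (B): from the realization result $K(f,2^{-n\a};L_p,H_p^\a)\lesssim\Vert f-\eta_{2^n}f\Vert_{L_p}+2^{-n\a}\Vert(-\Delta)^{\a/2}\eta_{2^n}f\Vert_{L_p}$, bound the second term by $2^{-n\a}\Vert(-\Delta)^{\a/2}\Psi_{2^{n+m}}f\Vert_{L_p}$ via $\widetilde{\Psi}$, and bound the first term using the $\R^d$-version of~\eqref{R} (or directly the Littlewood--Paley argument of~\eqref{f9}) by $(\sum_{k\ge n+1}2^{-\a\t k}\Vert(-\Delta)^{\a/2}\eta_{2^k}f\Vert_{L_p}^\t)^{1/\t}$, then pass from $\eta_{2^k}$ to $\Psi_{2^{k+m}}$ once more and re-index. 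Collecting and using $m$ fixed gives~\eqref{eq7+Rd}.

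The main obstacle is not a deep one but a bookkeeping point: I must make sure that all three auxiliary facts — the realization identity for the couple $(L_p(\R^d),H_p^\a(\R^d))$, the two Littlewood--Paley equivalences, and the Bernstein-type inequality $\Vert(-\Delta)^{\a/2}P_\sigma\Vert_{L_p(\R^d)}\lesssim\sigma^\a\Vert P_\sigma\Vert_{L_p(\R^d)}$ for band-limited $P_\sigma$ — are available in the $\R^d$ setting with the stated ranges of $p$ and $\a$; these are exactly the statements collected from \cite{Wil}, \cite{GrII}, and~\cite{DDT} earlier in this section, so the transfer is legitimate. A secondary subtlety is that $\psi$ here is a function on $\R^d$ (radial composition $\psi(x/\sigma)$), not a sequence as in Section~\ref{sec4}, so the identity $\Psi_{2^k}\eta_{2^{k+1}}=\Psi_{2^k}$ and the factorization defining $\widetilde\Psi$ must be checked at the level of Fourier multipliers on $\R^d$ rather than on the discrete spectrum; this is immediate from $\supp\psi\subset[-1,1]^d$ and $\psi\neq0$ on $[-2^{-m},2^{-m}]^d$. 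No genuinely new estimate is needed beyond what Theorems~\ref{thLP} and~\ref{thPsi} already provide; the proof is a transcription, so I would simply say "The proof repeats the arguments of Theorems~\ref{thLP} and~\ref{thPsi} with $Q(D)$ replaced by $(-\Delta)^{\a/2}$, using the realization result and the Littlewood--Paley inequalities on $\R^d$ stated above," and then indicate the two or three places where the substitution occurs, as above.
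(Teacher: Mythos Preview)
Your proposal is correct and mirrors precisely what the paper does: it states Theorem~\ref{propVPRd} without a separate proof, having set up the $\R^d$ analogues of the Mikhlin--H\"ormander theorem, the de~la~Vall\'ee Poussin means, and the Littlewood--Paley equivalences, so that the proof is understood to be a direct transcription of the arguments of Theorems~\ref{thLP} and~\ref{thPsi}. Your two-paragraph outline (replace $\Psi_{2^k}$ by $\eta_{2^{k+1}}$ via uniform boundedness for part~(A), and pass from $\eta_{2^k}$ to $\Psi_{2^{k+m}}$ via $\widetilde{\Psi}$ for part~(B)) is exactly the content of the proof of Theorem~\ref{thPsi}, and the auxiliary facts you list are precisely those the paper assembles in Section~\ref{sec6} before stating the theorem.
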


An analogue of Corollary~\ref{o-sn1} on $\R^d$, namely, inequality~\eqref{rr++++} holds for the following Fourier means:

%It is not difficult to see that inequalities \eqref{eq7-Rd} and \eqref{eq7+Rd} are valid for the following Fourier means, see also Corollary~\ref{o-sn1} above:

1) the $\ell_q$-Fourier means given by
$$
\widehat{S_{n,q}} f(\xi)=\chi_{\{\xi\in\R^d\,:\,\Vert \xi\Vert_{\ell_q}\le n\}}(\xi) \widehat{f}(\xi) ,\quad q=1,\infty;
$$

2) the de~la~Vall\'ee Poussin-type means $\eta_{n}f(x)$;

3) the Riesz spherical  means $R_n^{\b,\d}$ given by
$$
\widehat{R_n^{\b,\d}} f(\xi)=\bigg(1- \bigg(\frac{\left|\xi\right|}n\bigg)^\b\bigg)_+^\delta \widehat{f}(\xi)
$$
for $\b>0$ and $\d>(d-1)/2$.

%4) Rogozinskii menas, $d=1$, ????

%As in the periodic case, for this, it is enough to note that this means are bounded in $L_p(\R^d)$, $1<p<\infty$, (see, e.g.,~\cite{GrII}, \cite[Ch. VII]{SW}) and to apply the
%Mikhlin-H\"ormander multiplier condition to show that the corresponding "inverse" operator $\widetilde{\Psi}_n$ is also bounded in $L_p(\R^d)$.

At the same time, an analogue of Corollary~\ref{fourier} on $\R^d$ is valid only for the de~la~Vall\'ee Poussin-type means and the Riesz spherical  means. Namely, for any $f \in L_p(\R^d)$, $p =1, \infty$, and $\a\in \N$, we have
\begin{equation*}
%\label{---fourier2+++}
%\label{rr++++}
 2^{- n \a }
\Vert T_{2^n}f \Vert_{\dot W_p^\a(\R^d)} \lesssim \omega_{\a} \Big( f,
\frac{1}{2^n} \Big)_p\lesssim \sum\limits_{k =n+1}^{\infty} 2^{- k \a}
\Vert T_{2^k}f \Vert_{\dot W_p^\a(\R^d)},
\end{equation*}
where $T_{2^k} f=\eta_{2^k}f$  or $R_{2^k}^{\b,\d} f$ with $\d>(d-1)/2$.

Finally in this section, we give a characterization of the classical Besov spaces $B_{p,q}^s(\R^d)$ in terms of best approximants  and Fourier means. Using Theorems~\ref{PrGR}, \ref{propAB} and~\ref{propVPRd} and the same arguments as in Corollary~\ref{cor2.4}, we derive
\begin{corollary}
Let $1< p<\infty$, $0<q\le \infty$, and $0<s<\alpha$. We have %Then $f\in B_{X,q}^s$ if and only if
\begin{equation}\label{BesovU}
  |f|_{B_{p,q}^s(\R^d)}\asymp \left( \sum_{k=1}^\infty 2^{(s-\a) q k} \Vert (-\Delta)^{\a/2}P_{2^k} (f)\Vert_{L_p(\R^d)}^q\right)^{\frac1q}, %\left( \int_0^1 \big( t^{-s} \Omega(P_{[1/t]}(f),t)_X)^q\frac{dt}t\right)^{\frac1q}.
\end{equation}
where $P_{2^k}(f)$ stands for the best approximants or the Fourier means $\Psi_{2^k} f$ with the properties given in Theorem~\ref{propVPRd}.

In the case $p=1$ or $\infty$ and $\alpha\in \N$, $s<\alpha$, we have
\begin{equation*}\label{BesovU}
  |f|_{B_{p,q}^s(\R^d)}\asymp \left( \sum_{k=1}^\infty 2^{(s-\alpha) q k} \Vert P_{2^k}(f)\Vert_{\dot W_p^\alpha(\R^d)}^q\right)^{\frac1q}, %\left( \int_0^1 \big( t^{-s} \Omega(P_{[1/t]}(f),t)_X)^q\frac{dt}t\right)^{\frac1q}.
\end{equation*}
where $P_{2^k}(f)$ stands for the best approximants, the de~la~Vall\'ee Poussin-type means $\eta_{n}f(x)$, or the Riesz spherical  means $R_n^{\b,\d}$ with $\d>(d-1)/2$.
%By Theorem~\ref{PrGR}, equivalence~\eqref{BesovU} is also valid in the cases $p=1$ and $p=\infty$ if we replace $\Vert (-\Delta)^{\a/2}U_{2^k} f\Vert_{L_p(\R^d)}$ by
%$2^{-k\alpha}\Vert P_{2^k}(f)\Vert_{\dot W_p^\alpha(\R^d)}$ with $\alpha\in \N$.
\end{corollary}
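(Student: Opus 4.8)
The plan is to run exactly the argument used for Corollary~\ref{cor2.4}, but with the abstract two-sided estimates of Theorem~\ref{le1} replaced by the concrete estimates on $\R^d$ already obtained in this section, and then to reduce the remaining combinatorics to a discrete Hardy inequality. First I would record the standard discretization of the Besov seminorm: for $s>0$, $\alpha>s$ (with $\alpha\in\N$ in the endpoint cases $p=1,\infty$), and $0<q\le\infty$,
\begin{equation*}
|f|_{B_{p,q}^s(\R^d)}\asymp\Big(\sum_{n=0}^\infty 2^{snq}\,\omega_\alpha(f,2^{-n})_{L_p(\R^d)}^q\Big)^{1/q},
\end{equation*}
with the usual $\sup$-modification when $q=\infty$; this follows from the defining integral $\big(\int_0^1(t^{-s}\omega_\alpha(f,t)_{L_p(\R^d)})^q\,dt/t\big)^{1/q}$ together with the monotonicity $\omega_\alpha(f,2t)_{L_p(\R^d)}\lesssim\omega_\alpha(f,t)_{L_p(\R^d)}$. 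Then I would abbreviate $a_k:=2^{-k\alpha}\|(-\Delta)^{\alpha/2}P_{2^k}(f)\|_{L_p(\R^d)}$ when $1<p<\infty$, and $a_k:=2^{-k\alpha}\|P_{2^k}(f)\|_{\dot W_p^\alpha(\R^d)}$ when $p=1,\infty$, so that the right-hand side of the asserted equivalence is precisely $\big(\sum_{k\ge1}2^{skq}a_k^q\big)^{1/q}$; the same notation is used with $\Psi_{2^k}f$, $\eta_{2^k}f$, or $R_{2^k}^{\beta,\delta}f$ in place of the best approximant.

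For the lower bound $|f|_{B_{p,q}^s(\R^d)}\gtrsim\big(\sum_k 2^{skq}a_k^q\big)^{1/q}$ I would invoke the left-hand inequalities of Theorem~\ref{propAB} (resp.\ Theorem~\ref{propVPRd}(A), resp.\ Theorem~\ref{PrGR} and the displayed inequality following Theorem~\ref{propVPRd}): since $\omega_\alpha(f,2^{-n})_{L_p(\R^d)}\gtrsim\big(\sum_{k>n}a_k^{\tau}\big)^{1/\tau}\ge a_{n+1}$ with $\tau=\max(2,p)$ (and $\tau=1$ in the endpoint cases), summing gives $\sum_n 2^{snq}\omega_\alpha(f,2^{-n})_{L_p(\R^d)}^q\gtrsim\sum_n 2^{snq}a_{n+1}^q\asymp\sum_k 2^{skq}a_k^q$. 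For the upper bound I would use the right-hand inequalities (Theorem~\ref{propAB}, Theorem~\ref{propVPRd}(B), Theorem~\ref{PrGR}), $\omega_\alpha(f,2^{-n})_{L_p(\R^d)}\lesssim\big(\sum_{k>n}a_k^{\theta}\big)^{1/\theta}$ with $\theta=\min(2,p)$, to get
\begin{equation*}
\sum_n 2^{snq}\omega_\alpha(f,2^{-n})_{L_p(\R^d)}^q\lesssim\sum_n 2^{snq}\Big(\sum_{k>n}a_k^{\theta}\Big)^{q/\theta},
\end{equation*}
and then apply the discrete Hardy-type inequality used in the proof of Corollary~\ref{cor2.4} (which holds for any base $>1$ and any positive exponent) with $A_k=a_k^\theta$ and exponent $q/\theta$ to collapse the right-hand side into $\lesssim\sum_k 2^{skq}a_k^q$. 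The case $q=\infty$ is the obvious modification (replace the $\ell_q$-sums in $n$ by $\sup_n$ and use the $\ell_\infty$ form of Hardy's inequality). The endpoint cases $p=1,\infty$ with $\alpha\in\N$ are the identical computation with $\tau=\theta=\lambda=\min(p,1)=1$, which immediately yields the second displayed equivalence for best approximants as well as for $\eta_{2^k}f$ and $R_{2^k}^{\beta,\delta}f$ with $\delta>(d-1)/2$.

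I do not expect a genuine obstacle: all the analytic content is already in Theorems~\ref{PrGR}, \ref{propAB}, and~\ref{propVPRd}. The one point deserving a sentence of care is the mismatch between the exponents $\tau=\max(2,p)$ in the lower estimate and $\theta=\min(2,p)$ in the upper estimate, which might look as though it should prevent a clean two-sided equivalence. It does not: taking the $\ell_q$-norm in $n$ washes it out. The upper estimate passes through Hardy's inequality, which absorbs the inner $\theta$-sum and returns $\sum_k 2^{skq}a_k^q$ regardless of the value of $\theta$, while the lower estimate never needs more than the trivial bound $\big(\sum_{k>n}a_k^{\tau}\big)^{1/\tau}\ge a_{n+1}$, so it does not see $\tau$ at all. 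Hence both sides meet at the single quantity $\sum_{k\ge1}2^{skq}a_k^q=\sum_{k\ge1}2^{(s-\alpha)qk}\|(-\Delta)^{\alpha/2}P_{2^k}(f)\|_{L_p(\R^d)}^q$, which is the claim.
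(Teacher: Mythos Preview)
Your proposal is correct and follows essentially the same approach as the paper: the paper simply says to combine Theorems~\ref{PrGR}, \ref{propAB}, \ref{propVPRd} with the Hardy-type argument already used for Corollary~\ref{cor2.4}, and you have spelled that out in detail. One tiny notational slip: in the endpoint cases $p=1,\infty$ the left-hand estimate of Theorem~\ref{PrGR} is a single term (i.e., $\tau=\infty$ in the paper's convention), not an $\ell_1$-sum, but since your lower-bound step only uses $\omega_\alpha(f,2^{-n})_p\gtrsim a_n$ this has no effect on the argument.
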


Note that a similar assertion for the Gauss-Weierstrass semi-group
$W_t f(x)=(4\pi t)^{d/2}\int_{\R^d} e^{-\frac{|x-y|^2}{4t}}f(y)dy=(e^{-t|\xi|^2}\widehat{f}(\xi))(x)$, $t>0$,  was obtained in~\cite[Theorem~3.4.6, p. 198]{BB} and \cite[Section 1.13.2, pp. 76–81]{Tr}.

%\bigskip

%\section{Sharp inequalities in $L_{p,w}[-1,1]$ spaces}

\section{Smoothness of approximation processes on $[-1,1]$}\label{sec7}

\subsection{Sharp inequalities for algebraic polynomials}

Let $L_{w,p}=L_p([-1,1]; w)$, $0< p\le\infty$,
be the space of all functions $f$ with the finite (quasi-)norm
\begin{equation*}
  \Vert f\Vert_{w,p}=\Vert f\Vert_{L_p([-1,1]; w)}=\(\int_{-1}^1 |f(x)|^p w(x){d}x\)^\frac1p,
\end{equation*}
where
\begin{equation*}
  w(x)=w_{a,b}(x)=(1-x)^a (1+x)^b,\quad a,b>-1,
\end{equation*}
is the Jacobi weight on $[-1,1]$.
In the unweighted case, $w(x)\equiv 1$, we  write $L_p=L_{p}[-1,1]$, $\Vert f\Vert_p=\Vert f\Vert_{L_p[-1,1]}$.

%First, let us introduce some notations.
%Denote
%$$
%\mathcal{P}_n=\text{\rm span}\, (1,\dots,x^{n-1}).
%$$
%In what follows,
%$$
%E_n(f)_{L_{p,w}[-1,1]} = \,\inf\,\big(\Vert  f-P_n\Vert
%_{L_{p,w}[-1,1]}: P_n\in \mathcal{P}_n\big)
%$$
%and for $f\in L_{p,w}[-1,1]$, the operator $P_n(f)$ is such that $P_n(f)\in \mathcal{P}_n$ and
%$$
%\Vert f-P_n(f)\Vert_{L_{p,w}[-1,1]}=E_n(f)_{L_{p,w}[-1,1]}.
%$$

Further, let $\mathcal{P}_n$ be the set of all algebraic polynomials of degree at most $n$.
As usual, the error of the best approximation of a function $f\in L_{w,p}$ by algebraic polynomials is defined as follows:
\begin{equation*}
E_n(f)_{w,p}=\inf_{P\in \mathcal{P}_{n}}\Vert f-P\Vert_{w,p}.
\end{equation*}
%An algebraic polynomial $P\in \mathcal{P}_{n-1}$  is called a polynomial of the best approximation of $f\in L_{w,p}$ if
%$
%\Vert f-P\Vert_{w,p}=E_n(f)_{w,p}.
%$

%First, we consider unweighted $L_p$ spaces , i.e. the case $w(x)\equiv 1$.

Let $f\in L_p[-1,1]$, $0<p<\infty$, $r\in\N$, $\vp(x)=\sqrt{1-x^2}$, and $\s\ge 0$. Recall that the Ditzian-Totik modulus of smoothness $\w_r^\vp(f,\d)_{p}$ is given by
$$
\w_r^\vp (f,\d)_{p}=\sup_{|h|\le \d}\Vert \Dl_{h\vp}^r f\Vert_{L_p[-1,1]},
$$
    where
$$
\Dl_{h\vp(x)}^r f(x)=\left\{
                   \begin{array}{ll}
                     \displaystyle \sum_{k=0}^r (-1)^k\binom{r}{k}f\(x+\(\frac r2-k\)h\vp(x)\), & \hbox{$x\pm \frac r2 h\vp(x)\in [-1,1]$,} \\
                     \displaystyle 0, & \hbox{otherwise.}
                   \end{array}
                 \right.
$$

%Let us illustrate some of the conditions~\eqref{eq1}--\eqref{eq6} for the Ditzian-Totik moduli of smoothness.
%First of all,
The Jackson-type theorem for the Ditzian-Totik moduli of smoothness is given by
%Let $$, and $r\in \N$. Then for any $n>r$ we have
\begin{equation*}\label{eqJALgeb+}
  E_n(f)_p\le C_{r,p}\w_r^\vp\(f,n^{-1}\)_p, \quad f\in L_p[-1,1],\quad 0< p<\infty,\quad n>r,
\end{equation*}
(see~\cite[Theorem~1.1]{DLY} for the case $0<p<1$ and~\cite[p.~79, Theorem~7.2.1]{book} for the case $p\ge 1$).
%where $C$ is a constant independent of $n$ and $f$.
%We also need
%analogues of inequalities
%(\ref{eq1}) and (\ref{eq4}) for the Ditzian-Totik moduli of
%smoothness. The first one is a standard inequality for
%moduli of smoothness of different orders
It is also well know, see, e.g.,~\cite{DHI}, that
$\w_r^\vp(f,\d)_p\le C_{r,p}\Vert f\Vert_p$ and
$
\w_r^{\vp}(f,\,2t)_p\leq C_{r,p}\w_r^{\vp}(f,t)_p.
$
Thus, taking into account the  following Nikolskii-Stechkin type inequality (see~\cite{DHI} and \cite{HuLi05})
%{\it for any $0<p<\infty$, $P_n\in \mathcal{P}_n$, $n\in \N$, and $0<\d\le n^{-1}$, we have
$$
\w_r^\vp(P_n,\d)_p \asymp \d^r \Vert \vp^r
P_n^{(r)}\Vert_p,\quad  0<p<\infty,\quad P_n\in \mathcal{P}_n,\quad 0<\d\le n^{-1},
$$
%where $M$ is a certain constant depending on $r$ and $p$.
%where $C$ is some constant depending only on $r$ and $p$.
we see that Theorem~\ref{le1} implies the following result (see also~\cite{HuLi05}).

%The above results allow us to show that in the considered case for $w\equiv 1$, Theorem~\ref{le1} has the following form.

\begin{theorem}\label{le1Pol}
  For any $f\in L_p[-1,1]$, $0<p\le \infty$, and $n>r$, we have
  \begin{equation*}
%\label{eq7}
    2^{-r n}\Vert \vp^r P_{2^n}^{(r)}\Vert_{L_p[-1,1]}\lesssim \w_r^\vp(f,2^{-n})_{L_p[-1,1]}\lesssim \(\sum_{k=n+1}^\infty  2^{-r \l k}\Vert \vp^r P_{2^k}^{(r)}\Vert_{L_p[-1,1]}^\l\)^\frac1\l,
  \end{equation*}
where $\l=\min(1,p)$ and $P_n$ is a polynomial of the best approximation  of $f$ in $L_p[-1,1]$.
\end{theorem}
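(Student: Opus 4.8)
The plan is to obtain Theorem~\ref{le1Pol} as a direct specialization of the abstract Theorem~\ref{le1}, feeding in the Jackson inequality and the Nikolskii--Stechkin equivalence for algebraic polynomials recalled just before the statement. First I would set $X=L_p[-1,1]$, take $\Vert\cdot\Vert_X=\Vert\cdot\Vert_p$ together with the metric $\rho(f,g)=\Vert f-g\Vert_p^\l$ for $\l=\min(1,p)$, choose $\Omega(f,\d)_X=\w_r^\vp(f,\d)_p$ as the abstract modulus of smoothness, and let $P_n(f)$ be a polynomial of best approximation to $f$ from $\mathcal{P}_n$ (which exists since $\mathcal{P}_n$ is finite dimensional). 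Properties i)--iii) of Section~\ref{sec2} then reduce to the $\l$-triangle inequality valid in any $p$-normed space.

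Next I would check conditions \eqref{eq1}--\eqref{eq6}. Condition \eqref{eq1} is the standard fact $\w_r^\vp(f,\d)_p\to 0$ as $\d\to+0$; \eqref{eq2} and \eqref{eq4} are exactly the estimates $\w_r^\vp(f,\d)_p\le C\Vert f\Vert_p$ and $\w_r^\vp(f,2\d)_p\le C\w_r^\vp(f,\d)_p$ recalled above; \eqref{eq3} follows from the linearity of the differences $\Dl_{h\vp}^r$ combined with the $\l$-triangle inequality; \eqref{eq5} holds because $P_n(P_{2n}(f))\in\mathcal{P}_n$, so that $\Vert f-P_n(f)\Vert_p=E_n(f)_p\le\Vert f-P_n(P_{2n}(f))\Vert_p$; and \eqref{eq6} is the Jackson-type bound $E_n(f)_p\lesssim\w_r^\vp(f,1/n)_p$ quoted above. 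With all hypotheses verified, Theorem~\ref{le1} gives
\[
  \w_r^\vp(P_{2^n}(f),2^{-n})_p\lesssim\w_r^\vp(f,2^{-n})_p\lesssim\Big(\sum_{k=n+1}^\infty\w_r^\vp(P_{2^k}(f),2^{-k})_p^\l\Big)^{1/\l}.
\]

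Finally I would apply the Nikolskii--Stechkin equivalence $\w_r^\vp(P_m,\d)_p\asymp\d^r\Vert\vp^r P_m^{(r)}\Vert_p$, valid for $P_m\in\mathcal{P}_m$ and $0<\d\le m^{-1}$, with $m=2^k$ and $\d=2^{-k}=m^{-1}$ (the admissible endpoint), to rewrite $\w_r^\vp(P_{2^k}(f),2^{-k})_p\asymp 2^{-rk}\Vert\vp^r P_{2^k}(f)^{(r)}\Vert_p$, and likewise for the term indexed by $n$; substituting these into the displayed two-sided estimate produces precisely the inequalities claimed in Theorem~\ref{le1Pol} with $\l=\min(1,p)$.

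I do not expect a genuine obstacle here: the analytic content is already carried by Theorem~\ref{le1} and by the cited Jackson and Nikolskii--Stechkin inequalities, which hold for the full range $0<p\le\infty$. The only points deserving care are verifying that the non-homogeneous functional $\Vert\cdot\Vert_p^\l$ really satisfies the axioms of Section~\ref{sec2} in the quasi-Banach range $0<p<1$ --- which is exactly where the exponent $\l=\min(1,p)$ enters --- and confirming that the borderline scale $\d=m^{-1}$ is covered by the Nikolskii--Stechkin statement; if one prefers to avoid the endpoint, it suffices to note that $P_{2^k}(f)\in\mathcal{P}_{2^k}$ and apply the equivalence on the strictly admissible scale $2^{-k}$ relative to degree $2^k$ exactly as written, since the quoted inequality already includes $\d=n^{-1}$.
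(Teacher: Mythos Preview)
Your proposal is correct and follows exactly the route the paper itself takes: the paper derives Theorem~\ref{le1Pol} by noting that the Ditzian--Totik modulus satisfies \eqref{eq1}--\eqref{eq4}, that the Jackson inequality gives \eqref{eq6}, and that the best-approximation property gives \eqref{eq5}, then applies Theorem~\ref{le1} and converts via the Nikolskii--Stechkin equivalence $\w_r^\vp(P_n,\d)_p\asymp\d^r\Vert\vp^r P_n^{(r)}\Vert_p$. Your verification of the hypotheses is slightly more explicit than the paper's, but the argument is identical in substance.
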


Now, we  are going to apply Theorems~\ref{corthAB} and~\ref{thLP} in the case of the weighted $L_p$ spaces for $1<p<\infty$.
First, we introduce  some notations.

%For simplicity, we denote
%the space $C[-1,1]$ as $L_\infty[-1,1]$ and
%\begin{equation*}
%  \Vert f\Vert_\infty=\max_{x\in [-1,1]}|f(x)|.
%\end{equation*}

For $a,b>-1$, denote by $P_k^\ab(x)$, $k\in \Z_+$, the system of Jacobi polynomials, orthogonal on $[-1,1]$, such that
$P_k^{(a,b)}(1)=\binom{k+a}{k}$,  $k\in \Z_+$.
Let also $R_k^{(a,b)}$ be the normalized Jacobi polynomials,
$
R_k^{(a,b)}(x)={P_k^{(a,b)}(x)}/{P_k^{(a,b)}(1)}$, $k\in \Z_+$.

The Fourier-Jacobi series of $f\in {L_{w,p}}$, $1\le p\le\infty$, $a,b>-1$, is given by
\begin{equation*}\label{eqFJs}
  f(x)\sim\sum_{k=0}^\infty c_k^\ab(f)\mu_k^\ab R_k^\ab(x),
\end{equation*}
with the Fourier coefficients
\begin{equation*}
c_k^\ab(f)=\int_{-1}^1 f(x)R_k^\ab(x)w(x){d}x,\quad k\in \Z_+,
\end{equation*}
and
$
\mu_k^\ab=\Vert R_k^\ab \Vert_{L_{w,2}}^{-2}\asymp k^{2a+1}.
$

Note that the Jacobi polynomials are the eigenfunctions of the differential operator
\begin{equation*}
Q(D) = Q_{\alpha  ,\beta  }(D)=\frac{-1}{w(x)}\frac{\mathrm{d}}{\mathrm{d}x}w(x)(1-x^2)\frac{\mathrm{d}}{\mathrm{d}x}\,,
\end{equation*}
\begin{equation*}
Q(D) P_k^\ab = \l_k^\ab P_k^\ab,\quad \l_k^\ab=k(k+a+b+1).
\end{equation*}
Then the corresponding $K$-functional  is given by~\eqref{Kfunc} with $\s=2$ and $\mathcal{D}=[-1,1]$.

Recall that by~\eqref{f3}  and \cite[Section 6]{DaDi}, we have
\begin{equation*}\label{eqValKfunc}
 K_\gamma  \big(f,Q(D),n^{-2\gamma}\big)_{L_{p,w}[-1,1]} \asymp \Vert f-\eta_n f\Vert_{L_{p,w}[-1,1]} +n^{-2\g}\Vert Q(D)^\gamma \eta_n f\Vert_{L_{p,w}[-1,1]} ,
\end{equation*}
where the de la Vall\'ee Poussin means $\eta_n f$ are given by
$$
\eta_n f(x)=\sum_{k=0}^\infty \eta\(\frac kn\) c_k^\ab(f)\mu_k^\ab R_k^\ab(x).
$$
%where $\eta$ is given by~\eqref{f4}.

Thus, employing Theorem~\ref{corthAB}, Theorem~\ref{thLP},  and the needed facts from~\cite[Section 6]{DaDi}, we obtain  the following result.

%Thus, from Theorem~\ref{thAB}, we obtain the following result.
\begin{theorem}\label{propAB_pol}
  Let $f\in L_{p,w}[-1,1]$, $1<p<\infty$,  $\gamma>0$, $\tau=\max(2,p)$, and $\t=\min(2,p)$. Then
  \begin{equation}\label{eq7-P}
   \(\sum_{k=n+1}^\infty 2^{-2\gamma\tau k}\Vert Q(D)^\g\eta_{2^k}f\Vert_{L_{p,w}[-1,1]}^\tau\)^\frac1\tau\lesssim K_\g(f,Q(D),2^{-2n\g})_{L_{p,w}[-1,1]},
  \end{equation}
    \begin{equation}\label{eq7+P}
    K_\g(f,Q(D),2^{- 2 n\g })_{L_{p,w}[-1,1]}\lesssim \(\sum_{k=n+1}^\infty 2^{-2\gamma\t k}\Vert Q(D)^\g\eta_{2^k}f\Vert_{L_{p,w}[-1,1]}^\t\)^\frac1\t.
  \end{equation}
 %where  $C_1$ and $C_2$ does not depend on $f$ and $n$.

 Inequalities~\eqref{eq7-P} and~\eqref{eq7+P} are also valid if we replace the de la Vall\'ee Poussin means $\eta_{2^k} f$ by the best approximants $P_{2^k}(f)$, or by the Fourier-Jacobi means $\Psi_{2^k} f$ with the properties similar to those indicated in Theorem~\ref{thPsi}.
\end{theorem}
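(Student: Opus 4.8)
The statement is not a fresh computation but an instantiation of the abstract results of Sections~\ref{sec3} and~\ref{sec4}: once the Fourier--Jacobi setting is recognised as a special case of the framework of Section~\ref{SecReal}, the inequalities \eqref{eq7-P} and \eqref{eq7+P}, as well as their versions with $P_{2^k}(f)$ and $\Psi_{2^k}f$ in place of $\eta_{2^k}f$, will follow from Theorems~\ref{thLP}, \ref{thPsi}, and~\ref{corthAB}. The plan is thus threefold: (1) place $Q(D)=Q_{\a,\b}(D)$ into the setup of Section~\ref{SecReal}; (2) record the H\"ormander-type multiplier theorem and the realization/Littlewood--Paley facts for Jacobi expansions; (3) apply the three general theorems with the appropriate choices of spaces and parameters.

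For step (1): the Jacobi polynomials $P_k^\ab$ are the eigenfunctions of $Q_{\a,\b}(D)$ with eigenvalues $\l_k^\ab=k(k+\a+\b+1)\asymp k^2$, so the scaling exponent is $\s=2$, with $\SD=[-1,1]$ and weight $w=w_{\a,\b}$; the operator is self-adjoint on $L_{2,w}[-1,1]$, its eigenspaces $G_k$ are finite dimensional, and the polynomials are dense in $L_{p,w}[-1,1]$ for $1\le p<\infty$. Consequently the $K$-functional of \eqref{Kfunc} with $t^{\s\g}=t^{2\g}$ and the de~la~Vall\'ee Poussin means $\eta_n f$ are precisely the objects appearing in the statement.

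For step (2): the one genuinely nontrivial ingredient is Assumption~\ref{H} in this setting --- the H\"ormander--Mikhlin multiplier theorem for Fourier--Jacobi series when $1<p<\infty$ --- together with the square-function equivalences of the form \eqref{eq3.16'} and the realization identity \eqref{f3}, i.e. $K_\g(f,Q(D),\l_N^{-\g})_{p,w}\asymp\Vert f-\eta_N f\Vert_{p,w}+\l_N^{-\g}\Vert Q(D)^\g\eta_N f\Vert_{p,w}$. All of this is available from~\cite{DaDi} (with \cite[Section~6]{DaDi} for the multiplier and square-function estimates and \cite[Theorem~7.1]{Di} for the realization). This is where all the analytic content sits, and I expect it to be the main obstacle; I would quote it as a black box.

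For step (3): Theorem~\ref{thLP} applied with $\SD=[-1,1]$, $w=w_{\a,\b}$, $\s=2$ yields \eqref{eq7-P}--\eqref{eq7+P} for $\eta_{2^k}f$. Passing to a Fourier--Jacobi mean $\Psi_{2^k}f$ is done via Theorem~\ref{thPsi}, for which one needs only the uniform $L_{p,w}$-bounds \eqref{dopa} on $\Psi_{2^n}$ and \eqref{dopa'} on $\widetilde{\Psi}_{2^n}$; by Remark~\ref{remmuk} these follow from Assumption~\ref{H} applied to the symbols $\{\psi(k2^{-n})\}$ and $\{\widetilde{\psi}(k2^{-n})\}$. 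For the best approximants $P_{2^k}(f)$ I would instead invoke Theorem~\ref{corthAB} with $X=L_{p,w}[-1,1]$, which is an abstract $L_p$ space so that Proposition~\ref{remLp} gives $\d_X(\e)\gtrsim\e^{\max(2,p)}$ and $\rho_X(t)\lesssim t^{\min(2,p)}$; one takes $G_n=\mathcal{P}_n$ (convex), $\a=2\g$, and $Y=\{g:Q(D)^\g g\in L_{p,w}[-1,1]\}$ with seminorm $\Vert Q(D)^\g g\Vert_{p,w}$, so that the corresponding Peetre $K$-functional coincides with $K_\g(f,Q(D),2^{-2n\g})_{p,w}$. The Jackson inequality \eqref{K1.2}, the Bernstein inequality \eqref{K1.3} (equivalently $\Vert Q(D)^\g P\Vert_{p,w}\lesssim n^{2\g}\Vert P\Vert_{p,w}$ for $P\in\mathcal{P}_n$), and the realization hypothesis \eqref{r1--} are again part of the package cited in step (2), so Theorem~\ref{corthAB} applies and produces the $P_{2^k}(f)$-version. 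Beyond step (2), the remaining work is bookkeeping only: matching $\a=2\g$ and $\s=2$, observing that $\mathcal{P}_n$ is convex and that $L_{p,w}[-1,1]$ is an abstract $L_p$ space, and assembling Theorems~\ref{thLP}, \ref{thPsi}, and~\ref{corthAB}.
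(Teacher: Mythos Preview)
Your proposal is correct and follows exactly the route the paper takes: the paper's proof is a one-line appeal to Theorem~\ref{corthAB}, Theorem~\ref{thLP}, and the multiplier/realization facts from~\cite[Section~6]{DaDi}, and you have unpacked precisely this, adding the (correct) observation that the $\Psi_{2^k}f$-case goes through Theorem~\ref{thPsi} and that $L_{p,w}[-1,1]$ is an abstract $L_p$ space so Proposition~\ref{remLp} applies. Note, incidentally, that the roles of $\tau$ and $\theta$ in the stated theorem are swapped relative to Theorems~\ref{thLP} and~\ref{corthAB}; this is a typographical slip in the statement, not a gap in your argument.
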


\begin{remark}
\emph{Note that the results given in  Theorems~\ref{le1Pol} and~\ref{propAB_pol} essentially improve the corresponding results for the best approximants  in $L_{p,w}[-1,1]$, $1\le p<\infty$, obtained early in~\cite[Theorem~8.3.1]{book},~\cite{bu1,Go, HuLi05, L}.}
\end{remark}

\subsection{Sharp inequalities for splines}

In this subsection, we consider approximation of
functions by splines in the space $L_p[0,1]$ with the (quasi-)norm $\Vert \cdot \Vert_p=\Vert \cdot \Vert_{L_p[0,1]}$.

Denote by $\mathcal{S}_{m,n}$ the set of all spline functions of
degree $m-1$ with the knots $t_j=t_{j,n}=j/n$, $j=0,\ldots,n,$ i.e.,
$S\in \mathcal{S}_{m,n}$ if $S\in C^{m-2}[0,1]$ and $S$ is some
algebraic polynomial of degree $m-1$ in each interval
$(t_{j-1},\,t_j)$, $j=1,\ldots,n$.
%Recall that $\mathcal{P}_r$ denotes the set of all algebraic polynomials of degree at most $r$.

Let
$$
\E_{m,n}(f)_p=\inf_{S\in \Sp_{m,n}}\Vert f-S\Vert_{L_p[0,1]}
$$
be the best approximation of a function $f$ by
splines $S\in \Sp_{m,n}$ in $L_p[0,1]$.
%As above, by $P_n(f)\in \Sp_{m,n}$ we denote the best approximant of $f$ in $L_p[0,1]$, i.e.,
%$$
%\Vert f-P_n(f)\Vert_{L_p[0,1]}=\E_{m,n}(f)_p.
%$$

%The modulus of smoothness of order $r\in \N$ of $f\in L_p[0,1]$ is defined by
%$$
%\w_r(f,\d)_p=\sup_{0<h\leq \d}\Vert \D_h^rf\Vert_{L_p[0,1-rh]}
%$$
%where
%$$
%\D_h^rf(x)=\sum_{\nu=0}^r\binom{r}{\nu}(-1)^\nu f(x+\nu h),
%$$
%$ \binom{r}{\nu}=\frac{r (r-1)\dots
%(r-\nu+1)}{\nu!}$, $\binom{r}{0}=1$, and $A_{rh}=[0,1-rh]$.
The Jackson type inequality is given by (\cite[Theorem~1]{Os80}, see also~\cite[Ch. 12, p.~379]{DeLo})
\begin{equation}\label{JackSplineXXX}
  \E_{r,n}(f)_p\lesssim \omega_{r}(f,n^{-1})_p,
\end{equation}
where $f\in L_p[0,1]$, $0<p\le \infty$, $n\in \N$, and
$$
\w_r(f,\d)_p=\sup_{0<h\leq \d}\Vert \D_h^rf\Vert_{L_p[0,1-rh]}
$$
is the modulus of smoothness of order $r\in \N$.

Note that any spline $S_n\in \mathcal{S}_{r,n}$ can be represented (see \cite{Os80}) as follows:
\begin{equation*}\label{eq.lem.th2.4}
S_n(x)=P(x)+\sum_{j=1}^{n-1}a_j(x-t_j)_+^{r-1}\,,
\end{equation*}
where $P\in \mathcal{P}_{r-1}$, $x_+=x$ if $x\geq0$ and $x_+=0$ if $x<0$. Moreover, one has
\begin{equation}\label{eq.lem.th2.6}
C^{-1}n^{-(1+(r-1)p)}\sum\limits_{j=1}^{n-1}|a_j|^p \le \w_r(S_n,\,n^{-1})_p^p\le
C n^{-(1+(r-1)p)}\sum\limits_{j=1}^{n-1}|a_j|^p\,,
\end{equation}
where $C$ is a positive constant that depends only on $r$ and $p$.
Inequalities (\ref{eq.lem.th2.6}) were proved in~\cite[Lemma~2.1]{HuYu} (see also~\cite{Hu}) in the case $1\le p<\infty$. It is easy to see that the same also holds  in the case $0<p<1$.

It is important to mention that~\eqref{eq.lem.th2.6} implies that for any $S_n\in \mathcal{S}_{r,n}$, $n, r\in \N$, one has
\begin{equation}\label{eqVarp}
  \omega_{r}(S_n,n^{-1})_p\asymp n^{-(r-1)-\frac1p }V(S_n^{(r-1)})_p,\quad  0<p<\infty,
\end{equation}
where $V(f)_p$ denotes the  $p$-variation of the function $f$, that is,
\begin{equation*}
V(f)_p={\sup_{0=x_0<x_1<\dots<x_n=1}}\bigg(\sum_{k=0}^{n-1}
|f(x_{k+1})-f(x_{k})|^p\bigg)^{\frac 1p}.
\end{equation*}
In its turn,~\eqref{eqVarp} implies the following analogue of the Bernstein inequality
\begin{equation}\label{eqBer}
  n^{-(r-1)-\frac1p }V(S_n^{(r-1)})_p\lesssim \Vert S_n\Vert_p.
\end{equation}
Moreover, by~\eqref{JackSplineXXX} and~\eqref{eqVarp}, for any $S_n\in \mathcal{S}_{r,n}$, $n, r\in \N$, such that
$\Vert f-S_n\Vert_{L_p[0,1]}=\E_{r,n}(f)_p$, we have
\begin{equation}\label{realSp}
  \Vert f-S_n\Vert_p+n^{-(r-1+\frac1p)}V(S_n^{(r-1)})_p\asymp \w_r(f,n^{-1})_p.
\end{equation}
%where the constants in $\asymp$ do not depend on $f$, $S_n$, and $n$.

The above results allow us to apply Theorem~\ref{le1} to obtain the following result.

\begin{theorem}\label{le1Spl}
  Let $f\in L_p[0,1]$, $0<p< \infty$, $r,n\in  \N$, and $\l=\min(1,p)$. Then
  \begin{equation*}
%\label{eq7}
    2^{- n (r-1+\frac1p)}
V(S_{2^k}^{(r-1)})_p\lesssim \w_r(f,2^{-n})_{p}\lesssim \(\sum_{k=n+1}^\infty \(2^{- k (r-1+\frac1p)}
V(S_{2^k}^{(r-1)})_p\)^\l\)^\frac1\l,
  \end{equation*}
where $S_{2^k}\in \Sp_{r,{2^k}}$ is such that
$\Vert f-S_{2^k}\Vert_{L_p[0,1]}=\E_{r,{2^k}}(f)_p$.
\end{theorem}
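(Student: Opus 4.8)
The plan is to deduce Theorem~\ref{le1Spl} from the abstract Theorem~\ref{le1}, and then to rewrite the resulting moduli of smoothness of splines in terms of $p$-variations via~\eqref{eqVarp}. Concretely, I would apply Theorem~\ref{le1} with $X=L_p[0,1]$, $\Vert\cdot\Vert_X=\Vert\cdot\Vert_p$, and $\l=\min(1,p)$, so that properties i)--iii) of Section~\ref{sec2} hold because $\Vert f+g\Vert_p^\l\le\Vert f\Vert_p^\l+\Vert g\Vert_p^\l$ for $0<p<\infty$; with abstract modulus $\Omega(f,\d)_X=\w_r(f,\d)_p$; and with approximation family $P_n(f)=S_n$, a best spline approximant of $f$ from $\Sp_{r,n}$ (which exists since $\Sp_{r,n}$ is finite-dimensional).

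Next I would verify the hypotheses \eqref{eq1}--\eqref{eq6}. The properties \eqref{eq1}--\eqref{eq4} of $\w_r(\cdot,\cdot)_p$ on $[0,1]$ are classical (see, e.g.,~\cite{DeLo}): \eqref{eq1} is the density of smooth functions in $L_p$; since $\D_h^r f$ is a finite linear combination of the translates $f(\cdot+kh)$, $k=0,\dots,r$, the bound \eqref{eq2} $\w_r(f,\d)_p\le C\Vert f\Vert_p$ and the $\l$-semiadditivity \eqref{eq3} follow from the $\l$-triangle inequality in $L_p$; and the doubling property \eqref{eq4} $\w_r(f,2\d)_p\lesssim\w_r(f,\d)_p$ is standard (with a constant depending on $p$ when $0<p<1$). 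Property \eqref{eq5} holds because $P_n(P_{2n}(f))\in\Sp_{r,n}$, so by minimality $\Vert f-P_n(f)\Vert_p=\E_{r,n}(f)_p\le\Vert f-P_n(P_{2n}(f))\Vert_p$. Finally, \eqref{eq6} is exactly the Jackson inequality~\eqref{JackSplineXXX}.

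With all hypotheses confirmed, Theorem~\ref{le1} gives $\w_r(S_{2^n},2^{-n})_p\lesssim\w_r(f,2^{-n})_p\lesssim\big(\sum_{k=n+1}^\infty\w_r(S_{2^k},2^{-k})_p^\l\big)^{1/\l}$. Since each $S_{2^k}\in\Sp_{r,2^k}$, applying the equivalence~\eqref{eqVarp} with $n=2^k$ gives $\w_r(S_{2^k},2^{-k})_p\asymp 2^{-k(r-1+1/p)}V(S_{2^k}^{(r-1)})_p$; substituting this into both sides of the chain yields the claimed inequalities (for the left one only the estimate $2^{-n(r-1+1/p)}V(S_{2^n}^{(r-1)})_p\lesssim\w_r(S_{2^n},2^{-n})_p$ is needed, for the right one only the reverse estimate). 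The only point requiring genuine care is the verification of \eqref{eq1}--\eqref{eq4} in the quasi-Banach range $0<p<1$, where $\w_r(\cdot,\cdot)_p$ is merely $\l$-subadditive and the doubling constant depends on $p$, together with keeping in mind that this is the non-periodic modulus on $[0,1]$ with the domain of integration $[0,1-rh]$; apart from that, the argument is a routine substitution with no substantial obstacle.
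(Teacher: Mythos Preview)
Your proposal is correct and follows exactly the route the paper indicates: apply Theorem~\ref{le1} with $X=L_p[0,1]$, $\Omega=\w_r$, $\l=\min(1,p)$, and $P_n$ the best spline approximant (checking \eqref{eq1}--\eqref{eq6} via the standard modulus properties and the Jackson inequality~\eqref{JackSplineXXX}), and then convert $\w_r(S_{2^k},2^{-k})_p$ to $2^{-k(r-1+1/p)}V(S_{2^k}^{(r-1)})_p$ using~\eqref{eqVarp}. The paper's own proof is just the one-line remark ``The above results allow us to apply Theorem~\ref{le1}'', so your write-up is in fact more detailed than what appears there.
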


In the case $1<p<\infty$, using~\eqref{eqVarp}, \eqref{eqBer}, and~\eqref{realSp} and Theorem~\ref{corthAB}, we arrive at the next statement.

\begin{theorem}\label{o-sn1Spl}
Let $f \in L_p[0,1], 1 < p < \infty,$ $r,n\in \mathbb{N}$, and $\tau = \max(2,p)$, $\theta = \min(2,p)$.
 Then
\begin{equation*}
%\label{rr}
 \( \sum\limits_{k =n+1}^{\infty} 2^{- k (r-1+\frac1p) \tau}
V(S_{2^k}^{(r-1)})_p^{\tau}
\)^{\frac{1}{\tau}} \lesssim \omega_{r} ( f,
{2^{-n}} )_p \lesssim  \( \sum\limits_{k =n+1}^{\infty} 2^{- k (r-1+\frac1p) \t}
V(S_{2^k}^{(r-1)})_p^{\t}
\)^{\frac{1}{\t}},
\end{equation*}
where $S_{2^k}\in \Sp_{r,{2^k}}$ is such that
$\Vert f-S_{2^k}\Vert_{L_p[0,1]}=\E_{r,{2^k}}(f)_p$.
\end{theorem}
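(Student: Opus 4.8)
The plan is to realize the statement as an instance of Theorem~\ref{corthAB} (equivalently, of Lemma~\ref{thAB}) applied with $X=L_p[0,1]$, $G_n=\mathcal{S}_{r,n}$, smoothness functional $\|g\|_Y=V(g^{(r-1)})_p$, and smoothness exponent $\alpha=r-1+\frac1p$. With these choices one has $\|P_{2^k}(f)\|_Y=V(S_{2^k}^{(r-1)})_p$, so the conclusion of Theorem~\ref{corthAB} reads exactly as the asserted double inequality, with the $K$-functional $K(f,2^{-n\alpha};L_p,Y)$ in the middle; the final step is to identify that $K$-functional with $\omega_r(f,2^{-n})_p$, which is precisely what the realization~\eqref{realSp} delivers.

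Next I would check that the hypotheses are in force. The spaces $\mathcal{S}_{r,n}$ are finite-dimensional linear subspaces, hence convex, symmetric, and containing $0$, with $\bigcup_n\mathcal{S}_{r,n}$ dense in $L_p[0,1]$; thus the best approximants $P_{2^k}(f)=S_{2^k}$ exist. Although the family $\{\mathcal{S}_{r,n}\}$ is not nested, the dyadic inclusions $\mathcal{S}_{r,2^{k-1}}\subset\mathcal{S}_{r,2^k}$ do hold (each knot $j/2^{k-1}$ is the knot $2j/2^k$), and this is all that the proof of Theorem~\ref{corthAB} through Lemma~\ref{thAB} uses, since there one only telescopes $P_{2^k}(f)=\sum_{l=n+1}^k\bigl(P_{2^l}(f)-P_{2^{l-1}}(f)\bigr)+P_{2^n}(f)$ and applies Bernstein to the differences $P_{2^l}(f)-P_{2^{l-1}}(f)\in\mathcal{S}_{r,2^l}$. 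The Bernstein inequality~\eqref{K1.3} is~\eqref{eqBer} applied to differences of splines (which again lie in the same $\mathcal{S}_{r,n}$); the Jackson inequality~\eqref{K1.2}, needed in the proof only for the splines $P_{2^k}(f)\in\mathcal{S}_{r,2^k}$, follows from~\eqref{JackSplineXXX} and~\eqref{eqVarp}, since $E_{2^{k-1}}(P_{2^k}(f))_p\lesssim\omega_r(P_{2^k}(f),2^{-(k-1)})_p\lesssim\omega_r(P_{2^k}(f),2^{-k})_p\asymp 2^{-(k-1)\alpha}V(P_{2^k}(f)^{(r-1)})_p$, using $\omega_r(g,2h)_p\lesssim\omega_r(g,h)_p$. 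Finally, condition~\eqref{r1--} amounts to $R(f,n^{-\alpha};L_p,\mathcal{S}_{r,n})\lesssim K(f,n^{-\alpha};L_p,Y)$: here $R\asymp\omega_r(f,n^{-1})_p$ by~\eqref{realSp}, $K\le R$ is trivial, and $\omega_r(f,n^{-1})_p\lesssim K(f,n^{-\alpha};L_p,Y)$ follows from the elementary estimate $\omega_r(g,h)_p\lesssim h^{r-1}\omega_1(g^{(r-1)},h)_p\lesssim h^{r-1+1/p}V(g^{(r-1)})_p$ (the embedding of bounded $p$-variation into $\mathrm{Lip}(1/p,p)$) applied to a near-optimal element of the $K$-functional; this chain simultaneously gives $K(f,2^{-n\alpha};L_p,Y)\asymp\omega_r(f,2^{-n})_p$.

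Granting these, Theorem~\ref{corthAB} with $\tau=\max(2,p)$ and $\theta=\min(2,p)$ yields
\[
\Bigl(\sum_{k=n+1}^\infty 2^{-k\alpha\tau}V(S_{2^k}^{(r-1)})_p^{\tau}\Bigr)^{1/\tau}\lesssim K(f,2^{-n\alpha};L_p,Y)\lesssim\Bigl(\sum_{k=n+1}^\infty 2^{-k\alpha\theta}V(S_{2^k}^{(r-1)})_p^{\theta}\Bigr)^{1/\theta},
\]
and substituting $\alpha=r-1+\frac1p$ together with $K(f,2^{-n\alpha};L_p,Y)\asymp\omega_r(f,2^{-n})_p$ finishes the proof. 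The only genuinely non-routine point is the verification of~\eqref{r1--}, i.e.\ the three-way equivalence $K\asymp R\asymp\omega_r$ for this couple; everything else is bookkeeping with the non-nested scale $\{\mathcal{S}_{r,n}\}$ and with the harmless dilation from step $2^{-k}$ to step $2^{-(k-1)}$ in~\eqref{eqVarp}. One may also bypass~\eqref{r1--} altogether by running the proof of Theorem~\ref{mainmain} — $L_p$, $1<p<\infty$, is $\tau$-uniformly convex and $\theta$-uniformly smooth with $\tau=\max(2,p)$, $\theta=\min(2,p)$ by Proposition~\ref{remLp}, so the inequalities~\eqref{A} and~\eqref{B} hold with these exponents — and then applying Lemma~\ref{thAB} with the realization $R(f,2^{-n\alpha};L_p,\mathcal{S}_{r,2^n})$ in place of the $K$-functional, at which point only~\eqref{JackSplineXXX}, \eqref{eqVarp}, \eqref{eqBer}, and~\eqref{realSp} are used.
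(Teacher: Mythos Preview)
Your proposal is correct and follows exactly the route the paper takes: apply Theorem~\ref{corthAB} with $X=L_p[0,1]$, $G_n=\mathcal{S}_{r,n}$, $\Vert g\Vert_Y=V(g^{(r-1)})_p$, $\alpha=r-1+\tfrac1p$, verifying the hypotheses via~\eqref{eqVarp}, \eqref{eqBer}, \eqref{JackSplineXXX}, and~\eqref{realSp}. Your careful remarks about the dyadic nesting $\mathcal{S}_{r,2^{k-1}}\subset\mathcal{S}_{r,2^k}$ and the verification of~\eqref{r1--} (or its bypass via Remark~3.1(iii) and~\eqref{realSp}) fill in points the paper leaves implicit in its one-sentence proof.
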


\bigskip

\section{Nonlinear methods of approximation}\label{sec8}

\subsection{Nonlinear wavelet approximation}

We restrict ourselves to the case of compactly supported biorthogonal wavelets and follow the discussion  in~\cite[Section~7]{DeVore}.
Let $\vp$ and $\widetilde{\vp}$ be two refinable compactly supported functions and let $\psi$ and $\widetilde{\psi}$ be their corresponding wavelets. Suppose that  $\vp$ and $\widetilde{\vp}$ are in duality as follows
$$
\int_\R \vp(x-j)\widetilde{\vp}(x-k)dx=\delta_{jk},
$$
where $\delta_{jk}$ is the Kronecker delta. Then each function $f\in L_p(\R)$ has the following wavelet decomposition:
\begin{equation*}\label{wa1}
  f=\sum_{I\in D} c_{I,p}(f)\psi_{I,p},\quad c_{I,p}(f)=\langle f,\widetilde{\psi}_{I,p/(p-1)}\rangle,
\end{equation*}
see, e.g.,~\cite{CDF} and~\cite{Da}. In the above formula, $D$ is the set of all dyadic intervals in $\R$, $I$ denotes the dyadic cube $I=2^{-k}(j+[0,1])$ associated with $j,k\in \Z$ and
$$
\psi_{I,p}(x)=|I|^{-1/p}\psi(2^kx -j).
$$
%Details can be found, e.g., in~\cite{DeVore}.

Let $\Sigma_n^w$ denote the set of all functions
\begin{equation*}\label{wa2}
  S=\sum_{I\in \Lambda} a_I\psi_I,
\end{equation*}
where $\Lambda\subset D$ is a set of dyadic intervals of cardinality $\#\Lambda\le n$. Thus $\Sigma_n^w$ is
the set of all functions which are a linear combination of $n$ wavelet functions.
We define
$$
\s_{n}^w(f)_p=\inf_{S\in \Sigma_{n}^w} \Vert f-S\Vert_{L_p(\R)}.
$$

Let $B_{p,q}^r(\R)$, $r>0$, $0<p,q\le\infty$,
be the classical Besov spaces.
The Jackson and Bernstein  type inequalities are given in the following two propositions (see~\cite[Corollary~4.1 and Theorem~4.3]{CDH}).

\begin{proposition}\label{lewa1}
  Let $1<p<\infty$, $r>0$, and $f\in L_p(\R)$, $1/\g=r+1/p$. If $\psi$ has $m$ vanishing moments with $m>r$ and $\psi$ is in $B_{\g,q}^\rho (\R)$ for some $q>0$ and some $\rho>r$, then
\begin{equation*}\label{wa3}
  \s_n^w(f)_p\lesssim  K\(f,n^{-r}; L_p(\R),B_{\g,\g}^r(\R)\),\quad n\in\N.
\end{equation*}
%where
%%$$
%%K_s(f,n^{-1})_p=K(f,n^{-s},L_p(\R),B_{\g,\g}^s(\R))
%%$$
%%and
%$B_{\g,\g}^s(\R)$ is the classical Besov spaces.
\end{proposition}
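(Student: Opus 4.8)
The plan is to derive the estimate from two ingredients: a realization of the $K$-functional, and the classical Jackson inequality for $n$-term wavelet approximation of Besov functions. Fix $f\in L_p(\R)$ and $n\in\N$, and choose $g=g_n\in B_{\gamma,\gamma}^r(\R)$ with
\[
\|f-g\|_{L_p(\R)}+n^{-r}\|g\|_{B_{\gamma,\gamma}^r(\R)}\le 2\,K\(f,n^{-r};L_p(\R),B_{\gamma,\gamma}^r(\R)\).
\]
Since $\sigma_n^w(f)_p\le\|f-g\|_{L_p(\R)}+\sigma_n^w(g)_p$, it suffices to prove the pure Jackson inequality
\[
\sigma_n^w(g)_p\lesssim n^{-r}\,\|g\|_{B_{\gamma,\gamma}^r(\R)},\qquad g\in B_{\gamma,\gamma}^r(\R).
\]

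For this one uses the wavelet characterizations of the two spaces, and this is exactly where the hypotheses on $\psi$ (hence on $\widetilde\psi$) are needed: the $m>r$ vanishing moments together with the smoothness $\psi\in B_{\gamma,q}^\rho(\R)$, $\rho>r$, ensure that for $1<p<\infty$ the biorthogonal system $\{\psi_{I,p}\}_{I\in D}$ is an unconditional (indeed democratic) basis of $L_p(\R)$ admitting the Littlewood--Paley square-function equivalence in terms of the coefficients $c_{I,p}(\cdot)$, and that, on the Sobolev-critical line $1/\gamma=r+1/p$,
\[
\|h\|_{B_{\gamma,\gamma}^r(\R)}\asymp\Big(\sum_{I\in D}|c_{I,p}(h)|^{\gamma}\Big)^{1/\gamma};
\]
see \cite{CDF,Da,DeVore}. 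In particular $\big(c_{I,p}(g)\big)_{I\in D}\in\ell_\gamma$ with $\gamma<p$, and $g=\sum_{I\in D}c_{I,p}(g)\psi_{I,p}$ in $L_p(\R)$.

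The approximant is then obtained greedily: let $\Lambda_n\subset D$ collect $n$ indices carrying $n$ largest values of $|c_{I,p}(g)|$ and put $S=\sum_{I\in\Lambda_n}c_{I,p}(g)\,\psi_{I,p}\in\Sigma_n^w$, so that $g-S=\sum_{I\notin\Lambda_n}c_{I,p}(g)\,\psi_{I,p}$. By Stechkin's lemma the $(n{+}1)$-st largest coefficient $\e_n$ satisfies $\e_n\lesssim n^{-1/\gamma}\|(c_{I,p}(g))\|_{\ell_\gamma}$, and $\ell_\gamma$-summability then controls the discarded tail. To turn this into an $L_p$-bound on $g-S$ one invokes the square-function equivalence: for $p\le 2$ this is immediate from the convexity inequality $\big(\sum b_I\big)^{p/2}\le\sum b_I^{p/2}$, while for $p>2$ one splits the discarded indices into the dyadic layers $E_j=\{I\notin\Lambda_n:2^{-j}<|c_{I,p}(g)|/\e_n\le 2^{-j+1}\}$, estimates $\big\|\sum_{I\in E_j}c_{I,p}(g)\psi_{I,p}\big\|_{L_p(\R)}$ via the democracy bound $\big\|\sum_{I\in E_j}\psi_{I,p}\big\|_{L_p(\R)}\lesssim(\#E_j)^{1/p}$ together with $\#E_j\lesssim 2^{j\gamma}\e_n^{-\gamma}\|(c_{I,p}(g))\|_{\ell_\gamma}^{\gamma}$, and sums the resulting geometric series in $j$. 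In either case the (routine) bookkeeping of exponents, using $1/p-1/\gamma=-r$, yields $\|g-S\|_{L_p(\R)}\lesssim n^{-r}\|(c_{I,p}(g))\|_{\ell_\gamma}\asymp n^{-r}\|g\|_{B_{\gamma,\gamma}^r(\R)}$; combined with the first step this gives the proposition.

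The first reduction is routine. The real content --- and the main obstacle --- is the pair of wavelet norm equivalences together with the democracy of the system $\{\psi_{I,p}\}$ in $L_p(\R)$; establishing these for biorthogonal (rather than orthonormal) wavelets is precisely what the vanishing-moment and smoothness assumptions on $\psi$ are for, and the $p>2$ case of the $L_p$-stability of selecting a subset of wavelet coefficients is the only genuinely delicate point. All of this is carried out in \cite[Corollary~4.1]{CDH}, so in the paper one may simply invoke it after the $K$-functional reduction above.
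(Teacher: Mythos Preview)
Your proposal is correct, and in fact you do more than the paper does: the paper gives no proof of this proposition at all, but simply records it (together with the companion Bernstein inequality) as a citation of \cite[Corollary~4.1 and Theorem~4.3]{CDH}. Your reduction from the $K$-functional bound to the pure Jackson inequality $\sigma_n^w(g)_p\lesssim n^{-r}\|g\|_{B_{\gamma,\gamma}^r}$ is the standard one, and your sketch of the latter---wavelet coefficient characterization of $B_{\gamma,\gamma}^r$ on the critical line, greedy selection, Stechkin's lemma, and the democracy/unconditionality argument split according to $p\le 2$ versus $p>2$---is exactly the content of the cited result in \cite{CDH}. So your write-up is not a different route; it is an expanded account of what the paper leaves as a black-box citation.
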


\begin{proposition}\label{lewa2}
  Let $1<p<\infty$, $r>0$, $1/\g=r+1/p$.  If %$S\in L_p(\R)$ has the wavelet decomposition
$
S=\sum_{I\in \Lambda} c_{I,p}(f) \psi_{I,p},
$
with $\# \Lambda \le n$, then %and $c_{I,p}(f)$ belongs to $\ell_{\g,\infty}$ ($\ell_\g$ Lorentz space), then
\begin{equation*}\label{wa4}
  |S|_{B_{\g,\g}^r(\R)} \lesssim n^r \Vert S\Vert_{L_p(\R)}.
\end{equation*}
%where $C$ depends only on $p$ and $s$.
\end{proposition}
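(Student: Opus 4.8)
The plan is to pass from the Besov seminorm of $S$ to a plain $\ell_\gamma$-sum of its wavelet coefficients, to extract the factor $n^r$ from the cardinality bound $\#\Lambda\le n$ by H\"older's inequality, and finally to control the remaining $\ell_p$-type coefficient sum by $\|S\|_{L_p(\R)}$. First I would invoke the wavelet characterization of $B_{\gamma,\gamma}^r(\R)$. The key elementary observation is that, when $1/\gamma=r+1/p$, the $L_p$- and $L_\gamma$-normalized wavelets are related by $\psi_{I,\gamma}=|I|^{-r}\psi_{I,p}$, hence $c_{I,\gamma}(f)=|I|^{r}c_{I,p}(f)$; substituting this into the standard (weighted) characterization $|f|_{B_{\gamma,\gamma}^r}^\gamma\asymp\sum_I|I|^{-r\gamma}|c_{I,\gamma}(f)|^\gamma$ (valid under suitable smoothness and vanishing-moment assumptions on $\psi,\widetilde\psi$, cf.~\cite{CDF,CDH}) the weight disappears, and since by biorthogonality the only nonzero coefficients of $S$ are $c_{I,p}(f)$ with $I\in\Lambda$,
\[
|S|_{B_{\gamma,\gamma}^r(\R)}\asymp\Big(\sum_{I\in\Lambda}|c_{I,p}(f)|^\gamma\Big)^{1/\gamma}.
\]

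Next, since $1/\gamma=r+1/p>1/p$ we have $\gamma<p$, so H\"older's inequality with exponent $p/\gamma>1$, applied to the at most $n$ nonzero terms, gives
\[
\Big(\sum_{I\in\Lambda}|c_{I,p}(f)|^\gamma\Big)^{1/\gamma}\le(\#\Lambda)^{\frac1\gamma-\frac1p}\Big(\sum_{I\in\Lambda}|c_{I,p}(f)|^p\Big)^{1/p}\le n^{r}\Big(\sum_{I\in\Lambda}|c_{I,p}(f)|^p\Big)^{1/p}.
\]
It then remains to prove the coefficient estimate $\big(\sum_{I\in\Lambda}|c_{I,p}(f)|^p\big)^{1/p}\lesssim\|S\|_{L_p(\R)}$. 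For $p\ge2$ this is straightforward from the Littlewood--Paley characterization $\|S\|_{L_p}\asymp\big\|\big(\sum_I|c_{I,p}(S)|^2|I|^{-2/p}\chi_I\big)^{1/2}\big\|_{L_p}=:\|G(S)\|_{L_p}$: since $|c_{I,p}(S)|\,|I|^{-1/p}\le G(S)(x)$ for a.e.\ $x\in I$, one gets $|c_{I,p}(S)|^p\le\int_I G(S)^p$, and summing over $I$ together with the pointwise embedding $\ell_2\hookrightarrow\ell_p$ (where the nesting of the dyadic intervals and their disjointness within each level enter) yields $\sum_I|c_{I,p}(S)|^p\lesssim\|G(S)\|_{L_p}^p\asymp\|S\|_{L_p}^p$.

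The hard part is the range $1<p<2$: there the bound $\big(\sum_I|c_{I,p}(S)|^p\big)^{1/p}\lesssim\|S\|_{L_p}$ is genuinely false for arbitrary $S$ (it fails already for sums spreading mass evenly over many dyadic levels), so the passage through $\ell_p$ is too crude and one must instead exploit the constraint $\#\Lambda\le n$ together with the level structure. The extremal configuration is $S$ supported on a single level, where the wavelets have essentially disjoint supports so that $\sum_{|I|=2^{-j}}|c_{I,p}(S)|^p\asymp\|S\|_{L_p}^p$ and the chain above closes with the correct power $n^r$; the passage to the general case is carried out in~\cite[Theorem~4.3]{CDH}. Combining the three displays then gives $|S|_{B_{\gamma,\gamma}^r(\R)}\lesssim n^r\|S\|_{L_p(\R)}$.
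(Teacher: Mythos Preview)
The paper does not prove this proposition; it simply quotes \cite[Theorem~4.3]{CDH}. So there is no in-house argument to compare against, and your proposal has to stand on its own.

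Steps 1 and 2 are fine: under the relation $1/\gamma=r+1/p$ the wavelet characterization of $B_{\gamma,\gamma}^r$ collapses to the unweighted $\ell_\gamma$-sum of the $L_p$-normalized coefficients, and H\"older over the $\le n$ nonzero terms produces the factor $n^r$. For $p\ge2$ your step 3 also works: the pointwise embedding $\ell_2\hookrightarrow\ell_p$ applied inside the square function gives $\sum_I |c_{I,p}(S)|^p\lesssim\|G(S)\|_{L_p}^p\asymp\|S\|_{L_p}^p$, so the chain closes.

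The genuine gap is in the range $1<p<2$. You yourself note that $\big(\sum_I|c_{I,p}|^p\big)^{1/p}\lesssim\|S\|_{L_p}$ fails there, so routing through $\ell_p$ is blocked. Your fix, however, is to invoke \cite[Theorem~4.3]{CDH} --- and that theorem \emph{is} the proposition you are proving. The argument becomes circular: for $1<p<2$ you prove the Bernstein inequality by citing the Bernstein inequality. What is actually required is the argument carried out in \cite{CDH} (see also \cite[\S7]{DeVore}), which does not pass through the crude $\ell_p$-sum of coefficients; it exploits the square-function characterization of $L_p$ together with a more careful counting/rearrangement of the $n$ coefficients, and this is where the hypothesis $\#\Lambda\le n$ does real work when $p<2$. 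Without reproducing that step, the proposal is a complete proof only for $p\ge2$.
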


%Consider the so-called thresholds operator  ???
%$$
%T_n f=\sum_{I\in \Lambda_n} a_{I,p}(f)\psi_I,
%$$
%where
%$a_{I,p}=|\langle f,\widetilde{\psi}_{I,p'}\rangle|$, $\widetilde{\psi}_{I,p'}$ is the dual wavelets $\widetilde{\psi}_{I}$ normalized in $L_{p'}$, $1/p+1/p'=1$, and $\Lambda_n$ consists of $n$ intervals for which $a_{I,p}(f)$ is largest (??? with ties handled in an arbitrary way ???).
%It follows from~\cite[Theorems 4.1 and 3.2]{CDH} that
%$$
%K_r(f,n^{-1})_p=K(f,n^{-r},L_p(\R),B_{\g,\g}^r(\R)).
%$$

We will also use the fact that there exists ${Q}_n f\in \Sigma_n^w$ such that
$\Vert f-{Q}_n f\Vert_{L_p(\R)} \lesssim \s_n^w(f)_p $ and
$$
K\(f,n^{-r};L_p(\R),B_{\g,\g}^r(\R)\) \asymp \Vert f-{Q}_n f\Vert_{L_p(\R)}+n^{-r} \vert {Q}_n f\vert_{B_{\g,\g}^r(\R)}
$$
(see for details~\cite{CDH}). This realization result in particular implies the Nikolskii-Stechkin-type inequality
$$
K\(S,n^{-r};L_p(\R),B_{\g,\g}^r(\R)\) \asymp n^{-r} \vert S\vert_{B_{\g,\g}^r(\R)},\quad S\in \Sigma_n^w.
$$
Thus, in light of  Theorem~\ref{corthAB}, Propositions~\ref{lewa1} and~\ref{lewa2}, we obtain the following result.

\begin{theorem}\label{thNsp}
%Let $f \in L_p(\R), 1< p < \infty,$ and $r,n\in \mathbb{N}$.
% Then
Under conditions of Proposition~\ref{lewa1}, we have
\begin{equation*}\label{rr}
\begin{split}
\( \sum\limits_{k =n+1}^{\infty}
2^{-r\tau k} \vert {P}_{2^k} f\vert_{B_{\g,\g}^r(\R)}^{\tau}
\)^{\frac{1}{\tau}}
&\lesssim
K\(f,2^{-r n}; L_p(\R),B_{\g,\g}^r(\R)\)\\
&\lesssim  \( \sum\limits_{k =n+1}^{\infty}
2^{-r\t k} \vert {P}_{2^k} f\vert_{B_{\g,\g}^r(\R)}^{\t}
\)^{\frac{1}{\t}},
\end{split}
\end{equation*}
where $P_{2^k}f\in \Sigma_{2^k}^w$ is such that $\Vert f-P_{2^k} f\Vert_{L_p(\R)}=\s_{{2^k}}^w(f)_p$ and $\tau=\max(2,p)$, $\t=\min(2,p)$.
%In particular, we have
%\begin{equation}\label{rr2}
%| T_{2^n}|_{B_{\g,\g}^r}\lesssim K_{r} ( f,
%{2^{-n}} )_p \lesssim  \sum\limits_{k =n+1}^{\infty}
%2^{-k \t r}
%| T_{2^k}|_{B_{\g,\g}^r}.
%\end{equation}
\end{theorem}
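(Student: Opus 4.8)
The plan is to deduce Theorem~\ref{thNsp} from the framework of Section~\ref{sec3}, with $X=L_p(\R)$, $Y=B_{\g,\g}^r(\R)$ equipped with the semi-norm $|\cdot|_{B_{\g,\g}^r}$, $G_n=\Sigma_n^w$, and $\a=r$. First I would verify the cheap ingredients: the family $\{\Sigma_n^w\}$ satisfies the structural requirements i)--iv) (it contains $0$, is nested, is symmetric, and its union is dense in $L_p(\R)$); the Jackson inequality~\eqref{K1.2} is Proposition~\ref{lewa1} combined with $K(f,n^{-r};L_p,B_{\g,\g}^r)\le n^{-r}|f|_{B_{\g,\g}^r}$; the Bernstein inequality~\eqref{K1.3} follows from Proposition~\ref{lewa2}, since for $g_1,g_2\in\Sigma_n^w$ the difference $g_1-g_2$ coincides with its own wavelet expansion $\sum_I c_{I,p}(g_1-g_2)\psi_{I,p}$ with at most $2n$ nonzero terms, whence $|g_1-g_2|_{B_{\g,\g}^r}\lesssim(2n)^r\|g_1-g_2\|_{L_p}$; and the realization bound~\eqref{r1--} is immediate from the realization result of~\cite{CDH} recalled above, because $R(f,n^{-r};L_p,\Sigma_n^w)\le\|f-Q_nf\|_{L_p}+n^{-r}|Q_nf|_{B_{\g,\g}^r}\asymp K(f,n^{-r};L_p,B_{\g,\g}^r)$. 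As $L_p(\R)$ is $\tau$-uniformly convex with $\tau=\max(2,p)$ and $\t$-uniformly smooth with $\t=\min(2,p)$ (Proposition~\ref{remLp}), Theorem~\ref{corthAB} would now close the argument --- were it not that $\Sigma_n^w$ is \emph{not convex}.

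The hard part is exactly this non-convexity. Convexity is used in the proofs of Theorems~\ref{mainmain} and~\ref{corthAB} only to extract conditions~\eqref{A} and~\eqref{B} of Lemma~\ref{thAB} via the Kolmogorov criterion, so I would re-run the proof of Lemma~\ref{thAB} and install these two estimates by hand for the best $n$-term approximants $P_{2^k}(f)$ (writing $\sigma_{2^k}^w(f)_p=\|f-P_{2^k}(f)\|_{L_p}$). For the left-hand bound~\eqref{eq1LPmod} with $\tau=\max(2,p)$: the midpoint $m_k=\tfrac12\big(P_{2^{k-1}}(f)+P_{2^k}(f)\big)$ uses at most $3\cdot2^{k-1}\le2^{k+1}$ wavelets, so $\sigma_{2^{k+1}}^w(f)_p\le\|f-m_k\|_{L_p}$, and the $\tau$-uniform convexity inequality~\eqref{3.1Xu} (with $t=\tfrac12$), applied to $f-P_{2^{k-1}}(f)$ and $f-P_{2^k}(f)$, gives $\sigma_{2^{k+1}}^w(f)_p^{\tau}\le\tfrac12\sigma_{2^{k-1}}^w(f)_p^{\tau}+\tfrac12\sigma_{2^k}^w(f)_p^{\tau}-c\|P_{2^k}(f)-P_{2^{k-1}}(f)\|_{L_p}^{\tau}$; summing this telescoping-type inequality over $k>n$ and using that $\sigma_{2^k}^w(f)_p$ is nonincreasing yields $\sum_{k>n}\|P_{2^k}(f)-P_{2^{k-1}}(f)\|_{L_p}^{\tau}\lesssim\sigma_{2^n}^w(f)_p^{\tau}$, which is precisely the passage~\eqref{eq2LP}--\eqref{eq4LP} of the proof of Lemma~\ref{thAB}(A) (it replaces~\eqref{eq3LP}, the one step that used~\eqref{A}); from there the telescoping representation of $P_{2^k}(f)$, Hardy's inequality~\eqref{eqHardy}, the Bernstein inequality, and Proposition~\ref{l-real} finish as written. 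For the right-hand bound~\eqref{eq1+mod} with $\t=\min(2,p)$: put $\Lambda_k=\supp P_{2^k}(f)$, so the coordinate subspace $V_{\Lambda_k}=\spann\{\psi_I:I\in\Lambda_k\}$ has dimension $\le2^k$, hence $V_{\Lambda_k}\subset\Sigma_{2^k}^w$ and $P_{2^k}(f)$ is the best $L_p$-approximant of $f$ from the \emph{linear subspace} $V_{\Lambda_k}$; then the computation in the proof of Theorem~\ref{mainmain}(B) --- Xu's inequality~\eqref{3.1Xu2} for the $\t$-uniformly smooth $L_p$, Gateaux differentiation, and the Kolmogorov criterion, all valid on a linear subspace --- gives $\|f-g\|_{L_p}^{\t}\le\|f-P_{2^k}(f)\|_{L_p}^{\t}+B\|g-P_{2^k}(f)\|_{L_p}^{\t}$ for every $g\in V_{\Lambda_k}$. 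Applying this with $g$ the best $2^{k-1}$-term compression of $P_{2^k}(f)$ among sums supported in $\Lambda_k$ --- so that $g\in\Sigma_{2^{k-1}}^w$, hence $\sigma_{2^{k-1}}^w(f)_p\le\|f-g\|_{L_p}$, while $\|g-P_{2^k}(f)\|_{L_p}\lesssim\sigma_{2^{k-1}}^w(P_{2^k}(f))_p\lesssim2^{-kr}|P_{2^k}(f)|_{B_{\g,\g}^r}$ by unconditionality of the wavelet basis in $L_p$, Jackson's inequality, and the Nikolskii--Stechkin-type equivalence $K(S,n^{-r};L_p,B_{\g,\g}^r)\asymp n^{-r}|S|_{B_{\g,\g}^r}$ for $S\in\Sigma_n^w$ --- reproduces~\eqref{eq7+}; the telescoping~\eqref{eq6LP} and the definition of the $K$-functional then give~\eqref{eq1+mod}.

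Combining the two halves with $\tau=\max(2,p)$, $\t=\min(2,p)$ completes the proof. To summarize: the only genuine obstruction is the non-convexity of $\Sigma_n^w$, and the way around it is the observation that although best $n$-term approximants of different orders need not lie in one subspace, each individual one lies in a coordinate subspace of dimension $O(n)$, on which $L_p$ retains its moduli of convexity and smoothness and the Kolmogorov criterion for linear subspaces is available; the only enlargement required, $n\mapsto 3n$, merely shifts the dyadic index by a bounded amount and therefore leaves the sums in~\eqref{eq1LPmod}--\eqref{eq1+mod} unchanged up to constants.
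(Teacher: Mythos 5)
Your proposal is correct, and it takes a genuinely different route at the decisive step. The paper's own proof of Theorem~\ref{thNsp} is a one-line appeal to Theorem~\ref{corthAB} together with Propositions~\ref{lewa1} and~\ref{lewa2} and the realization result of~\cite{CDH}; that is, it runs the Section~\ref{sec3} machinery with $X=L_p(\R)$, $Y=B_{\g,\g}^r(\R)$, $G_n=\Sigma_n^w$ exactly as you set it up, but it does not address the fact that Theorem~\ref{corthAB} is stated for convex $G_n$, whereas $\Sigma_n^w$ is not convex. You isolate this as the only real obstruction and repair it, and your two substitutes for conditions~\eqref{A} and~\eqref{B} of Lemma~\ref{thAB} are sound: for the left-hand inequality, the midpoint $\tfrac12\bigl(P_{2^{k-1}}(f)+P_{2^k}(f)\bigr)\in\Sigma_{2^{k+1}}^w$ combined with Xu's inequality~\eqref{3.1Xu} at $t=1/2$ does give, after summation and monotonicity of $\s_{2^k}^w(f)_p$, the bound $\sum_{k>n}\Vert P_{2^k}(f)-P_{2^{k-1}}(f)\Vert_{L_p}^\tau\lesssim \s_{2^n}^w(f)_p^\tau$, which is precisely what~\eqref{A} was used for in Lemma~\ref{thAB}(A), and it bypasses the Kolmogorov criterion entirely; for the right-hand inequality, your observation that $P_{2^k}(f)$ is automatically a best approximant from the linear coordinate subspace $V_{\Lambda_k}\subset\Sigma_{2^k}^w$ legitimizes the argument of Theorem~\ref{mainmain}(B) (Xu's inequality~\eqref{3.1Xu2} plus vanishing of the Gateaux derivative along a subspace), yielding~\eqref{B} for all $g\in V_{\Lambda_k}$, and your choice of $g$ as a compression of $P_{2^k}(f)$ retained inside $\Lambda_k$ is controlled as claimed, the only ingredient beyond the paper being the uniform boundedness of wavelet coordinate projections in $L_p(\R)$, $1<p<\infty$ (unconditionality), which gives $\Vert g-P_{2^k}(f)\Vert_{L_p}\lesssim \s_{2^{k-1}}^w(P_{2^k}(f))_p\lesssim 2^{-kr}\vert P_{2^k}(f)\vert_{B_{\g,\g}^r(\R)}$ via Proposition~\ref{lewa1} and the trivial bound $K(h,t;X,Y)\le t\Vert h\Vert_Y$. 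The remaining ingredients (Jackson~\eqref{K1.2} from Proposition~\ref{lewa1}, Bernstein~\eqref{K1.3} from Proposition~\ref{lewa2} applied to differences, \eqref{r1--} from the realization result, the moduli of $L_p$ from Proposition~\ref{remLp}) are used exactly as in the paper, and the bounded index shifts are harmless. In short, the paper's route buys brevity by citing Theorem~\ref{corthAB} as a black box; your route buys a proof of the same two-sided estimate that does not rest on the convexity hypothesis, which for $\Sigma_n^w$ is genuinely unavailable.
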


%
%Remark that it follows from Theorem~\ref{le1} that if $T_n f\in \Sigma_{n}$ is such that
%$\Vert f-T_n f\Vert_{L_p(\R)}=\s_{n}(f)_p$ (a best $n$-term approximation to $f$ in $L_p(\R)$), then inequalities~\eqref{rr} and~\eqref{rr2} are valid with $\l=1$ instead of $\t$.

As a corollary, we obtain the characterization of the Besov space $B_{X,q}^r$ (interpolation space) given in~\eqref{Besov} with $X=L_p(\R)$ and $\Omega(f,2^{-k})_X=K(f,2^{-r k},L_p(\R),B_{\g,\g}^r(\R))$.

\begin{corollary}
Under conditions of Proposition~\ref{lewa1}, if $0<\s<r$ and $0<q\le \infty$, then
$$
|f|_{B_{X,q}^\s(\R)}\asymp \left( \sum_{k=1}^\infty 2^{(\s-r) q k} \vert {P}_{2^k} f\vert_{B_{\g,\g}^r(\R)}^q\right)^{\frac1q}, %\left( \int_0^1 \big( t^{-s} K_s(T_{[1/t]}f,t)_p)^q\frac{dt}t\right)^{\frac1q},
$$
where $P_{2^k} f\in \Sigma_{2^k}^w$ is such that
$\Vert f-P_{2^k} f\Vert_{L_p(\R)}=\s_{2^k}^w(f)_p$.
\end{corollary}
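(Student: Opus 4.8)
The plan is to obtain the statement as a direct application of the abstract characterization in Corollary~\ref{cor2.4}. I would take $X=L_p(\R)$ with $\Vert\cdot\Vert_X=\Vert\cdot\Vert_{L_p(\R)}$ (so that the parameter of Section~\ref{sec2} is $\l=1$, since $1<p<\infty$), let $P_n$ be the operator of best $n$-term wavelet approximation, $\Vert f-P_n(f)\Vert_{L_p(\R)}=\s_n^w(f)_p$, and set $\Omega(f,t)_X:=K\big(f,t^{r};L_p(\R),B_{\g,\g}^r(\R)\big)$. With this choice the space $B_{X,q}^\sigma(\R)$ of~\eqref{Besov} is precisely the interpolation space named in the statement, so it suffices to verify that $\Omega$ and $\{P_n\}$ satisfy the hypotheses~\eqref{eq1}--\eqref{eq6} of Section~\ref{sec2}.

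Properties~\eqref{eq1}--\eqref{eq4} are elementary facts about the $K$-functional: \eqref{eq2} and~\eqref{eq3} hold with $C_1=C_2=1$ (take $g=0$, resp.\ use subadditivity of $K$ in the first variable); \eqref{eq4} holds with $C_3=2^{r}$ because $t\mapsto K(f,t;X,Y)$ is concave, nondecreasing and vanishes at $0$, hence $K(f,\lambda t)\le\lambda K(f,t)$ for $\lambda\ge 1$; and~\eqref{eq1} follows from density of $B_{\g,\g}^r(\R)$ in $L_p(\R)$ (already $C_c^\infty(\R)$ is dense in both). For the operators, \eqref{eq5} holds because $P_n(P_{2n}(f))\in\Sigma_n^w$ while $P_n(f)$ is a best approximant from $\Sigma_n^w$, and~\eqref{eq6} is exactly the Jackson-type estimate of Proposition~\ref{lewa1}, namely $\s_n^w(f)_p\lesssim K(f,n^{-r};L_p(\R),B_{\g,\g}^r(\R))=\Omega(f,n^{-1})_X$.

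Granting these, Corollary~\ref{cor2.4} gives $|f|_{B_{X,q}^\sigma(\R)}\asymp\big(\sum_{k\ge 1}2^{\sigma q k}\Omega(P_{2^k}(f),2^{-k})_X^q\big)^{1/q}$, and it only remains to rewrite the summand. Since $P_{2^k}(f)\in\Sigma_{2^k}^w$, the Nikolskii--Stechkin-type equivalence recorded just before Theorem~\ref{thNsp} (a consequence of the realization theorem of~\cite{CDH}) yields
$$
\Omega\big(P_{2^k}(f),2^{-k}\big)_X=K\big(P_{2^k}(f),2^{-rk};L_p(\R),B_{\g,\g}^r(\R)\big)\asymp 2^{-rk}\,|P_{2^k}(f)|_{B_{\g,\g}^r(\R)},
$$
so that $2^{\sigma q k}\Omega(P_{2^k}(f),2^{-k})_X^q\asymp 2^{(\sigma-r)q k}|P_{2^k}(f)|_{B_{\g,\g}^r(\R)}^q$ and summing over $k\ge 1$ gives the claim. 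Equivalently, one may argue directly from the double inequality in Theorem~\ref{thNsp}: after discretizing the integral in~\eqref{Besov} one sandwiches each $K(f,2^{-rn};L_p(\R),B_{\g,\g}^r(\R))$ between the two sums there and collapses the resulting double sums by the Hardy-type inequality $\sum_{\nu\ge n}2^{\nu s}\big(\sum_{k\ge\nu}A_k\big)^\mu\asymp\sum_{\nu\ge n}2^{\nu s}A_\nu^\mu$ ($A_\nu\ge 0$, $s,\mu>0$) used in Corollary~\ref{cor2.4}.

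I expect the only non-formal point to be the validity of the Nikolskii--Stechkin-type equivalence $K(S,2^{-rk};L_p(\R),B_{\g,\g}^r(\R))\asymp 2^{-rk}|S|_{B_{\g,\g}^r(\R)}$ for $S\in\Sigma_{2^k}^w$: the class $\Sigma_{2^k}^w$ is non-linear and non-convex, so this is exactly where the vanishing-moment and smoothness hypotheses on $\psi$ (through~\cite{CDH}) enter, and — together with the existence of a best $n$-term approximant $P_{2^k}(f)\in\Sigma_{2^k}^w$, which is needed for~\eqref{eq5} — it is the only input that is not pure bookkeeping with $K$-functionals and a Hardy inequality.
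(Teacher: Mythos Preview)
Your proposal is correct and matches the paper's intended approach: the paper gives no explicit proof of this corollary, but the preceding text explicitly identifies $\Omega(f,2^{-k})_X=K(f,2^{-rk};L_p(\R),B_{\g,\g}^r(\R))$, and the analogous corollary in Section~\ref{sec6} is proved ``using \ldots\ the same arguments as in Corollary~\ref{cor2.4}'', which is exactly your route---verify \eqref{eq1}--\eqref{eq6}, apply Corollary~\ref{cor2.4}, and then rewrite $\Omega(P_{2^k}(f),2^{-k})_X$ via the Nikolskii--Stechkin equivalence stated just before Theorem~\ref{thNsp}. Your alternative derivation directly from Theorem~\ref{thNsp} plus the Hardy-type inequality is also fine and is in fact how Corollary~\ref{cor2.4} itself is proved.
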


\subsection{Free knot piecewise polynomial approximation}

Let $r\in \N$ be fixed and for each $n=1,2,\dots$, let $\Sigma_{r,n}$ be the space of piecewise polynomials of degree  $r$ with $n$ pieces on $[0,1]$. That is, for each element $S\in \Sigma_{r,n}$ there is a partition $\Lambda$ of $[0,1]$ consisting of $n$ disjoint intervals $I\subset [0,1]$ and polynomials $P_I\in \mathcal{P}_r$ such that
$$
S=\sum_{I\in \Lambda} P_I \chi_I.
$$
For each $0<p<\infty$, we define the error of the best approximation by
$$
\s_{r,n}(f)_p=\inf_{S\in \Sigma_{r,n}} \Vert f-S\Vert_{L_p[0,1]}.
$$

%Recall the well-known Jackson-type inequality (see~\cite[Theorem]{Pe88}):
%\begin{equation}\label{free1}
%  \s_n(f)_p\le C n^{-r} |f|_{B_{\g,\g}^r([0,1])},
%\end{equation}
%\begin{equation}\label{free2}
%  |S|_{B_{\g,\g}^r([0,1])}\le C n^r \Vert S\Vert_{L_p([0,1])},
%\end{equation}
%where $S \in \Sigma_{n,r}$, $\frac1\g=r+\frac1p$, $0<p<\infty$, and the constant $C$ depending only on $r>0$.
%
%Note that by~\eqref{NJack} these inequalities that
%\begin{equation}\label{NJackS}
%  \s_n(f)_p\le CK_r(f,n^{-1})_p,
%\end{equation}
%where $K_r(f,n^{-1})_p=K(f,n^{-r},L_p([0,1]),B_{\g,\g}^r([0,1]))$.

Recall the well-known Jackson-type inequality (see~\cite[Theorem~2.3]{Pe88}).

\begin{proposition}\label{Jacknelsp}
Let $f \in L_p[0,1]$, $0<p<\infty$, $r>0$, $k\in \N$, and $1/\g=r+1/p$. Then
\begin{equation}\label{NJackS}
  \s_{r,n}(f)_p\lesssim K\(f,n^{-r};L_p[0,1],B_{\g,\g; k}^r[0,1]\),\quad n\in \N,
\end{equation}
where
%$$
%K_r(f,n^{-1})_p=K\(f,n^{-r},L_p[0,1],B_{\g,\g; k}^r[0,1]\)
%$$
%and
$B_{\g,\g; k}^r[0,1]$ is the non-periodic Besov space, which consists of $f\in L_\g[0,1]$ such that
$$
|f|_{B_{p,q;\, k}^r[0,1]}=\(\int_0^{1/k} \(t^{-r} \omega_k(f,t)_{L_\g[0,1]}\)^\g \frac{dt}{t}\)^{1/\g}<\infty.
$$
\end{proposition}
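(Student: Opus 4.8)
The plan is to derive \eqref{NJackS} from a direct (Jackson) estimate for the $B^r_{\gamma,\gamma;k}$-seminorm, following the classical adaptive free-knot partitioning scheme of Birman--Solomjak and DeVore--Popov, in the present form due to Petrushev \cite{Pe88}. First I would reduce the $K$-functional to a seminorm bound: since $\Sigma_{r,n}$ contains every piecewise polynomial approximant of $g$, one has $\sigma_{r,n}(f)_p\le\|f-g\|_{L_p[0,1]}+\sigma_{r,n}(g)_p$ for each $g\in B^r_{\gamma,\gamma;k}[0,1]$; taking the infimum over $g$ and using $|g|_{B^r_{\gamma,\gamma;k}[0,1]}\le\|g\|_{B^r_{\gamma,\gamma;k}[0,1]}$, it suffices to prove
$$
\sigma_{r,n}(g)_p\lesssim n^{-r}\,|g|_{B^r_{\gamma,\gamma;k}[0,1]},\qquad n\in\N .
$$
Here one uses implicitly the sub-critical Besov embedding $B^r_{\gamma,\gamma;k}[0,1]\hookrightarrow L_p[0,1]$, which holds precisely because $\tfrac1\gamma-\tfrac1p=r$, so that the $K$-functional is well defined; and, as is standard in this theory, one takes $k>r$ so that the $k$-th order modulus annihilates $\mathcal{P}_r$ (then both $\sigma_{r,n}$ and $|\cdot|_{B^r_{\gamma,\gamma;k}[0,1]}$ are unchanged under adding a polynomial from $\mathcal{P}_r$, globally or on each piece, and the complementary case $k\le r$ is trivial).

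Next I would set up the local machinery. For a subinterval $I\subset[0,1]$ let $E_r(g,I)_p$ denote the error of best $L_p(I)$-approximation of $g$ by $\mathcal{P}_r$, and put the \emph{local content}
$$
b_I:=\Big(\int_0^{|I|}\big(t^{-r}\,\omega_k(g,t)_{L_\gamma(I)}\big)^{\gamma}\,\tfrac{dt}{t}\Big)^{1/\gamma},
$$
where $\omega_k(g,\cdot)_{L_\gamma(I)}$ is the $k$-th order modulus with all differences confined to $I$. Whitney's inequality on $I$, combined with a local atomic/telescoping decomposition of $g-P_Ig$ over the dyadic scales inside $I$ — this is the local form of the embedding $B^r_{\gamma,\gamma}\hookrightarrow L_p$, and it is exactly here that the relation $\tfrac1\gamma-\tfrac1p=r$ is used — gives the pointwise-in-$I$ estimate $E_r(g,I)_p\lesssim b_I$. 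Moreover, since passing to disjoint subintervals only discards boundary contributions to the moduli, one has the $\gamma$-superadditivity $\sum_{I\in\Lambda}b_I^{\,\gamma}\lesssim|g|_{B^r_{\gamma,\gamma;k}[0,1]}^{\,\gamma}$ for every partition $\Lambda$ of $[0,1]$ into disjoint intervals.

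The heart of the argument is the adaptive partitioning. Starting from $[0,1]$, subdivide a dyadic interval $I$ as long as $b_I>\varepsilon$ and stop at the maximal dyadic intervals with $b_I\le\varepsilon$, obtaining a finite partition $\Lambda_\varepsilon$ (finiteness holds because $b_I\to0$ along every dyadic chain, these being tails of the convergent integral defining $|g|_{B^r_{\gamma,\gamma;k}[0,1]}$). A counting argument resting on the $\gamma$-superadditivity shows $\#\Lambda_\varepsilon\lesssim\big(|g|_{B^r_{\gamma,\gamma;k}[0,1]}/\varepsilon\big)^{\gamma}$, so the choice $\varepsilon\asymp|g|_{B^r_{\gamma,\gamma;k}[0,1]}\,n^{-1/\gamma}$ forces $\#\Lambda_\varepsilon\le n$, i.e. the associated piecewise polynomial belongs to $\Sigma_{r,n}$. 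Since $b_I\le\varepsilon$ on every leaf and $p\ge\gamma$,
$$
\sigma_{r,n}(g)_p^{\,p}\le\sum_{I\in\Lambda_\varepsilon}E_r(g,I)_p^{\,p}\lesssim\sum_{I\in\Lambda_\varepsilon}b_I^{\,p}\le\varepsilon^{\,p-\gamma}\sum_{I\in\Lambda_\varepsilon}b_I^{\,\gamma}\lesssim\varepsilon^{\,p-\gamma}\,|g|_{B^r_{\gamma,\gamma;k}[0,1]}^{\,\gamma}\asymp|g|_{B^r_{\gamma,\gamma;k}[0,1]}^{\,p}\,n^{-pr},
$$
where the last step uses $\tfrac{p-\gamma}{\gamma}=\tfrac p\gamma-1=pr$ (because $\tfrac1\gamma=r+\tfrac1p$). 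Taking $p$-th roots yields the direct estimate, and hence \eqref{NJackS}. For $0<p<1$ the same scheme applies verbatim after replacing Whitney's inequality by its quasi-norm version; the displayed chain uses only additivity of $p$-th powers over disjoint intervals and the monotonicity $\ell_\gamma\hookrightarrow\ell_p$, both insensitive to $p<1$.

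I expect the main obstacle to be the counting bound $\#\Lambda_\varepsilon\lesssim(|g|_{B^r_{\gamma,\gamma;k}[0,1]}/\varepsilon)^{\gamma}$: controlling the number of leaves of the greedy tree simultaneously with the aggregate $L_p$-error is delicate precisely because the content $b_I$ carries a scale-integral rather than a single spatial integral (as in the Sobolev case treated by Birman--Solomjak), so one must exploit the two-parameter (space $\times$ scale) structure — for instance through a maximal-function reformulation of $b_I$ — to turn $\gamma$-summability of the local contents into an honest bound on the cardinality of an admissible partition. The local estimate $E_r(g,I)_p\lesssim b_I$ is the other genuinely technical point, and it is where the exponent balance $\tfrac1\gamma-\tfrac1p=r$ actually enters; the reduction in the first step and the rearrangement in the displayed chain are routine.
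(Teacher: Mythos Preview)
The paper does not supply its own proof of this proposition; it is simply recalled as a known Jackson-type inequality with a citation to Petrushev \cite[Theorem~2.3]{Pe88}. Your sketch is precisely the adaptive dyadic-partition argument of that reference (in the Birman--Solomjak / DeVore--Popov tradition), with the two genuinely technical points---the local critical embedding $E_r(g,I)_p\lesssim b_I$ and the cardinality control of the stopping-time partition---correctly identified, so there is no alternative proof in the paper to compare against.
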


Now using~\eqref{NJackS} and %, we obtain the following analogue of
Theorem~\ref{le1}, we derive  the following result.

%Below
%$$
%\theta = \left\{
%           \begin{array}{ll}
%             \min(2,p), & \hbox{$0<p<\infty$,} \\
%             1, & \hbox{$p=\infty$}
%           \end{array}
%         \right.
%\quad\text{and}\quad
%\l=\min(p,1)
%$$

\begin{theorem}\label{thNsp}
Under conditions of Proposition~\ref{Jacknelsp}, we have
%Let $f \in L_p([0,1]), 0< p < \infty,$ and $r,n\in \mathbb{N}$.
% Then
\begin{equation*}\label{rr}
\begin{split}
   K\(S_{2^n},2^{-rn};L_p[0,1],B_{\g,\g; k}^r[0,1]\)
&\lesssim
K\(f,2^{-rn};L_p[0,1],B_{\g,\g; k}^r[0,1]\)
\\
&\lesssim  \( \sum\limits_{k =n+1}^{\infty}
K\(S_{2^k},2^{-rk};L_p[0,1],B_{\g,\g; k}^r[0,1]\)^{\l}
\)^{\frac{1}{\l}},
\end{split}
\end{equation*}
where $S_{2^k}\in \Sigma_{r,2^k}$ is such that
$\Vert f-S_{2^k}\Vert_{L_p[0,1]}=\s_{r,2^k}(f)_p$ and $\l=\min(p,1)$.
\end{theorem}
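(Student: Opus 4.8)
The plan is to obtain this statement as a direct application of the abstract Theorem~\ref{le1}. I would take $X=L_p[0,1]$, viewed as a quasi-normed space with $\Vert u+v\Vert_X^\l\le\Vert u\Vert_X^\l+\Vert v\Vert_X^\l$ for $\l=\min(p,1)$; the approximation operators $P_{2^n}(f):=S_{2^n}$, a best $L_p$-approximant of $f$ from $\Sigma_{r,2^n}$; and the measure of smoothness $\Omega(g,\delta)_X:=K(g,\delta^r;L_p[0,1],B_{\g,\g;k}^r[0,1])$. Once the hypotheses~\eqref{eq1}--\eqref{eq6} of Theorem~\ref{le1} are checked for these choices, its conclusion reads $\Omega(S_{2^n},2^{-n})_X\lesssim\Omega(f,2^{-n})_X\lesssim\big(\sum_{k\ge n+1}\Omega(S_{2^k},2^{-k})_X^\l\big)^{1/\l}$, and rewriting $\Omega(g,2^{-m})_X=K(g,2^{-rm};L_p[0,1],B_{\g,\g;k}^r[0,1])$ gives precisely the asserted chain; note that, as recorded in Theorem~\ref{le1}, the left-hand inequality uses only~\eqref{eq2}, \eqref{eq3}, and~\eqref{eq6}.

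First I would verify the four properties of $\Omega$. Property~\eqref{eq2} is immediate, since taking $0$ as a competitor gives $\Omega(g,\delta)_X\le\Vert g\Vert_{L_p[0,1]}$, and~\eqref{eq4} follows from $\Omega(g,2\delta)_X=K(g,2^r\delta^r;\dots)\le 2^rK(g,\delta^r;\dots)=2^r\,\Omega(g,\delta)_X$. Property~\eqref{eq3} is the (quasi-)subadditivity of the $K$-functional: feeding near-optimal competitors $g_1,g_2$ for $f_1,f_2$ into the definition and using the quasi-triangle inequalities of $L_p[0,1]$ and of the Besov (quasi-)seminorm $|\cdot|_{B_{\g,\g;k}^r[0,1]}$ (the latter relevant only when $\g<1$) gives~\eqref{eq3} with a constant independent of $f_1,f_2$ and $\delta$. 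Property~\eqref{eq1} reduces to density of $B_{\g,\g;k}^r[0,1]$ in $L_p[0,1]$ (it contains $C^\infty[0,1]$): for $g$ with $\Vert f-g\Vert_{L_p[0,1]}$ small one has $\Omega(f,\delta)_X\le\Vert f-g\Vert_{L_p[0,1]}+\delta^r|g|_{B_{\g,\g;k}^r[0,1]}$, whence $\limsup_{\delta\to 0}\Omega(f,\delta)_X\le\Vert f-g\Vert_{L_p[0,1]}$, which is arbitrarily small.

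Next I would verify the two properties of the operators $P_{2^n}$. Property~\eqref{eq6} is exactly the Jackson inequality~\eqref{NJackS} of Proposition~\ref{Jacknelsp}, namely $\Vert f-P_{2^n}(f)\Vert_{L_p[0,1]}=\s_{r,2^n}(f)_p\lesssim K(f,2^{-rn};L_p[0,1],B_{\g,\g;k}^r[0,1])=\Omega(f,2^{-n})_X$. Property~\eqref{eq5} holds because $S_{2^n}$ is a \emph{best} approximant of $f$ from $\Sigma_{r,2^n}$, whereas $P_{2^n}(P_{2^{n+1}}(f))=S_{2^n}(S_{2^{n+1}}(f))$ again lies in $\Sigma_{r,2^n}$, so that $\Vert f-P_{2^n}(f)\Vert_{L_p[0,1]}=\s_{r,2^n}(f)_p\le\Vert f-P_{2^n}(P_{2^{n+1}}(f))\Vert_{L_p[0,1]}$. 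With~\eqref{eq1}--\eqref{eq6} all in hand, Theorem~\ref{le1} applies and completes the argument.

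I do not anticipate a genuinely hard estimate; the only delicate points are of a framework nature. One must make sure that the $K$-functional of the couple $(L_p[0,1],B_{\g,\g;k}^r[0,1])$ is indeed a legitimate abstract modulus satisfying~\eqref{eq1}--\eqref{eq4} even when $0<p<1$ or $\g<1$, so that only quasi-triangle inequalities are available, and that $\Sigma_{r,n}$ admits an actual best $L_p$-approximant, which is what makes~\eqref{eq5} available and is precisely what the statement presupposes in the phrase ``$S_{2^k}\in\Sigma_{r,2^k}$ is such that $\Vert f-S_{2^k}\Vert_{L_p[0,1]}=\s_{r,2^k}(f)_p$''.
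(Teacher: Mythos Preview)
Your proposal is correct and matches the paper's approach exactly: the paper simply states that the result follows from the Jackson inequality~\eqref{NJackS} and Theorem~\ref{le1}, and you have carried out precisely that verification of hypotheses~\eqref{eq1}--\eqref{eq6} for $X=L_p[0,1]$, $\Omega(g,\delta)_X=K(g,\delta^r;L_p[0,1],B_{\g,\g;k}^r[0,1])$, and $P_{2^n}(f)=S_{2^n}$.
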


%\textbf{??????????   Ne ponytno cho eto za takie Beosv spaces           ????????????}

Finally, we characterize the Besov  space $B_{X,q}^r$  given in~\eqref{Besov} with
% corresponding to
 $X=L_p[0,1]$ and $\Omega(f,2^{-k})_X=K\(f,2^{-r k},L_p[0,1],B_{\g,\g; k}^r[0,1]\)$.

\begin{corollary}
Let $0<\s<r$ and $0<q\le\infty$. We have
$$
|f|_{B_{X,q}^\s[0,1]}\asymp \left( \sum_{k=1}^\infty 2^{\s q k} K\(S_{2^k},2^{-rk};L_p[0,1],B_{\g,\g; k}^r[0,1]\)^q\right)^{\frac1q}, %\left( \int_0^1 \big( t^{-s} K_r(S_{[1/t]}(f),t)_p)^q\frac{dt}t\right)^{\frac1q},
$$
where $S_{2^k}\in \Sigma_{2^k,r}$ is such that
$\Vert f-S_{2^k}\Vert_{L_p[0,1]}=\s_{r,{2^k}}(f)_p$.
\end{corollary}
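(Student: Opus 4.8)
The plan is to recognize the asserted equivalence as a direct instance of Corollary~\ref{cor2.4}. First I would set $X=L_p[0,1]$, equipped with $\Vert\cdot\Vert_X=\Vert\cdot\Vert_{L_p[0,1]}$ and the parameter $\l=\min(1,p)$; take as approximation family the operators $P_n=S_{r,n}$, where $S_{r,n}$ sends $f$ to a best piecewise polynomial approximant from $\Sigma_{r,n}$; and choose the abstract measure of smoothness
\[
\Omega(f,t)_X:=K\left(f,t^{r};L_p[0,1],B_{\g,\g;k}^r[0,1]\right),\qquad t>0.
\]
With this data $\Omega(f,2^{-j})_X=K\left(f,2^{-rj};L_p[0,1],B_{\g,\g;k}^r[0,1]\right)$, and the space $B_{X,q}^\s[0,1]$ built from $\Omega$ via~\eqref{Besov} is exactly the one in the statement, so the whole matter reduces to verifying the structural hypotheses~\eqref{eq1}--\eqref{eq6} of Section~\ref{sec2} for this choice.

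Next I would check~\eqref{eq1}--\eqref{eq6}, which is the routine part. Properties~\eqref{eq1}--\eqref{eq4} are standard features of the $K$-functional: $K(f,s;\cdot,\cdot)\le\Vert f\Vert_X$ gives~\eqref{eq2}; subadditivity of $K$, up to the quasi-triangle constants of $L_p[0,1]$ and of the (quasi-normed, when $\g<1$) Besov space $B_{\g,\g;k}^r[0,1]$, gives~\eqref{eq3}; the elementary bound $K(f,2^{r}s)\le 2^{r}K(f,s)$ gives~\eqref{eq4}; and~\eqref{eq1}, i.e.\ $K(f,s)\to0$ as $s\to0$, follows from density of $B_{\g,\g;k}^r[0,1]$ in $L_p[0,1]$. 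Property~\eqref{eq5} is immediate from the remark following~\eqref{eq6}: since $\Sigma_{r,n}\subset\Sigma_{r,2n}$, $0\in\Sigma_{r,n}$, and $S_{r,n}(f)$ is a best approximant from $\Sigma_{r,n}$, we get $\Vert f-S_{r,n}(f)\Vert_X\le\Vert f-S_{r,n}(S_{r,2n}(f))\Vert_X$. Finally, property~\eqref{eq6} is precisely the Jackson estimate~\eqref{NJackS} of Proposition~\ref{Jacknelsp}, namely $\s_{r,n}(f)_p\lesssim K\left(f,n^{-r};L_p[0,1],B_{\g,\g;k}^r[0,1]\right)=\Omega(f,n^{-1})_X$.

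Once~\eqref{eq1}--\eqref{eq6} are in place, Corollary~\ref{cor2.4} applies verbatim and yields
\begin{align*}
|f|_{B_{X,q}^\s[0,1]}
&\asymp\left(\sum_{k=1}^\infty 2^{\s q k}\,\Omega\left(S_{2^k},2^{-k}\right)_X^{\,q}\right)^{1/q}\\
&=\left(\sum_{k=1}^\infty 2^{\s q k}\,K\left(S_{2^k},2^{-rk};L_p[0,1],B_{\g,\g;k}^r[0,1]\right)^{q}\right)^{1/q},
\end{align*}
with the usual sup modification when $q=\infty$, which is the claim. If one prefers not to invoke the abstract corollary, the same conclusion can be reached by rerunning its proof concretely here: the two estimates of Theorem~\ref{thNsp} give $\Omega(S_{2^j},2^{-j})_X\lesssim\Omega(f,2^{-j})_X$ and $\Omega(f,2^{-j})_X^\l\lesssim\sum_{k>j}\Omega(S_{2^k},2^{-k})_X^\l$, and feeding these — after the routine dyadic discretization of the integral in~\eqref{Besov} — into the Hardy-type inequality used in the proof of Corollary~\ref{cor2.4}, applied with exponent $q/\l$, produces both directions of the equivalence. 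I do not expect a genuine obstacle: the only substantial input is the Jackson inequality~\eqref{NJackS}, which is quoted from the literature, and the sole point requiring care — existence of best free-knot approximants $S_{r,n}$ — is assumed throughout this section (alternatively one replaces $S_{r,n}$ by near-best approximants $Q_n$ and uses the $Q_n$-versions of Theorem~\ref{le1} and Corollary~\ref{cor2.4}).
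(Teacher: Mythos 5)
Your proposal is correct and is essentially the paper's own (implicit) argument: the corollary is exactly Corollary~\ref{cor2.4} applied with $X=L_p[0,1]$, $\Omega(f,t)_X=K(f,t^{r};L_p[0,1],B_{\g,\g;k}^r[0,1])$, and $P_n(f)=S_n$ the best approximant from $\Sigma_{r,n}$, where \eqref{eq5} holds because $S_n$ is a best approximant and \eqref{eq6} is the Jackson estimate~\eqref{NJackS}, just as you verify. Your concrete alternative via Theorem~\ref{thNsp} plus the Hardy-type inequality with exponent $q/\l$ is likewise the same computation that underlies Corollary~\ref{cor2.4}, so there is nothing to add (only the parenthetical remark about substituting near-best approximants $Q_n$ is not needed and would require extra care, since \eqref{eq5} is not automatic for near-best approximants).
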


%\textbf{??????????   Ne ponytno cho eto za takie Beosv spaces           ????????????}

%\bigskip

\section{Optimality}\label{secOpt}\label{sec9}

%\subsection{Proof of Remark \ref{rem1}}
In the previous sections, we  derived the following inequalities:
\begin{equation}\label{optimal}
    \(\sum_{k=n+1}^\infty 2^{-k\a\tau}\Vert P_{2^k}(f)\Vert_{Y}^\tau\)^\frac1\tau
    \lesssim K(f,2^{-n\a};L_p,Y)
    \lesssim
    \(\sum_{k=n+1}^\infty 2^{-k\a\theta}\Vert P_{2^k}(f)\Vert_{Y}^\theta\)^\frac1\theta,
 \end{equation}
 where $f \in L_p$, $1\le p\le\infty$,  $$\tau=\left\{
                                            \begin{array}{ll}
                                              \max(p,2), & \hbox{$1<p<\infty$,} \\
                                              \infty, & \hbox{otherwise}
                                            \end{array}
                                          \right.,
\qquad
\theta=\left\{
                                            \begin{array}{ll}
                                              \min(p,2), & \hbox{$p<\infty$,} \\
                                              1, & \hbox{$p=\infty$,}
                                            \end{array}
                                          \right.
 $$
$Y$ is an appropriate smooth function space, and
$P_n(f)$ is a suitable approximation method.
In this section, we show that the parameters $\theta$ and $\tau$ are optimal.

For this, we restrict ourselves to the case of $\mathcal{D}=\T$ and approximation of periodic $L_p$-functions by $S_n(f)$, the $n$-th partial sums of the Fourier series of $f$,  and the de la Vall\'ee Poussin means $\eta_n f$.
%Now let us investigate the sharpness of Theorem~\ref{le1} for $p=1,\infty$ for functions with lacunary Fourier series. We restrict ourself to the case of $\mathcal{D}=\T$ and approximation of functions by the de la Vall\'e-Poussin means $V_n f$.

Recall that if $f \in L_p(\T), 1 < p < \infty,$ then inequality (\ref{optimal}) in particular implies %($\tau = \max(2,p)$, $\theta = \min(2,p)$)
\begin{equation}\label{optimal-p}
 \( \sum\limits_{k =n+1}^{\infty} 2^{- k \a \tau}
\Vert S_{2^k}^{(\a)}(f) \Vert_p^{\tau}
\)^{\frac{1}{\tau}} \lesssim \omega_{\a} \Big( f,
\frac{1}{2^n} \Big)_p \lesssim \( \sum\limits_{k =n+1}^{\infty}
2^{- k \a \theta} \Vert S_{2^k}^{(\a)}(f) \Vert_p^{\theta}
\)^{\frac{1}{\theta}}.
\end{equation}
If $f \in L_p(\T),  p=1, \infty,$ and $P_n(f)=\eta_n f,$  estimate (\ref{optimal}) can be written by
\begin{equation*}\label{optimal+p}
2^{- \a n} \Vert (\eta_{2^n}f)^{(\a)}\Vert_{L_p(\T)}  \lesssim  \w_\a(f,2^{-n})_{L_p(\T)}\lesssim \sum_{k=n}^\infty 2^{-2 \a k} \Vert (\eta_{2^k}f)^{(\a)}\Vert_{L_p(\T)}.
  \end{equation*}

\subsection{Optimality of (\ref{optimal}) in the case $1<p<\infty$}

In this subsection, we deal with  not only sharpness of the parameters  $\tau = \max(2,p)$ and $\theta = \min(2,p)$ but we also show that for the classes
 %$\Lambda$ and $GM$ class
of functions with lacunary and general monotone Fourier coefficients,
inequality (\ref{optimal}) becomes an equivalence with $\tau = \theta = 2$ and $\tau = \theta = p$, respectively.

%will show that the assertion of Theorem~\ref{thLP} is sharp on a class of functions with lacunary Fourier series and Fourier series with monotone coefficients ???.
%In particular, in Theorem~\ref{thOpt2} below we will prove that estimates~\eqref{L} and~\eqref{R} are equivalent if $f$ has a lacunary Fourier series and  $\tau=\t=2$. At the same time, in Theorem~\ref{thOpt3}, we show that this estimates are equivalent for functions $f$ with the Fourier series having monotone coefficients.

We start with lacunary series and first give a simple proof of Zygmund's theorem in $L_p$, $1<p<\infty$, based on the Littlewood--Paley technique given in  Section~\ref{SecReal}. We  deal with the general case of functions represented by
%$$
%A_k f =\sum^{d_k}_{\ell=1} \, \langle f,\psi_{k,\ell} \rangle\,\psi_{k,\ell}, %\int_{\SD} f(x)\psi  _{k,\ell}(x)w(x)dx
%$$
%where $d_k$ is the dimension of $G_k$ and $\{\psi  _{k,\ell}\}$ an
%orthonormal basis of $G_k$ in $L_{2,w}(\SD).$
$$
f\sim \sum_{k=0}^\infty A_k f,\quad A_k f =\sum^{d_k}_{\ell=1} \, \langle f,\psi_{k,\ell} \rangle\,\psi_{k,\ell}.
$$
For convenience,  we  suppose that the dimension $d_k=1$ for all $k\in \Z_+$.

%, the weight $w$ is such that
%$$
%\left|\int_{\mathcal{D}} w\, dx\right|\le c_0<\infty,
%$$
%and the functions $\psi_k=\psi_{k,1}$ are such that
%\begin{equation}\label{bounds}
%  c_2\le \Vert \psi_k\Vert_{p,w}\le c_1\quad\text{for any}\quad k\in \Z_+.
%\end{equation}
We will say that the Fourier expansion of $f\in L_{p,w}(\mathcal{D})$ is lacunary, written $f\in \Lambda$, if
$
f\sim \sum_{j=0}^\infty A_{2^j}f,
$
i.e., $A_{k}f=0$ for $k\neq 2^j$, $j\in \Z_+$.

Let us first derive an analogue of Zygmund's theorem.

\begin{lemma}\label{thZ1}
  Let $1<p<\infty$, $f\in \Lambda$, and Assumption~\ref{H} hold. Suppose that $w\in L_1(\mathcal{D})$ and the functions $\psi_k=\psi_{k,1}$ are such that
\begin{equation}\label{bounds}
  0<\xi_2\le \Vert \psi_k\Vert_{p,w}\le \xi_1<\infty\quad\text{for any}\quad k\in \Z_+.
\end{equation}
Then
  $$
  \Vert f\Vert_{p,w}\asymp \(\sum_{k=0}^\infty c_{2^k}(f)^2\)^\frac12,\quad c_k(f)=\int_{\mathcal{D}} f \psi_k\, w.
  $$
In particular, $\Vert f\Vert_{p,w}\asymp \Vert f\Vert_{2,w}$.
\end{lemma}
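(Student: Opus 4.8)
The plan is to reduce, via the Littlewood--Paley machinery of Subsection~\ref{SecReal}, the norm $\|f\|_{p,w}$ to the norm of the square function of the lacunary blocks of $f$, and then to estimate that square function by Minkowski's and H\"older's inequalities, treating the cases $p\ge2$ and $p\le2$ separately.

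\textbf{Step 1 (Littlewood--Paley reduction).} For a lacunary $f$ one first computes the blocks $\theta_j(f)$ explicitly: since $A_kf=0$ unless $k=2^i$ and $\eta(2^i/2^j)$ equals $1$ for $i\le j-1$ and $0$ for $i\ge j$ (no lacunary frequency lies in the transition annulus $(2^{j-1},2^j)$), one gets $\eta_{2^j}f=\sum_{i=0}^{j-1}A_{2^i}f$, hence $\theta_0(f)=0$ and $\theta_j(f)=A_{2^{j-1}}f=c_{2^{j-1}}(f)\,\psi_{2^{j-1}}$ for $j\ge1$ (using $d_k=1$). Consequently $\sum_{j\ge0}(\theta_j(f))^2=\sum_{i\ge0}c_{2^i}(f)^2\psi_{2^i}^2=:S(f)^2$, and Theorem~\ref{J3.0} yields $\|f\|_{p,w}\asymp\|S(f)\|_{p,w}$. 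Since $\{\psi_k\}$ is an orthonormal system in $L_{2,w}(\mathcal D)$, Parseval's identity gives $\|f\|_{2,w}^2=\sum_i c_{2^i}(f)^2$, so the final assertion of the lemma is a mere reformulation, and it remains to prove $\|S(f)\|_{p,w}\asymp\big(\sum_i c_{2^i}(f)^2\big)^{1/2}$. To keep all sums finite we would argue first for $f_N=\eta_{2^{N+1}}f=\sum_{i\le N}A_{2^i}f$, which is bounded in $L_{p,w}$ uniformly in $N$ by~\eqref{f4} and satisfies $f_N\to f$ in $L_{p,w}$, and then let $N\to\infty$.

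\textbf{Step 2 (the case $p\ge2$).} Here $p/2\ge1$, so by the triangle inequality in $L_{p/2,w}$ and the upper bound in~\eqref{bounds},
\[
\|S(f)\|_{p,w}^2=\Big\|\sum_i c_{2^i}(f)^2\psi_{2^i}^2\Big\|_{p/2,w}\le\sum_i c_{2^i}(f)^2\,\|\psi_{2^i}\|_{p,w}^2\le\xi_1^2\sum_i c_{2^i}(f)^2 ,
\]
which together with Step~1 gives $\|f\|_{p,w}\lesssim\big(\sum_i c_{2^i}(f)^2\big)^{1/2}=\|f\|_{2,w}$. The converse $\|f\|_{2,w}\lesssim\|f\|_{p,w}$ is immediate from H\"older's inequality, using $w\in L_1(\mathcal D)$.

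\textbf{Step 3 (the case $p\le2$; the main difficulty).} For $p\le2$ the bound $\|f\|_{p,w}\lesssim\|f\|_{2,w}$ is again H\"older's inequality. The reverse bound $\|f\|_{2,w}\lesssim\|f\|_{p,w}$ is the real obstacle: it is false for arbitrary functions when $p<2$, and it cannot be read off from $S(f)$ alone (for a general $g\ge0$ one does not have $\|g\|_{p,w}\gtrsim\|g\|_{2,w}$), so lacunarity must be used in full strength. The plan is to pass to the conjugate exponent $p'=p/(p-1)>2$: by H\"older, $\|f\|_{2,w}^2=\int_{\mathcal D}|f|^2w\le\|f\|_{p,w}\|f\|_{p',w}$, and then the estimate of Step~2 applied at the exponent $p'$ gives $\|f\|_{p',w}\lesssim\|f\|_{2,w}$, whence $\|f\|_{2,w}\lesssim\|f\|_{p,w}$. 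This uses Assumption~\ref{H} and the two-sided bound~\eqref{bounds} also at the exponent $p'$; in the applications this is automatic, since there $\|\psi_k\|_{r,w}\asymp1$ holds on an interval of $r$ symmetric about $2$ and the multiplier hypothesis is self-dual under $p\leftrightarrow p'$. Letting $N\to\infty$ removes the finite-sum reduction and completes the argument; the lower estimate for $p<2$ is the step where the real work lies.
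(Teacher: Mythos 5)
Your Steps 1 and 2 coincide with the paper's argument: the identification $\theta_j(f)=A_{2^{j-1}}f$ for lacunary $f$, Theorem~\ref{J3.0}, the triangle inequality in $L_{p/2,w}$ together with the upper bound in~\eqref{bounds} when $p\ge 2$, and H\"older's inequality (using $w\in L_1(\mathcal{D})$) plus Parseval for the two easy estimates. The problem is Step 3, i.e.\ the lower estimate $\bigl(\sum_k c_{2^k}(f)^2\bigr)^{1/2}\lesssim\Vert f\Vert_{p,w}$ for $1<p<2$. Your duality route applies Step 2 at the conjugate exponent $p'$, and therefore needs a uniform bound $\sup_k\Vert\psi_k\Vert_{p',w}<\infty$. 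This is not among the hypotheses of Lemma~\ref{thZ1}, and it cannot be deduced from~\eqref{bounds}: from $\Vert\psi_k\Vert_{2,w}=1$ and $\xi_2\le\Vert\psi_k\Vert_{p,w}\le\xi_1$ one only gets, via H\"older/interpolation, a \emph{lower} bound on $\Vert\psi_k\Vert_{p',w}$, while $\Vert\psi_k\Vert_{p',w}$ may be arbitrarily large (an $L_{2,w}$-normalized function consisting of a spread-out part plus a thin high spike keeps $\Vert\cdot\Vert_{p,w}$ bounded below but blows up $\Vert\cdot\Vert_{p',w}$). So, as written, you prove a weaker statement under an additional assumption, even though that assumption happens to hold in the concrete applications you mention.

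Moreover, your remark that the bound ``cannot be read off from $S(f)$ alone'' is exactly where the missing idea sits. The square function is not a general nonnegative function: it is a sum of nonnegative terms, and since $p/2\le 1$ the quasi-norm $\Vert\cdot\Vert_{p/2,w}$ is \emph{superadditive} on nonnegative functions (reverse Minkowski). Hence
\[
\Vert S(f)\Vert_{p,w}^2=\Big\Vert\sum_k c_{2^k}(f)^2\psi_{2^k}^2\Big\Vert_{p/2,w}\ \ge\ \sum_k c_{2^k}(f)^2\,\Vert\psi_{2^k}\Vert_{p,w}^2\ \ge\ \xi_2^2\sum_k c_{2^k}(f)^2,
\]
which, combined with your Step 1 (Theorem~\ref{J3.0}), gives the desired lower bound using only~\eqref{bounds} at the exponent $p$ itself; this is precisely how the paper argues, and it is where the lower bound in~\eqref{bounds} enters. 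Replacing your duality argument in Step 3 by this superadditivity estimate (the rest of your proposal unchanged, including the truncation $f_N=\eta_{2^{N+1}}f$ and the limit $N\to\infty$) yields a complete proof of the lemma as stated.
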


\begin{proof}
First, let us prove the estimate from above.
Let $1<p\le 2$. Then by H\"older's inequality and Parseval's inequality, we obtain
$
\Vert f\Vert_{p,w}\lesssim \Vert f\Vert_{2,w}\asymp \(\sum_{k=0}^\infty c_{2^k}(f)^2\)^\frac 12.
$
If $p\ge 2$, noting that
\begin{equation*}
  \begin{split}
     \theta_j(A_{2^k}f)=(\eta_{2^j}-\eta_{2^{j-1}})(A_{2^k}f)=\left\{
                                                                  \begin{array}{ll}
                                                                    A_{2^{j-1}} f, & \hbox{$j=k+1$,} \\
                                                                    0, & \hbox{$j\neq k+1$,}
                                                                  \end{array}
                                                                \right.
   \end{split}
\end{equation*}
and using the Littlewood–Paley decomposition  (Theorem~\ref{J3.0}), Minkowski's inequality, and~\eqref{bounds}, we derive
\begin{equation*}
  \begin{split}
     \Vert f\Vert_{p,w}&\asymp \bigg\Vert \bigg(\sum_{k=0}^\infty \theta_k (f)^2\bigg)^\frac12\bigg\Vert_{p,w}
     = \bigg\Vert \bigg(\sum_{k=0}^\infty A_{2^k} (f)^2\bigg)^\frac12\bigg\Vert_{p,w}\\
&= \(\int_{\mathcal{D}} \(\sum_{k=0}^\infty (c_{2^k}(f)\psi_{2^k})^2\)^\frac p2 w\)^\frac1p\\
&\le \(\sum_{k=0}^\infty \( \int_{\mathcal{D}} |c_{2^k}(f)\psi_{2^k}|^p w \)^\frac{2}{p}\)^\frac12\\
&\le \(\sum_{k=0}^\infty |c_{2^k}(f)|^2\)^\frac12\max_{k}\(\int_{\mathcal{D}}|\psi_{2^k}|^p w\)^\frac1p
\lesssim \(\sum_{k=0}^\infty |c_{2^k}(f)|^2\)^\frac12.
   \end{split}
\end{equation*}

To show the inverse inequality for $p\le 2$, we similarly obtain %If $p\ge 2$, then again using the Littlewood–Paley theorem, Minkoski's inequality, and~\eqref{bounds}, we obtain
\begin{equation*}
  \begin{split}
      \Vert f \Vert_{p,w} &\gtrsim \(\int_{\mathcal{D}} \(\sum_{k=0}^\infty ( c_{2^k}(f)\psi_{2^k})^2\)^\frac{p}{2}w\)^\frac1p\\
      &\ge \(\sum_{k=0}^\infty \(\int_{\mathcal{D}} |c_{2^k}(f)\psi_{2^k}|^p w\)^\frac2p\)^\frac12\\
      &\ge \(\sum_{k=0}^\infty c_{2^k}(f)^2\)^\frac12\min_{k}\Vert \psi_{2^k}\Vert_{p,w}\gtrsim\(\sum_{k=0}^\infty c_{2^k}(f)^2\)^\frac12.
   \end{split}
\end{equation*}
If $p\ge 2$, H\"older's inequality implies
$
\Vert f\Vert_{2,w}\lesssim \Vert f\Vert_{p,w},
$
which proves the lemma.
\end{proof}

\begin{remark}
\emph{As an example of the system $\{\psi_k\}$ in Lemma~\ref{thZ1}, one can take the trigonometric system, the Walsh system, systems of the Chebyshev polynomials and, more generally, the system of normalized Jacobi polynomials for specific range of parameters $\a,\b>-1$ indicated in~\cite{ABD}.}
\end{remark}

\begin{theorem}\label{thOpt2}
Under all assumptions of Lemma~\ref{thZ1}, we have for $f\in L_{p,w}(\mathcal{D})\cap \Lambda$
\begin{equation*}
  \(\sum_{k=n+1}^\infty 2^{- 2\g \s k} \Vert Q(D)^\gamma \eta_{2^k} f\Vert_{p,w}^2\)^\frac12\asymp K_\gamma(f,Q(D),2^{-n \g \s})_{p,w},\quad \g>0.
\end{equation*}
%where the constants in the equivalence do not depend on $f$ and $n$.
\end{theorem}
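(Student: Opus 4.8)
The plan is to deduce Theorem~\ref{thOpt2} by combining Theorem~\ref{thLP} with Lemma~\ref{thZ1}, exploiting that for a lacunary expansion the geometry of $L_{p,w}$ becomes irrelevant: the block-square-function of a lacunary series is comparable to its $\ell_2$-norm of coefficients, and hence all the $\ell_\tau$/$\ell_\theta$ sums collapse to $\ell_2$. More precisely, first I would observe that Theorem~\ref{thLP} already gives, for general $f\in L_{p,w}(\mathcal{D})$, $1<p<\infty$,
\begin{equation*}
\Bigl(\sum_{k=n+1}^\infty 2^{-2\gamma\sigma k}\Vert Q(D)^\gamma\eta_{2^k}f\Vert_{p,w}^{\tau}\Bigr)^{1/\tau}
\lesssim K_\gamma(f,Q(D),2^{-n\gamma\sigma})_{p,w}
\lesssim \Bigl(\sum_{k=n+1}^\infty 2^{-2\gamma\sigma k}\Vert Q(D)^\gamma\eta_{2^k}f\Vert_{p,w}^{\theta}\Bigr)^{1/\theta},
\end{equation*}
with $\tau=\max(2,p)$ and $\theta=\min(2,p)$. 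Since always $\theta\le 2\le\tau$, the elementary embeddings $\ell_\theta\hookrightarrow\ell_2\hookrightarrow\ell_\tau$ of sequence spaces sandwich the middle quantity between the two extreme sums evaluated with exponent $2$; so it suffices to prove that the $\ell_\tau$-sum and the $\ell_\theta$-sum are each equivalent to the $\ell_2$-sum when $f\in\Lambda$. Equivalently, it is enough to show $\Vert Q(D)^\gamma\eta_{2^k}f\Vert_{p,w}\asymp \Vert Q(D)^\gamma\eta_{2^k}f\Vert_{2,w}$ uniformly in $k$, because then all three sums are the same $\ell_2$-sum up to constants.

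The key step is therefore the following: if $f\in\Lambda$, i.e.\ $f\sim\sum_j A_{2^j}f$, then $Q(D)^\gamma\eta_{2^k}f$ is again a (finite) lacunary sum, namely $Q(D)^\gamma\eta_{2^k}f\sim\sum_{j:\,2^j\le 2^k}\eta(2^{-k}2^j)\lambda_{2^j}^\gamma A_{2^j}f$, since $\eta_{2^k}$ and $Q(D)^\gamma$ act diagonally on the blocks $A_m$. Hence $Q(D)^\gamma\eta_{2^k}f$ has Fourier coefficients supported on the lacunary set $\{2^j\}$, with coefficient at $2^j$ equal to $\eta(2^{j-k})\lambda_{2^j}^\gamma c_{2^j}(f)$. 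Now Lemma~\ref{thZ1} applies verbatim to this function (it lies in $L_{p,w}(\mathcal{D})\cap\Lambda$, the hypotheses~\eqref{bounds} on $\psi_k$ and $w\in L_1(\mathcal{D})$ being exactly those of the lemma), giving
$$
\Vert Q(D)^\gamma\eta_{2^k}f\Vert_{p,w}\asymp\Bigl(\sum_{j:\,2^j\le 2^k}\bigl(\eta(2^{j-k})\lambda_{2^j}^\gamma c_{2^j}(f)\bigr)^2\Bigr)^{1/2}\asymp\Vert Q(D)^\gamma\eta_{2^k}f\Vert_{2,w},
$$
with constants independent of $k$. This is the whole content of the reduction.

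With that in hand I would finish as follows. Writing $a_j:=\lambda_{2^j}^{\gamma}c_{2^j}(f)$ and using $\eta\equiv1$ on $[0,1/2]$ together with $0\le\eta\le1$, one gets $\Vert Q(D)^\gamma\eta_{2^k}f\Vert_{p,w}^2\asymp\sum_{j\le k-1}a_j^2$ up to a boundary term $\asymp a_k^2$ (the index $j=k$ where $\eta(2^{j-k})=\eta(1)=0$ actually drops out, and $j=k-1,k$ contribute $O(a_{k-1}^2)$), so in any case $\Vert Q(D)^\gamma\eta_{2^k}f\Vert_{p,w}^2\asymp\sum_{j=0}^{k}a_j^2$ up to finitely many shifted terms, uniformly in $k$. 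Then, for any exponent $q\in\{\theta,2,\tau\}$,
$$
\sum_{k=n+1}^\infty 2^{-2\gamma\sigma k}\Vert Q(D)^\gamma\eta_{2^k}f\Vert_{p,w}^{q}\asymp\sum_{k=n+1}^\infty 2^{-2\gamma\sigma k}\Bigl(\sum_{j=0}^{k}a_j^2\Bigr)^{q/2},
$$
and applying Hardy's inequality~\eqref{eqHardy} (with the weight $2^{-2\gamma\sigma k}$, $\gamma\sigma>0$, and $A_j=a_j^2$) this is in turn $\asymp\sum_{k=n+1}^\infty 2^{-2\gamma\sigma k}a_k^{q}$, with constants depending only on $q$ and $\gamma\sigma$; in particular it is, up to constants, the $q$-th power of $\bigl(\sum_{k=n+1}^\infty 2^{-2\gamma\sigma k}a_k^2\bigr)^{1/2}$ raised to power $1$ when $q=2$, and more generally all three choices of $q$ produce sums whose $1/q$-th powers are mutually equivalent (since $\sum 2^{-2\gamma\sigma k}a_k^q$ is, by the geometric decay $2^{-2\gamma\sigma k}$, comparable to the $\ell_q$-norm of the sequence $(2^{-2\gamma\sigma k/q}a_k)_k$, and geometric-type sequences have comparable $\ell_q$-norms across $q$ — more carefully, one just invokes $\ell_\theta\hookrightarrow\ell_2\hookrightarrow\ell_\tau$ for the upper bounds and the reverse comparisons hold because of the geometric weights). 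Squeezing the $K$-functional between the $\tau$- and $\theta$-sums via Theorem~\ref{thLP} and using that both of these are now $\asymp\bigl(\sum_{k=n+1}^\infty 2^{-2\gamma\sigma k}\Vert Q(D)^\gamma\eta_{2^k}f\Vert_{p,w}^2\bigr)^{1/2}$ yields the claimed equivalence.

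The main obstacle I anticipate is not conceptual but bookkeeping: one must be slightly careful with the boundary blocks $j\in\{k-1,k\}$ in the de la Vallée Poussin cutoff (where $\eta$ transitions from $1$ to $0$), and, more importantly, one must justify that $\sum_k 2^{-2\gamma\sigma k}b_k^q$ has $1/q$-th power independent of $q\in\{\theta,2,\tau\}$ up to constants when the $b_k$ are arbitrary nonnegative — this is false in general for fixed weights, so the real point is that after the Hardy-inequality step the relevant sequence is $(2^{-\gamma\sigma k}a_k)$, and one should instead argue directly: the $\ell_\theta\hookrightarrow\ell_2\hookrightarrow\ell_\tau$ inclusions give $K_\gamma\lesssim\ell_\theta\le \ell_2$ and $K_\gamma\gtrsim\ell_\tau\ge\ell_2$ — wait, this has the inequalities the wrong way — so one must combine the two halves of Theorem~\ref{thLP} as $\ell_\tau(\text{lower bound})\le K_\gamma\le\ell_\theta(\text{upper bound})$ together with $\ell_\tau\le\ell_2\le\ell_\theta$ applied to the \emph{same} nonnegative sequence $(2^{-\gamma\sigma k}\Vert Q(D)^\gamma\eta_{2^k}f\Vert_{p,w})_k$, which sandwiches everything between two constant multiples of the $\ell_2$-sum. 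That last sandwiching, rather than any pointwise estimate, is the crux, and it goes through precisely because Lemma~\ref{thZ1} makes the norm comparisons $\Vert\cdot\Vert_{p,w}\asymp\Vert\cdot\Vert_{2,w}$ available uniformly in $k$.
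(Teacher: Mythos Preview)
Your proposal contains a genuine gap. The central claim---that once you know $\Vert Q(D)^\gamma\eta_{2^k}f\Vert_{p,w}\asymp\Vert Q(D)^\gamma\eta_{2^k}f\Vert_{2,w}$ termwise, the $\ell_\tau$-, $\ell_2$-, and $\ell_\theta$-sums become equivalent---is false. Termwise equivalence of the entries lets you swap $\Vert\cdot\Vert_{p,w}$ for $\Vert\cdot\Vert_{2,w}$ \emph{inside} a fixed $\ell_q$-sum, but it does nothing to compare different $\ell_q$-norms of the \emph{same} sequence. Concretely: take $p=4$ (so $\tau=4$, $\theta=2$) and a lacunary $f$ with $c_{2^j}(f)=2^{-\gamma\sigma j}$ for $n<j\le N$ and $c_{2^j}(f)=0$ otherwise. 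Then $2^{-\gamma\sigma k}\Vert Q(D)^\gamma\eta_{2^k}f\Vert_{p,w}\asymp 1$ for roughly $N-n$ values of $k$ (and decays geometrically afterward), so the $\ell_2$-norm of this sequence is $\asymp(N-n)^{1/2}$ while its $\ell_4$-norm is $\asymp(N-n)^{1/4}$. These are not comparable as $N\to\infty$; hence $\ell_\tau\not\asymp\ell_2$ even for lacunary $f$. Your final ``fix''---that $\ell_\tau\le K\le\ell_\theta$ together with $\ell_\tau\le\ell_2\le\ell_\theta$ sandwiches $K$ and $\ell_2$---is also wrong: knowing that two quantities both lie in $[\ell_\tau,\ell_\theta]$ says nothing about their ratio. (You actually notice the direction problem yourself but do not repair it.)

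There is a clean salvage in the spirit of your idea, though you do not carry it out: use Lemma~\ref{thZ1} \emph{twice}, once on each side. By realization~\eqref{f3}, both $f-\eta_{2^n}f$ and $Q(D)^\gamma\eta_{2^n}f$ are lacunary, so Lemma~\ref{thZ1} gives $K_\gamma(f,Q(D),2^{-n\gamma\sigma})_{p,w}\asymp K_\gamma(f,Q(D),2^{-n\gamma\sigma})_{2,w}$. Now apply Theorem~\ref{thLP} at $p=2$ (where $\tau=\theta=2$) to get the $\ell_2$-equivalence with $\Vert\cdot\Vert_{2,w}$ inside, and finally use Lemma~\ref{thZ1} again termwise to return to $\Vert\cdot\Vert_{p,w}$. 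The paper instead bypasses Theorem~\ref{thLP} entirely: it uses Lemma~\ref{thZ1} and realization to write both sides explicitly as quadratic expressions in the coefficients $c_{2^k}(f)$, and then checks the equivalence by an elementary splitting of the double sum (essentially the Hardy-type manipulation you sketch). Either route works; yours, as written, does not.
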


\begin{proof}
Using the realization result~\eqref{f3} and Lemma~\ref{thZ1}, we get
\begin{equation}\label{Opt1}
  \begin{split}
      K_\gamma(f,Q(D),2^{-n \g \s})_{p,w}&\asymp \Vert f-\eta_{2^n}f\Vert_{p,w}+2^{-\g\s n}\Vert Q(D)^\g\eta_{2^n} f\Vert_{p,w}\\
      &\asymp \(\sum_{k=n-1}^\infty c_{2^k}(f)^2\)^\frac12+2^{-\g\s n}\(\sum_{k=1}^{n-1} 2^{2 \g\s k}c_{2^k}(f)^2\)^\frac12
   \end{split}
\end{equation}
%By Theorem~\ref{thZ1}, we also have
and
\begin{equation*}
  2^{-2 \g\s k}\Vert Q(D)^\g\eta_{2^k} f\Vert_{p,w}^2 \asymp 2^{-2 \g \s k}\sum_{l=1}^{k-1} 2^{2 \g \s l}c_{2^l}(f)^2.
\end{equation*}
Then %by~\eqref{Opt1} and Hardy's inequality~\eqref{eqHardy}, we get
\begin{equation*}
  \begin{split}
      \sum_{k=n+1}^\infty 2^{- 2\g \s k} \Vert Q(D)^\gamma \eta_{2^k} f\Vert_{p,w}^2&\asymp \sum_{k=n+1}^\infty 2^{- 2\g \s k} \sum_{l=1}^{k-1} 2^{2 \g \s l}c_{2^l}(f)^2\\
      &=\sum_{k=n+1}^\infty 2^{- 2\g \s k} \(\sum_{l=1}^{n} \,+\, \sum_{l=n+1}^{k-1}  2^{2 \g \s l}c_{2^l}(f)^2\)\\
      &\asymp   2^{- 2\g \s n} \sum_{l=1}^{n} 2^{2 \g \s l}c_{2^l}(f)^2+ \sum_{l=n+1}^{\infty}  2^{2 \g \s l}c_{2^l}(f)^2\\
      &\asymp K_\gamma(f,Q(D),2^{-n \g \s})_{p,w}^2.
   \end{split}
\end{equation*}
%which proves the theorem.
\end{proof}
In particular, %we derive, for any $f\in \Lambda$, $\a>0$, and
for  the classical Fourier series on $\mathcal{D}=\T$ we obtain % the following equivalence:
  \begin{equation}\label{eqOptTrigL1--}
    \w_\a(f,2^{-n})_{L_p(\T)}\asymp \(\sum_{k=n}^\infty 2^{-2 \a k} \Vert (S_{2^k}f)^{(\a)}\Vert_{L_p(\T)}^2\)^\frac12,\quad f\in L_p(\T)\cap \Lambda,
  \end{equation}
  where $1<p<\infty$ and $\a>0$;
cf.~\eqref{optimal-p}.

  \begin{remark}\label{remOpt1}
\emph{It is clear that (\ref{eqOptTrigL1--})
gives the sharpness of the parameter  $\theta$ for $p\ge2$ and $\tau$ for $p\le 2$ in inequality (\ref{optimal-p}).}
  \end{remark}

\begin{proof}
Assume that  $p\ge2$ and there holds
  \begin{equation}\label{eqOptTrigL1---}
    \w_\a(f,2^{-n})_{L_p(\T)}
\lesssim
 \(\sum_{k=n}^\infty \left(2^{- \a k} \Vert (S_{2^k}f)^{(\a)}\Vert_{L_p(\T)} \right)^{2+\varepsilon}\)^\frac1{2+\varepsilon}
%,\quad 1<p<\infty,
  \end{equation}
 with some $\varepsilon>0$.
Consider $f(x)=\sum_{n=1}^\infty a_{2^n} \cos 2^nx$, where $a_{2^n}=1/n$. Then $f\in L_p(\T)\cap \Lambda$ and, by~\eqref{Opt1}, one has
$$    \w_\a(f,2^{-n})_{L_p(\T)}\asymp
\(\sum_{k=n}^\infty a_{2^k}^2\)^\frac12+2^{-\a n}\(\sum_{k=1}^{n} 2^{2 \a k}a_{2^k}^2\)^\frac12
\asymp \frac1{n^{1/2}},
$$
$$
2^{-\a k} \Vert (S_{2^k}f)^{(\a)}\Vert_{L_p(\T)}
%\asymp
%2^{-\a n}\(\sum_{k=1}^{n} 2^{2 \a k}a_{2^k}^2\)^\frac12
\asymp \frac1{k},\qquad
\(\sum_{k=n}^\infty 2^{-(2+\varepsilon) \a k} \Vert (S_{2^k}f)^{(\a)}\Vert_{L_p(\T)}^{2+\varepsilon}\)^\frac1{2+\varepsilon}\asymp
n^{-\frac{1+\varepsilon}{2+\varepsilon}},
$$
which contradicts  (\ref{eqOptTrigL1---}).
Similarly, if $p\le2$, then the inequality
  \begin{equation*}\label{eqOptTrigL1---+}
    \w_\a(f,2^{-n})_{L_p(\T)}
\gtrsim
 \(\sum_{k=n}^\infty \left(2^{- \a k} \Vert (S_{2^k}f)^{(\a)}\Vert_{L_p(\T)} \right)^{2-\varepsilon}\)^\frac1{2-\varepsilon}
%,\quad 1<p<\infty,
  \end{equation*}
with some  $\varepsilon\in (0,2)$ does not hold for $f(x)=\sum_{n=1}^\infty a_{2^n} \cos 2^nx\in L_p$, where $a_{2^n}=n^{-1/(2-\varepsilon)}$.
\end{proof}

%\bigskip

Now, let us consider the case of the classical Fourier series with general monotone coefficients. In what follows,
we say (see~\cite{tikhonov}) that a (complex) sequence $\{d_n\}$ is general monotone, written $\{d_n\}\in GM$, if
$$\sum_{k=n}^{2n}|d_k-d_{k+1}|\le C |d_n|,$$
where $C$ does not depend on $n$. Note that any monotone (quasi-monotone) sequences are general monotone.
We denote by $\widehat{GM}$ the  class of integrable  functions such that $f(x)\sim \sum_{n=1}^\infty (a_n\cos nx+b_n\sin nx)$ with
$\{a_n\}, \{b_n\}\in GM$.

\begin{theorem}\label{thOpt3}
  Let $f\in L_p(\T)\cap \widehat{GM}$, $1<p<\infty$, and $\a>0$. Then
  \begin{equation}\label{eqOptTrigL1-}
    \w_\a(f,2^{-n})_{L_p(\T)}\asymp \(\sum_{k=n}^\infty 2^{-p \a k} \Vert (S_{2^k}f)^{(\a)}\Vert_{L_p(\T)}^p\)^\frac1p.
  \end{equation}
\end{theorem}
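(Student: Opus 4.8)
The strategy is to reduce the equivalence \eqref{eqOptTrigL1-} to a statement purely about the Fourier coefficients $\{a_n\},\{b_n\}$, using the two pillars already available: (i) the Hardy--Littlewood type description of $L_p$-norms of general monotone series, and (ii) the realization of the $K$-functional $\asymp \w_\a(f,\cdot)_p$ together with the Littlewood--Paley machinery of Section~\ref{SecReal}. First I would recall the classical fact (see \cite{tikhonov}) that for $f\in\widehat{GM}$ and $1<p<\infty$ one has the coefficient-wise equivalence
$$
\Vert f\Vert_{L_p(\T)}^p\asymp \sum_{n=1}^\infty n^{p-2}\bigl(|a_n|^p+|b_n|^p\bigr),
$$
and, more generally, that $f^{(\a)}$ (the function with coefficients $n^\a a_n$, $n^\a b_n$, up to harmless rotations) again lies in $\widehat{GM}$ with
$$
\Vert f^{(\a)}\Vert_{L_p(\T)}^p\asymp \sum_{n=1}^\infty n^{p\a+p-2}\bigl(|a_n|^p+|b_n|^p\bigr).
$$
Applying this to $S_{2^k}f$ (whose coefficients are just the $\{a_n\},\{b_n\}$ truncated at $n\le 2^k$, still a $GM$ sequence) gives
$$
2^{-p\a k}\Vert (S_{2^k}f)^{(\a)}\Vert_{L_p(\T)}^p\asymp 2^{-p\a k}\sum_{n\le 2^k} n^{p\a+p-2}\bigl(|a_n|^p+|b_n|^p\bigr).
$$

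Next I would compute the right-hand side of \eqref{eqOptTrigL1-} by summing this in $k$ and interchanging the order of summation: writing $c_n:=n^{p-2}(|a_n|^p+|b_n|^p)$, the inner double sum is
$$
\sum_{k=n}^\infty 2^{-p\a k}\sum_{m\le 2^k} m^{p\a}c_m,
$$
and splitting $\sum_{m\le 2^k}=\sum_{m\le 2^n}+\sum_{2^n<m\le 2^k}$, then using the elementary geometric estimates $\sum_{k\ge n}2^{-p\a k}\asymp 2^{-p\a n}$ and $\sum_{k:\,2^k\ge m}2^{-p\a k}\asymp m^{-p\a}$ (valid since $p\a>0$), I get that this quantity is
$$
\asymp 2^{-p\a n}\sum_{m\le 2^n} m^{p\a}c_m+\sum_{m>2^n}c_m.
$$
On the other hand, the realization result for $\w_\a$ (the periodic analogue of \eqref{f3}, i.e.\ $\w_\a(f,2^{-n})_p\asymp \Vert f-\eta_{2^n}f\Vert_p+2^{-\a n}\Vert Q(D)^\a\eta_{2^n}f\Vert_p$, here with $Q(D)=-d^2/dx^2$, $\sigma=2$) combined with the same $GM$ coefficient description applied to $f-\eta_{2^n}f$ and to $\eta_{2^n}f$ shows exactly
$$
\w_\a(f,2^{-n})_{L_p(\T)}^p\asymp \sum_{m>2^n}c_m+2^{-p\a n}\sum_{m\le 2^n} m^{p\a}c_m,
$$
which matches the previous display. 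That is the whole proof; it mirrors the computation \eqref{Opt1}--\eqref{eqOptTrigL1--} in the lacunary case, with $c_{2^k}(f)^2$ replaced by the $GM$-weight $c_m$ and the exponent $2$ replaced by $p$.

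The main technical obstacle is justifying the coefficient-wise norm equivalence for the \emph{truncated} and \emph{differentiated} functions $ (S_{2^k}f)^{(\a)}$ simultaneously and with constants independent of $k$: one must check that truncation of a $GM$ sequence at a dyadic index stays in $GM$ with a uniform constant (this is standard but needs the dyadic block structure of the $GM$ condition), and that multiplication by the slowly varying factor $n^\a$ preserves $GM$ — again standard, since $\{n^\a\}$ itself is monotone and $GM\cdot$(monotone)$\subset GM$. A second, minor point is the interchange of summation and the two geometric-series estimates, which require $p\a>0$; this holds throughout since $p>1$ and $\a>0$. Once these bookkeeping facts are in place, everything else is the routine rearrangement sketched above, so I would state the $GM$ lemmas precisely, cite \cite{tikhonov}, and then present the two-line coefficient computation.
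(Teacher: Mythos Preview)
Your argument is correct and follows essentially the same path as the paper's proof: both reduce \eqref{eqOptTrigL1-} to a coefficient computation via the Hardy--Littlewood theorem for $\widehat{GM}$ series, verify that truncation and multiplication by $n^{\a}$ preserve the $GM$ property with uniform constants, and then perform the identical double-sum rearrangement (split $\sum_{m\le 2^k}=\sum_{m\le 2^n}+\sum_{2^n<m\le 2^k}$ and sum the geometric series). The only difference is in how the coefficient formula for $\w_\a(f,2^{-n})_p$ is obtained: the paper quotes it directly from~\cite{muk}, whereas you propose to derive it from the realization~\eqref{f3} by applying the Hardy--Littlewood description to $\eta_{2^n}f$ and $f-\eta_{2^n}f$. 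That route works, but note the small caveat: the sequences $\{\eta(k/2^n)a_k\}$ and $\{(1-\eta(k/2^n))a_k\}$ need not literally lie in $GM$ near the smooth cutoff, so rather than arguing $GM$ membership for them, it is cleaner to use that $S_N$ and $\eta_N$ are uniformly bounded multipliers on $L_p(\T)$ for $1<p<\infty$, whence $\Vert (\eta_{2^n}f)^{(\a)}\Vert_p\asymp\Vert (S_{2^n}f)^{(\a)}\Vert_p$ and $\Vert f-\eta_{2^n}f\Vert_p\asymp\Vert f-S_{2^n}f\Vert_p$, and then apply Hardy--Littlewood to the sharp truncations, exactly as you already do for the right-hand side of~\eqref{eqOptTrigL1-}.
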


\begin{proof}
First, we recall the following Hardy--Littlewood theorem:
$$
\Vert f\Vert_{L_p(\T)}  \asymp
\(\sum_{n=1}^\infty (|a_n|+|b_n|)^p n^{p-2}\)^\frac1p,\quad f\in L_p(\T)\cap \widehat{GM},\quad 1<p<\infty.
 $$% \end{equation}
This is a well-known fact for functions with monotone coefficients, see~\cite[Ch.~XII]{Z}. For the class $\widehat{GM}$ (in fact for a more general class and for Lorentz spaces) this has been recently proved in~\cite{muk}.
Moreover, it is also shown in~\cite{muk} that
  \begin{equation*}\label{Opt3--}
    \begin{split}
   \w_\a(f,n^{-1})_{L_p(\T)}&\asymp
n^{- \a }
\(\sum_{k=0}^n (|a_{k}|+|b_{k}|)^p k^{p\a+p-2}\)^\frac1p
+
\(\sum_{k=n}^\infty (|a_{k}|+|b_{k}|)^p k^{p-2}\)^\frac1p.
\end{split}
\end{equation*}

Now we note that the sequences $\{d_1,\cdots,d_n,0,0,\cdots\}$ and $\{n^\alpha d_n\}$ belong to $GM$  whenever  $\{d_n\}\in GM$, which implies that the Hardy--Littlewood theorem can be applied for the partial Fourier sums of $f$.
Moreover, since any general monotone sequence $\{d_n\}$ satisfies the following property, see~\cite{tikhonov}:
$|d_k|\le C |d_n|$ for $n\le k\le 2n$,
we have
$$
\Vert (S_{2^n} f)^{(\a)}\Vert_{L_p(\T)}\asymp
\(\sum_{k=0}^n (|a_{2^k}|+|b_{2^k}|)^p 2^{k(p\a+p-1)}\)^\frac1p.
$$
%Using \cite{tikhonov}, we also have that
%$$
%\Vert f-S_{2^n} (f)\Vert_{L_p(\T)}
%\lesssim
%\(\sum_{k=n}^\infty (|a_{2^k}|+|b_{2^k}|)^p 2^{k(p-1)}\)^\frac1p.
%$$
Thus, we derive % Now, by the realization result,
  \begin{equation*}%\label{Opt3}
    \begin{split}
   \w_\a(f,2^{-n})_{L_p(\T)}
%&\asymp2^{- \a n}\Vert (S_{2^n} (f))^{(\a)}\Vert_{L_p(\T)}+\Vert f-S_{2^n} (f)\Vert_{L_p(\T)}
%\\
&\asymp
2^{- \a n}
\(\sum_{k=0}^n (|a_{2^k}|+|b_{2^k}|)^p 2^{k(p\a+p-1)}\)^\frac1p
+
\(\sum_{k=n}^\infty (|a_{2^k}|+|b_{2^k}|)^p 2^{k(p-1)}\)^\frac1p
\\
&\asymp
\(\sum_{k=n}^\infty 2^{-p \a k}
\sum_{l=0}^k (|a_{2^l}|+|b_{2^l}|)^p 2^{l(p\a+p-1)}%\)^\frac1p
\)^\frac1p
\\
&\asymp
  \(\sum_{k=n}^\infty 2^{-p \a k} \Vert (S_{2^k}f)^{(\a)}\Vert_{L_p(\T)}^p\)^\frac1p,
\end{split}
\end{equation*}
completing the proof.
%Using
%$$  \(\sum_{k=n}^\infty 2^{-p \a k} \Vert (S_{2^k}f)^{(\a)}\Vert_{L_p(\T)}^p\)^\frac1p
%\gtrsim
%\(\sum_{k=n}^\infty 2^{-p \a k}
%\sum_{l=0}^k (|a_{2^l}|+|b_{2^l}|)^p 2^{l(p\a+p-1)}%\)^\frac1p
%\)^\frac1p.
%$$
\end{proof}

  \begin{remark}
\emph{Similarly to Remark~\ref{remOpt1}, equivalence~\eqref{eqOptTrigL1-} provides the sharpness of the parameter  $\theta$ for $p\le2$ and $\tau$ for $p\ge 2$ in (\ref{optimal-p}).}
  \end{remark}

%\subsection{The case $p=1$ and $p=\infty$}
\subsection{Optimality  of the right-hand inequality in~\eqref{optimal} for $p=1$ and $p=\infty$}
We start by  obtaining two simple  results for lacunary Fourier series.
\begin{theorem}\label{thOptTrigL1}
  Let $f\in L_1(\T)\cap \Lambda$ and $\a>0$. Then
  \begin{equation*}\label{eqOptTrigL1}
    \w_\a(f,2^{-n})_{L_1(\T)}\asymp \(\sum_{k=n}^\infty 2^{-2 \a k} \Vert (\eta_{2^k}f)^{(\a)}\Vert_{L_1(\T)}^2\)^\frac12.
  \end{equation*}
\end{theorem}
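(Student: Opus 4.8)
The plan is to argue exactly as in the proof of Theorem~\ref{thOpt2}, with the Littlewood--Paley description of the norm (which is not available at $p=1$) replaced by the classical Sidon--Zygmund equivalence for lacunary series: if $g\sim\sum_{j\ge 1}\bigl(a_{2^j}\cos 2^j x+b_{2^j}\sin 2^j x\bigr)$ belongs to $L_1(\T)$, then, writing $A_j^2:=|a_{2^j}|^2+|b_{2^j}|^2$,
$$
\Vert g\Vert_{L_1(\T)}\asymp\Bigl(\sum_{j\ge 1}A_j^2\Bigr)^{1/2},
$$
see, e.g.,~\cite[Ch.~V]{Z}; in particular $\Vert g\Vert_{L_1(\T)}\asymp\Vert g\Vert_{L_2(\T)}$ for lacunary $g$. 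This is the analogue of Lemma~\ref{thZ1} valid at the endpoint $p=1$, and it is the one place where the present argument genuinely differs from the case $1<p<\infty$.

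First I would invoke the realization of the modulus of smoothness by de~la~Vall\'ee Poussin means,
$$
\w_\a(f,2^{-n})_{L_1(\T)}\asymp\Vert f-\eta_{2^n}f\Vert_{L_1(\T)}+2^{-\a n}\Vert(\eta_{2^n}f)^{(\a)}\Vert_{L_1(\T)},
$$
which holds for $0<p\le\infty$ (cf.~\eqref{f3}, the Nikolskii--Stechkin inequality~\eqref{ineqNS3cor}, and the uniform boundedness of $\{\eta_{2^n}\}$ on $L_1(\T)$, see Remark~\ref{rem5.1}). Since $f$ is lacunary, so are $\eta_{2^n}f$, $(\eta_{2^n}f)^{(\a)}$ and $f-\eta_{2^n}f$; because the cut-off $\eta(\,\cdot\,/2^n)$ equals $1$ on $[0,2^{n-1}]$ and $0$ on $[2^n,\infty)$, while the lacunary frequencies avoid the transition band $(2^{n-1},2^n)$, the nonzero coefficients of these three functions at the frequency $2^j$ are, up to fixed bounded factors, $A_j$ for $j\le n-1$ (for $\eta_{2^n}f$), $2^{\a j}A_j$ for $j\le n-1$ (for $(\eta_{2^n}f)^{(\a)}$), and $A_j$ for $j\ge n$ (for $f-\eta_{2^n}f$). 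Applying the Sidon--Zygmund equivalence to each of them I would obtain, as in~\eqref{Opt1},
$$
\w_\a(f,2^{-n})_{L_1(\T)}\asymp\Bigl(\sum_{j\ge n}A_j^2\Bigr)^{1/2}+2^{-\a n}\Bigl(\sum_{j=1}^{n-1}2^{2\a j}A_j^2\Bigr)^{1/2},
$$
and likewise $2^{-2\a k}\Vert(\eta_{2^k}f)^{(\a)}\Vert_{L_1(\T)}^2\asymp 2^{-2\a k}\sum_{j=1}^{k-1}2^{2\a j}A_j^2$ for every $k$.

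Finally I would carry out the elementary Hardy-type summation exactly as in Theorem~\ref{thOpt2}: interchanging the order of summation and summing the geometric series in $k$,
$$
\sum_{k=n}^\infty 2^{-2\a k}\sum_{j=1}^{k-1}2^{2\a j}A_j^2\asymp 2^{-2\a n}\sum_{j=1}^{n-1}2^{2\a j}A_j^2+\sum_{j\ge n}A_j^2\asymp\w_\a(f,2^{-n})_{L_1(\T)}^2,
$$
which is the asserted equivalence. The main obstacle is thus not in the (routine) summation or in the realization step, but in having at hand the $L_1$--$\ell_2$ equivalence for lacunary series; once that is granted, the bookkeeping of which frequencies survive the cut-off is harmless precisely because lacunary frequencies miss the transition band of $\eta$, so no ``boundary'' terms need separate treatment.
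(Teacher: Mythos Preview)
Your proposal is correct and follows essentially the same approach as the paper: the paper's proof simply says that it ``repeats the one of Theorem~\ref{thOpt2}'' once Zygmund's theorem for lacunary series (the $L_1$--$\ell_2$ equivalence you call Sidon--Zygmund) is invoked to obtain the coefficient formula for $\w_\a(f,2^{-n})_{L_1(\T)}$. Your observation that the lacunary frequencies miss the transition band of $\eta$ (so no boundary terms arise) and the Hardy-type summation are exactly the content of repeating the computation in Theorem~\ref{thOpt2}.
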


\begin{proof}
The proof repeats the one of Theorem~\ref{thOpt2} since by Zygmund's theorem (see~\cite[Theorem 3.7.4]{Gr}), we have
  \begin{equation*}\label{Opt2}
      \w_\a(f,2^{-n})_{L_1(\T)}\asymp \(\sum_{k=n}^\infty |c_{2^k}|^2\)^\frac12+2^{-\a n}\(\sum_{k=1}^n 2^{2\a k}|c_{2^k}|^2\)^\frac12,
  \end{equation*}
where $\{c_k\}$ are the Fourier coefficients of $f$.
\end{proof}

\begin{theorem}\label{thOptTrigLinfty}
  Let $f\in L_\infty(\T)\cap \Lambda$ and $\a>0$. Then
  \begin{equation*}\label{eqOptTrigLinfty}
    \w_\a(f,2^{-n})_{L_\infty(\T)}\asymp\sum_{k=n}^\infty 2^{- \a k} \Vert \eta_{2^k}^{(\a)}f\Vert_{L_\infty(\T)}.
  \end{equation*}
\end{theorem}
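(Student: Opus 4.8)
The plan is to mimic the proof of Theorem~\ref{thOptTrigL1}, replacing Zygmund's $L_1$-theorem by its $L_\infty$-counterpart, which rests on Sidon's theorem for lacunary series. Write $f\sim\sum_{k\ge 0}c_{2^k}e^{i2^kx}$. Since $f\in\Lambda$ has spectrum contained in the lacunary (Sidon) set $\{2^k\}_{k\ge 0}$, its Fourier series converges absolutely, and, more generally, every $g\in L_\infty(\T)$ whose spectrum lies in $\{2^k\}$ satisfies
\[
\|g\|_{L_\infty(\T)}\asymp\sum_k|\widehat g(2^k)|,
\]
the inequality $\gtrsim$ being Sidon's theorem, with a constant depending only on the lacunarity ratio (here $2$), and $\lesssim$ being the triangle inequality. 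All the $\asymp$-relations below will have constants independent of $n$ and $f$ for exactly this reason.

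Next I would use the realization of the $K$-functional for the modulus $\w_\a$ in $L_\infty(\T)$ (see, e.g.,~\cite{KT}),
\[
\w_\a(f,2^{-n})_{L_\infty(\T)}\asymp\|f-\eta_{2^n}f\|_{L_\infty(\T)}+2^{-\a n}\|(\eta_{2^n}f)^{(\a)}\|_{L_\infty(\T)}.
\]
Because $f\in\Lambda$ and $\eta(\xi)=1$ for $\xi\le 1/2$, $\eta(\xi)=0$ for $\xi\ge 1$, the operator $\eta_{2^k}$ keeps exactly the frequencies $2^j$ with $j<k$ and $I-\eta_{2^k}$ keeps exactly those with $j\ge k$; moreover the $2^j$-th coefficient of $(\eta_{2^k}f)^{(\a)}$ has modulus $2^{\a j}|c_{2^j}|$. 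Applying the Sidon equivalence to the lacunary functions $f-\eta_{2^n}f$ and $(\eta_{2^n}f)^{(\a)}$ therefore gives
\[
\w_\a(f,2^{-n})_{L_\infty(\T)}\asymp\sum_{j\ge n}|c_{2^j}|+2^{-\a n}\sum_{j\le n}2^{\a j}|c_{2^j}|,
\]
which is the $L_\infty$-analogue of Zygmund's theorem used in Theorem~\ref{thOptTrigL1}; the same argument with $n$ replaced by $k\ge n$ yields $\|(\eta_{2^k}f)^{(\a)}\|_{L_\infty(\T)}\asymp\sum_{j<k}2^{\a j}|c_{2^j}|$. (Alternatively, this $L_\infty$-Zygmund estimate can be obtained directly by applying Sidon to the fractional difference $\sum_\nu(-1)^\nu\binom{\a}{\nu}f(\cdot+(\a-\nu)h)$, whose $2^k$-th coefficient has modulus $|e^{ih2^k}-1|^\a|c_{2^k}|\asymp\min(1,(|h|2^k)^\a)|c_{2^k}|$, and taking $\sup_{|h|\le2^{-n}}$.)

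Finally I would sum. Put $b_j:=2^{\a j}|c_{2^j}|$; then
\[
\sum_{k=n}^\infty 2^{-\a k}\|(\eta_{2^k}f)^{(\a)}\|_{L_\infty(\T)}\asymp\sum_{k=n}^\infty 2^{-\a k}\sum_{j<k}b_j=\sum_{k=n}^\infty 2^{-\a k}\Bigl(\sum_{j<n}b_j+\sum_{n\le j<k}b_j\Bigr)\asymp 2^{-\a n}\sum_{j<n}b_j+\sum_{j\ge n}2^{-\a j}b_j,
\]
where in the last step one sums the geometric series in $k$ (equivalently, invokes the Hardy-type inequality~\eqref{eqHardy}) and interchanges the order of summation in the second term. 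Since $2^{-\a j}b_j=|c_{2^j}|$ and $2^{-\a n}\sum_{j<n}b_j\asymp 2^{-\a n}\sum_{j\le n}2^{\a j}|c_{2^j}|$, the right-hand side is comparable to $\sum_{j\ge n}|c_{2^j}|+2^{-\a n}\sum_{j\le n}2^{\a j}|c_{2^j}|$, i.e.\ to $\w_\a(f,2^{-n})_{L_\infty(\T)}$, which proves the claim. The only genuine ingredient is Sidon's theorem; the one point requiring care is that the realization equivalence and every application of Sidon carry constants uniform in $n$, so there is no real obstacle beyond bookkeeping.
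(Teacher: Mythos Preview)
Your proof is correct and follows essentially the same route as the paper: what you call Sidon's theorem is precisely what the paper invokes as ``Stechkin's theorem'' (see~\cite[Theorem 3.7.6]{Gr}), namely $\|g\|_{L_\infty(\T)}\asymp\sum_k|\widehat g(2^k)|$ for functions with lacunary spectrum, and the remaining steps---realization of $\w_\a$, the coefficient formula $\|(\eta_{2^k}f)^{(\a)}\|_{L_\infty}\asymp\sum_{j<k}2^{\a j}|c_{2^j}|$, splitting the double sum and summing the geometric series---are identical to the paper's argument.
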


\begin{proof}
  By Stechkin's theorem (see~\cite[Theorem 3.7.6]{Gr}), we have
  \begin{equation*}
    \begin{split}
       \sum_{k=n}^\infty 2^{- \a k} \Vert (\eta_{2^k}f)^{(\a)}\Vert_{L_\infty(\T)} &\asymp \sum_{k=n}^\infty 2^{-\a k}\sum_{s=1}^{n-1} 2^{\a s}|c_{2^s}|+
       \sum_{k=n}^\infty 2^{-\a k}\sum_{s=n}^k 2^{\a s} |c_{2^s}|\\
       &\asymp 2^{-\a n} \Vert (\eta_{2^n}f)^{(\a)}\Vert_{L_\infty(\T)} +\sum_{k=n}^\infty |c_{2^k}|\\
       &\asymp 2^{-\a n} \Vert (\eta_{2^n}f)^{(\a)}\Vert_{L_\infty(\T)} +E_{2^{n}}(f)_\infty \asymp \w_\a(f,2^{-n})_{L_\infty(\T)}.
    \end{split}
  \end{equation*}
 % which by the realization result implies~\eqref{eqOptTrigLinfty}.
\end{proof}

Note that Theorem~\ref{thOptTrigLinfty} shows that in the case $p=\infty$, the right-hand inequality~\eqref{optimal} is sharp for $\theta=1$, in other words this inequality cannot be improved for some $\theta>1$ in the general case. At the same time, we remark that Theorem~\ref{thOptTrigL1} only shows that in the case $p=1$,  the right-hand inequality~\eqref{optimal} is sharp for $\theta=2$, that is,~\eqref{optimal} cannot be sharpen with any $\t>2$.

%, this inequality cannot be improved for any $\theta>2$ in the general case.

%Recall that Theorem~\ref{thOptTrigL1} shows that in the case $p=1$,  the right hand inequality~\eqref{eq7} is sharp for $\t=2$, that is, this inequality cannot be improved for any $\t>2$ in the general case.
Now we show that~\eqref{optimal} is in fact sharp for $\t=1$.

\begin{theorem}\label{Opt2LL2}
  Let $\a\in \N$. Then for any $q>1$ there exists a function $f\in L_1(\T)$ such that
  \begin{equation}\label{Opt2LL2-}
   \omega_\a(f,2^{-n})_{L_{1}(\T)}\le C  \(\sum_{k=n+1}^\infty 2^{-q \a k}\Vert (\eta_{2^k}f)^{(\a)}\Vert_{L_1(\T)}^q\)^\frac1q
  \end{equation}
   is not valid with a constant $C$ independent of $n$ and $f$.
\end{theorem}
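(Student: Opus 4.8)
The goal is to exhibit, for each $q>1$, a single function $f\in L_1(\T)$ (or really a family depending on $n$, but it is cleanest to fix one $f$) for which the right-hand side of \eqref{Opt2LL2-} is much smaller than $\w_\a(f,2^{-n})_{L_1(\T)}$ as $n\to\infty$. The natural candidates are lacunary series, since for those both sides of \eqref{optimal} are computable via Zygmund's theorem. However, by Theorem~\ref{thOptTrigL1}, for lacunary $f$ the right-hand side with exponent $q=2$ is already \emph{equivalent} to $\w_\a(f,2^{-n})_{L_1}$; and for $q>2$ the $\ell_q$-sum is only smaller, so a lacunary example with slowly decaying coefficients would at best disprove \eqref{Opt2LL2-} for $q$ slightly above $2$, not for all $q>1$. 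Hence the plan is: first handle $q\ge 2$ via a lacunary series, and then for $1<q<2$ use a \emph{non-lacunary} construction — a sum of polynomial ``blocks'' concentrated on dyadic frequency ranges, chosen so that the $\ell_q$-norm of the block-derivatives is strictly smaller than the modulus.

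First I would treat the (easier) range and set up notation. Take $f(x)=\sum_{j=1}^\infty a_{2^j}\cos(2^j x)$ with $a_{2^j}=1/j$ (or more generally $a_{2^j}=j^{-\b}$ with $\b$ to be tuned). By Zygmund's theorem $f\in L_1(\T)$, and by the equivalence recalled in the proof of Theorem~\ref{thOptTrigL1},
\[
\w_\a(f,2^{-n})_{L_1(\T)}\asymp\Big(\sum_{k\ge n}a_{2^k}^2\Big)^{1/2}+2^{-\a n}\Big(\sum_{k\le n}2^{2\a k}a_{2^k}^2\Big)^{1/2}\asymp\Big(\sum_{k\ge n}a_{2^k}^2\Big)^{1/2},
\]
while $2^{-\a k}\Vert(\eta_{2^k}f)^{(\a)}\Vert_{L_1(\T)}\asymp\big(\sum_{l\le k}a_{2^l}^2\big)^{1/2}$, which for $a_{2^l}=l^{-\b}$ with $\tfrac12<\b$ stays bounded, so that
\[
\Big(\sum_{k\ge n}2^{-q\a k}\Vert(\eta_{2^k}f)^{(\a)}\Vert_{L_1(\T)}^q\Big)^{1/q}\asymp\Big(\sum_{k\ge n}\big(\textstyle\sum_{l\le k}a_{2^l}^2\big)^{q/2}\Big)^{1/q}.
\]
Choosing $\b$ close to $\tfrac12$ one arranges $\sum_{k\ge n}a_{2^k}^2\asymp n^{1-2\b}$ (a power of $n$) while the right side, being an $\ell_q$-sum of a sequence tending to a constant, is $\asymp$ that same constant times a power of $n$ only if $q<2$ and diverges when the constant is positive — so actually to make the right side decay one must take $\sum_l a_{2^l}^2<\infty$, i.e. $\b>\tfrac12$, in which case the right side is $\asymp(\sum_{k\ge n}c^q)^{1/q}$ with $c$ the tail constant and this diverges. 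The correct bookkeeping: with $\sum_l a_{2^l}^2=\infty$ and $\b=\tfrac12$, $\big(\sum_{l\le k}a_{2^l}^2\big)^{1/2}\asymp k^{1/2}/(\log k)^{?}$... I would carry out this computation carefully and pin down the borderline; the upshot is that for $q\ge 2$ the quotient (left side)/(right side) $\to\infty$, disproving \eqref{Opt2LL2-}.

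Second, and this is the main obstacle, I need the range $1<q<2$, where lacunary examples cannot help (Theorem~\ref{thOptTrigL1} shows the $q=2$ sum is already an equivalence, hence a fortiori the $q$-sum is comparable, so no lacunary $f$ works). Here the idea is to spread each ``lacunary atom'' over a whole dyadic block of frequencies, using a classical $L_1$ construction (e.g.\ the de la Vall\'ee Poussin or Fej\'er kernel, or a Rudin--Shapiro-type polynomial) $W_k$ supported on frequencies in $[2^{k},2^{k+1})$ with $\Vert W_k\Vert_{L_1(\T)}\asymp1$ but $\Vert W_k^{(\a)}\Vert_{L_1(\T)}\asymp2^{\a k}$ (this is just Bernstein, sharp for these kernels). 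Set $f=\sum_k \varepsilon_k W_k$ with a slowly decaying sequence $\varepsilon_k\downarrow 0$ with $\sum\varepsilon_k=\infty$ (say $\varepsilon_k=1/k$), so that $f\in L_1$ is \emph{not} automatic — I would instead take $f=\sum_k\varepsilon_k W_k$ with $\sum|\varepsilon_k|<\infty$ so that $f\in L_1$ trivially, but then arrange, by choosing the $W_k$ with \emph{near-orthogonal} $L_1$ behaviour and using the Jackson estimate $\w_\a(f,2^{-n})_{L_1}\gtrsim E_{2^n}(f)_{L_1}\gtrsim\Vert\sum_{k\ge n}\varepsilon_k W_k\Vert_{L_1}$ from below together with a two-sided control $\Vert\sum_{k\ge n}\varepsilon_k W_k\Vert_{L_1}\asymp\sum_{k\ge n}|\varepsilon_k|$ (true for suitably lacunary-in-scale blocks, e.g.\ keeping only $k$ in a sparse set). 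Meanwhile $2^{-\a k}\Vert(\eta_{2^k}f)^{(\a)}\Vert_{L_1}\asymp\sum_{l\le k}|\varepsilon_l|\big(\text{bounded}\big)$, so the right-hand $\ell_q$-sum is $\asymp\big(\sum_{k\ge n}(\sum_{l\le k}|\varepsilon_l|)^q\big)^{1/q}$, and with $\varepsilon_l$ chosen so that $\sum_l|\varepsilon_l|=\infty$ but $\sum_{k\ge n}|\varepsilon_k|$ is a \emph{slowly decaying} quantity while $\big(\sum_{k\ge n}(\text{const})^q\big)^{1/q}$ diverges — so again one forces $\sum|\varepsilon_l|<\infty$, the tail constant $c_n:=\sum_{l>n}|\varepsilon_l|\to 0$, the left side is $\asymp c_n$, and the right side is $\asymp\big(\sum_{k>n}(S-c_k)^q\big)^{1/q}$ where $S=\sum_l|\varepsilon_l|>0$; since $S-c_k\to S>0$ this $\ell_q$-sum \emph{diverges} for every $q\ge 1$, giving a contradiction with \eqref{Opt2LL2-} for all $q$ at once.

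Thus the real content is: (i) build $L_1$-blocks $W_k$ with $\Vert W_k\Vert_1\asymp1$, $\Vert W_k^{(\a)}\Vert_1\asymp2^{\a k}$, spectrum in a single dyadic annulus, and with two-sided $L_1$-additivity of sparse sums $\Vert\sum_{k\in E}\varepsilon_k W_k\Vert_1\asymp\sum_{k\in E}|\varepsilon_k|$ — this additivity is the delicate point and is where I would invoke a lacunary-type or martingale-difference argument, or simply restrict to $k\in\{1,4,9,16,\dots\}$ and translated Fej\'er kernels whose $L_1$-masses are essentially disjointly supported; (ii) observe that $\eta_{2^k}f$ simply truncates the block sum, so $(\eta_{2^k}f)^{(\a)}=\sum_{l\le k}\varepsilon_l W_l^{(\a)}$ up to boundary terms, and apply Bernstein block-wise; (iii) use $E_{2^n}(f)_1\le\w_\a(f,2^{-n})_1\lesssim$ [realization] to sandwich the modulus; (iv) compare $c_n$ with the divergent series $\sum_{k>n}(S-c_k)^q$. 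Steps (ii)--(iv) are routine; step (i), the $L_1$-norm additivity of the blocks, is the crux and the step I expect to cost the most care. I would present it by choosing the $W_k$ to be modulated Fej\'er kernels $F_{m_k}(x)e^{i2^{k}x}$ with $m_k$ chosen so the kernels are essentially supported on disjoint intervals of length $\asymp1/m_k$ around a moving center, making $\|\sum\varepsilon_k W_k\|_1\asymp\sum|\varepsilon_k|$ transparent.
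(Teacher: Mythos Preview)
Your argument has a fatal logical error in the block construction. You compute (with $c_k=\sum_{l>k}|\e_l|$ and $S=\sum_l|\e_l|$) that the right-hand side of \eqref{Opt2LL2-} is $\asymp(\sum_{k>n}(S-c_k)^q)^{1/q}$, note that $S-c_k\to S>0$ so this series diverges, and declare a contradiction. But a divergent right-hand side makes \eqref{Opt2LL2-} hold \emph{trivially}, not fail: to disprove the inequality you need the right-hand side \emph{small} relative to the modulus. The source of the error is the step $2^{-\a k}\Vert(\eta_{2^k}f)^{(\a)}\Vert_1\asymp\sum_{l\le k}|\e_l|$: with $\Vert W_l^{(\a)}\Vert_1\asymp 2^{\a l}$ and your disjointness assumption the correct quantity is $2^{-\a k}\sum_{l\le k}|\e_l|\,2^{\a l}$, which for slowly varying $\e_l$ is $\asymp|\e_k|$, not a partial sum. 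With this correction the right-hand side becomes $\asymp(\sum_{k>n}|\e_k|^q)^{1/q}$, and a comparison with $\w_\a(f,2^{-n})_1\asymp\sum_{k>n}|\e_k|$ (obtained from the realization \eqref{f3}, not from the inequality $E_{2^n}(f)_1\gtrsim\Vert\text{tail}\Vert_1$ you wrote --- that one goes the wrong way, since best approximation is an infimum) would indeed succeed. So the scheme is salvageable, but as written it runs backwards, and the crucial $L_1$-additivity of the blocks is only asserted, not proved.

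Separately, your lacunary branch cannot cover $q=2$: by Theorem~\ref{thOptTrigL1} the $q=2$ sum is \emph{equivalent} to the modulus for every lacunary $f\in L_1$, so no lacunary example can violate \eqref{Opt2LL2-} at $q=2$. Your claimed range ``$q\ge 2$'' should read ``$q>2$'' (and the computation there is in any case left unfinished).

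The paper avoids the bifurcation and the block machinery with a single short construction valid for all $q>1$. Fix $0<\g<1/q$ and set $f_N(x)=\sum_{k=1}^N\sin(kx)/\log^\g(k+1)$. A classical $L_1$ lower bound for sine series due to Geit gives $\w_\a(f_N,2^{-n})_1\gtrsim E_{2^n}(f_N)_1\gtrsim(\log N)^{1-\g}$ for $N\gg 2^n$, while Kolmogorov's estimate $\Vert\sum a_k\cos kx\Vert_1\lesssim\sum k|\D^2 a_k|$ (and its sine analogue) yields $\Vert(\eta_{2^m}f_N)^{(\a)}\Vert_1\lesssim 2^{\a m}/m^\g$, hence the right-hand side of \eqref{Opt2LL2-} is $\lesssim(\log N)^{1/q-\g}$. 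Since $1-\g>1/q-\g$ for $q>1$, letting $N\to\infty$ with $n$ fixed breaks \eqref{Opt2LL2-}.
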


\begin{proof}
We will use the following well-known Kolmogorov's estimates for the $L_1$-norms of trigonometric series:

\begin{equation}\label{eqSum+1}
  \int_0^\pi \left|\sum_{k=1}^\infty a_k \cos kx \right|dx\lesssim\sum_{k=1}^\infty k|\Delta^2 a_k|,
\end{equation}
\begin{equation}\label{eqSum+2}
  \int_0^\pi \left|\sum_{k=1}^\infty a_k \sin kx \right|dx\lesssim \sum_{k=1}^\infty k|\Delta^2 a_k|+\sum_{k=1}^\infty \frac{|a_k|}{k},
\end{equation}
where $\Delta^2 a_k =a_{k+2}-2a_{k+1}+a_k$. Inequality~\eqref{eqSum+1} was obtained in \cite{Kolm1}, see  also \cite{Te64}; for inequality~\eqref{eqSum+2} see
 %~\cite{Kolm2}, see also~
 \cite{Te64}.

We will also need the following estimate for the error of the best approximation
given by (see~\cite[Lemma 2]{Ge})
\begin{equation}\label{eqSumbestL1}
 E_n(g)_{L_1(\T)}\gtrsim \left|\sum_{k=n+1}^\infty \frac{a_k}{k}\right|,\quad g(x)\sim \sum_{k=1}^\infty a_k\sin kx\in L_1(\T).
\end{equation}

Now consider the function
$$
  f_N(x)=\sum_{k=1}^N\frac{\sin kx}{\log^\g (k+1)},
  $$
 where $N>2^n$ and $0<\g<1/q$. By the Jackson inequality and~\eqref{eqSumbestL1}, we obtain
\begin{equation}\label{eqE_nL1}
\begin{split}
  \w_\a(f_N,2^{-n})_{L_1(\T)} &\gtrsim E_{2^n}(f_N)_{L_1(\T)}\\
    &\gtrsim \sum_{k=2^n+1}^N \frac1{k\log^\g (k+1)}\asymp \log^{1-\g} N -\log^{1-\g} 2^n.
\end{split}
\end{equation}

Next, if $\a$ is odd, by~\eqref{eqSum+1}, we derive
\begin{equation*}\label{V_nL11}
  \begin{split}
     \Vert (\eta_{2^m} f_N)^{(\a)}\Vert_{L_1(\T)}&=\left\Vert \eta_{2^m}\(\sum_{k=1}^N \frac{k^\a}{\log^\g(k+1)}\cos kx\)\right\Vert_{L_1(\T)}\\
       &\lesssim \sum_{k=1}^{2^m} \frac{k^{\a-1}}{\log^\g (k+1)}\lesssim \frac{2^{\a m}}{m^\g}.
  \end{split}
\end{equation*}
Similarly, if $\a$ is even, ~\eqref{eqSum+2} implies that
\begin{equation*}\label{V_nL11+}
  \begin{split}
     \Vert (\eta_{2^m} f_N)^{(\a)}\Vert_{L_1(\T)}\lesssim \frac{2^{\a m}}{m^\g}.
  \end{split}
\end{equation*}
Thus, for all $\a\in \N$, we have
\begin{equation}\label{V_NLLLL}
\begin{split}
    \sum_{m=n}^\infty \(2^{-\a m} \Vert (\eta_{2^m} f_N)^{(\a)}\Vert_{L_1(\T)}\)^q
  &\lesssim \sum_{m=n}^{[\log N]}\frac{1}{m^{\g q}}+\sum_{m=[\log N]+1}^\infty 2^{-\a q m}\frac{N^{\a q}}{(\log N)^{\g q}}\\
  &\lesssim\frac{\log N}{(\log N)^{\g q}}.
\end{split}
\end{equation}

Combining~\eqref{eqE_nL1}  and~\eqref{V_NLLLL}, it is easy to see that inequality~\eqref{Opt2LL2-} is not valid for $f=f_N$ with sufficiently large $N$.

%  result in~\cite{Telyak73}:
% If
% $$
% f(x)\asymp \frac{a_0}{2}+\sum_{k=1}^\infty a_k \cos kx
% $$
% is such that
% $$
% \sum_{k=1}^\infty A_k<\infty,\quad |\Delta a_k|\le A_k,
% $$
% then
% $$
% \int_0^\pi |f(x)|dx\lesssim \sum_{k=0}^\infty A_k.
% $$
%
%???? dalee $\a=1$ ????
\end{proof}

\subsection{Optimality of the left-hand inequality in~\eqref{optimal} for $p=1$ and $p=\infty$}

In this subsection, we show that the left-hand inequality in~\eqref{optimal} cannot be improved in general. In particular, for $p=1$ or $p=\infty$, the following inequality is not valid for any $q>0$
%$$
%\(\sum_{k=n+1}^\infty 2^{-q \a k}\Vert (V_{2^k}f)^{(\a)}\Vert_{L_p(\T)}^q\)^\frac1q\le c \omega_\a(f,2^{-n})_{L_{p}(\T)}.
%$$
 \begin{equation}\label{Opt2LL2}
    \(\sum_{k=n+1}^\infty 2^{-q \a k}\Vert (\eta_{2^k}f)^{(\a)}\Vert_{L_p(\T)}^q\)^\frac1q\le C\omega_\a(f,2^{-n})_{L_{p}(\T)}.
  \end{equation}

\begin{theorem}\label{Opt2LL}
  Let $p=1$ or $\infty$ and $\a\in \N$. Then for any $q>0$ there exists a function $f\in L_p(\T)$ such that inequality
(\ref{Opt2LL2})
 %  \begin{equation}\label{Opt2LL2}
%    \(\sum_{k=n+1}^\infty 2^{-q \a k}\Vert (V_{2^k}f)^{(\a)}\Vert_{L_p(\T)}^q\)^\frac1q\le C\omega_\a(f,2^{-n})_{L_{p}(\T)}.
%  \end{equation}
   is not valid with a constant $C$ independent of $n$ and $f$.
\end{theorem}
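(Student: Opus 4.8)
The plan is to disprove \eqref{Opt2LL2} by exhibiting, for each $q>0$, an explicit test function $f\in L_p(\T)$ (varying $n$ will suffice; if convenient one may instead use a sequence of polynomials of growing degree) for which the left-hand sum of \eqref{Opt2LL2} is not dominated by $C\,\omega_\a(f,2^{-n})_{L_p(\T)}$ uniformly. The guiding idea is to build $f$ so that the Nikolskii--Stechkin inequality for the de la Vall\'ee Poussin means is essentially \emph{saturated} --- so that $2^{-\a k}\Vert(\eta_{2^k}f)^{(\a)}\Vert_{L_p(\T)}$ decays only like $k^{-\b}$ --- while $\omega_\a(f,2^{-n})_{L_p(\T)}$ either decays comparably or is so slow that $\sum_k\bigl(2^{-\a k}\Vert(\eta_{2^k}f)^{(\a)}\Vert_{L_p(\T)}\bigr)^q$ already diverges. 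Since the modulus of continuity $\d\mapsto(\log(1/\d))^{-\b}$ violates condition \eqref{Omeg}, this forces the $\ell_q$-sum to outgrow the modulus, which (in view of \eqref{Omegla}) is exactly the obstruction to \eqref{Opt2LL2}; indeed, choosing $\b$ with $\b q\le 1$ makes the left side of \eqref{Opt2LL2} infinite while the right side stays $\lesssim\Vert f\Vert_{L_p(\T)}$, and for the remaining $\b$ one gets a ratio growing like $n^{1/q}$.

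Concretely I would take $f=\sum_{j\ge 1}a_j\,h_j(\,\cdot-\xi_j)$ with $a_j=j^{-\b}$, where each $h_j$ is a trigonometric polynomial with spectrum in the dyadic block $[2^{j-1},2^j)$, normalized so that $\Vert h_j\Vert_{L_p(\T)}\asymp 1$ and $\Vert h_j^{(\a)}\Vert_{L_p(\T)}\asymp 2^{\a j}$ (a Bernstein-extremal block: for $p=\infty$ a dyadic piece of $\sum_k k^{-1}\cos kx$, for $p=1$ a modulated Fej\'er kernel $\cos(2^j x)F_{c2^j}(x)$), and the phases $\xi_j$ are picked inductively to be sufficiently separated that the blocks essentially do not interfere. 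For such $f$ one has to establish the two-sided asymptotics $2^{-\a k}\Vert(\eta_{2^k}f)^{(\a)}\Vert_{L_p(\T)}\asymp a_k$ --- the upper bound by the triangle inequality and the geometric growth of $2^{\a j}a_j$, the lower bound by a reverse-triangle argument isolating the top block --- together with the value of $\omega_\a(f,2^{-n})_{L_p(\T)}$, which by the realization theorem equals, up to constants, $E_{2^n}(f)_{L_p(\T)}+2^{-\a n}\Vert(\eta_{2^n}f)^{(\a)}\Vert_{L_p(\T)}$, where the first term is controlled through the spatial dispersion of the blocks. Feeding these into \eqref{Opt2LL2} and optimizing $\b$ in terms of $q$ yields the contradiction. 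For $p=\infty$ the required norm estimates come from $\Vert T\Vert_{L_\infty(\T)}\ge|T(x_0)|$ and Stechkin's theorem (as in Theorem~\ref{thOptTrigLinfty}); for $p=1$ from Kolmogorov's $L_1$-estimates \eqref{eqSum+1}--\eqref{eqSum+2} and the best-approximation lower bound \eqref{eqSumbestL1}, exactly the toolkit of the proof of Theorem~\ref{Opt2LL2}.

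The main obstacle is the $p=1$ case: a polynomial saturating Bernstein's inequality in $L_1$ is necessarily concentrated, and concentrated polynomials have heavy $L_1$-tails, so a priori $E_{2^n}(f)_{L_1(\T)}$ (hence $\omega_\a(f,2^{-n})_{L_1(\T)}$) threatens to be of order $\sum_{j>n}a_j$ rather than $a_n$, which only gives the failure of \eqref{Opt2LL2} for $q<1$. Overcoming this requires either engineering enough separation and cancellation among the translated blocks to suppress these tails, or, in the ranges of $q$ where that is genuinely impossible, switching to ``flat'' (Rudin--Shapiro-type) blocks, whose $L_1$ and $L_2$ norms are comparable so that the best-approximation error collapses to the $\ell_2$-tail $\bigl(\sum_{j>n}a_j^2\bigr)^{1/2}$ while the Bernstein-extremality is retained. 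Lining up the asymptotics of $\Vert(\eta_{2^k}f)^{(\a)}\Vert_{L_1(\T)}$ and of $\omega_\a(f,2^{-n})_{L_1(\T)}$ so that \eqref{Opt2LL2} fails for \emph{every} $q>0$ is the delicate technical heart of the argument; for $p=\infty$ the spatial separation of the blocks already makes $E_{2^n}(f)_{L_\infty(\T)}\asymp a_n$, so that case is comparatively routine.
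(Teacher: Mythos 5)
Your overall strategy (make $2^{-\a k}\Vert(\eta_{2^k}f)^{(\a)}\Vert_{L_p}$ decay like a power of the dyadic index $k$ while keeping $\w_\a(f,2^{-n})_{L_p}$ of the same order, so that the $\ell_q$-tail exceeds the modulus by a factor $n^{1/q}$) is exactly the mechanism the paper uses, and your $p=\infty$ construction is workable: it is essentially the paper's example $f(x)=\sum_m \sin(mx)/(m\log^\g(m+1))$ written blockwise, except that the paper avoids translated blocks altogether and gets $E_n(f)_\infty\asymp\log^{-\g}n$ directly from the known best-approximation asymptotics for monotone coefficients, then concludes via the realization and a Bernstein step for odd $\a$.

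The genuine gap is the case $p=1$ for $q\ge 1$ (and a fortiori $q\ge 2$), which you correctly flag but do not resolve. The obstruction you describe is real and, with the tools you propose, unavoidable: in $L_1$ spatial separation buys no cancellation, so for concentrated (Fej\'er-modulated) blocks the modulus of the tail is of order $\sum_{j>n}a_j$, giving failure of \eqref{Opt2LL2} only for $q<1$; and for flat (Rudin--Shapiro-type) blocks the best you can prove for the tail is the $\ell_2$-bound $(\sum_{j>n}a_j^2)^{1/2}$, which caps the argument at $q<2$ --- this is the same barrier visible in Theorem~\ref{thOptTrigL1}, where lacunary functions witness sharpness only at exponent $2$. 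No choice of $\b$ or of block type among the two you list closes the range $q\ge 2$, and "lining up the asymptotics" is precisely the missing idea rather than a technicality. The paper's way out is a different construction: the non-lacunary cosine series $f(x)=\sum_m \cos(mx)/\log^\g(m+1)$ with \emph{convex}, logarithmically decaying coefficients. Convexity gives $f\in L_1(\T)$ and, via Aljan\v{c}i\'c's theorem, the crucial upper bound \eqref{eqmodoptL1}, $\w_\a(f,2^{-n})_{L_1}\lesssim 2^{-\a n}\sum_{m\le 2^n}m^{\a-1}a_m\asymp n^{-\g}$; i.e.\ the modulus is \emph{not} comparable to any blockwise tail sum, which is exactly what your block constructions cannot achieve. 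The matching lower bound $\Vert(\eta_{2^k}f)^{(\a)}\Vert_{L_1}\gtrsim 2^{\a k}k^{-\g}$ then comes from the trivial estimate $\Vert g\Vert_{L_1}\ge 2\pi|\widehat{g}(m)|$ applied at a frequency in the flat part of $\eta_{2^k}$, and the resulting ratio $n^{1/q}$ kills \eqref{Opt2LL2} for every $q>0$. (Note also that the Kolmogorov estimates \eqref{eqSum+1}--\eqref{eqSum+2} and the bound \eqref{eqSumbestL1} you cite belong to the proof of Theorem~\ref{Opt2LL2}, the converse-direction optimality; they are not what drives the $p=1$ case here.)
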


\begin{proof}
Let $p=\infty$. We take
$$
f(x)=\sum_{m=1}^\infty a_m \sin mx,\quad a_m=\frac{1}{m \log^\gamma (m+1)},\quad \g>1.
$$
Since $a_m\searrow 0$ and $ma_m\to 0$, we have $f\in C(\T)$, see, e.g.,~\cite[Ch.~V]{Z}.

By~\cite{Ti08}, we get
\begin{equation}\label{eqBestOpt}
E_n(f)_{L_\infty(\T)}\asymp \max_{\nu\ge 1}\nu a_{\nu+n} \asymp \max_{\nu\ge 1}\frac{\nu}{(\nu+n)\log^\g (\nu+n+1)}\asymp \frac1{\log^\g n}.
\end{equation}

Next,
$$
\Vert (\eta_{2^k}f)^{(\a)}\Vert_{L_\infty(\T)}=\left\Vert \eta_{2^k}\(\sum_{m=1}^\infty \frac{m^{\a-1}\cos(mx+\a\pi)}{\log^\g (m+1)}\)\right\Vert_{L_\infty(\T)}.
$$
If $\a$ is even, we obviously have
\begin{equation}\label{oddA}
  \Vert (\eta_{2^k}f)^{(\a)}\Vert_{L_\infty(\T)}\asymp \sum_{m=1}^{2^k} \eta\(\frac{m}{2^k}\) \frac{m^{\a-1}}{\log^\g (m+1)}\asymp\frac{2^{\a k}}{k^\g}.
\end{equation}
For odd $\a$, using Bernstein's inequality, we derive
\begin{equation}\label{evenA}
  \begin{split}
      \Vert (\eta_{2^k}f)^{(\a)}\Vert_{L_\infty(\T)}&\ge \frac{1}{2^k} \Vert (\eta_{2^k}f)^{(\a+1)}\Vert_{L_\infty(\T)}\\
      &=\frac 1{2^k} \left\Vert \eta_{2^k}\(\sum_{m=1}^\infty \frac{m^{\a}\cos mx}{\log^\g (m+1)}\)\right\Vert_{L_\infty(\T)}\asymp\frac{2^{\a k}}{k^\g}.
   \end{split}
\end{equation}

Due to \eqref{eqBestOpt}, \eqref{oddA}, and~\eqref{evenA}, and using the realization result, we have
\begin{equation*}\label{eqMOdLem}
  \omega_\a(f,2^{-n})_{L_{\infty}(\T)}\asymp \frac{1}{n^\g}.
\end{equation*}
At the same time, by \eqref{oddA} and~\eqref{evenA}, we derive
\begin{equation*}\label{eqValsumnorm}
  \(\sum_{k=n+1}^\infty 2^{-q \a k}\Vert (\eta_{2^k}f)^{(\a)}\Vert_{L_\infty(\T)}^q\)^\frac1q\asymp \frac{n^\frac1q}{n^\g}.
\end{equation*}
The last two formula imply that inequality~\eqref{Opt2LL2} is not valid in the case $p=\infty$.

Now, let us consider the case $p=1$. We put
$$
f(x)=\sum_{m=1}^\infty {a_m \cos mx},\quad
a_m=\frac{1}{\log^\gamma (m+1)},
\qquad
\g>1.
$$
Since $a_m\searrow 0$ and $\Delta^2 a_m\ge 0$, we have $f\in L_1(\T)$, see, e.g.,~\cite[Ch.~V]{Z}.

Recall that if  a convex sequence $\{a_m\}$ is the sequence of cosine Fourier coefficients of an even function $f\in L_1(\T)$, then applying   Theorem~1 from~\cite{al}, we have
\begin{equation}\label{eqmodoptL1}
  \w_\a(f,2^{-n})_{L_1(\T)}\lesssim \frac1{2^{\a n}}\sum_{m=1}^{2^{n}} {m^{\a-1}}{a_m}\lesssim \frac1{n^\g}.%\asymp \frac1{n^\g}.
\end{equation}
%Using this fact, we obtain
%\begin{equation}\label{eqmodoptL1}
%  \w_\a(f_0,2^{-n})_{L_1(\T)}\lesssim %\frac1{2^{\a n}}\sum_{k=1}^{2^{n}} \frac{k^{\a-1}}{\log^\g (k+1)}\asymp
%\frac1{n^\g}.
%\end{equation}
Next, since for any $g\in L_1(\T)$ and $k\in \N$, one has $\Vert g\Vert_{L_1(\T)} \ge 2\pi|\widehat{g}(2^k)|$, it follows that
$$
\Vert (\eta_{2^k} f)^{(\a)}\Vert_{L_1(\T)}=\left\Vert \eta_{2^k}\(\sum_{m=1}^\infty \frac{m^{\a}\cos(mx+\a\pi)}{\log^\g (m+1)}\)\right\Vert_{L_1(\T)} \gtrsim \frac{2^{\a k}}{k^\g}
$$
and, therefore,
\begin{equation}\label{eqValsumnormL1}
  \(\sum_{k=n+1}^\infty 2^{-q \a k}\Vert (\eta_{2^k}f)^{(\a)}\Vert_{L_1(\T)}^q\)^\frac1q\gtrsim \frac{n^\frac1q}{n^\g}.
\end{equation}
Finally, combining~\eqref{eqmodoptL1} and~\eqref{eqValsumnormL1}, we obtain contradiction to~\eqref{Opt2LL2}.
\end{proof}

\bigskip
\bigskip

%\section{Final remarks}

                         \end{document}